\date{December} \Year{2009}  
\author{Melchior Gr{\"u}tzmann}
\title{Courant Algebroids: Cohomology and Matched Pairs}
\newtheorem{vdef}{Definition}[chapter]
\newtheorem{lemma}[vdef]{Lemma}
\newtheorem{prop}[vdef]{Proposition}
\newtheorem{cor}[vdef]{Corollary}
\newtheorem{thm}[vdef]{Theorem}
\theoremstyle{remark}
\newtheorem{rem}[vdef]{Remark} 
\newtheorem{Exam}[vdef]{Example}  
\newenvironment{steps}{\begingroup\begin{enumerate}\newcommand\step[1][]%
  {\item[\stepcounter{enumi}\textbf{step~\arabic{enumi}}:]\textbf{##1}}%
}{\end{enumerate}\endgroup}
\newcommand\defbb[2]{\def#1{{\mathbb{#2}}}}
\newcommand\defbf[2]{\def#1{{\mathbf{#2}}}}
\newcommand\defcal[2]{\def#1{{\mathcal{#2}}}}  
\newcommand\deffrak[2]{\def#1{{\mathfrak{#2}}}}
\newcommand\defrm[2]{\def#1{{\mathrm{#2}}}}
\def\defcat{\deffrak}
\def\<{\langle}
\def\>{\rangle}
\def\[{\begin{equation}}  
\def\]{\end{equation}}
\DeclareMathOperator\Ad{Ad}
\DeclareMathOperator\Ann{Ann}
\defcal\B{B}
\defcal\cC{C}  
\defbb\C{C}
\deffrak\c{c}
\DeclareMathOperator\CDO{CDO}
\def\:{\colon}
\def\conn_#1{\nabla_{\! #1}\,}  
\def\ConnectionDer_#1{\nabla_{\!\!#1}\,}
\def\lconn_#1{\nablaleft_{\!\!#1}\,}
\def\rconn_#1{\nablaright_{\!\!#1}\,}
\defrm\ud{d}
\defcal\D{D}
\deffrak\DD{D}
\DeclareMathOperator\dec{dec}
\def\dia{\diamond}
\newcommand\dor[2]{#1\dia #2}
\defcal\E{E}
\DeclareMathOperator\End{End}
\let\oldepsilon=\epsilon
\let\epsilon=\varepsilon
\defcal\F{F}
\deffrak\g{g}
\defcat\Gmfd{Gmfd}
\DeclareMathOperator\braph{graph}
\deffrak\h{h}
\defcal\H{H}
\def\Hald{{\ensuremath{H_{algd}}}}
\def\i{i}  
\DeclareMathOperator\id{id}
\DeclareMathOperator\im{im}
\def\img{\im}  
\def\iso{\cong}
\deffrak\k{k}
\defcal\L{L}
\let\Lie=\L
\defcal\M{M}
\defcal\N{N}
\def\nablaDer{\nabla}
\def\nablaleft{{\overleftarrow{\nabla}}}
\def\nablaright{{\overrightarrow{\nabla}}}
\defcat\Nmfd{Nmfd}
\defbb\NN{N}
\def\oo{\Omega}
\def\w{\mho}
\defcal\P{P}
\def\pfrac#1#2{\frac{\partial#1}{\partial#2}}
\DeclareMathOperator\pr{pr}
\defrm\pt{pt}
\defbb\R{R}
\defbf\br{r}
\def\Rleft{{\overleftarrow{R}}}
\def\Rright{{\overrightarrow{R}}}
\def\rep{\nabla^o}
\DeclareMathOperator\rk{rk}
\defbb\S{S}
\defbf\bs{s}
\DeclareMathOperator\sgn{sgn}
\deffrak\sl{sl}
\defcat\Smfd{Smfd}
\def\smooth{\ensuremath{\cC^\infty}}
\def\ssmooth{{\mathcal{O}}}
\def\gsmooth{\ssmooth}
\def\holom{{\mathcal{O}}}
\let\varsubset=\subset
\let\subset=\subseteq
\let\supset=\supseteq
\defbb\T{T}
\defcal\U{U}
\defcat{\Vect}{Vect}
\defcal\X{X} 
\deffrak\Xx{X}  
\def\vect{\Xx}
\defbb\Z{Z}
\newcommand{\ip}[2]{\langle #1 , #2 \rangle} 
\def\ipG#1#2{\ip{#1}{#2}_\GG}
\def\lbG#1#2{[#1,#2]_\GG}
\def\duality#1#2{\langle #1,#2\rangle}
\newcommand{\beq}[1]{\begin{equation}\label{#1}}
\newcommand{\eeq}{\end{equation}}
\newcommand{\dbi}[3]{#1 \dia_{#3} #2}
\def\zo{^{0,1}}
\def\oz{^{1,0}}
\def\lie#1#2{[#1,#2]}
\def\xto{\xrightarrow}
\def\hX{\Theta}
\def\GG{\g}
\def\fsgf{{F^*\oplus\GG\oplus F}}
\def\fx{x}
\let\cdefn=\c
\let\idefn=\i
\let\kdefn=\k
\let\Ldefn=\L
\long\def\redefineSymbols{
  \let\c=\cdefn
  \let\i=\idefn
  \let\k=\kdefn
  \let\L=\Ldefn
}
\newcommand\MG[1]{} 
\let\justifiedmarginpar=\marginpar
\renewcommand\marginpar[1]{\justifiedmarginpar[\raggedleft\footnotesize #1]{\raggedright\footnotesize #1}}
\newcommand\MGI[1]{} 
\newcommand\PX[1]{} 
\newcommand\MS[1]{} 
\newcommand\GGi[1]{} 
\newcommand{\noprint}[1]{}  
\newcommand\notneeded[1]{{\footnotesize #1}}
\def\math{Professor of Mathematics}
\begin{document}\redefineSymbols
\frontmatter

\Abstract{We introduce Courant algebroids, providing definitions, some historical 
notes, and some elementary properties.  
Next, we summarize basic properties of graded manifolds.  Then, drawing on the work by Roytenberg and others, we introduce the graded or supergraded 
language demostrating a cochain complex/ 
cohomology for (general) Courant algebroids.  
We review spectral sequences and show how this tool is used to 
compute cohomology for regular Courant algebroids with a split base.
Finally we analyze matched pairs of Courant algebroids including the complexified standard Courant algebroid of a complex manifold and the matched pair arising from a holomorphic Courant algebroid.
}

\Acknowledgements{I would like to thank my adviser, Ping Xu, for his continuous 
support during my PhD studies.  I also thank the staff of the Department of 
Mathematics at The Pennsylvania State University for their help with all the 
organizational matters surrounding my long-term studies.  Additionally I thank my colleagues and the professors in our working group, especially
Mathieu Sti{\'e}non, for numerous interesting, motivating and provocative 
discussions.  I also want to thank my collaborator, Gr{\'e}gory Ginot, for his continuous motivation.  Last but not least, I want to thank my parents, 
who encouraged me to enjoy the other things in life.
}

\makepreliminarypages
\singlespace
\mainmatter

\chapter{Introduction \MG{minimalistic}}

The skew-symmetric Courant bracket was first introduced by Courant in 1990 (see \cite{Cour90})
in order to describe Dirac manifolds, a generalization of presymplectic and
Poisson manifolds.  Parallel considerations in variational calculus led
Dorfman \cite{Dor87,Dor93} to derive a similar bracket which is not skew-symmetric,
but which fulfills a certain easy Jacobi identity.

Poisson manifolds are a slight generalization of symplectic
manifolds and have a rich geometric structure, starting from a Poisson
bracket, expanding to the association of a Hamiltonian vector field to a function, and further expanding to areas such as Poisson cohomology.  Presymplectic manifolds seem a bit odd in the beginning, because
they present no such nice structure, but nevertheless occur
naturally as a submanifold of a symplectic manifold.  This is
because the pullback of the symplectic form is closed under the de Rham
differential, yet may be degenerate.  The property of closedness defines a
presymplectic form.  

In the context of a twisted Courant bracket it is also possible to describe 
twisted Poisson structures as Dirac structures, as shown by \v{S}evera and 
Weinstein \cite{Sev01}.  These occur naturally in the context of gauge fixed 
Poisson sigma-models with boundary terms (see e.g.\ \cite{Str02}).

In 1997 Liu, Weinstein, and Xu introduced the notion of a Courant algebroid in 
order to describe Manin triples for Lie bialgebroids
.  This is the 
vector bundle analog of a Manin triple of Lie algebras.  Such a triple consists 
of a quadratic Lie algebra of split signature, a maximal isotropic Lie 
subalgebra, and its dual embedded into the quadratic Lie algebra.  The important 
property of the quadratic Lie algebra is that it has the additional structure 
of a symmetric nondegenerate bilinear form which is compatible with the Lie 
bracket.

In short, a Courant algebroid consists of a bracket on the sections of a
vector bundle (analogous to a Lie algebra) subject to a Jacobi identity,
an anchor map subject to a Leibniz rule, and an inner
product compatible with the bracket (analogous to quadratic Lie algebras).
However, due to the existing axiomatic conclusions, one has
to violate the skew-symmetry.  This version is called Dorfman
bracket.

With the skew-symmetrized bracket the Courant algebroid is an $L_\infty$
algebra of length 2, as observed by Roytenberg and Weinstein \cite{Royt98}.
An important step forward by Roytenberg \cite{Royt02} was a description of Courant
algebroids in terms of a derived bracket as introduced by Kosmann-Schwarzbach
\cite{YKS96}, or equivalently in terms of a nilpotent
odd operator (also known as $Q$-structure). Hence the Courant algebroid
structure with its intricate axioms can all be encoded in a cubic function
 $H$ on a graded symplectic manifold and its derived bracket.

\medskip

Recently, the complexified (exact) Courant algebroids have become popular as a
framework for the generalized complex geometry (see for example
\cite{Hit03,Gua04}).  
Moreover, a reduction procedure (analogous to Marsden--Weinstein reduction in
symplectic geometry \cite{Wei74}) of exact Courant algebroids and generalized
complex structures has been described by Bursztyn et al \cite{BCG07}.

\medskip

In addition to their use for Dirac structures, Courant algebroids permit access from 
the point of view of theoretical physics as target spaces of sigma models, see 
e.g\ \cite{Par00,Ikeda02,HoPa04,Royt06}. A long-term question is how 
to quantize Courant algebroids using the Feynman path integral. The first step 
has already been taken by Roytenberg who described the symmetries of the 
Courant sigma-model in AKSZ-BV formalism.

\bigskip

The plan of the thesis is as follows.  Chapter 2 presents Lie algebroids, Lie bialgebroids, Courant algebroids, and Dirac structures.  Moverover it introduces the graded and supergraded geometry.  Then we explain the derived bracket construction for Courant algebroids and describe Dirac structures in superlanguage.  We finish by explaining spectral sequences, which serve as the base of the constructions in Chapter 3.

Chapter 3 presents the detailed results and proofs computing the standard cohomology of regular Courant algebroids with a split base.  In particular, we prove a conjecture of Sti{\'e}non and Xu.

Chapter 4 gives detailed results and proofs about matched pairs of Courant algebroids.  Examples arise from complex geometry and certain regular Courant algebroids.

\chapter{Basic notions \MG{90\%}}
\label{s:vdef}
\section{Courant algebroids and Dirac structures}
In this chapter we will introduce Lie algebroids and Courant algebroids together
with some examples. Further, we give the definition and examples of Dirac structures. 


\subsection{Lie algebroids}
\subsubsection{Definition}
\begin{vdef}\index{Lie!algebroid, definition} A \emph{Lie algebroid} 
is a triple $(A\to M,[.,.],\rho)$ consisting of a vector bundle $A$ over a smooth manifold $M$, 
a skew-symmetric $\R$-bilinear operation $[.,.]:\Gamma(A)\otimes\Gamma(A)
\to\Gamma(A)$, where $\Gamma(A)$ are the smooth sections of $A$, and a vector bundle map $\rho\:A\to TM$, called the anchor, such that
\begin{align*}
  [\phi,[\psi_1,\psi_2]] &= [[\phi,\psi_1],\psi_2] +[\psi_1,[\phi,\psi_2]]
    \\
  [\phi,f\cdot\psi] &= \rho(\phi)[f]\cdot\psi +f\cdot[\phi,\psi]  
\end{align*}  where $\phi,\psi,\psi_1,\psi_2\in\Gamma(A)$ and $f\in\smooth(M)$.
\end{vdef}

The bracket is not associative, instead the first axiom (Jacobi-identity) 
tells us that $[\phi,.]$ is a derivation with respect to the bracket. This turns $\Gamma(A)$ into a Lie algebra.  The second axiom (Leibniz rule) tells us that the bracket is not $\smooth$-linear.

\begin{rem} A first observation due to Uchino \cite{Uchi02} is that the anchor map $\rho$ is 
a morphism of brackets, $\rho[\psi_1,\psi_2]=[\rho(\psi_1),\rho(\psi_2)]$, as 
it follows from the two axioms.  
\end{rem}

\subsubsection{Examples}
\begin{Exam} One class of examples are Lie algebras which are Lie algebroids over a point 
 $M=\{*\}$. Here $\rho=0$, $A=\g$ is a vector space with a bracket that has to 
fulfill the first axiom (Jacobi identity).
\end{Exam}

\begin{Exam} The tangent bundle $A=TM$ of a manifold $M$ 
is a Lie algebroid. Here $\rho=\id_{TM}$ and $[.,.]$ is the commutator of vector fields%
.\end{Exam}

\begin{Exam} 
Take an arbitrary vector
bundle $V\to M$ and consider the set of linear covariant differential-operators
(CDO).  These are $\R$-linear maps $\psi:\Gamma(V)\to \Gamma(V)$ such that there 
is a vector field $X\in\Xx(M)$, satisfying
 $$  \psi[f\cdot v] = X[f]\cdot v +f\cdot\psi[v] \;,
 $$  for all $v\in\Gamma(V), f\in\smooth(M)$.  

Note that given a fixed $\psi$ with an associated vector field $X$, then 
this $X$ is unique (as long as $V$ is of rank at least one).  
Moreover the set of all covariant 
differential operators forms a module over $\smooth(M)$ via 
\begin{align*}
  (f\cdot\psi)[g\cdot v]    &= (f\cdot X)[g]\cdot v +fg\cdot\psi[v]  \quad\forall f,g\in\smooth(M)\\
  (\psi+\phi)[g\cdot v]&= (X+Y)[g]\cdot v +g\cdot\psi[v]+g\cdot\phi[v] \quad\forall \phi,\psi\in\CDO
\intertext{where $Y$ is associated to $\phi$.\MG{I need $g$ here to show how to
 get the associated vector field.}  Therefore}
  \rho:\CDO&\to \vect(M) : \psi \mapsto X
\intertext{is a map of $\smooth(M)$-modules.  As any affine connection $\nabla$ 
on $V$ gives rise to a map,}
  \nabla\: \vect(M)&\to \CDO:\quad X\mapsto \nabla_X \;.\\
\intertext{It follows that $\rho$ is surjective.}
  \Gamma(\End(V)) &=\{\psi\in\CDO: \rho(\psi)=0 \} \;,
\intertext{hence, we obtain the following 
  short exact sequence}
  0\to &\Gamma(\End(V))\to \CDO\xrightarrow{\rho} \Xx(M)\to 0
\end{align*}
\end{Exam}
In fact, we have the following:
\begin{prop}[Mackenzie \cite{Mack05}]  Given a vector bundle $V\to M$, there is a vector 
  bundle $\DD(V)\to M$ whose sections are the covariant differential operators, 
  fitting into the short exact sequence.
\[  0\to\End(V)\to \DD(V)\xrightarrow{\rho}\ TM\to 0
\]
Moreover, $\DD(V)$ is a Lie algebroid with the commutator bracket.
\end{prop}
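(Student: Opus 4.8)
The plan is to first upgrade the module-level short exact sequence from the preceding example to the level of vector bundles, and then to verify the Lie algebroid axioms directly on operators. The crucial point---and the main obstacle---is to exhibit $\DD(V)$ as an honest vector bundle whose sections are precisely the covariant differential operators; once this is in place, everything else is formal.

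First I would establish \emph{locality}: if $\psi\in\CDO$ and $v\in\Gamma(V)$ vanishes on an open set $U$, then $\psi[v]$ vanishes on $U$. This follows from the Leibniz rule by the standard bump-function argument: for $p\in U$ choose $f\in\smooth(M)$ with $f(p)=1$ and support contained in $U$, so that $fv\equiv0$; expanding $0=\psi[fv]=\rho(\psi)[f]\cdot v+f\cdot\psi[v]$ and evaluating at $p$, where $v(p)=0$, gives $\psi[v](p)=0$. Locality lets us restrict operators to opens and so organize the $\CDO$ into a sheaf of $\smooth$-modules.

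Next I would compute the local structure. Over a trivializing open $U$ with a frame $e_1,\dots,e_k$ of $V$ and coordinates $x^1,\dots,x^n$ on $M$, the Leibniz rule shows that a covariant differential operator $\psi$ is completely determined by its symbol $X=\rho(\psi)$ together with the sections $\psi[e_i]$, and conversely any such data defines an operator. Hence the restriction of $\CDO$ to $U$ is a \emph{free} $\smooth(U)$-module of rank $n+k^2$, generated by the covariant derivatives $\nabla_{\partial_{x^j}}$ for a fixed local connection $\nabla$ together with the elementary endomorphisms $E_{i\ell}$ defined by $E_{i\ell}[e_m]=\delta_{\ell m}e_i$. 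Local freeness of finite rank, combined with locality, yields a vector bundle $\DD(V)\to M$ of rank $n+(\rk V)^2$ whose sheaf of sections is the sheaf of covariant differential operators, via the usual equivalence between locally free sheaves of finite rank and vector bundles.

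Having $\DD(V)$, the short exact sequence is immediate: the kernel of the symbol map $\rho$ consists of the $\smooth$-linear operators, which are exactly the sections of $\End(V)$, while surjectivity of $\rho$ is witnessed by any connection $X\mapsto\nabla_X$; since all three terms are now vector bundles, the module sequence of the example becomes a short exact sequence of bundles. Finally I would check that the commutator $[\psi,\phi]=\psi\circ\phi-\phi\circ\psi$ makes $\DD(V)$ a Lie algebroid. A one-line computation with the Leibniz rule gives $[\psi,\phi][fv]=[X,Y][f]\cdot v+f\cdot[\psi,\phi][v]$, so the commutator of two covariant differential operators is again one, with symbol $\rho([\psi,\phi])=[X,Y]=[\rho(\psi),\rho(\phi)]$. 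The Jacobi identity is automatic for a commutator of associative operators, and the Leibniz rule $[\psi,f\phi]=\rho(\psi)[f]\cdot\phi+f\cdot[\psi,\phi]$ follows by the same expansion. This verifies all the axioms, the only genuinely nontrivial ingredient being the bundle-existence step above.
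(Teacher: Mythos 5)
Your proof is correct, but there is nothing in the paper to compare it against: the proposition is quoted from Mackenzie's book and the text supplies no proof of it. The only supporting material is the preceding example, which works entirely at the level of sections --- it shows that $\CDO$ is an $\smooth(M)$-module, that $\rho:\CDO\to\vect(M)$ is a module map, surjective because any connection provides a splitting $X\mapsto\nabla_X$, and that $\ker\rho=\Gamma(\End(V))$, giving the short exact sequence of modules only. What the proposition asserts beyond this, and what your proposal correctly supplies, are exactly the two missing ingredients: (a) the existence of the vector bundle $\DD(V)$, which you obtain from locality (the bump-function argument) together with the observation that over a trivializing chart the covariant differential operators form a free $\smooth(U)$-module of rank $\dim M+(\rk V)^2$ on the generators $\nabla_{\partial_{x^j}}$ and $E_{i\ell}$, so that the sheaf of such operators is locally free of finite rank and hence is the sheaf of sections of a bundle; and (b) the verification of the Lie algebroid axioms, where the key computation $[\psi,\phi][fv]=[X,Y][f]\cdot v+f\cdot[\psi,\phi][v]$ shows that the commutator of two covariant differential operators is again one, with symbol $[\rho(\psi),\rho(\phi)]$, the Jacobi identity being automatic for commutators of associative operators and the Leibniz rule following from the same expansion. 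Your argument is thus a complete, self-contained proof of the statement the paper delegates to the literature, and it is, in substance, the standard construction found in Mackenzie.
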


\begin{Exam}[Gauge Lie algebroid] Let $\g$ be a Lie algebra acting on a smooth 
manifold $M$, i.e.\ suppose we have a morphism $\bar\rho:\g\to \X(M)$ of Lie algebras:
\begin{align*}
  \bar\rho[X,Y] &= [\bar\rho(X),\bar\rho(Y)] \;.
\intertext{Then the bundle $M\times\g\to M$ can be endowed with the structure of a 
    Lie algebroid as follows:}
  \rho(X)(p) &:= \bar\rho(X(p)) \;.
\intertext{For the constant sections $X,Y\in\g$:}
  [X,Y](p) &= [X(p),Y(p)]
\intertext{and extended by Leibniz-rule to arbitrary sections as}
  [f\cdot X,g\cdot Y](p) &= f(p)g(p)\cdot[X(p),Y(p)] +f(p)\rho(X)(p)[g]\cdot Y(p)-g(p)\rho(Y)(p)[f]\cdot X(p)
\end{align*}
\end{Exam}

\begin{Exam} Any regular integrable distribution $D\subset TM$ is a Lie algebroid. The 
anchor map is the embedding $\rho:D\subset TM$ and the bracket is induced from 
the commutator bracket. \qed 
\end{Exam}

Yet another class of examples arises from Poisson geometry.  Recall that the Schouten-Nijenhuis bracket on multivector fields is the graded extension of the commutator bracket for tangent vector fields.  For simplicity of notation we use the same notation for both brackets.

\begin{vdef}  A \emph{Poisson manifold} $(M,\pi)$ is a smooth manifold endowed with
a bivector field $\pi\in\Gamma(\Lambda^2TM)$ subject to $[\pi,\pi]=0$.
\end{vdef}

There are two brackets arising from such Poisson structures:
\begin{prop}[Weinstein \cite{WeDress}
]  Given a Poisson bivector $\pi$, then
\begin{enumerate}\item  the smooth functions $\smooth(M)$ together with the 
skew-symmetric bracket
\begin{align*}
 \{f,g\} &=\pi(\ud f,\ud g) 
\intertext{form a Poisson algebra, i.e.}
 \{f,gh\} &= \{f,g\}h +g\{f,h\} \\
  \{f,\{g,h\}\} &= \{\{f,g\},h\} + \{g,\{f,h\}\} \;;
\end{align*}

\item  $T^*M$ is a Lie algebroid with the anchor map
\begin{align*}
  \rho(\alpha)[f] &= \<\pi,\alpha\wedge\ud f\> \;
\intertext{and whose Lie bracket fulfills}
  [\ud f,\ud g]_\pi &= \ud\{f,g\} \;, \\
  [\alpha, f\beta]_\pi &= f[\alpha,\beta]_\pi +\pi^\#(\alpha)[f]\cdot\beta \;.
\end{align*}
\end{enumerate}
\end{prop}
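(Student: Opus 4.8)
The plan is to derive everything from the single hypothesis $[\pi,\pi]=0$, treating the Schouten--Nijenhuis calculus as the organizing principle. For part (1) the Leibniz rule $\{f,gh\}=\{f,g\}h+g\{f,h\}$ is immediate, since $\ud(gh)=g\,\ud h+h\,\ud g$ and $\pi$ is bilinear over $\smooth(M)$ in each slot. The Jacobi identity is the substantive point: I would invoke the standard identity of graded Schouten calculus expressing the self-bracket of a bivector through its Jacobiator, namely that $[\pi,\pi]$ paired with $\ud f\wedge\ud g\wedge\ud h$ equals, up to a universal constant, the cyclic sum $\{f,\{g,h\}\}+\{g,\{h,f\}\}+\{h,\{f,g\}\}$. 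Using skew-symmetry of $\{.,.\}$ this cyclic sum is exactly $\{f,\{g,h\}\}-\{\{f,g\},h\}-\{g,\{f,h\}\}$, so $[\pi,\pi]=0$ yields precisely the stated Jacobi identity. I would establish the bracket-vs-$[\pi,\pi]$ identity by expanding in local coordinates (or from the derivation rules of the Schouten bracket); this is where the bulk of the sign bookkeeping sits.

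For part (2) I would take as anchor the bundle map $\pi^\#\colon T^*M\to TM$ determined by $\langle\pi^\#\alpha,\beta\rangle=\pi(\alpha,\beta)$, so that $\rho(\alpha)[f]=\pi^\#\alpha[f]=\langle\pi,\alpha\wedge\ud f\rangle$; its $\smooth(M)$-linearity is manifest, so it is a genuine vector bundle map. For the bracket I would use the globally defined Koszul formula $[\alpha,\beta]_\pi=\Lie_{\pi^\#\alpha}\beta-\Lie_{\pi^\#\beta}\alpha-\ud\langle\pi,\alpha\wedge\beta\rangle$, which is patently skew-symmetric and requires no choice of generators. Specializing to exact forms and using $\Lie_X\ud g=\ud(X[g])$ collapses the three terms to one and gives $[\ud f,\ud g]_\pi=\ud\{f,g\}$. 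The Leibniz rule $[\alpha,f\beta]_\pi=f[\alpha,\beta]_\pi+\rho(\alpha)[f]\,\beta$ then follows from the Leibniz rules for $\Lie$ and $\ud$, once the two $\langle\pi,\alpha\wedge\beta\rangle\,\ud f$ contributions cancel.

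It remains to verify the Jacobi identity of $[.,.]_\pi$. I would first check that $\pi^\#$ is a morphism of brackets, $\pi^\#[\alpha,\beta]_\pi=[\pi^\#\alpha,\pi^\#\beta]$. On the generating exact forms this reduces to $[X_f,X_g]=X_{\{f,g\}}$ with $X_f:=\pi^\#\ud f$; since $X_f[h]=\{f,h\}$, this is literally a restatement of the Poisson Jacobi identity from part (1), and it propagates to arbitrary one-forms through the Leibniz rule together with $\smooth(M)$-linearity of $\pi^\#$. Granting that $\rho$ is a morphism, the Jacobiator $J(\alpha,\beta,\gamma)$ becomes $\smooth(M)$-trilinear — the standard computation shows its non-tensorial terms are governed precisely by the failure of $\rho$ to be a morphism, which is now zero — so $J$ is a tensor and may be tested on exact forms. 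There $J(\ud f,\ud g,\ud h)=\ud\big(\{f,\{g,h\}\}-\{\{f,g\},h\}-\{g,\{f,h\}\}\big)=0$ by part (1), whence $J\equiv0$.

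The main obstacle is the Jacobi identity and, beneath it, pinning down the precise identification of the Jacobiator with $\tfrac12[\pi,\pi](\ud f,\ud g,\ud h)$ in the Schouten--Nijenhuis calculus: once that single identity is secured, both the Poisson-algebra and Lie-algebroid Jacobi identities drop out formally, and the remaining axioms are routine derivation computations. A secondary point is the justification that tensorial quantities may be tested on exact one-forms, which rests on $\Omega^1(M)$ being locally generated over $\smooth(M)$ by differentials. A cleaner, if less elementary, alternative I would mention is to note that $\ud_\pi:=[\pi,\,\cdot\,]$ squares to zero iff $[\pi,\pi]=0$, by the graded Jacobi identity for the Schouten bracket, and that such a degree-one derivation of $\Gamma(\Lambda^\bullet TM)$ is exactly the data of a Lie algebroid structure on $T^*M$, from which all the displayed formulas can be read off.
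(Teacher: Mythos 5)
Your proof is correct, and its overall architecture coincides with the paper's: derive the Poisson--Jacobi identity from $[\pi,\pi]=0$, deduce the Lie-algebroid Jacobi identity on exact one-forms via $[\ud f,\ud g]_\pi=\ud\{f,g\}$, and then show the Jacobiator is $\smooth(M)$-trilinear so that its vanishing on exact forms suffices. The differences are in execution. For the function-level Jacobi identity the paper avoids coordinates entirely: it writes $\{f,g\}=[[\pi,f],g]$ and manipulates iterated Schouten brackets with the graded Jacobi identity to land exactly on $\tfrac12[[[[\pi,\pi],f],g],h]=0$; this is the intrinsic form of the identity you propose to verify by sign bookkeeping, and it is where your ``cleaner alternative'' ($\ud_\pi=[\pi,\cdot]$, $\ud_\pi^2=\tfrac12[[\pi,\pi],\cdot]$) essentially lives. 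For tensoriality, the paper expands $Jac(f\alpha,\beta,\gamma)$ by the Leibniz rule and asserts the result is $f\,Jac(\alpha,\beta,\gamma)$; if you carry out that expansion, the leftover terms cancel only because $\rho[\beta,\gamma]_\pi=[\rho\beta,\rho\gamma]$, a fact the paper uses tacitly (its earlier remark derives the anchor-morphism property from the Lie-algebroid axioms, which include the very Jacobi identity being proven here, so it cannot be invoked without circularity). Your proposal makes precisely this step explicit --- proving the morphism property first on exact forms, where it is the statement $[X_f,X_g]=X_{\{f,g\}}$ equivalent to part (1), and then extending by the Leibniz rule --- so your argument is, if anything, more complete at the one point where the paper's computation is terse. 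A further minor difference is that you take the global Koszul formula as the definition of the bracket, whereas the paper defines it on exact forms and extends by Leibniz; both are legitimate, and the paper itself records the Koszul formula immediately after the proposition.
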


 For general 1-forms it reads as:
 $$ [\alpha,\beta]_\pi = \Lie_{\rho(\alpha)}\beta -\Lie_{\rho(\beta)}\alpha 
  -\ud(\pi(\alpha,\beta))  \quad\forall\alpha,\beta\in\Omega^1(M)
 $$
\begin{proof}  The Leibniz rule of the Poisson bracket follows from the Leibniz
rule of the de Rham differential.
\begin{align*}
  \{f,\{g,h\}\}&-\{\{f,g\},h\}-\{g,\{f,h\}\} \\
  &= [[\pi,f],[[\pi,g],h]]-[[\pi,[[\pi,f],g]],h] -[[\pi,g],[[\pi,f],h]]  \\
  &= [[[\pi,f],[\pi,g]],h] -[[\pi,[[\pi,f],g]],h]  \\
  &= [[[\pi,[\pi,f]],g],h]  \\
  &= \frac12[[[[\pi,\pi],f],g],h] =0
\intertext{Jacobi identity for exact 1-forms follows from this.  It remains to 
    check that the Jacobi identity holds when the forms are multiplied by functions.  We show this by showing that the Jacobiator is a tensor:}
  Jac(\alpha,\beta,\gamma) &:= [\alpha,[\beta,\gamma]_\pi]_\pi +[\gamma,[\alpha,\beta]_\pi]_\pi +[\beta,[\gamma,\alpha]_\pi]_\pi \\
  Jac(f\alpha,\beta,\gamma) &= -\rho[\beta,\gamma]_\pi[f]\cdot\alpha +f[\alpha,[\beta,\gamma]_\pi]_\pi -[\gamma,\rho(\beta)[f]\cdot\alpha]_\pi \\
  &\quad +[\gamma,f[\alpha,\beta]_\pi]_\pi +[\beta,\rho(\gamma)[f]\cdot\alpha]_\pi +[\beta,f[\gamma,\alpha]_\pi]_\pi \\
  &= f\,Jac(\alpha,\beta,\gamma)
\end{align*}
\end{proof}

\subsection{Lie bialgebroids}
Given a Lie algebroid $A\to M$, we can define a differential
 $\ud_A:\Omega^k_M(A)\to\Omega^{k+1}_M(A)$ with
 $\Omega_M^\bullet(A):=\Gamma(\Lambda^\bullet A^*)$ as follows:
\begin{align}  (\ud_A\omega)(\psi_0,\ldots,\psi_n) = 
  &\,\sum_{i=0}^n (-1)^i\rho(\psi_i)[\omega(\psi_1,\ldots,
      \widehat{\psi_i},\ldots,\psi_n)]  \nonumber\\
  &+\sum_{i<j} (-1)^{i+j}\omega([\psi_i,\psi_j],\psi_1,\ldots,
      \hat\psi_i,\ldots,\hat\psi_j,\ldots,\psi_n)  \label{dLie}
\end{align}  where $\psi_i\in\Gamma(A)$.

\begin{vdef}  A \emph{Lie bialgebroid} $(A,A^*)$ is a pair of Lie algebroids whose 
vector bundles are dual to each other and whose structures are compatible in the sense that
 $$ \ud_{A^*}[\phi,\psi]_A = [\ud_{A^*}\phi,\psi]_A +[\phi,\ud_{A^*}\psi]_A
  \quad\forall\phi,\psi\in\Gamma(A) $$
\end{vdef}

\begin{Exam}  A Lie bialgebra is a Lie algebra $(\g,[.,.])$ together with a differential $\mu:\g\to\Lambda^2\g$ which is a 1-cocycle $\delta\mu=0$, where $\delta$ is the Chevalley--Eilenberg differential from Lie algebra cohomology.  The differential $\mu$ gives rise to a bracket $l_2=[.,.]_{\g^*}$ on the dual space.  Due to duality this bracket fulfills $[\delta,l_2]=0$, which is the defining property of a Lie algebra. A Lie bialgebra arises as infinitesimal version of a Poisson--Lie group (see also \cite{Drin86}).  Due to a proposition by De Smedt \cite{Sme94} every Lie algebra permits a non-trivial bialgebra structure.  For further details see, e.g.\ \cite{LW90,Gutt93}.
\end{Exam}
\begin{Exam} More particular examples are exact Lie bialgebras.  A Lie bialgebra $(\g,[.,.],\mu)$ is called exact iff $\mu=\delta r$.  In this case $\delta\mu=0$ follows trivially.  In order for $\mu$ to be nil-potent $r\in\Lambda^2\g$ needs to fulfill:
 $$ \forall X\in\g: [[r,r],X]=0 $$  One particular solution is $[r,r]=0$ where again $[.,.]$ is the Lie bracket extended to exterior powers of the Lie algebra.
 
 A particular example on $\sl_2\C$ is as follows.  We choose the basis $\{H,X_+,X_-\}$ with $[H,X_\pm]=\pm2X_\pm$ and $[X_+,X_-]=H$.  One choice of $r$ is $r=H\wedge X_+$, because here $[r,r]=0$ and $(\delta r)(X_-)=2X_-\wedge X_+\ne0$.  The other two derivations are $(\delta r)(X_+)=0$ and $(\delta r)(H)=2H\wedge X_+$.  Therefore the Lie algebra structure on the dual space $\g^*$ in the dual base $\{H^*,X_+^*,X_-^*\}$ is $[H^*,X_+^*]_*=\frac12H^*$, $[X_-^*,X_+^*]_*=\frac12X_-^*$, and $[H^*,X_-^*]_*=0$.
\end{Exam}

\begin{prop}[Mackenzie and Xu \cite{Mack94}] Every Poisson manifold gives rise to a Lie bialgebroid $((T^*M)_\pi,TM)$.
\end{prop}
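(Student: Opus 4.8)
The structures on the two sides are already available: $TM$ is a Lie algebroid with the commutator bracket and anchor $\id$, while $(T^*M)_\pi$ is a Lie algebroid with anchor $\pi^\#$ and bracket $[\alpha,\beta]_\pi=\Lie_{\pi^\#\alpha}\beta-\Lie_{\pi^\#\beta}\alpha-\ud(\pi(\alpha,\beta))$ by the Weinstein proposition above; moreover $TM$ and $T^*M$ are dual bundles. Hence the only thing left to verify is the compatibility condition of the definition. Taking $A=(T^*M)_\pi$ and $A^*=TM$, so that $\ud_{A^*}=\ud$ is the de Rham differential, I must show
\[
  \ud[\alpha,\beta]_\pi=[\ud\alpha,\beta]_\pi+[\alpha,\ud\beta]_\pi,\qquad \alpha,\beta\in\Omega^1(M),
\]
where $[\cdot,\cdot]_\pi$ on the right is the Gerstenhaber (Koszul) extension of the cotangent bracket to all of $\Omega^\bullet(M)$.

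The plan is to avoid computing directly with the Koszul bracket and instead exploit the symmetry of the defining condition: $(A,A^*)$ is a Lie bialgebroid if and only if $(A^*,A)$ is (Mackenzie--Xu, Kosmann-Schwarzbach). It therefore suffices to treat the pair $(TM,(T^*M)_\pi)$, that is, to show that $\ud_{T^*M}$ is a derivation of the bracket of $TM$. The key identification is that the Chevalley--Eilenberg differential $\ud_{T^*M}$ of the cotangent Lie algebroid, acting on $\Gamma(\Lambda^\bullet TM)$, coincides with the Lichnerowicz--Poisson operator $[\pi,\cdot]$: this I would check on generators from~\eqref{dLie}, since $\ud_{T^*M}f=\pi^\#(\ud f)=[\pi,f]$ for $f\in\smooth(M)$, $\ud_{T^*M}$ agrees with $[\pi,\cdot]$ on vector fields, and both are derivations of the wedge product. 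The compatibility for $X,Y\in\vect(M)$ then reads $[\pi,[X,Y]]=[[\pi,X],Y]+[X,[\pi,Y]]$, which is precisely the graded Jacobi identity of the Schouten bracket applied to $\pi,X,Y$ (the sign is $+$ because $\pi$ is Schouten-even relative to a vector field). Extending by the Leibniz rule to all of $\Gamma(\Lambda^\bullet TM)$ and treating functions separately then closes this route.

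The step I would be most careful about is exactly this identification $\ud_{T^*M}=[\pi,\cdot]$ together with the sign and degree bookkeeping: one must fix the grading on $\Gamma(\Lambda^\bullet TM)$ in which a $k$-vector sits in degree $k-1$, so that $\pi$ is odd of degree $1$, and verify that the $+$ signs in the definition match the graded Jacobi identity in that convention. It is worth noting that nilpotency $\ud_{T^*M}^2=0$ is equivalent to $[\pi,[\pi,\cdot]]=\tfrac12[[\pi,\pi],\cdot]=0$, i.e.\ to $[\pi,\pi]=0$, so the Poisson condition is exactly what powers both the Lie algebroid axioms of $(T^*M)_\pi$ and this compatibility. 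If one prefers to avoid the symmetry theorem, the alternative is to prove the displayed identity on $\Omega^\bullet(M)$ directly; this is Koszul's theorem that the de Rham differential is a derivation of the Koszul bracket precisely when $[\pi,\pi]=0$, and there the main obstacle becomes the explicit sign bookkeeping in extending $[\cdot,\cdot]_\pi$ from $1$-forms to arbitrary forms.
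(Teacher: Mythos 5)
Your proof is correct, but it takes a genuinely different route from the paper's. The paper proves the compatibility $\ud[\alpha,\beta]_\pi=[\ud\alpha,\beta]_\pi+[\alpha,\ud\beta]_\pi$ head-on: both sides vanish on exact $1$-forms, and a short computation with $\ud f$ and $g_1\ud g_2$ shows that the two sides acquire identical extra terms when a $1$-form is multiplied by a smooth function; since exact forms locally generate $\Omega^1(M)$ over $\smooth(M)$, the identity follows. You instead pass to the transposed pair $(TM,(T^*M)_\pi)$ via the Mackenzie--Xu/Kosmann-Schwarzbach duality theorem, identify $\ud_{T^*M}$ with the Lichnerowicz operator $[\pi,\cdot]$ on multivector fields, and read off the compatibility as the graded Jacobi identity of the Schouten bracket, with $[\pi,\pi]=0$ supplying nilpotency. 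Your route is not circular -- the duality theorem is a general statement about Lie bialgebroids, proved independently of this example -- and it is conceptually cleaner: it exposes the compatibility as the Jacobi identity in disguise and avoids sign bookkeeping for the Koszul bracket on higher forms. What it costs is the import of a substantial external theorem (the symmetry of the bialgebroid condition, which is the hard part of the Mackenzie--Xu theory) plus a hidden computation of comparable size: the identification $\ud_{T^*M}=[\pi,\cdot]$ still has to be verified on functions and on vector fields, which is roughly the same amount of local Leibniz-rule work that the paper spends on its direct check. In short, the paper's argument is elementary and self-contained given the Weinstein proposition stated just above it, while yours trades elementary computation for conceptual machinery.
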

\begin{proof}  We have to show
\begin{align*}
  \ud[\alpha,\beta]_\pi &= [\ud\alpha,\beta]_\pi +[\alpha,\ud\beta]_\pi
  \quad\forall\alpha,\beta\in\Omega^1(M) \;.
\intertext{The formula is trivial for exact forms.  For the general case we 
    have to check that both sides gain the same terms when multiplying a form with
    a smooth function.}
  \ud[\ud f,g_1\ud g_2] &= \ud(\rho(\ud f)[g_1]\ud g_2) +\ud g_1\wedge[\ud f,\ud g_2]_\pi  \\
  &= [\ud f,\ud g_1]_\pi\wedge\ud g_2 +\ud g_1\wedge[\ud f,\ud g_2]_\pi \\
  &= [\ud^2 f,g_1\ud g_2]_\pi +[\ud f,\ud(g_1\ud g_2)]
\end{align*}
{The same formula holds for the first argument of the bracket due to 
  skew-symmetry.}
\end{proof}

\subsection{Courant algebroids}
\subsubsection{Definition}
\begin{vdef}\label{d:Courant}\index{Courant!Definition} A \emph{Courant algebroid}
  is a vector bundle $E\to M$ with 
  three operations: a symmetric non-degenerate bilinear form
  $\langle.,.\rangle$ on $E$, an $\R$-bilinear operation 
  $\dia:\Gamma(E)\otimes\Gamma(E)\to\Gamma(E)$ on 
  the sections of $E$ and a bundle map $\rho:E\to TM$, called the anchor map,  
  satisfying the following rules:
  \begin{align}
  \dor{\phi}{(\dor{\psi_1}{\psi_2})} &= \dor{(\dor{\phi}{\psi_1})}{\psi_2} 
    +\dor{\psi_1}{(\dor{\phi}{\psi_2})}  \label{ax:Jacobi}  \\
  \dor{\phi}{(f\cdot \psi)} &= \rho(\phi)[f]\cdot\psi +f\cdot(\dor{\phi}{\psi})  \label{ax:Leibn}\\
    \dor{\phi}{\phi} &=\frac12\rho^*\ud\langle\phi,\phi\rangle  \label{ax:nSkew}\\
    \rho(\phi)\langle\psi,\psi\rangle &= 2\langle\dor{\phi}{\psi},
      \psi\rangle  \label{ax:adInv}
  \end{align} where $\phi,\psi,\psi_1,\psi_2$ are sections of $E$, $f$ is a 
  smooth function on $M$, $\ud$ is the de Rham differential (from functions
   to 1-forms) and
   $$\rho^*:T^*M\to E $$ is induced by $\rho$ and the identification of $E$ and 
   $E^*$ via the bilinear form $\<.,.\>$.
\end{vdef}

The first two axioms are similar to those of a Lie algebroid and imply in particular
\beq{rhomor} \rho(\phi\dia\psi) = [\rho\phi,\rho\psi] \eeq
as observed by Uchino \cite{Uchi02}.  The last 
axiom is the invariance of the inner product under the bracket and equivalent 
to
 $$ \rho(\phi)\<\psi_1,\psi_2\> = \<\dor{\phi}{\psi_1},\psi_2\> +\<\psi_1,
  \dor{\phi}{\psi_2}\> \;.$$

\begin{Exam} Liu et al \cite{Xu97}.  Let $(A,A^*)$ be a Lie bialgebroid and consider the
direct sum $A\oplus A^*$ with the natural inner product and the bracket:
\begin{align*} 
  \<X\oplus\alpha,Y\oplus\beta\> &= \alpha(X)+\beta(Y)  \\
  \rho(X\oplus\alpha) &= \rho_A(X)+\rho_{A^*}(\alpha)  \\
  \dor{(X\oplus\alpha)}{(Y\oplus\beta)} &= 
  \left([X,Y]_{A} +\Lie^{A^*}_\alpha Y-\imath_\beta\ud_{A^*}X \right) 
  \oplus \left( [\alpha,\beta]_{A^*}+\Lie_X\beta -\imath_Y\ud_A\alpha \right)
\end{align*}  
Then this forms a Courant algebroid.
\end{Exam}

However, the operation $\dia$ is not skew-symmetric, but equivalent to a 
skew-symmetrized one.

\begin{prop}  The non-skew-symmetric Courant bracket with its axioms is equivalent to the 
skew-symmetric bracket $[\phi,\psi]:=\frac12(\dor{\phi}{\psi}-\dor{\psi}{\phi})$ 
and the following axioms
\begin{align*}
  [[\psi_1,\psi_2],\psi_3] &+[[\psi_2,\psi_3],\psi_1]
    +[[\psi_3,\psi_1],\psi_2] 
    = \tfrac16\D \left(\langle[\psi_1,\psi_2],\psi_3\rangle+\text{cycl.}\right)  \\
  [\psi_1,f\cdot\psi_2] &= f\cdot[\psi_1,\psi_2] +\rho(\psi_1)[f]\cdot\psi_2 
    -\tfrac12\<\psi_1,\psi_2\>\cdot\D f  \\
  \<\D f,\D g\> &= 0 \\
  \rho(\phi)\<\psi_1,\psi_2\> &= 
    \big\langle[\phi,\psi_1]+\tfrac12\D\<\phi,\psi_1\>,\psi_2\big\rangle 
    +\big\langle\psi_1,[\phi,\psi_2]+\tfrac12\D\<\phi,\psi_2\>\big\rangle
\end{align*}
The non-skew-symmetric bracket can be reconstructed as
$$ \dor{\phi}{\psi} = [\phi,\psi]+\frac12\D\<\phi,\psi\>
$$

Where 
\begin{align*}
  \D&:\smooth(M)\to\Gamma(E) \\
  \<A,\D f\> &= \rho(A)[f]
\end{align*}
\end{prop}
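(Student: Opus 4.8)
The plan is to build a dictionary between the two packages of data by splitting $\dia$ into its symmetric and skew parts, and then to verify each axiom by substitution, leaving the two Jacobi identities for last. First I would polarize \eqref{ax:nSkew}: replacing $\phi$ by $\phi+\psi$ and subtracting the pure terms gives the symmetric-part identity $\dor{\phi}{\psi}+\dor{\psi}{\phi}=\D\langle\phi,\psi\rangle$, where $\D=\rho^*\ud$. Since $[\phi,\psi]=\tfrac12(\dor{\phi}{\psi}-\dor{\psi}{\phi})$ is the skew part by definition, adding the two halves gives the reconstruction formula $\dor{\phi}{\psi}=[\phi,\psi]+\tfrac12\D\langle\phi,\psi\rangle$ at once. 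I would also record two facts used repeatedly below: that $\D$ is a derivation, $\D(fg)=f\,\D g+g\,\D f$, because $\ud$ is one and $\rho^*$ is $\smooth$-linear; and the polarized form of \eqref{ax:adInv} noted after the definition, $\rho(\phi)\langle\psi_1,\psi_2\rangle=\langle\dor{\phi}{\psi_1},\psi_2\rangle+\langle\psi_1,\dor{\phi}{\psi_2}\rangle$.

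With these in hand the non-Jacobi axioms are routine. For $\langle\D f,\D g\rangle=0$ I would apply $\rho$ to the symmetric-part identity: by \eqref{rhomor} the left side becomes $[\rho\phi,\rho\psi]+[\rho\psi,\rho\phi]=0$, so $\rho\,\D\langle\phi,\psi\rangle=0$, and since the differentials $\ud\langle\phi,\psi\rangle$ span $T^*M$ pointwise this forces $\rho\rho^*=0$, whence $\langle\D f,\D g\rangle=\rho(\D g)[f]=0$. The skew Leibniz rule follows by writing $[\psi_1,f\psi_2]=\tfrac12(\dor{\psi_1}{(f\psi_2)}-\dor{(f\psi_2)}{\psi_1})$, using \eqref{ax:Leibn} on the first term, eliminating $\dor{(f\psi_2)}{\psi_1}$ with the symmetric-part identity, and collecting the $\D f$ contribution via the derivation property; the modified invariance axiom is just the polarized \eqref{ax:adInv} with $\dor{\phi}{\psi_i}$ replaced by $[\phi,\psi_i]+\tfrac12\D\langle\phi,\psi_i\rangle$.

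The converse implications for the non-Jacobi axioms are the same computations read backwards, taking $\dor{\phi}{\psi}:=[\phi,\psi]+\tfrac12\D\langle\phi,\psi\rangle$ as the definition: \eqref{ax:nSkew} is immediate from $[\phi,\phi]=0$ and $\D=\rho^*\ud$; \eqref{ax:adInv} is the skew invariance axiom rewritten; and \eqref{ax:Leibn} comes out by substituting the definition into $\dor{\phi}{(f\psi)}$, applying the skew Leibniz rule and the derivation property of $\D$, and watching the two $\langle\phi,\psi\rangle\D f$ terms cancel.

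The substantive step is the equivalence of the two Jacobi identities. Starting from \eqref{ax:Jacobi}, which says $\dia$ is a left Loday bracket, $\dor{\phi}{(\dor{\psi_1}{\psi_2})}-\dor{\psi_1}{(\dor{\phi}{\psi_2})}=\dor{(\dor{\phi}{\psi_1})}{\psi_2}$, I would expand the skew Jacobiator $[[\psi_1,\psi_2],\psi_3]+\mathrm{cycl.}$ fully in terms of $\dia$ using the reconstruction formula. The trilinear Dorfman terms should cancel by the Loday identity, and every surviving term is symmetric in a pair of arguments, hence collapses to $\D$ of an inner product by the symmetric-part identity; the expectation is that these reassemble into exactly $\tfrac16\D(\langle[\psi_1,\psi_2],\psi_3\rangle+\mathrm{cycl.})$. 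I expect this bookkeeping---tracking the many $\D\langle\cdot,\cdot\rangle$ terms and confirming the coefficient $\tfrac16$---to be the main obstacle, and the cleanest route is probably to symmetrize the Dorfman Jacobiator and show directly that the result is $\D$ of the cyclic inner-product sum. The converse runs along the same identity: assuming the skew Jacobi-with-anomaly together with the already-established \eqref{ax:Leibn}--\eqref{ax:adInv}, one rewrites the Dorfman Jacobiator $\dor{\phi}{(\dor{\psi_1}{\psi_2})}-\dor{(\dor{\phi}{\psi_1})}{\psi_2}-\dor{\psi_1}{(\dor{\phi}{\psi_2})}$ through the reconstruction formula and checks that the anomaly term is precisely what cancels the remainder.
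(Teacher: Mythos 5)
Your strategy is sound, and the parts you actually execute are correct: polarizing \eqref{ax:nSkew} to get $\dor{\phi}{\psi}+\dor{\psi}{\phi}=\D\<\phi,\psi\>$, the reconstruction formula, the derivation property of $\D$, the deduction $\rho\circ\rho^*=0$ from \eqref{rhomor} (using that inner products locally generate all functions, a fact the paper itself uses elsewhere), and the transfer of the Leibniz and invariance axioms in both directions all check out. Note there is no in-paper argument to compare against: the paper's ``proof'' is the citation of Roytenberg [Prop.~2.6.5], and your route is essentially that standard one.

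The gap is that the step you yourself call substantive --- the equivalence of the two Jacobi identities, with the coefficient $\tfrac16$ --- is not proved but only predicted (``the expectation is that these reassemble\ldots''), and the bookkeeping scheme you describe would not close as stated. After substituting $[\psi_1,\psi_2]=\dor{\psi_1}{\psi_2}-\tfrac12\D\<\psi_1,\psi_2\>$ into the Jacobiator, you are left with terms such as $\dor{(\D\<\psi_1,\psi_2\>)}{\psi_3}$, i.e.\ Dorfman brackets with an \emph{exact} argument; these neither cancel by the Loday identity nor are ``symmetric in a pair of arguments''. What is missing is the auxiliary lemma $\dor{\phi}{(\D f)}=\D\bigl(\rho(\phi)[f]\bigr)$, hence $\dor{(\D f)}{\phi}=0$, which follows from the polarized \eqref{ax:adInv}, nondegeneracy of $\<.,.\>$, and \eqref{rhomor}. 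With it the computation does close: the lemma gives $[[\psi_1,\psi_2],\psi_3]=\dor{(\dor{\psi_1}{\psi_2})}{\psi_3}-\tfrac12\D\<[\psi_1,\psi_2],\psi_3\>$; then, writing $A_{ijk}:=\dor{\psi_i}{(\dor{\psi_j}{\psi_k})}$, $P:=A_{123}+A_{231}+A_{312}$, $Q:=A_{213}+A_{321}+A_{132}$, $S_1:=\sum_{\mathrm{cycl}}\<\dor{\psi_1}{\psi_2},\psi_3\>$, and $S_2:=\sum_{\mathrm{cycl}}\rho(\psi_1)\<\psi_2,\psi_3\>$, the Loday identity \eqref{ax:Jacobi}, the symmetric-part identity, and the lemma yield $P+Q=\D S_2$ and $2P-Q=\D S_1$, hence $\sum_{\mathrm{cycl}}\dor{(\dor{\psi_1}{\psi_2})}{\psi_3}=P-Q=\tfrac13\D(2S_1-S_2)$. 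Since $T:=\sum_{\mathrm{cycl}}\<[\psi_1,\psi_2],\psi_3\>=S_1-\tfrac12 S_2$, the skew Jacobiator equals $\tfrac23\D T-\tfrac12\D T=\tfrac16\D T$, confirming the coefficient; the converse direction reverses the same identities. Until this computation (or an equivalent one) is actually written out, your proposal is incomplete at exactly its central point.
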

\noindent The proof can be found in Roytenberg \cite[Prop.2.6.5]{Royt99}.

\begin{rem}Analogously to Lie algebroids the anchor map is a morphism of brackets, 
i.e.\ $\rho(\dor{\phi}{\psi})=[\rho(\phi),\rho(\psi)]$. As observed by 
K.~Uchino \cite{Uchi02} this follows from the first 2 axioms, for Lie algebroids 
as well as for Courant algebroids (also for the skew-symmetric bracket).
\end{rem}

\begin{rem}  Courant algebroids produce examples of the following definition (see also \cite[defn.p.1094]{SHLA}).

  An $L_\infty$-algebra is a graded vector space $V=\oplus_{i\in\Z}V_i$ together with operations: $$ l_i:\Lambda^i V\to V $$ of degree $2-i$.  The $l_i$ are extended as derivations to $\Lambda^\bullet V\to\Lambda^{\bullet-i+1}V$ which by abuse of notation we write with the same symbols.  The $l_i$ are subject to the axioms:
\begin{align*}
  \sum_{i+j=n+1} (-1)^{i(j+1)} (l_i\circ l_j)(e_1,\ldots,e_n)=0 \quad\forall e_i\in V, n\ge1
\end{align*}

Let $V_\bullet$ be the graded vector space $V_0=\Gamma(E),
  V_{1}=\smooth(M), V_2=\ker(\rho^*\ud)$, $V_n=0$ for $n\ge3$, together with 
  the operations
\begin{align*}
  l_1(f) =& \begin{cases} \rho^*\ud f &\text{for } f\in V_1  \\
    f &\text{for } f\in V_2 \\
    0 &\text{otherwise}  \end{cases}  \\
  l_2(e_1,e_2) =& \begin{cases} [\phi_1,\phi_2]
     &\text{for } e_i=\phi_i\in V_0 \\
    \rho(\phi)[f] &\text{for } e_1=\phi\in V_0, e_2=f\in V_1 \\
    -\rho(\phi)[f] &\text{for } e_1=f\in V_1, e_2=\phi\in V_0 \\
    0 &\text{otherwise} \end{cases}  \\
  l_3(\phi_1,\phi_2,\phi_3) =& \begin{cases}
    -\frac13\langle[\phi_1,\phi_2],\phi_3\rangle-\text{cycl.}
     &\text{for } \phi_i\in V_0 \\
    0 &\text{otherwise}  \end{cases}\\
  l_n =&\, 0\quad\text{for }n\ge4 \;.
\end{align*}  

The theorem now reads as follows \cite{Royt98}: The above structure makes a Courant algebroid an $L_\infty$-algebra.
\end{rem}

\subsubsection{Exact Courant algebroids}
\begin{vdef} A Courant algebroid $E\to M$ is called \emph{transitive} iff its anchor map 
is surjective.  It is called \emph{exact} iff in addition $$ \rk E=2\dim M\;. $$
\end{vdef}  
Exact Courant algebroids fit into the short exact sequence:
 $$ 0\to T^*M\xrightarrow{\rho^*} E\xrightarrow{\rho}TM\to 0 \;, $$
because 
 $$\rho\circ\rho^*\ud\<\psi_1,\psi_2\>=\rho(\psi_1\dia\psi_2)+\rho(\psi_2\dia\psi_1)
  =[\rho\psi_1,\rho\psi_2]+[\rho\psi_2,\rho\psi_1]=0$$ and $\<\psi_1,\psi_2\>$ 
locally generates all functions.  Exactness is due to dimensional reasons.
An isotropic splitting of this sequence is $\sigma:TM\to E$ such that
 $\rho\circ\sigma=\id_{TM}$ and $\<\sigma(X),\sigma(Y)\>=0$ for all $X,Y\in TM$.  
These exist, because $E$ is a vector bundle.

If we choose an isotropic splitting $\sigma:TM\to E$, we can introduce the 
function $H(X,Y,Z):=\<\dor{\sigma(X)}{\sigma(Y)},\sigma(Z)\>$.  Then $H$ is a closed 3-form.  Under the change of the splitting $\sigma\mapsto\sigma+B^\sharp$ for any 2-form $B$, $H$ changes by the exact term $\ud B$.

\begin{prop}[{\v S}evera \cite{SevLett}]   There is a one-to-one correspondence between equivalence classes of exact Courant algebroids and $H^3_{dR}(M)$.
\end{prop}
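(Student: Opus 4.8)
The plan is to build a bijection by assigning to each exact Courant algebroid $E$ the de Rham class $[H_\sigma]$ of the closed $3$-form $H_\sigma(X,Y,Z)=\langle\dor{\sigma(X)}{\sigma(Y)},\sigma(Z)\rangle$ attached to an isotropic splitting $\sigma$, and by assigning to each closed $3$-form a suitable twisted structure on $TM\oplus T^*M$. The text already supplies the two facts that make $E\mapsto[H_\sigma]$ land in $H^3_{dR}(M)$: $H_\sigma$ is closed, and replacing $\sigma$ by $\sigma+B^\sharp$ changes $H_\sigma$ by the exact form $\ud B$, so the class $[H_\sigma]$ is independent of the chosen splitting. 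It then remains to verify three things: that the class is unchanged under an isomorphism of exact Courant algebroids (so the map descends to equivalence classes), that every class is attained (surjectivity), and that equal classes force an isomorphism (injectivity).

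The technical core I would establish first is an identification lemma: any isotropic splitting $\sigma$ makes the bundle map $(X,\xi)\mapsto\sigma(X)+\rho^*\xi$ an isomorphism of Courant algebroids from the \emph{$H_\sigma$-twisted standard structure} on $TM\oplus T^*M$ onto $E$, where that structure has anchor $\rho(X+\xi)=X$, pairing $\langle X+\xi,Y+\eta\rangle=\tfrac12(\eta(X)+\xi(Y))$, and Dorfman bracket
$$\dor{(X+\xi)}{(Y+\eta)} = [X,Y]+\Lie_X\eta-\iota_Y\,\ud\xi+\iota_Y\iota_X H_\sigma.$$
Isotropy of $\sigma$ together with the fact that $\rho^*(T^*M)$ is the isotropic kernel of the sequence yields the pairing; the bracket formula follows by expanding $\dor{(\sigma X+\rho^*\xi)}{(\sigma Y+\rho^*\eta)}$ using the Leibniz rule \eqref{ax:Leibn}, the invariance axiom \eqref{ax:adInv}, and $\rho(\dor{\phi}{\psi})=[\rho\phi,\rho\psi]$ from \eqref{rhomor}, with the twist term $\iota_Y\iota_X H_\sigma$ arising as the cotangent ($\rho^*$) component of $\dor{\sigma X}{\sigma Y}$, which by definition pairs with $\sigma Z$ to give $H_\sigma(X,Y,Z)$. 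Granting this lemma, invariance under equivalence is immediate: if $\Phi\colon E_1\to E_2$ is an equivalence, i.e.\ a Courant algebroid isomorphism covering $\id_M$ and intertwining the maps $\rho^*,\rho$, then $\Phi\circ\sigma_1$ is an isotropic splitting of $E_2$ with $H_{\Phi\sigma_1}=H_{\sigma_1}$, since $\Phi$ preserves both the bracket and the pairing.

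For surjectivity I would take an arbitrary closed $3$-form $H$ and place the $H$-twisted standard structure on $E=TM\oplus T^*M$ using the formulas above. The key point is that closedness of $H$ is exactly what makes the Dorfman bracket satisfy the Jacobi axiom \eqref{ax:Jacobi}: the untwisted bracket is known to satisfy all four axioms, inserting the twist $\iota_Y\iota_X H$ leaves axioms \eqref{ax:Leibn}--\eqref{ax:adInv} untouched, and alters the Jacobiator only by a term proportional to $\ud H$. Choosing $\sigma$ to be the inclusion $TM\into TM\oplus T^*M$, which is isotropic, one reads off $H_\sigma=H$, so $[H]$ lies in the image.

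Injectivity is where the gauge freedom is used. If $E_1,E_2$ satisfy $[H_{\sigma_1}]=[H_{\sigma_2}]$, write $H_{\sigma_1}-H_{\sigma_2}=\ud B$ for a $2$-form $B$; changing the splitting of $E_2$ by $B^\sharp$ replaces $H_{\sigma_2}$ by $H_{\sigma_2}+\ud B=H_{\sigma_1}$, so both algebroids are now identified, via the lemma, with one and the same $H_{\sigma_1}$-twisted standard structure on $TM\oplus T^*M$, and composing the two identifications produces an isomorphism $E_1\iso E_2$. The main obstacle, and the only genuinely computational part, is the identification lemma together with the twist/Jacobi bookkeeping of the surjectivity step: one must check that the splitting reproduces exactly the claimed Dorfman bracket with no stray $\rho^*$-terms, and that the failure of the Jacobiator under the twist equals precisely $\iota_Z\iota_Y\iota_X\,\ud H$, so that the Courant axioms hold if and only if $\ud H=0$. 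Everything else---well-definedness modulo exact forms, invariance under equivalence, and the use of $B$-field transforms---is soft once this lemma is in hand.
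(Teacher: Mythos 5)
Your proposal is correct, and at the level of the constructions it realizes exactly the same correspondence as the paper: the forward map $E\mapsto[H_\sigma]$ built from an isotropic splitting (with well-definedness coming from the two facts quoted just before the proposition), and the inverse built from the twisted standard structure of Example~\ref{ex:Sev}. Where you genuinely diverge is in how the Courant axioms for the twisted bracket are verified, and in how much of the bijection is made explicit. The paper checks the axioms abstractly: in Section~\ref{s:derived} the twisted Dorfman bracket is exhibited as the derived bracket of the Hamiltonian $H_d+H_{twist}$ on $T^*[2]T[1]M$, and nilpotency of that Hamiltonian (equivalently the Jacobi axiom~\eqref{ax:Jacobi}) reduces to $\ud H_{twist}=0$; you instead propose the direct computation that the twist term preserves \eqref{ax:Leibn}--\eqref{ax:adInv} and shifts the Jacobiator by precisely $\imath_Z\imath_Y\imath_X\,\ud H$. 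Your route is more elementary but computational; the paper's buys the same statement with essentially no computation once the graded-manifold machinery is in place. You also supply two steps the paper leaves implicit in its one-line proof: invariance of the class under isomorphism of exact Courant algebroids, and injectivity, both obtained from your identification lemma (any isotropic splitting makes $(X,\xi)\mapsto\sigma(X)+\rho^*\xi$ an isomorphism onto the $H_\sigma$-twisted standard structure) combined with $B$-field transforms; this is a genuine completion of the sketch. One small normalization slip: with your pairing $\tfrac12\bigl(\eta(X)+\xi(Y)\bigr)$, the map $(X,\xi)\mapsto\sigma(X)+\rho^*\xi$ multiplies the pairing by $2$, since $\langle\sigma(X),\rho^*\eta\rangle_E=\eta(\rho\sigma X)=\eta(X)$ while $\langle\sigma X,\sigma Y\rangle_E=0$ and $\langle\rho^*\xi,\rho^*\eta\rangle_E=0$; either drop the $\tfrac12$ (as in Example~\ref{ex:Sev}) or map $\xi$ to $\tfrac12\rho^*\xi$. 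This is bookkeeping only and does not affect the structure of your argument.
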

\begin{proof}  The above construction defines the closed 3-form for a given exact Courant algebroid.  Conversely given a closed 3-form the following example constructs a corresponding exact Courant algebroid.  A short proof that this fulfills indeed the axioms of a Courant algebroid is given at the end of section \ref{s:derived}.
\end{proof}

\begin{Exam}\textbf{ twisted Courant algebroid \cite{SevLett}.}\label{ex:Sev} $E=TM\oplus T^*M$ with
the inner product $$ \<X\oplus\alpha,Y\oplus\beta\> = \alpha(Y)+\beta(X) \;,$$
and the anchor map
 $$\rho(X\oplus\alpha)=X\;.$$  
Let $H_{twist}\in\Omega^3(M)$ be a closed 3-form.  One can now define a 
bracket as:
\begin{equation}  \dor{(X\oplus\alpha)}{(Y\oplus\beta)} := [X,Y]\oplus 
  \left(\Lie_X\beta-\imath_Y\ud\alpha +\imath_X\imath_Y H_{twist}\right)  \label{Cbrack}
\end{equation} For $X,Y\in\vect(M)$, $\alpha,\beta\in\Omega^1(M)$.
It is called the (twisted) Dorfman bracket.
This bracket forms a Courant algebroid.  The skew-symmetric version reads as
 $$ [X\oplus\alpha, Y\oplus\beta] := [X,Y]\oplus \left(\Lie_X\beta
-\Lie_Y\alpha +\frac12\D(\alpha(Y)-\beta(X))+\imath_X\imath_Y H_{twist}\right)
 $$
\end{Exam}

\begin{Exam} More generally, start with an arbitrary Lie
algebroid $A$ (and $A^*$ its dual endowed with differential $\ud_A$).  Pick a 
 $\ud_A$-closed $A$-3-form $H_{twist}\in\Gamma(\Lambda^3A^*)$ and define inner product, anchor map and 
the operation analogously on $A\oplus A^*$.
\begin{align*}
  \<X\oplus\alpha,Y\oplus\beta\> &= \alpha(Y) +\beta(X)  \\
  \rho(X\oplus\alpha) &= \rho_A(X)  \\
  \dor{(X\oplus\alpha)}{(Y\oplus\beta)} &= [X,Y]_A \oplus 
    \left(\Lie_X\beta -\imath_Y\ud_A\alpha +\imath_X\imath_Y H_{twist}\right) \;,
\end{align*}  then this forms a Courant algebroid.  With the $A$-Lie derivative
 $\Lie_X=[\imath_X,\ud_A]$.
\end{Exam}
\bigskip

\subsection{Dirac structures \MG{90\%} I}
\begin{vdef}\label{d:induced}  Given a Courant algebroid $(E\to M,\dia)$ and a vector subbundle $L\subset E$ with support on a submanifold $N\subset M$, we say that an operation 
$\dia_L:\Gamma_N(L)\otimes\Gamma_N(L)\to \Gamma_N(E)$ is \emph{induced} by $\dia$ on
 $E$ iff all $\psi_i\in\Gamma_N(L)$, $\psi_i'\in\Gamma(E)$ with
  $\psi_i=\psi_i'|_N$ fulfill $\psi_1\dia_L\psi_2=(\psi_1'\dia\psi_2')|_N$.
\end{vdef}

The following slightly generalized definition of Dirac structures goes back
to Brusztyn, Iglesias-Ponte and {\v S}evera \cite{BIS08}.

\begin{vdef}\label{d:Dirac} Given a Courant algebroid $E\to M$. A subbundle
  $\begin{CD}L @>{\varsubset}>> E\\
    @VVV  @VVV \\
    N @>{\varsubset}>>  M\end{CD}$ ($N\subset M$ a submanifold) is called
\begin{description}\item[Lagrangian] iff it is isotropic and has rank half the rank of $E$;
\item[almost Dirac] iff it is isotropic and the anchor maps $L$ into $TN$;
\item[Dirac]\index{Dirac structure|strong} iff it is Lagrangian, sub-Dirac, and for all sections $\psi_i\in\Gamma(E)$ with $\psi_i|_N\in\Gamma(L)$, $(\psi_1\dia\psi_2)|_N\in\Gamma(L)$.
\end{description}
\end{vdef}

\begin{prop}\begin{enumerate}\item  Given a 2-form $\omega\in\Omega^2(M)$ then its graph 
 $$\braph(\omega):=\{X\oplus \imath_X\omega: X\in TM\}\subset (TM\oplus T^*M)$$
is Dirac in the standard Courant algebroid iff $\ud\omega=0$.

\item  The graph of a bivector $\pi\in\Gamma(\Lambda^2TM)$, 
 $$\braph(\pi)=\{\pi^\#\alpha\oplus\alpha: \alpha\in T^*M\}$$
is Dirac in the standard Courant algebroid iff it is Poisson, i.e.\ $[\pi,\pi]=0$.
\end{enumerate}
\end{prop}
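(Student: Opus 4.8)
The plan is to treat both graphs in parallel. Since the base is $N=M$, the only genuinely geometric requirement for being Dirac is closure under the Dorfman bracket: the almost-Dirac/sub-Dirac condition that $\rho$ map $L$ into $TN=TM$ is vacuous, and the Lagrangian condition is automatic in both cases. Indeed each graph has rank $\dim M=\tfrac12\rk(TM\oplus T^*M)$, and each is isotropic by skew-symmetry: for $\braph(\omega)$ one has $\<X\oplus\imath_X\omega,Y\oplus\imath_Y\omega\>=\omega(X,Y)+\omega(Y,X)=0$, and for $\braph(\pi)$ one has $\pi(\beta,\alpha)+\pi(\alpha,\beta)=0$. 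So in each part I would reduce the claim to: the graph is closed under $\dia$ iff the stated condition holds.

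For part (1), I would feed two graph sections into the untwisted Dorfman bracket \eqref{Cbrack} (with $H_{twist}=0$), obtaining
\[
  (X\oplus\imath_X\omega)\dia(Y\oplus\imath_Y\omega)=[X,Y]\oplus\big(\Lie_X(\imath_X\omega)-\imath_Y\ud(\imath_X\omega)\big).
\]
The tangent part $[X,Y]$ already has graph form, so closure is controlled by the $1$-form part, which must equal $\imath_{[X,Y]}\omega$. Using Cartan's formula, the identity $\Lie_X\imath_Y-\imath_Y\Lie_X=\imath_{[X,Y]}$, and $\ud(\imath_X\omega)=\Lie_X\omega-\imath_X\ud\omega$, I would rewrite the $1$-form part as $\imath_{[X,Y]}\omega+\imath_Y\imath_X\ud\omega$. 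Hence closure forces $\imath_Y\imath_X\ud\omega=0$ for all $X,Y$, which is equivalent to $\ud\omega=0$ since $\ud\omega$ is a $3$-form.

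For part (2), I would set $X=\pi^\#\alpha$, $Y=\pi^\#\beta$ and again apply \eqref{Cbrack}, getting tangent part $[\pi^\#\alpha,\pi^\#\beta]$ and $1$-form part $\Lie_{\pi^\#\alpha}\beta-\imath_{\pi^\#\beta}\ud\alpha$. The key bookkeeping step is to recognize this $1$-form part as the Koszul bracket $[\alpha,\beta]_\pi=\Lie_{\pi^\#\alpha}\beta-\Lie_{\pi^\#\beta}\alpha-\ud(\pi(\alpha,\beta))$ of the earlier Weinstein proposition; expanding the Lie derivatives via Cartan and collecting the $\ud(\pi(\alpha,\beta))$ terms shows the two expressions agree. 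Thus the bracket of two graph sections is $[\pi^\#\alpha,\pi^\#\beta]\oplus[\alpha,\beta]_\pi$, and $\braph(\pi)$ is closed iff
\[
  [\pi^\#\alpha,\pi^\#\beta]=\pi^\#[\alpha,\beta]_\pi\qquad\text{for all }\alpha,\beta,
\]
i.e.\ iff $\pi^\#$ intertwines the Koszul bracket with the commutator of vector fields.

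It remains to identify this morphism property with $[\pi,\pi]=0$, and this is where the real work sits. For sufficiency I would invoke the Mackenzie--Xu proposition: when $[\pi,\pi]=0$, $(T^*M,[.,.]_\pi,\pi^\#)$ is a Lie algebroid whose anchor is automatically a bracket morphism (Uchino's remark), giving exactly the displayed identity. For necessity I would show the defect $D(\alpha,\beta):=[\pi^\#\alpha,\pi^\#\beta]-\pi^\#[\alpha,\beta]_\pi$ is $\smooth(M)$-bilinear: the Leibniz terms generated by replacing $\alpha$ with $f\alpha$ in $[\pi^\#(f\alpha),\pi^\#\beta]$ and in $\pi^\#[f\alpha,\beta]_\pi$ coincide and cancel, so $D$ is a skew tensor. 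Evaluating on exact forms and pairing with $\ud h$ gives $D(\ud f,\ud g)[h]=\{f,\{g,h\}\}-\{\{f,g\},h\}-\{g,\{f,h\}\}$, the Jacobiator, which the computation in the Weinstein proposition already identifies with $\tfrac12[[[[\pi,\pi],f],g],h]$, a constant multiple of $[\pi,\pi](\ud f,\ud g,\ud h)$. Since exact differentials span $T^*M$ pointwise and $D$ is tensorial, $D\equiv0$ iff $[\pi,\pi]=0$. The main obstacle is precisely this last equivalence, namely the tensoriality of $D$ and its identification with the Schouten bracket; by contrast the component computations and the rank/isotropy checks are routine Cartan-calculus bookkeeping.
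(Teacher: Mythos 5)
The paper states this proposition \emph{without proof} --- it is followed immediately by the remark that Dirac structures unify presymplectic and Poisson geometry --- so there is no argument in the text to compare yours against; the only question is whether your proof stands on its own, and it does. Your reduction of ``Dirac'' to ``Lagrangian plus closure under $\dia$'' is legitimate here precisely because $N=M$ makes the anchor condition $\rho(L)\subset TN$ vacuous, and the rank and isotropy checks are correct. In part (1) note a typo: the displayed bracket should read $[X,Y]\oplus\bigl(\Lie_X(\imath_Y\omega)-\imath_Y\ud(\imath_X\omega)\bigr)$ rather than $\Lie_X(\imath_X\omega)$; your subsequent rewriting of the $1$-form component as $\imath_{[X,Y]}\omega+\imath_Y\imath_X\ud\omega$ uses the correct term, so the slip is harmless and the conclusion $\ud\omega=0$ follows as you say. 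In part (2) your identification of the $1$-form component with the Koszul bracket agrees with the explicit formula $[\alpha,\beta]_\pi=\Lie_{\rho(\alpha)}\beta-\Lie_{\rho(\beta)}\alpha-\ud(\pi(\alpha,\beta))$ that the paper records after Weinstein's proposition, and your two-sided treatment of the morphism property is complete: sufficiency via Mackenzie--Xu together with Uchino's remark that anchors are bracket morphisms, and necessity via $\smooth(M)$-bilinearity of the defect $D(\alpha,\beta)=[\pi^\#\alpha,\pi^\#\beta]-\pi^\#[\alpha,\beta]_\pi$ followed by evaluation on exact forms, where the paper's own computation $\{f,\{g,h\}\}-\{\{f,g\},h\}-\{g,\{f,h\}\}=\tfrac12[[[[\pi,\pi],f],g],h]$ supplies the identification of the Jacobiator with a nonzero multiple of $[\pi,\pi](\ud f,\ud g,\ud h)$, so that vanishing of $D$ is indeed equivalent to $[\pi,\pi]=0$. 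In short: correct, and it fills a gap the paper leaves open rather than duplicating or diverging from an existing argument.
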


Therefore Dirac structures (in the standard Courant algebroid) unify the notion 
of (pre)\-symplectic and Poisson structures on the manifold.\par\bigskip
Next we wish to investigate under which conditions an isotropic submanifold
of a Courant algebroid inherits a bracket.

Note that a generalized Dirac structure over an arbitrary submanifold does not necessarily inherit a bracket.  The condition for this is as follows.
\begin{prop}\label{p:induced}  A vector subbundle $\begin{CD}L @>\varsubset>> E\\ 
  @VVV @VVV\\N @>\varsubset>> M \end{CD}$ of a Courant algebroid 
 $(E,\langle.,.\rangle,\dia,\rho)$ has an induced operation 
 $\dia_L:\Gamma(L)\times\Gamma(L)\to\Gamma_N(E)$ iff $\rho(E|_N)\subset TN$.
\end{prop}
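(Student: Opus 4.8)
The plan is to read Definition~\ref{d:induced} as a well-definedness requirement: $\dia_L$ exists exactly when $(\psi_1'\dia\psi_2')|_N$ is independent of the chosen extensions $\psi_i'\in\Gamma(E)$ of $\psi_i\in\Gamma_N(L)$. Any two extensions of the same section differ by some $\chi\in\Gamma(E)$ with $\chi|_N=0$, so by $\R$-bilinearity of $\dia$ the requirement reduces to showing that $(\psi'\dia\chi)|_N$ and $(\chi\dia\psi')|_N$ vanish whenever $\chi|_N=0$. Since $\dia$ is a first-order bidifferential operator it is local, so I may compute in a local frame and reduce to $\chi=g\eta$ with $g\in\smooth(M)$, $g|_N=0$, and $\eta\in\Gamma(E)$.

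The computational input consists of the Leibniz rule \eqref{ax:Leibn} and its first-slot analogue, the latter obtained by polarizing \eqref{ax:nSkew} to $\phi\dia\psi+\psi\dia\phi=\rho^*\ud\langle\phi,\psi\rangle$:
\begin{align*}
  \psi'\dia(g\eta) &= \rho(\psi')[g]\cdot\eta +g\cdot(\psi'\dia\eta),  \\
  (g\eta)\dia\psi' &= g\cdot(\eta\dia\psi') -\rho(\psi')[g]\cdot\eta +\langle\eta,\psi'\rangle\cdot\rho^*\ud g.
\end{align*}
I will combine these with three elementary pointwise facts at $p\in N$: the set $\{\ud g_p:g|_N=0\}$ is the conormal space $(T_pN)^\circ$; a vector lies in $T_pN$ iff it is annihilated by $(T_pN)^\circ$; and, by nondegeneracy of $\langle.,.\rangle$ together with $\langle\rho^*\xi,\psi\rangle=\xi(\rho\psi)$, one has $(\rho^*\ud g)_p=0$ iff $\ud g_p(\rho(\psi)_p)=0$ for all $\psi\in E_p$.

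For the forward direction I assume $\rho(E|_N)\subset TN$. Then for every $p\in N$, every $\psi'\in\Gamma(E)$, and every $g$ vanishing on $N$, the scalar $\rho(\psi')[g]_p=\ud g_p(\rho(\psi')_p)$ vanishes, because $\rho(\psi')_p\in T_pN$ while $\ud g_p\in(T_pN)^\circ$; for the same reason $(\rho^*\ud g)_p=0$. Substituting into the two displayed formulas, every summand restricts to zero on $N$ (the summands carrying an explicit factor $g$ because $g|_N=0$, the others by the vanishing just noted). Since these conditions hold for arbitrary $\psi'\in\Gamma(E)$, they also dispose of the cross term where both extensions are changed, and $\dia_L$ is well defined.

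For the converse I assume $\dia_L$ exists and exploit the two conditions in order, since the second slot is needed to unlock the first. From $(\psi'\dia(g\eta))|_N=0$ the first formula gives $\rho(\psi')[g]|_N\cdot\eta|_N=0$; choosing $\eta$ with $\eta_p\neq0$ forces $\rho(\psi')[g]_p=0$ for all such $g$, hence $\rho(\psi')_p\in T_pN$, and since $\psi'$ extends an arbitrary section of $L$ this yields $\rho(L)\subset TN$. With this the middle term of the second formula drops, and $((g\eta)\dia\psi')|_N=0$ reduces to $\langle\eta|_N,\psi'|_N\rangle\cdot(\rho^*\ud g)|_N=0$ (the term with factor $g$ having already dropped). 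The decisive step is to promote this from $L$ to all of $E$: fixing $p$ and a nonzero $\psi\in L_p$, nondegeneracy of $\langle.,.\rangle$ supplies an $\eta$ with $\langle\eta_p,\psi\rangle\neq0$, so the scalar is nonzero and $(\rho^*\ud g)_p=0$ for every $g$ vanishing on $N$; by the third fact above this means $\ud g_p(\rho(\psi)_p)=0$ for all $\psi\in E_p$, i.e.\ $\rho(E_p)\subset T_pN$. This last manoeuvre, where nondegeneracy converts the pairing identity into the full anchor condition, is the main obstacle; it also tacitly uses $\rk L\ge1$, which I assume of the subbundle.
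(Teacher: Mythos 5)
Your proof is correct and follows essentially the same route as the paper's: both reduce well-definedness to the vanishing of brackets against perturbations of the form $g\eta$ with $g\in I(N)$, kill the $\rho(\cdot)[g]$ terms by tangency in the forward direction, and in the converse first extract $\rho(L)\subset TN$ and then use nondegeneracy of $\langle.,.\rangle$ to force $(\rho^*\ud g)|_N=0$, which is equivalent to $\rho(E|_N)\subset TN$ by conormal duality. Your pointwise annihilator facts are simply a local rendering of the paper's conormal-bundle sequence argument ($0\to N^*N\to T^*_NM\to T^*N\to 0$ and its dual).
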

\MG{shorten!}
\begin{proof}$\quad$\begin{steps}\step[denseness of extensible sections]  Since $N$
  is an embedded submanifold the functions on $M$ restricted to $N$ are
  dense in the set of all smooth functions on $N$.  Analogously, for every point
  $p\in N$ there is an open neighborhood $U\subset M$ over which $E$ and
  $L_{U\cap N}$ are trivial and $E_U$ has a frame $\{e_a:1\le a\le k\}$
  ($k:=\rk E$) such that $\{e_a|_N:1\le a\le l\}$ ($l:=\rk L$) form a
  frame of $L$ over $U\cap N$.

  Therefore a bracket that is defined on extensible sections uniquely extends
  to all (smooth) sections of $L$.
\step[ambiguity of the extension]  The question is now whether the bracket
depends on the extension of a section.  Let $\psi_i\in\Gamma(E)$ such that
$\psi_i|_N\in\Gamma(L)$.  Then $\psi_1+\widetilde{\psi_1}$ would be another
extension of $\psi_1|_N$ iff $\widetilde{\psi_1}\in\Gamma(E)$ and
$\widetilde{\psi_1}|_N=0$.  It turns out that the set $I(N)\cdot\Gamma(E)$, where
$I(N):=\{f\in\smooth(M):f|_N=0\}$, is dense in the set of all sections
$\widetilde{\psi_1}$.  Therefore one needs to check whether
\begin{align*}
  (\dor{\psi_1}{\psi_2})|_N &= (\dor{(\psi_1+f_1\cdot\psi_1')}{(\psi_2+f_2\cdot\psi_2')})|_N  \\
\intertext{or equivalently}
  0=& (\dor{\psi_1}{(f_2\cdot\psi_2')}+\dor{(f_1\cdot\psi_1')}{\psi_2} +\dor{(f_1\cdot\psi_1')}{(f_2\cdot\psi_2')})|_N  \\
  =& f_2\cdot(\dor{\psi_1}{\psi_2'})+\rho(\psi_1)[f_2]\cdot\psi_2' 
  +f_1\cdot(\dor{\psi_1'}{\psi_2})-\rho(\psi_2)[f_1]\cdot\psi_1' \\ 
  &+\langle\psi_1',\psi_2\rangle \D f_1
  +f_2\dor{(f_1\cdot\psi_1')}{\psi_2}+f_1\rho(\psi_1')[f_2]\cdot\psi_2' \quad\text{all restricted to }N
\end{align*}
The terms containing at least 1 factor $f$ obviously vanish on $N$.  
\step[``$\Leftarrow$''] Also the terms containing the factors $\rho(\psi)[f]$
vanish on $N$ because of the following fact from differential geometry:
  \begin{lemma}\label{l:emb}  Let $N\subset M$ be an embedded submanifold.  Then
    $X\in\X(M)$ which has $X|_N\in\X(N)$  maps $I(N)\to I(N)$.
  \end{lemma}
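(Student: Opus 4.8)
The plan is to unwind the hypothesis ``$X|_N\in\X(N)$'' as the geometric statement that $X$ is tangent to $N$, meaning $X_p\in T_pN$ for every $p\in N$, and then to show that for $f\in I(N)$ the function $X[f]$ again vanishes at every point of $N$. Since $N$ is embedded, the identification of $X|_N$ with an honest vector field on $N$ is exactly what forces tangency, and tangency is the only feature of the hypothesis I will use.

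First I would fix a point $p\in N$ and a function $f\in\smooth(M)$ with $f|_N=0$. Because $X_p\in T_pN$, I can choose a smooth curve $\gamma\colon(-\epsilon,\epsilon)\to N$ lying \emph{entirely} in $N$ with $\gamma(0)=p$ and $\dot\gamma(0)=X_p$; such a curve exists precisely because the velocity vector $X_p$ belongs to $T_pN$. Then $X[f](p)=X_p[f]=\frac{\ud}{\ud t}\big|_{t=0}f(\gamma(t))$. But $\gamma(t)\in N$ for all $t$, and $f$ vanishes identically on $N$, so $t\mapsto f(\gamma(t))$ is the zero function, whose derivative at $t=0$ is $0$. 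Hence $X[f](p)=0$ for every $p\in N$, i.e.\ $X[f]\in I(N)$, which is the assertion.

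A purely local alternative that avoids curves is to work in coordinates $(x^1,\dots,x^k,y^1,\dots,y^{m-k})$ adapted to the embedded submanifold, in which $N=\{y=0\}$. Then $f|_N=0$ lets me write $f=\sum_j y^j g_j$ by Hadamard's lemma, and writing $X=\sum_i a^i\partial_{x^i}+\sum_j b^j\partial_{y^j}$, tangency to $N$ says exactly that $b^j|_{y=0}=0$. A direct computation gives $X[f]|_{y=0}=\sum_j b^j(x,0)\,g_j(x,0)=0$, recovering the claim; this version also makes transparent why embeddedness (availability of slice coordinates) is needed.

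I do not expect a genuine obstacle here, as this is a standard fact. The only point that requires care is the correct reading of the hypothesis: ``$X|_N\in\X(N)$'' must be interpreted as tangency of $X$ to $N$ (so that $X$ restricts to a well-defined vector field on $N$), not merely as the restriction of $X$ viewed as a section of $TM|_N$. Once tangency is secured, the conclusion is immediate, since differentiating a function that is constant along $N$ in a direction tangent to $N$ necessarily yields zero.
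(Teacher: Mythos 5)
Your proof is correct. Note, however, that the paper itself gives no proof of this lemma at all: it is invoked inside the proof of Proposition \ref{p:induced} as ``the following fact from differential geometry'' and left unproved, so there is no argument of the paper's to compare yours against. Both of your arguments are valid and standard. The curve argument works because $X_p\in T_pN$ guarantees a smooth curve $\gamma$ in $N$ with $\dot\gamma(0)=X_p$, and then $X[f](p)=\frac{\ud}{\ud t}\big|_{t=0}(f\circ\gamma)=0$ since $f\circ\gamma\equiv 0$. The coordinate argument is equally sound: embeddedness supplies slice charts in which $N\cap U=\{y=0\}$, Hadamard's lemma gives $f=\sum_j y^jg_j$, tangency gives $b^j|_{y=0}=0$, and the terms $a^i\partial_{x^i}f$ vanish on $\{y=0\}$ because $\partial_{x^i}f|_{y=0}=\sum_j y^j\partial_{x^i}g_j|_{y=0}=0$, leaving exactly the sum $\sum_j b^j(x,0)g_j(x,0)=0$ that you computed. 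You also correctly identified the one interpretive subtlety --- that $X|_N\in\X(N)$ must be read as tangency of $X$ along $N$ --- which is precisely how the lemma is used in the paper (the vector fields $\rho(\psi)$ there are tangent to $N$ by the hypothesis $\rho(E|_N)\subset TN$).
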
  Here $I(N)$ denotes the ideal of functions vanishing on the submanifold $N\subset M$.
  The last term vanishes if one can prove that $\D f|_N=0$.
  $\D=\rho^*\circ\ud_M|_N$  gives $(\D
  f)|_N=\rho^*\circ(\tilde\ud_N\oplus\ud_N)f$  where $\ud_N$
  is the de Rham differential on $N$ and
  $\tilde{\ud_N}:I(N)\to\Gamma(N^*N)$ is the second part of the de Rham
  differential of $M$ when the image is restricted to $N$.  More precisely
  $0\to N^*N\xrightarrow{e} T^*_NM\to T^*N\to0$.

  Clearly the second part vanishes since $f|_N=0$, but the first part
  requires $\rho^*|_{N^*N}=0$.  This condition is also necessary if
  the map $\D_L:\smooth(N)\to\Gamma_N(E)$ should be induced by the map
  $\D:\smooth(M)\to\Gamma(E)$.

  Finally considering the dual $0\leftarrow NN\xleftarrow{e^*} T_NM\leftarrow
  TN\leftarrow0$  then $\rho^*|_{N^*N}=0$ is equivalent to $\rho^*\circ e=0$
  and equivalent to $e^*\circ\rho=0$.  But this is equivalent to 
  $\rho(E|_N)\subset TN$.

\step[``$\Rightarrow$'']  From the previous analysis it follows that all the
remaining terms beside those containing a factor from $I(N)$ have to vanish
together.  However setting $f_1$ to 0 one sees that $\rho(L)\subset TN$ is needed
first (which is therefore an explicit condition in the definition of
sub-Dirac structure).  

But then again the term
 $\langle\psi_1',\psi_2\rangle\D f_1$ is the remaining one.  From the
well-definedness of $\dia_L$ one concludes that this has to vanish.  The inner
product takes values in a dense set of $\smooth(M)\cap\smooth(N)$, thus $(\D
f)|_N$ has to vanish.
Since the last part in the previous step was an iff one can conversely
conclude that actually $\rho(E|_N)\subset TN$.
\end{steps}
\end{proof}

An immediate consequence is
\begin{cor}\label{c:diracisLie}  If the bracket is well defined on an isotropic 
  subbundle $L\to N$ of a Courant algebroid $E\to M$ it forms automatically a Lie 
  algebroid.
\end{cor}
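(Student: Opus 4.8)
The plan is to equip $L\to N$ with the anchor $\rho_L:=\rho|_L$ and the induced bracket $\dia_L$, and to deduce the three Lie-algebroid axioms by restricting the Courant axioms \eqref{ax:Jacobi}--\eqref{ax:adInv} to $N$. The standing hypothesis that the bracket be well defined supplies, via Proposition \ref{p:induced}, the condition $\rho(E|_N)\subset TN$; this guarantees both that $\rho_L$ actually lands in $TN$ and that $(\D f)|_N=0$ for every $f\in I(N)$, the two facts established inside that proof. Beyond these, the only additional input is the closure property $\dia_L(\Gamma(L),\Gamma(L))\subset\Gamma_N(L)$, which is what ``well defined on $L$'' means.

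First I would establish skew-symmetry. Polarising axiom \eqref{ax:nSkew} gives $\dor{\phi'}{\psi'}+\dor{\psi'}{\phi'}=\rho^*\ud\langle\phi',\psi'\rangle=\D\langle\phi',\psi'\rangle$ for any extensions $\phi',\psi'\in\Gamma(E)$ of $\phi,\psi\in\Gamma(L)$. Isotropy of $L$ forces $\langle\phi',\psi'\rangle|_N=\langle\phi,\psi\rangle=0$, so $\langle\phi',\psi'\rangle\in I(N)$ and hence $(\D\langle\phi',\psi'\rangle)|_N=0$. Restricting to $N$ yields $\phi\dia_L\psi=-\psi\dia_L\phi$; thus $\dia_L$ is skew-symmetric and serves as the Lie bracket.

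Next come the Leibniz rule and the Jacobi identity. For Leibniz I extend $f\in\smooth(N)$ to $\tilde f\in\smooth(M)$ and restrict \eqref{ax:Leibn}, namely $\dor{\phi'}{(\tilde f\psi')}=\rho(\phi')[\tilde f]\cdot\psi'+\tilde f\cdot\dor{\phi'}{\psi'}$, to $N$. Because $\rho(\phi')|_N=\rho_L(\phi)\in TN$, Lemma \ref{l:emb} shows $\rho(\phi')[\tilde f]|_N$ depends only on $f=\tilde f|_N$ and equals $\rho_L(\phi)[f]$, giving $\phi\dia_L(f\psi)=\rho_L(\phi)[f]\cdot\psi+f\cdot(\phi\dia_L\psi)$. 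For Jacobi the key is closure: since $\psi_1\dia_L\psi_2\in\Gamma(L)$, the section $\psi_1'\dia\psi_2'\in\Gamma(E)$ is an \emph{admissible} extension of it, so by well-definedness of $\dia_L$ it may be substituted into the nested brackets. Restricting \eqref{ax:Jacobi} to $N$ then reads $\phi\dia_L(\psi_1\dia_L\psi_2)=(\phi\dia_L\psi_1)\dia_L\psi_2+\psi_1\dia_L(\phi\dia_L\psi_2)$, which is precisely the Jacobi identity for $\dia_L$.

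I expect the skew-symmetry step to be the main obstacle, concentrated in the vanishing of $(\D\langle\phi',\psi'\rangle)|_N$: isotropy only says the pairing vanishes \emph{along} $N$, whereas $\D=\rho^*\ud$ involves the full de Rham differential on $M$ and so a priori records conormal derivatives. It is exactly here that $\rho^*|_{N^*N}=0$, equivalently $\rho(E|_N)\subset TN$, from Proposition \ref{p:induced} is indispensable; without it the symmetric part $\D\langle\phi,\psi\rangle$ would survive on $N$ and $L$ would not close up into a Lie algebroid. The remaining verifications are routine, and the anchor being a bracket morphism is automatic from the first two axioms, as in Uchino's remark.
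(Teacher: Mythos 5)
Your proof is correct, and its skeleton --- equip $L$ with $\rho|_L$ and $\dia_L$, restrict the Courant axioms \eqref{ax:Jacobi}--\eqref{ax:adInv} to $N$, and let isotropy account for skew-symmetry --- is the same as the paper's, whose entire proof reads: ``Extend the sections isotropically and observe that the bracket is skew-symmetric.'' Where you genuinely differ is in the mechanism behind skew-symmetry. The paper's phrasing indicates choosing \emph{isotropic} extensions, i.e.\ extensions $\phi',\psi'$ with $\langle\phi',\psi'\rangle\equiv 0$, so that the symmetric part $\tfrac12\D\langle\phi',\psi'\rangle$ of the Dorfman bracket vanishes identically; it silently assumes such extensions exist. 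You instead take arbitrary extensions, note that isotropy only gives $\langle\phi',\psi'\rangle\in I(N)$, and then kill $(\D\langle\phi',\psi'\rangle)|_N$ via $\rho^*|_{N^*N}=0$, i.e.\ the fact that $(\D f)|_N=0$ for all $f\in I(N)$, which the proof of Proposition~\ref{p:induced} extracts from well-definedness of $\dia_L$ --- and well-definedness is exactly the hypothesis of the corollary, so this input is free. Your route is therefore more self-contained (no existence question for isotropic extensions), at the price of leaning on the internals of Proposition~\ref{p:induced}; the paper's is shorter once that construction is granted. Two further merits of your write-up: you state explicitly that ``well defined on $L$'' must be read as including the closure $\dia_L(\Gamma(L),\Gamma(L))\subset\Gamma(L)$ --- without it the statement fails, e.g.\ for the graph of a non-closed $2$-form in $TM\oplus T^*M$, which is isotropic and has a well-defined induced bracket that does not land in $\Gamma(L)$ --- and you invoke closure precisely where it is needed, namely to justify substituting nested brackets when restricting the Jacobi identity \eqref{ax:Jacobi} to $N$.
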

\begin{proof}  Extend the sections isotropically and observe that the bracket
  is skew-symmetric.
\end{proof}

\section{Graded geometry} \label{app:graded}
Here we give a short introduction to graded geometry, graded vector bundles, and graded symplectic and Poisson manifolds.  We then continue with the derived bracket and standard cohomology of Courant algebroids.  We finish with description of Dirac structures in superlanguage.

\subsection{Comparison to supermanifolds}
For a good and detailed introduction to supermanifolds see, for
instance,~\cite{Vor91}.  Further references for graded manifolds are \cite[section 4]{Vor01}, \cite[section 2]{Sev01b} or \cite[section 2]{Royt02}.

First we need the notion of a ringed space/ manifold.
\begin{vdef} A ringed space $(M,\gsmooth)$ is a topological (Hausdorff) space $M$ together with a sheaf of rings $\gsmooth(\bullet)$.

A ringed manifold $(M,\gsmooth)$ is a ringed space $(M,\gsmooth)$ such that $\smooth_M$ is a subsheaf of $\gsmooth$.
\end{vdef}

Every smooth manifold is canonically a ringed space, here $\gsmooth=\smooth$.  Note that in this case the (sheaf of) ring(s) separate points.

\begin{vdef}\index{supermanifold} A \emph{supermanifold} is a ringed manifold $(M,\ssmooth)$ of $\Z_2$-graded algebras $\ssmooth(\bullet)$ which are locally free and of constant dimension, i.e.\ there is a $p\in\NN$ such that for every point $x\in M$, there is an open neighborhood $U\subset M$ with
\[  \ssmooth(U)\cong \smooth(U)\otimes \Lambda^\bullet\R^p
\]

  A smooth map $f\:(M,\ssmooth)\to(N,\P)$ of supermanifolds is a map of base manifolds $f\:M\to N$ together with a map of sheaves $f^*\:\P\to\ssmooth$ which is a morphism of $\Z_2$-graded algebras.  It is said to be injective iff $f^*$ is surjective.  It is said to be surjective iff $f^*$ is injective.
\end{vdef}
\begin{Exam}\begin{enumerate}\item Let $E\to M$ be a vector bundle.  We construct the supermanifold $\Pi E$ as space $M$ with the structure sheaf $\ssmooth(U):=\Gamma(U,\Lambda^\bullet E^*)$ for $U\subset M$, open.

\item let $\M:=U\times\Pi\R^p$ with $U\subset\R^q$ be a supermanifold. We introduce the coordinates $x^i$ on $U$, $\xi^a$ on the odd fiber $\Pi\R^p$.  One example of a smooth map is the following $f\:\M\to\M$ with
\begin{align*}
  f|_M &= \id_M, \\
  f^*(x^i) &= x^i+\frac12R^{\tilde{\imath}}_{ab}(x)\xi^a\xi^b \\
  f^*(\xi^a) &= M^{\tilde a}_b(x)\xi^b +\frac16S^{\tilde{a}}_{bcd}(x)\xi^b\xi^c\xi^d
\end{align*} for smooth functions $R$, $M$, $S$.
\end{enumerate}
\end{Exam}
The notation $\Lambda^\bullet \R^p$ means the exterior algebra of the vector space $\R^p$.  This means an algebra generated by $p$ generators $\xi_1,\ldots,\xi_p$ under the relation $\xi_j\xi_i=-\xi_i\xi_j$ for $1\le i,j\le p$.

\begin{vdef}\index{graded manifold} A \emph{graded manifold} is a ringed manifold $(M,\gsmooth)$ of $\Z$-graded algebras $\gsmooth(\bullet)$ which are locally free and of constant dimension, i.e.\ there are non negative integers $p_i$ such that for every point $p\in M$, there is an open neighborhood $U\subset M$ with
\[  \gsmooth(U)\cong {S^\bullet/\Lambda^\bullet}\R^{p_{-k}}\otimes\ldots\otimes \Lambda^\bullet \R^{p_{-1}}\otimes
  \smooth(U)\otimes\Lambda^\bullet\R^{p_1} \otimes S^\bullet\R^{p_2} \otimes\Lambda^\bullet\R^{p_3} \otimes\ldots\R^{p_n}
\]  The product on the right hand side implies that $\R^{p_1}$ are the generators of degree 1, $\R^{p_2}$ the generators of degree 2, \ldots .

A smooth map $f\:(M,\gsmooth)\to(N,\P)$ of graded manifolds is a smooth map of the bases $f\:M\to N$ together with a contravariant map $f^*\:\P\to\gsmooth$ where $f^*$ is a morphism of $\Z$-graded algebras.  It is said to be injective iff $f^*$ is surjective.  It is said to be surjective iff $f^*$ is injective.

Given a smooth map $f\:(M,\ssmooth)\to(N,\P)$, the inverse image for an $U\subset N$ is the ``manifold'' $f^{-1}(U)\subset M$ together with the restricted structure sheaf $\ssmooth|_{f^{-1}(U)}$.
\end{vdef}

Here $S^\bullet \R^{p_2}$ means the symmetric (i.e.\ poynomial) algebra in $p_2$ generators $x_1,\ldots,x_{p_2}$.  The notation $S^\bullet/\Lambda^\bullet$ means that in the lowest degree $k$ you have to take the symmetric/ exterior algebra depending on whether $k$ is even/ odd.

Note that in the above notation with $p_n\ne0$ we call $n$ the degree of the graded manifold $(M,\gsmooth)$.

Let $\dim M=p_0$, then we call $\dim(M,\gsmooth)=(\ldots,p_{-1},p_0,p_1,p_2,\ldots)$ the dimension of $\M=(M,\gsmooth)$.  If $(M,\gsmooth)$ has nonzero dimension only in non-negative degrees we call it an $\NN$-manifold.

\begin{Exam}\begin{enumerate}\item\index{graded manifold!example} Let $E\to M$ be a vector bundle and $n$ an integer. The graded manifold $E[n]$ is the manifold $M$ together with the structure sheaf
 $$ \gsmooth(U):= \Gamma(U,S^\bullet E^*[n]) \;. $$  Here and in what follows we use the compact notation $S^\bullet V$ for the graded symmetric tensor algebra of a graded vector space.  In the case $V=V_1[1]$, i.e.\ generators in odd degree, this means the exterior algebra and in the case $V=V_2[2]$, i.e.\ generators of even degree, this means the algebra of polynoms.

\item\index{graded manifold!example} Let $E\to M$ be a $\Z$-graded vector bundle, i.e.\ the fiber is a $\Z$-graded vector space $F\cong \R^{p_1}\oplus\R^{p_2}\oplus\ldots\R^{p_n}$ and the transition functions preserve the grading.  We define the graded manifold $\E$ as the manifold $M$ together with the structure sheaf:
\[ \gsmooth(U):= \Gamma(U, S^\bullet E^*) \]  

\item\index{graded manifold!example} Let $\E=(M,\gsmooth)$ and $\F=(N,\mathcal{P})$ be graded manifolds.  Their cartesian product $\E\times\F$ is the graded manifold over $M\times N$ with structure sheaf $\gsmooth\boxtimes\mathcal{P}$.  Note that the tensor product of sheaves locally fulfills
 $$ (\gsmooth\boxtimes\mathcal{P})(U\times V) \cong \ldots\otimes\Lambda^\bullet\R^{p_{-1}+q_{-1}}\otimes\smooth(U\times V)\otimes \Lambda^\bullet \R^{p_1+q_1}\otimes S^\bullet \R^{p_2+q_2}\otimes\ldots \;. $$

\item\index{graded manifold!example} Let $\M:=U\times\R^{p_1}[1]\times\R^{p_2}[2]$ be a graded manifold.  We introduce the coordinates $x^i$ on $U$, $\xi^a$ of degree 1 on $\R^{p_1}[1]$, and $b^B$ of degree 2 on $\R^{p_2}[2]$.  A particular example of a smooth map is $f\:\M\to\M$ with
\begin{align*}
  f|_M &= \id_M \\
  f^*(x^i)   &= \tilde{x}^i(x) \\
  f^*(\xi^a) &= M^{\tilde{a}}_b(x)\xi^b \\
  f^*(b^B)   &= N^{\tilde{B}}_C(x) b^C +\frac12R^{\tilde B}_{ab}(x)\xi^a\xi^b \;.
\end{align*}  This is the most general smooth map on this graded manifold.  Note that due to the $\Z$-grading and the absence of negative degrees the new coordinates of degree $k$ depend only on coordinates of degree less or equal $k$.
\end{enumerate}
\end{Exam}

For supermanifolds we have the following result.
\begin{thm}[Batchelor] Given a supermanifold $(M,\ssmooth)$ there exists a vector bundle $E\to M$ such that $(M,\gsmooth)\cong\Pi E$.
\end{thm}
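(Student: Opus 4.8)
The plan is to exhibit a (non-canonical) isomorphism between the structure sheaf $\ssmooth$ and the split model $\Gamma(\Lambda^\bullet E^*)$, where $E$ is read off canonically from the nilpotents of $\ssmooth$ and the isomorphism is assembled from local data by a partition of unity. First I would isolate the bundle: let $\mathcal{J}\subset\ssmooth$ be the sheaf of ideals generated by the odd nilpotent elements. In a chart where $\ssmooth(U)\iso\smooth(U)\otimes\Lambda^\bullet\R^p$ this is the augmentation ideal, so $\ssmooth/\mathcal{J}\iso\smooth_M$ recovers the reduced manifold, while $\mathcal{J}/\mathcal{J}^2\iso\smooth(U)\otimes\R^p$ is free of rank $p$ and concentrated in odd parity. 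Hence $\mathcal{J}/\mathcal{J}^2$ is the section sheaf of a rank-$p$ vector bundle $E^*\to M$, which defines $E$. The filtration $\ssmooth\supset\mathcal{J}\supset\mathcal{J}^2\supset\cdots\supset\mathcal{J}^{p+1}=0$ is finite with associated graded $\bigoplus_k\mathcal{J}^k/\mathcal{J}^{k+1}\iso\Gamma(\Lambda^\bullet E^*)$, so $\Pi E$ is precisely the split model attached to this filtration, and the content of the theorem is that $\ssmooth$ is isomorphic to its own associated graded.

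Next I would split the filtration. Over a locally finite cover $\{U_\alpha\}$ trivializing both $\ssmooth$ and $E$, choose on each patch an odd, $\smooth_M$-linear section $s_\alpha\colon\Gamma(E^*)|_{U_\alpha}\to\mathcal{J}|_{U_\alpha}$ of the projection $\pi\colon\mathcal{J}\to\mathcal{J}/\mathcal{J}^2=\Gamma(E^*)$; such a local section exists because $\Gamma(E^*)$ is locally free. With a subordinate partition of unity $\{\rho_\alpha\}$, the combination $s_1:=\sum_\alpha\rho_\alpha s_\alpha$ is a global $\smooth_M$-linear section, since $\pi\circ s_1=\sum_\alpha\rho_\alpha\,\id=\id$. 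Its values $s_1(\xi)$ are odd, hence square to zero and anticommute, so by the universal property of $\Lambda^\bullet E^*$ as the free graded-commutative $\smooth_M$-algebra on the odd module $\Gamma(E^*)$, the map $s_1$ extends uniquely to a morphism of $\Z_2$-graded algebras $s\colon\Gamma(\Lambda^\bullet E^*)\to\ssmooth$.

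Finally I would verify that $s$ is an isomorphism, which is a local statement and follows from the filtration. By construction $s$ sends $\Gamma(\Lambda^{\ge k}E^*)$ into $\mathcal{J}^k$ and induces the identity on $\mathcal{J}/\mathcal{J}^2=\Gamma(E^*)$, hence the identity on the entire associated graded; a filtered algebra morphism that induces an isomorphism on the graded of a finite filtration is bijective, so $s$ is an isomorphism and $(M,\ssmooth)\iso\Pi E$. I expect the only genuine obstacle to be the globalization in the middle step: the local sections $s_\alpha$ disagree on overlaps and can be merged only because the choices of splitting form an affine space on which convex combinations again land. This is exactly where smooth partitions of unity are indispensable, and it is why the statement is special to the $\smooth$ category and fails for holomorphic or algebraic supermanifolds.
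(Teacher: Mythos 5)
The paper itself offers no proof of Batchelor's theorem (it defers to the reference \cite{Vor91}), so your argument can only be judged on its own terms. Its canonical ingredients are sound: the ideal $\mathcal{J}$ generated by the odd elements, the identification $\ssmooth/\mathcal{J}\cong\smooth_M$, the bundle $E^*$ defined by the locally free sheaf $\mathcal{J}/\mathcal{J}^2$, the isomorphism of the associated graded of the finite filtration with $\Gamma(\Lambda^\bullet E^*)$, and the closing observation that a filtered algebra morphism inducing an isomorphism on the associated graded of a finite filtration is itself an isomorphism. The skeleton is the right one.

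The genuine gap sits in your middle step: everything there tacitly assumes that $\ssmooth$ is a sheaf of $\smooth_M$-\emph{algebras}, i.e.\ that one has an algebra embedding $\iota\:\smooth_M\hookrightarrow\ssmooth$ splitting the quotient $\ssmooth\to\ssmooth/\mathcal{J}\cong\smooth_M$. Each chart supplies such an embedding locally, but the transition maps of a supermanifold need not preserve it --- the paper's own example of the automorphism $f^*(x^i)=x^i+\frac12R^{\tilde{\imath}}_{ab}(x)\xi^a\xi^b$ shows exactly this mixing --- and you never construct a global one. Without $\iota$, the phrase ``$\smooth_M$-linear section $s_\alpha$'' has no global meaning ($\mathcal{J}$ is an $\ssmooth$-module, not an $\smooth_M$-module; even the products $\rho_\alpha s_\alpha(\xi)$ require choosing lifts of $\rho_\alpha$ into $\ssmooth$), and, decisively, the universal property you invoke must be told where the degree-zero part $\Lambda^0E^*=\smooth_M$ is sent, which is precisely the missing $\iota$. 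Nor can $\iota$ be obtained by your averaging trick: partition-of-unity averages work for \emph{module} splittings, whose differences form a module, but a convex combination of algebra homomorphisms is not an algebra homomorphism. The honest content of Batchelor's theorem is the construction of $\iota$, done by induction up the finite tower $\ssmooth/\mathcal{J}^k$: local lifts exist; two lifts over the same open set differ by a derivation of $\smooth_M$ valued in $\mathcal{J}^k/\mathcal{J}^{k+1}$; hence the mutual differences of the chart lifts form a \v{C}ech 1-cocycle with values in a fine sheaf, which a partition of unity exhibits as a coboundary; correcting the local lifts by that coboundary makes them agree and glue (the correction of an algebra map by a derivation into a square-zero ideal is again an algebra map). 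Even granted $\iota$, note that your $s_1=\sum_\alpha\rho_\alpha s_\alpha$ is $\iota$-linear only modulo $\mathcal{J}^3$, since each $s_\alpha$ is linear with respect to its own chart's embedding; the clean fix is to use that $\Gamma(E^*)$ is a projective $\smooth_M$-module to split $\pi\:\mathcal{J}\to\mathcal{J}/\mathcal{J}^2$ $\iota$-linearly in one stroke. With these two repairs your universal-property and associated-graded steps do close the proof, and your final remark becomes precise: in the holomorphic category the relevant sheaves are not fine, the obstruction cocycles need not bound, and splitting indeed fails.
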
  For a proof, see e.g.\ \cite{Vor91}.

In this way the functor $\Pi\:\Vect\to\Smfd$, from the category of (ordinary) vector bundles to the category of supermanifolds, is surjective on objects.  Note however that the category of supermanifolds has more morphisms.

Remember, an $\NN$-manifold is a graded manifold where the coordinates have only non-negative degree.  For such manifolds we have the following proposition
\begin{prop}[Roytenberg \cite{Royt02}]  Given an $\NN$-manifold $\M=(M,\gsmooth)$ there is a tower of fibrations $\M_n=\M\to\M_{n-1}\to\ldots\to\M_1\to\M_0=M$, where the $\M_k$ are graded manifolds of degree at most $k$ over $M$. The bundles $\M_{k+1}\to\M_k$ are affine bundles.
\end{prop}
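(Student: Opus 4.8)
The plan is to read the tower off the grading of the structure sheaf, exploiting the feature of $\NN$-manifolds already visible in the last example above: a change of coordinates preserves degree, and---because there are no negative degrees available to compensate---any homogeneous function of degree $j$ can only involve the coordinates of degree $\le j$. This triangularity is what lets me filter $\gsmooth$ in a coordinate-independent way and then recognise the successive layers as affine bundles.

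First I would define, for each $k\ge0$, the subsheaf $\gsmooth_{\le k}\subset\gsmooth$ whose sections over an open set $U$ form the subalgebra of $\gsmooth(U)$ generated by the homogeneous elements of degree $\le k$. In adapted local coordinates this is exactly $\smooth(U)\otimes\Lambda^\bullet\R^{p_1}\otimes S^\bullet\R^{p_2}\otimes\cdots$ truncated after the degree-$k$ generators, hence itself the structure sheaf of a graded manifold of degree at most $k$ over $M$. By the triangularity above, every transition function carries a generator of degree $j\le k$ to an element of $\gsmooth_{\le k}$, so $\gsmooth_{\le k}$ is preserved by coordinate changes and is a globally well-defined subsheaf. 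Setting $\M_k:=(M,\gsmooth_{\le k})$ yields graded manifolds with $\M_0=M$ and $\M_n=\M$. The inclusions $\gsmooth_{\le k}\hookrightarrow\gsmooth_{\le k+1}$ are injective morphisms of $\Z$-graded algebras over $\id_M$, so by the convention that a map of graded manifolds is surjective iff its pullback is injective, they define surjections $\M_{k+1}\to\M_k$ and assemble into the tower $\M=\M_n\to\cdots\to\M_1\to\M_0=M$.

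It remains to identify each step as an affine bundle. Locally $\gsmooth_{\le k+1}\cong\gsmooth_{\le k}\otimes S^\bullet\R^{p_{k+1}}[k+1]$, so the fibres are copies of $\R^{p_{k+1}}$ placed in degree $k+1$. On an overlap a new degree-$(k+1)$ generator $z_a$ must be sent to a homogeneous degree-$(k+1)$ element of $\gsmooth_{\le k+1}$, which by triangularity is forced into the shape
$$ \tilde z_a = M^{a}{}_{b}(x)\,z_b + P_a \;, $$
where $M^{a}{}_{b}(x)$ is a matrix of ordinary smooth functions on $M$---it multiplies a degree-$(k+1)$ coordinate, so it must have degree $0$---and $P_a\in\gsmooth_{\le k}$ is a degree-$(k+1)$ polynomial in the lower coordinates. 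The matrices $M(x)$ satisfy a $GL$-cocycle condition and glue to a vector bundle $V_{k+1}\to M$, while the offsets $P_a$ provide the translation part; hence the transition data lie in the affine group, exhibiting $\M_{k+1}$ as an affine bundle over $\M_k$ modelled on the pullback of $V_{k+1}[k+1]$. The main obstacle is precisely this last step: one must check that the offsets $P_a$ obey the affine (rather than merely linear) cocycle identity, so that they define a genuine torsor structure independent of the chosen trivialisation, and note that the linear part $M$ is pulled back from the base $M$. The coordinate-independence of $\gsmooth_{\le k}$, by contrast, is routine once the triangularity is in hand.
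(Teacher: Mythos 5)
Your proof is correct and follows essentially the same route as the paper's (much terser) argument: filter the structure sheaf by the subsheaves generated in degrees $\le k$, let the sheaf inclusions induce the projections $\M_{k+1}\to\M_k$, and observe that the fibers are affine with the degree-$(k+1)$ generators as model. Your write-up additionally makes explicit the two points the paper leaves implicit --- the coordinate-independence of the filtration via the triangularity of $\NN$-manifold coordinate changes, and the form $\tilde z_a = M^{a}{}_{b}(x)\,z_b + P_a$ of the transition functions exhibiting the affine cocycle --- which is a faithful expansion rather than a different method.
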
 
\begin{proof}  The sheaves $\gsmooth_k$ are locally generated by the generators of $\gsmooth$ up to degree $k$.  We have an embedding of sheaves $\gsmooth_k\to\gsmooth_{k+1}$ which gives a projection of graded spaces $\M_{k+1}\to\M_k$. The fibers are affine spaces with fiber the generators of degree $k+1$.
\end{proof}
Note that the category of $\NN$-manifolds of degree 1, $\Nmfd_1$, coincides with the category of (ordinary) vector bundles $\Vect$,
 $$ \Nmfd_1\cong\Vect \;. $$

\subsection{Graded vector bundles, tangent and cotangent bundle}
We give a definition of graded vector bundles and proceed to the definition of tangent and cotangent bundles.  The latter will be an important example for graded symplectic manifolds.
\begin{vdef} A \emph{graded fiber bundle} is a projection $p:\E\to\M$ such that locally 
\[ p^{-1}(U)\cong \M|_U\times\F
\] for a fixed graded manifold $\F$ called the fiber of $p$.

A vector bundle $p:\E\to\M$ is a fiber bundle where the fiber is a graded vector space and the transition functions are chosen to be isomorphisms of graded vector spaces.

Given a vector bundle $\E\to\M$ we denote by $\E[k]$ the vector bundle with same fiber, but the grades of the fiber coordinates shifted by $k$.
\end{vdef}
Note that by $\rk\E$ we denote the dimension of the fiber of a vector bundle.

\begin{vdef} A \emph{submanifold} of a graded manifold $(M,\gsmooth)$ is a graded manifold $(U,\P)$ together with an injective map $i\:(U,\P)\to(M,\gsmooth)$.

The \emph{open} submanifolds are the submanifolds $(U,\P)$ where $U\subset M$ is open and $\P=\gsmooth|_U$.
\end{vdef}
\begin{vdef} Given a fiber bundle $p\:\E\to\M$ the \emph{sheaf of sections} $\Gamma(\bullet,\E)$ is the sheaf of maps $\{(f\:\U\to\E) : p\circ f=\id_\U\}$.
\end{vdef}

The next aim is to define the tangent bundle.  As for smooth manifolds we want to define it as the vector bundle underlying the derivations of the structure sheaf.  We need the following lemma.
\begin{lemma} A sheaf of modules $\P$ over a ringed space $(M,\gsmooth)$ is projective iff there is a vector space $F$ such that for every point $p\in M$ there is an open neighborhood $p\in U\subset M$ with
\[ \P(U) \cong \ssmooth(U)\otimes F \;.
\]
\end{lemma}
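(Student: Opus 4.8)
The plan is to identify projectivity of a sheaf of $\mathcal{O}$-modules with being a retract (direct summand) of a free sheaf, and then prove the two implications separately. The converse direction (locally free of constant rank $\Rightarrow$ projective) is a partition-of-unity argument, while the forward direction (projective $\Rightarrow$ locally free of constant rank) is the substantial one.

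For the converse, suppose $\mathcal{P}$ is locally free, so there is a cover $\{U_i\}$ of $M$ with $\mathcal{P}|_{U_i}\cong\mathcal{O}|_{U_i}\otimes F$. I would refine this to a locally finite cover and choose a partition of unity $\{\rho_i\}$ subordinate to it, using that the body $M$ is an ordinary smooth manifold and $\smooth_M\subseteq\mathcal{O}$, so that degree-zero bump functions are available. Writing the local frame on $U_i$ as an $\mathcal{O}|_{U_i}$-basis of $\mathcal{O}|_{U_i}\otimes F$, the maps $s\mapsto(\rho_i\cdot(\text{coordinates of }s))_i$ and its evident left inverse by summation (using $\sum_i\rho_i=1$) exhibit $\mathcal{P}$ as a direct summand of the free sheaf $\bigoplus_i\mathcal{O}\otimes F$. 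Hence $\mathcal{P}$ is projective.

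For the forward direction, write a splitting $\mathcal{O}^{(I)}\cong\mathcal{P}\oplus\mathcal{Q}$ and argue pointwise. At each $p\in M$ the stalk $\mathcal{O}_p$ is a local ring whose maximal ideal $\mathfrak{m}_p$ has residue field $\R$; since a projective module over a local ring is free, the stalk $\mathcal{P}_p$ is a free $\mathcal{O}_p$-module. Setting $F:=\mathcal{P}_p/\mathfrak{m}_p\mathcal{P}_p$ and lifting an $\R$-basis of $F$ first to germs and then to genuine sections over a neighborhood $U$ of $p$ produces a comparison morphism $\mathcal{O}|_U\otimes F\to\mathcal{P}|_U$ that is an isomorphism on the stalk at $p$.

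The main obstacle is to spread this stalkwise triviality out to the whole neighborhood and to see that $\dim F$ is locally constant (hence, on each connected component, a single fixed vector space $F$, as the statement requires). Here I would use that $\mathcal{P}$, being a retract of a free module and of finite rank in the cases of interest (for instance the derivation sheaf defining the tangent bundle), is finitely presented: surjectivity of the comparison map then propagates from $p$ by Nakayama, while injectivity follows from the direct-summand property, so the map is an isomorphism on a whole neighborhood and the constructed local frame persists there. The genuinely delicate point is the finiteness/coherence needed to turn a stalk isomorphism into a neighborhood isomorphism; I expect to control this by exploiting the tower of fibrations of the $\NN$-manifold together with the inclusion $\smooth_M\subseteq\mathcal{O}$, reducing the generation-and-relations bookkeeping to the classical Serre--Swan picture over $\smooth(M)$.
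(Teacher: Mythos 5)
The paper states this lemma \emph{without proof} (and the vector-bundle correspondence that follows it is likewise only asserted), so there is no in-paper argument to measure your proposal against; it can only be judged on its own terms. Judged so, your outline is the standard Serre--Swan-type argument and is the right one. For the direction ``locally free $\Rightarrow$ projective'', the locally finite refinement and subordinate partition of unity $\{\rho_i\}$ drawn from $\smooth_M\varsubset\gsmooth$ do exhibit $\P$ as a direct summand of $\bigoplus_i\gsmooth\otimes F$; the only imprecision is that your retraction must be defined on the whole free sheaf, not merely on the image of your inclusion, so one should insert cutoffs (for instance $\sqrt{\rho_i}$ in both maps, since $\sum_i\rho_i=1$ then still gives the identity on $\P$) rather than use the bare frames on $U_i$ --- a routine repair. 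For the direction ``projective $\Rightarrow$ locally free'', the skeleton is also correct: the stalks $\gsmooth_p$ are local rings with residue field $\R$ (a germ whose degree-zero part is nonzero at $p$ is invertible, the geometric series inverting it being degreewise finite), finitely generated projective modules over a local ring are free, and a lift of a basis of $\P_p/\mathfrak{m}_p\P_p$ becomes an isomorphism of sheaves near $p$: surjectivity propagates by Nakayama, and the kernel, being a direct summand (by projectivity) with vanishing stalk at $p$, vanishes on a neighborhood.

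The genuine gap is the one you flag but do not close: every step of that forward direction --- Nakayama at the stalk, the propagation of generation to nearby stalks, and the vanishing of the kernel near $p$ --- requires $\P$ to be \emph{locally finitely generated}, which is not a hypothesis of the lemma as stated. Your proposed remedy (``finite rank in the cases of interest'', ``exploiting the tower of fibrations'') is a hope rather than an argument: being a retract of a free sheaf gives no finiteness whatsoever, and without it the claim is not provable this way. The honest fix is to add the finite-rank hypothesis to the statement, which is evidently what the author intends: the proposition immediately following the lemma restricts to \emph{finite dimensional} projective modules, and the lemma is applied only to the tangent sheaf of a finite-dimensional graded manifold, where local finite generation is automatic. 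A smaller mismatch of the same kind: your argument yields a locally constant rank, so the single fixed vector space $F$ demanded by the statement exists only on each connected component (or under an added constancy assumption); that, too, is a defect of the statement rather than of your approach.
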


Therefore we have the following characterization of vector bundles.
\begin{prop}  Finite dimensional projective modules $\P$ over $\M$ are in one-to-one correspondence with finite dimensional vector bundles. \qed
\end{prop}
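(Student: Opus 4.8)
The plan is to prove a graded analogue of the Serre--Swan theorem by exhibiting mutually inverse constructions between finite-dimensional graded vector bundles over $\M=(M,\gsmooth)$ and finite-dimensional projective $\gsmooth$-modules, and then checking that they are inverse on isomorphism classes. The preceding lemma is the key input: it characterises projectivity as local freeness of the fixed form $\P(U)\iso\gsmooth(U)\otimes F$.

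First I would treat the direction from bundles to modules. Given a finite-dimensional vector bundle $\E\to\M$ with graded fiber $F$, assign to it its sheaf of sections $\Gamma(\bullet,\E)$. Over any trivialising open $U$, where $\E|_U\iso\M|_U\times F$, a section is determined by its $F$-component, so $\Gamma(U,\E)\iso\gsmooth(U)\otimes F$ as graded $\gsmooth(U)$-modules. Since $F$ is finite-dimensional and this local form holds with a single fixed $F$, the preceding lemma shows that $\Gamma(\bullet,\E)$ is a finite-dimensional projective module. Thus $\E\mapsto\Gamma(\bullet,\E)$ lands in the intended target, and isomorphic bundles give isomorphic modules.

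Next, the direction from modules to bundles. Starting from a finite-dimensional projective $\P$, the lemma supplies a single finite-dimensional graded vector space $F$ together with, for a suitable open cover $\{U_\alpha\}$ of $M$, trivialising isomorphisms $\phi_\alpha\:\P|_{U_\alpha}\iso\gsmooth|_{U_\alpha}\otimes F$. On each overlap I set $g_{\alpha\beta}:=\phi_\alpha\circ\phi_\beta^{-1}$, a $\gsmooth$-linear degree-preserving automorphism of $\gsmooth|_{U_\alpha\cap U_\beta}\otimes F$. By construction each $g_{\alpha\beta}$ is invertible and the family satisfies the cocycle identity $g_{\alpha\beta}\circ g_{\beta\gamma}=g_{\alpha\gamma}$. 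Reading the $g_{\alpha\beta}$ as transition functions, I glue the trivial pieces $\M|_{U_\alpha}\times F$ into a graded vector bundle $\E\to\M$ with fiber $F$; because the $g_{\alpha\beta}$ have degree $0$, they are isomorphisms of the graded-linear structure, exactly as the definition of a vector bundle demands.

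Finally I would check the two constructions are mutually inverse. Retracing the gluing, the local sections of the bundle $\E$ just built are precisely the local models $\gsmooth|_{U_\alpha}\otimes F$ patched by the same $g_{\alpha\beta}$, so $\Gamma(\bullet,\E)\iso\P$; conversely, extracting sections from a bundle and regluing returns the original transition data up to refinement, hence an isomorphic bundle. The main obstacle is the graded bookkeeping in the module-to-bundle step: one must verify that the abstract $\gsmooth$-linear comparison maps $g_{\alpha\beta}$ genuinely assemble into morphisms of graded manifolds and that the glued structure sheaf is the correct one, i.e.\ that passing between $\gsmooth$-linear module automorphisms and graded bundle automorphisms is faithful. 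The only place the grading intervenes nontrivially is in ensuring the transition data respects degrees and that the nonzero-degree part of $\gsmooth$ (which carries the nilpotent exterior generators) does not obstruct the gluing; since the $g_{\alpha\beta}$ arise as honest composites of isomorphisms, their invertibility is automatic, and this reduces to a degree-by-degree consistency check on the finite-dimensional fiber $F$.
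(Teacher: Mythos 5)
Your proof is correct and takes essentially the same approach the paper intends: the paper states the proposition with no written argument, treating it as an immediate consequence of the preceding lemma that characterises projectivity as local freeness of the fixed form $\P(U)\cong\gsmooth(U)\otimes F$, and your write-up simply makes the standard Serre--Swan-style details explicit (sections of a bundle form a locally free, hence projective, module; a projective module glues into a bundle via the degree-zero transition cocycle $g_{\alpha\beta}=\phi_\alpha\circ\phi_\beta^{-1}$). There is no divergence from the paper's route, only a fleshing out of the bookkeeping it leaves to the reader.
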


\begin{vdef} The \emph{vector fields} of a smooth manifold $\M=(M,\gsmooth)$ are the sheaf $\X$ of (graded) derivations of the structure sheaf $\gsmooth$.  
\end{vdef}
\begin{prop} The (sheaf of) vector fields is a (sheaf of) projective module(s). The \emph{tangent bundle} $T\M$ is the underlying vector bundle. It has rank equal to the dimension of the manifold, i.e.\ if $\dim\M=(\ldots,p_{-1},p_0,p_1,\ldots)$ then $\rk T\M=(\ldots,p_{-1},p_0,p_1,\ldots)$.
\end{prop}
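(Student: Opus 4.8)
The plan is to reduce everything to a local computation on a coordinate chart and then invoke the two preceding results: the Lemma characterising projective sheaves of modules as those that are locally of the form $\gsmooth(U)\otimes F$, and the Proposition identifying finite-dimensional projective modules with vector bundles. Fix a point and a trivialising neighbourhood $U$ on which $\gsmooth(U)\iso\smooth(U)\otimes S^\bullet W$, where $W$ collects the generators of nonzero degree and $S^\bullet$ is the graded-symmetric algebra; write $z^A$ for the full collection of coordinates, i.e.\ the degree-$0$ coordinates $x^i$ together with the graded generators. First I would show that $\X(U)$ is a free $\gsmooth(U)$-module on the coordinate vector fields $\partial/\partial z^A$. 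One direction is the graded Leibniz rule: a derivation $D$ is additive, $\R$-linear and satisfies $D(fg)=D(f)\,g+(-1)^{|D||f|}f\,D(g)$, so its value on any monomial in the $z^A$ is determined by the values $D(z^A)\in\gsmooth(U)$; hence $D=\sum_A D(z^A)\,\partial/\partial z^A$. Conversely, any assignment $z^A\mapsto g_A\in\gsmooth(U)$ extends by this same formula to a well-defined derivation, so the $\partial/\partial z^A$ generate freely.

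With local freeness in hand the rest is bookkeeping. Taking $F$ to be the graded vector space with one basis element $\partial/\partial z^A$ per generator $z^A$ gives $\X(U)\iso\gsmooth(U)\otimes F$, so by the Lemma $\X$ is projective; since the graded manifold has constant dimension the same $F$ works at every point, which is exactly what the Lemma requires. The Proposition then produces the vector bundle $T\M$ with $\Gamma(T\M)=\X$ and fibre $F$. For the rank, observe that the generators $z^A$ are in bijection with the basis vector fields $\partial/\partial z^A$, graded by the degree of $z^A$; as there are exactly $p_k$ generators in degree $k$, the fibre $F$ has graded dimension $(\ldots,p_{-1},p_0,p_1,\ldots)$, i.e.\ $\rk T\M=\dim\M$. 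Finally, that these local descriptions patch into an honest graded vector bundle follows from the graded chain rule: under a change of coordinates $z=z(w)$ one has $\partial/\partial w^B=\sum_A(\partial z^A/\partial w^B)\,\partial/\partial z^A$, a $\gsmooth$-linear invertible transformation preserving the grading, so the transition functions are isomorphisms of graded vector spaces and $F$ is well-defined up to isomorphism.

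The main obstacle is the freeness assertion in the first step, and specifically the even/smooth directions. That a derivation is determined by its values on generators via Leibniz is formal, but one must also rule out ``extra'' derivations: in the degree-$0$ directions this is the classical fact that an $\R$-linear derivation of $\smooth(U)$ is a genuine vector field $\sum_i a^i\,\partial/\partial x^i$, and one has to check that derivations of the tensor product $\smooth(U)\otimes S^\bullet W$ split as the expected sum of derivations along each factor. Once this structural fact is secured, local freeness, and with it projectivity and the rank count, follow immediately.
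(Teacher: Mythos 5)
Your proposal is correct and follows essentially the same route as the paper: the paper's (very terse) proof simply passes to a coordinate neighborhood and asserts that the sheaf of derivations restricted to $U$ is the free sheaf generated by $T\U=\U\times(\R^{p_0}\oplus\R^{p_1}[1]\oplus\R^{p_2}[2]\oplus\ldots)$, which is exactly your local-freeness step followed by the appeal to the preceding lemma and proposition. Your write-up is in fact more careful than the paper's, since you explicitly flag the point the paper leaves silent — that in the degree-$0$ directions one needs the classical fact that $\R$-linear derivations of $\smooth(U)$ are ordinary vector fields, not just formal Leibniz bookkeeping on generators — and you also address the gluing via the graded chain rule.
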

\begin{proof}  Given a coordinate neighborhood $\U=(U,\gsmooth)$ of $\M$ with
 $$ \gsmooth(U)\cong\smooth(U)\otimes \Lambda^\bullet\R^{p_1}\otimes S^\bullet \R^{p_2}\otimes \ldots \R^{p_n} \;.
 $$  Then the sheaf of derivations restricted to $U$ is isomorphic to the sheaf
generated from $T\U:=\U\times(\R^{p_0}\oplus \R^{p_1}[1]\oplus\R^{p_2}[2]\oplus\ldots)$.
\end{proof}
Given a coordinate neighborhood $\U=(U,\gsmooth)$ of $\M$ with coordinates $x^i$ of degree 0, $\xi^a$ of degree 1, $b^B$ of degree 2, \ldots, we introduce the coordinate vector fields $\partial/\partial x^i$, $\partial/\partial\xi^a$ deriving from the left, $\partial/\partial b^B$, \ldots with the usual meaning and grading opposite to the degree of the coordinates.  Note that the fiber coordinates have opposite degree of the coordinate vector fields.

\begin{Exam}  Given a graded manifold $\M$ we can define the so-called Euler vector field $\epsilon\in\X(\M)$.  In a local chart this reads as
\begin{align*}  \oldepsilon[x^i] &= 0, \\
  \oldepsilon[\xi^a] &= \xi^a, \\
  \oldepsilon[b^B] &= 2b^B, \\
  \vdots \\
  \oldepsilon[q^\alpha] &= |\alpha|q^\alpha \;, \\
  \text{i.e.}\quad \oldepsilon &= \xi^a\pfrac{}{\xi^a} +2b^B\pfrac{}{b^B}+\ldots
\end{align*} where $q^\alpha$ is the compact notation for all coordinates and the degree of the (homogeneous) coordinate $q^\alpha$ is $|\alpha|$.  It is easy to check that $\oldepsilon$ is invariant under coordinate changes.
\end{Exam}

\begin{vdef}  Let $\M$ be a graded manifold.  The \emph{cotangent bundle} $T^*\M$ is the dual vector bundle of the tangent bundle.  Let $\Omega^1$ be the sheaf of $T^*\M$, and $\Omega^k=\Lambda^k\Omega^1$ is the $k$-th graded skew-symmetric power of $\Omega^1$.
\end{vdef}
Note that the graded skew-symmetric product is degree-wise isomorphic to the graded symmetric product with degrees shifted by 1, via
 \[\dec_k(\ud q^{\alpha_1}\wedge\ldots\wedge\ud q^{\alpha_k}) = (-1)^{\sum_{i=0}^{\lfloor(k-1)/2\rfloor}|x_{k-2i-1}|} sq^{\alpha_1}\cdot\ldots\cdot sq^{\alpha_k} \;.\]
Here $s\:V\xto{\sim}V[1]$.  However, we will use the notation as graded skew-symmetric tensor product.  This notion means that the sheaf $\Omega^\bullet$ has a double grading, i.e.\ a homogeneous form, e.g.\ $\omega=\omega_{aB}\ud\xi^a\wedge\ud b^B$, has a form degree, here 2, as well as a degree coming from the involved coordinates, here 3.  The sign when interchanging forms is governed by $\ud q^\beta\wedge \ud q^\alpha=(-1)^{|\alpha|\cdot|\beta|+1\cdot1}\ud q^\alpha\wedge \ud q^\beta$.
  We will denote the degree coming from the involved coordinates simply as degree.

\begin{prop} The action of vector fields factors through a map $\ud\:\gsmooth\to\Omega^1$, as 
\[ X[f]=(-1)^{|f|\cdot|X|}\<\ud f,X\>\;.\]  $\ud\:\gsmooth\to\Omega^1$ is a derivation
\[ \ud(fg)=(-1)^{|f|\,|g|}g\,\ud\! f +f\ud g \] called the de Rham differential.  It naturally extends to $\ud\:\Omega^\bullet\to\Omega^{\bullet+1}$.
The cotangent bundle has the rank opposite to the dimension of the manifold $\M$, i.e.\ for $\dim \M=(\ldots,p_{-1},p_0,p_1,\ldots)$ we have 
 $\rk T^*\M=(\ldots,p_1,p_0,p_{-1},\ldots)$.
\end{prop}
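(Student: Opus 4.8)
The plan is to define $\ud$ intrinsically by the stated pairing, check that it lands in $\Omega^1$, verify the graded Leibniz rule, extend it to all of $\Omega^\bullet$, and finally read off the rank of $T^*\M$ from that of $T\M$.

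First I would define, for homogeneous $f\in\gsmooth$ and homogeneous vector field $X\in\X$, the functional $\ud f$ by $\langle\ud f,X\rangle:=(-1)^{|f|\,|X|}X[f]$, extended $\R$-linearly in $X$. To see that $\ud f$ is a genuine section of $\Omega^1=\Hom_\gsmooth(\X,\gsmooth)$, the dual of $T\M$, I must verify $\gsmooth$-linearity in the $X$-slot. Here I use that $\X$, the sheaf of graded derivations of $\gsmooth$, carries the module structure $(gX)[h]:=g\cdot X[h]$; a short computation with $gf=(-1)^{|g|\,|f|}fg$ confirms that $gX$ is again a graded derivation, so $X\mapsto X[f]$ is $\gsmooth$-linear. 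The Koszul factor $(-1)^{|f|\,|X|}$ only serves to give $\ud f$ the correct internal degree $|f|$ (and form degree $1$). Because this defining formula refers solely to the intrinsic action of derivations, $\ud f$ does not depend on a choice of coordinates and is therefore automatically a well-defined global section; no gluing is required.

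Next I would establish the Leibniz rule by pairing the asserted identity with an arbitrary homogeneous $X$ and invoking the derivation property $X[fg]=X[f]\,g+(-1)^{|X|\,|f|}f\,X[g]$. Translating each term back through the defining formula, the Koszul signs reconcile to yield $\ud(fg)=(-1)^{|f|\,|g|}g\,\ud f+f\,\ud g$. For the extension to $\Omega^\bullet=\Lambda^\bullet\Omega^1$, I would stipulate that $\ud$ raise the form degree by one, agree with the map just constructed on $\Omega^0=\gsmooth$, annihilate the exact generators, $\ud(\ud q^\alpha)=0$, and act as a graded derivation of the wedge product. Since $\Omega^\bullet$ is locally freely generated over $\gsmooth$ by the $\ud q^\alpha$, this prescription determines $\ud$ uniquely, and coordinate-invariance follows exactly as in the classical case.

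Finally, the rank statement is immediate. By the preceding proposition, $\rk T\M=\dim\M=(\ldots,p_{-1},p_0,p_1,\ldots)$, so the fiber of $T\M$ has $p_k$ generators in degree $k$. Passing to the dual vector bundle negates the grading---a degree-$k$ basis vector pairs with a degree-$(-k)$ functional---so the fiber of $T^*\M=(T\M)^*$ has $p_{-k}$ generators in degree $k$, which reads as $\rk T^*\M=(\ldots,p_1,p_0,p_{-1},\ldots)$. I expect the main obstacle to be the first step: confirming that $\ud f$ truly defines a $\gsmooth$-linear (not merely $\R$-linear) functional on $\X$, and fixing the Koszul signs so that the degree bookkeeping is consistent. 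Once well-definedness is secured, the Leibniz rule, the extension, and the rank count are all routine.
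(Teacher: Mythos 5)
Your proposal is correct, but it reverses the logical direction of the paper's proof. The paper works coordinate-first: over a chart it \emph{defines} $\ud f:=(-1)^{|f|\,|\alpha|}\pfrac{f}{q^\alpha}\,\ud q^\alpha$ (and extends to higher form degrees by the analogous explicit formula on coefficient functions), and then \emph{derives} the pairing identity $X[f]=(-1)^{|f|\,|X|}\<\ud f,X\>$ by expanding $X=X^\alpha\pfrac{}{q^\alpha}$ against the convention $\<\ud q^\alpha,\pfrac{}{q^\beta}\>=\delta^\alpha_\beta$; the Leibniz rule and $\ud\circ\ud=0$ are then computations ``analog to the ungraded case.'' You take the pairing identity as the \emph{definition} of $\ud f$, which makes global well-definedness automatic but forces the check you rightly single out as the crux: graded $\gsmooth$-linearity of $X\mapsto X[f]$, so that $\ud f$ is genuinely a section of $T^*\M$ and not merely an $\R$-linear functional on $\X$. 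That step is invisible in the paper's route, where $\ud f$ is manifestly a combination of the local frame $\ud q^\alpha$, at the price of a coordinate-independence question the paper leaves implicit. (One quibble: the Koszul sign $(-1)^{|f|\,|X|}$ does not ``give'' $\ud f$ its internal degree --- that is forced by the pairing being of degree zero --- it is what makes the Leibniz rule come out in the stated form.) For the extension to $\Omega^\bullet$ both arguments become coordinate arguments, since your prescription ``graded derivation of the wedge product killing the $\ud q^\alpha$'' pins down exactly the paper's explicit formula; and your dualization argument for $\rk T^*\M=(\ldots,p_1,p_0,p_{-1},\ldots)$ supplies a justification for a claim that the paper's proof states but never argues. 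In short, your intrinsic route buys automatic globality and a cleaner conceptual structure, while the paper's route buys the explicit local formulas that are reused later in the thesis (for instance in the spectral-sequence computations).
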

\begin{proof}  Analog to the coordinate vector fields of the tangent bundle $T\M$ we have coordinate one-forms $\ud x^i$, $\ud\xi^a$, $\ud b^B$,\ldots  over a coordinate chart of $\M$.  

If we write the vector field $X$ locally as
\begin{align*}
  X&=X^\alpha\pfrac{}{q^\alpha}
\intertext{then we can rewrite its application to a function $f\in\gsmooth(\M)$ as}
 X[f] &= X^\alpha\pfrac{f}{q^\alpha}= (-1)^{(|X|+|\alpha|)|f|}\left\<\pfrac{f}{q^\alpha}\cdot\ud q^\alpha,X^\beta\pfrac{}{q^\beta}\right\>
\intertext{with the convention}
  \left\<\ud q^\alpha,\pfrac{}{q^\beta}\right\> &= \delta^\alpha_\beta \;.
\intertext{We define the de Rham differential $\ud$ as}
  \ud\:\Omega^0\to\Omega^1: f\mapsto (-1)^{|f|\,|\alpha|}\pfrac{f}{q^\alpha}\cdot \ud q^\alpha\;.
\intertext{and for higher form degrees as}
 \ud(\omega_{\alpha_1,\ldots,\alpha_k}\cdot\ud q^{\alpha_1}\wedge\ldots\wedge\ud q^{\alpha_k}) 
  &= (-1)^{|\omega_{\alpha_1,\ldots,\alpha_k}|\,|\alpha_0|}\pfrac{\omega_{\alpha_1,\ldots,\alpha_k}}{q^{\alpha_0}}\cdot\ud q^{\alpha_0}\wedge\ldots\wedge\ud q^{\alpha_k}
\end{align*}  Given that $\ud$ is an odd derivation with respect to the form-degrees, it is a straightforward computation, analog to the ungraded case, that the map $\ud$ squares to 0.
\end{proof}

\begin{vdef}  A \emph{Q-structure} on a graded manifold $\M$ is a vector field $Q$ of degree +1 that is nilpotent, i.e.
 $$ [Q,Q]=0 \;. $$
\end{vdef}

An NQ-manifold is an $\NN$-manifold with a Q-structure.

\subsection{Graded symplectic and Poisson manifolds}
\begin{vdef} A \emph{graded symplectic} manifold is a graded manifold $\M$ together with a homogeneous two-form $\omega\in\Omega^2(\M)$ of degree $d$ which is closed under the de Rham differential and the induced map $\omega^\#:T\M\to T^*[d]\M$ is nondegenerate.

\index{graded manifold!Poisson}\index{Poisson manifold (graded)} A \emph{graded Poisson} manifold is a graded manifold $\M$ together with a graded Poisson bracket $\{.,.\}$ on the structure sheaf.  Let $\{.,.\}$ be of degree $d$. The graded Leibniz rule reads as
\begin{align}
  \{f,gh\} &= \{f,g\}h +(-1)^{(|f|+d)|g|}g\{f,h\}
\intertext{The graded Jacobi identity reads as}
  \{f,\{g,h\}\} &= \{\{f,g\},h\} +(-1)^{(|f|+d)(|g|+d)}\{g,\{f,h\}\}
\end{align}  where $|f|$ denotes the degree of the homogeneous function $f$.
\end{vdef}

Note that there is a graded Lie bracket on the vector fields.  It induces the Schouten bracket $[.,.]$ of degree 0 on multivector fields $\X^\bullet:=\Gamma(\bullet,\Lambda^\bullet T\M)$.

\begin{prop} A Poisson structure on a graded manifold $\M$ is uniquely determined by a bivector field $\Pi\in\Gamma(\M,\Lambda^2T\M)$ satisfying $[\Pi,\Pi]=0$. The relation is
\[ \{f,g\} = \<\ud f\wedge\ud g,\Pi\> \;.
\]

A symplectic structure $\omega$ of degree $-d$ induces a nondegenerate Poisson structure of degree $d$ as
 \[ \Pi^\#=(\omega^\#)^{-1} \;. \] \qed
\end{prop}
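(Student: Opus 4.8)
The plan is to establish a bijection between graded Poisson brackets of degree $d$ and bivector fields $\Pi$ with $[\Pi,\Pi]=0$, and then to specialize to the symplectic case. For the first assertion I would begin with the forward direction. Fix a homogeneous $f$; the graded Leibniz rule makes $\{f,\cdot\}$ a graded derivation of $\gsmooth$ of degree $|f|+d$, hence a vector field $X_f\in\X(\M)$, the Hamiltonian vector field of $f$. Using that the action of vector fields factors through $\ud$, together with the graded skew-symmetry of the bracket (so that $\{\cdot,g\}$ is likewise a derivation), one sees that $\{f,g\}$ depends on its arguments only through $\ud f$ and $\ud g$, and $\gsmooth$-bilinearly. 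This yields a well-defined graded-skew pairing on exact one-forms, which extends to a unique section $\Pi\in\Gamma(\M,\Lambda^2T\M)$ with $\{f,g\}=\<\ud f\wedge\ud g,\Pi\>$; locally one reads off the components $\Pi^{\alpha\beta}=\{q^\alpha,q^\beta\}$ in the coordinate frame $\pfrac{}{q^\alpha}$. Conversely, any $\Pi\in\Gamma(\M,\Lambda^2T\M)$ defines a bracket by this same formula, and the graded Leibniz rule is then automatic because $\ud$ is a derivation; uniqueness is clear since the $\ud q^\alpha$ span $\Omega^1$ locally.

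The crux is the equivalence of the graded Jacobi identity with $[\Pi,\Pi]=0$. I would compute the Schouten bracket $[\Pi,\Pi]\in\Gamma(\M,\Lambda^3T\M)$ and pair it against $\ud f\wedge\ud g\wedge\ud h$. Expanding by the definition of the Schouten bracket and repeatedly using $X_h=\Pi^\#(\ud h)$ together with the factorization $X[f]=(-1)^{|f||X|}\<\ud f,X\>$, this pairing equals, up to a fixed nonzero constant and the appropriate Koszul signs, the graded Jacobiator $\{f,\{g,h\}\}-\{\{f,g\},h\}-(-1)^{(|f|+d)(|g|+d)}\{g,\{f,h\}\}$. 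Since the differentials of coordinate functions span $\Omega^1$ locally, $[\Pi,\Pi]=0$ holds iff the Jacobiator vanishes identically, i.e.\ iff the Jacobi identity holds. This single computation closes both directions of the bijection.

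For the second assertion, nondegeneracy of $\omega$ means the induced map $\omega^\#\:T\M\to T^*[-d]\M$ is an isomorphism of graded vector bundles, so its inverse is a well-defined bundle map $\Pi^\#:=(\omega^\#)^{-1}$, equivalently a bivector $\Pi$ of degree $d$; the associated bracket is nondegenerate by construction. It remains to deduce $[\Pi,\Pi]=0$ from $\ud\omega=0$. Writing $X_f=\Pi^\#(\ud f)$, so that $\imath_{X_f}\omega=\pm\ud f$, I would establish the graded analogue of the classical identity relating the Schouten square of an inverse bivector to the differential of the form, namely that $\<[\Pi,\Pi],\ud f\wedge\ud g\wedge\ud h\>$ is proportional to $(\ud\omega)(X_f,X_g,X_h)$. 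Closedness of $\omega$ then forces $[\Pi,\Pi]=0$, and by the first part $\Pi$ defines a nondegenerate Poisson structure, which is the one induced by $\omega$.

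The main obstacle is the graded sign bookkeeping in the two crux computations. In the ungraded case both the Jacobiator/Schouten identity and the closedness/Schouten identity are classical, but here every transposition contributes a Koszul sign governed by the degrees of $f,g,h$ and by $d$, and the graded antisymmetrization defining $\Lambda^3T\M$ must be tracked consistently with the conventions fixed earlier, in particular $X[f]=(-1)^{|f||X|}\<\ud f,X\>$ and $\ud q^\beta\wedge\ud q^\alpha=(-1)^{|\alpha||\beta|+1}\ud q^\alpha\wedge\ud q^\beta$. Checking that the proportionality constants are nonzero and that all these signs conspire correctly is where the real effort lies; the structural steps above are formal.
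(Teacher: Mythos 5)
Your proposal is correct and takes essentially the same route as the paper: the paper gives no details, stating only that ``the proof is analog to the proof for the ungraded case,'' and your argument is exactly that classical ungraded proof (Hamiltonian derivations yielding a bivector, the Schouten square pairing with $\ud f\wedge\ud g\wedge\ud h$ reproducing the Jacobiator, and inversion of $\omega^\#$ with $\ud\omega=0$ forcing $[\Pi,\Pi]=0$) carried over with Koszul-sign bookkeeping. Your identification of the sign tracking as the only real labor is precisely what the paper's one-line proof implicitly sweeps under ``analog.''
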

The proof is analog to the proof for the ungraded case.

\begin{Exam}  Given a pseudo Euclidean vector bundle $(E,(.,.))$ we can construct the graded Poisson manifold $E[1]$ with Poisson bracket of degree $-2$ as follows:
\begin{align*}
  \{\xi,\eta\} &= (\xi,\eta)\quad\forall \xi,\eta\in\Gamma(E)\cong \gsmooth(E[1])^{[1]} \\
  \{\xi,\eta_1\eta_2\} &= \{\xi,\eta_1\}\eta_2 +(-1)^{|\xi|\cdot|\eta_1|}\eta_1\{\xi,\eta_2\} \quad\forall\xi,\eta_i\in\gsmooth(E[1]) \\
  \{\eta,\xi\} &= (-1)^{|\eta|\cdot|\xi|} \{\xi,\eta\} \quad\forall \xi,\eta\in\gsmooth(E[1])
\end{align*}
\end{Exam}

\begin{prop} The cotangent bundle of a graded manifold is a symplectic manifold with symplectic potential, the Liouville form. \qed
\end{prop}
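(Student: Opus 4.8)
The plan is to reproduce the classical construction of the canonical symplectic structure on a cotangent bundle, keeping track of the graded signs. First I would produce the \emph{Liouville} (tautological) one-form $\theta\in\Omega^1(T^*\M)$ in an intrinsic way, so that its independence of coordinates is automatic. Writing $\pi\:T^*\M\to\M$ for the bundle projection, the total space $T^*\M$ carries a tautological section of $\pi^*T^*\M$, whose value over a ``point'' $\eta\in T^*_m\M$ is $\eta$ itself; contracting this section with the tangent map $T\pi\:T(T^*\M)\to T\M$ produces a well-defined one-form $\theta$, independent of any chart. I would then record that in a local chart with base coordinates $q^\alpha$ and conjugate fibre coordinates $p_\alpha$ it reads $\theta=p_\alpha\,\ud q^\alpha$, and define the candidate symplectic form as $\omega:=\ud\theta$, so that $\theta$ is by construction the announced symplectic potential.

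Closedness is then immediate: by the preceding proposition the de Rham differential obeys $\ud^2=0$, whence $\ud\omega=\ud^2\theta=0$. For the degree I would invoke the computation of $\rk T^*\M$ from the previous subsection: the fibre coordinate $p_\alpha$ dual to $\partial/\partial q^\alpha$ carries degree $-|q^\alpha|$, while $\ud q^\alpha$ has coordinate-degree $|q^\alpha|$, so $\theta=p_\alpha\,\ud q^\alpha$ is homogeneous of coordinate-degree $0$, and hence so is $\omega$. Thus $\omega$ is a homogeneous, $\ud$-closed two-form of degree $0$, matching the definition of a graded symplectic manifold with $d=0$ and $T^*[d]\M=T^*\M$.

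It remains to check nondegeneracy of $\omega^\#\:T(T^*\M)\to T^*(T^*\M)$. Differentiating the potential gives, up to the graded sign governing the antisymmetry $\ud q^\beta\wedge\ud q^\alpha=(-1)^{|\alpha||\beta|+1}\ud q^\alpha\wedge\ud q^\beta$, the local normal form $\omega=\ud p_\alpha\wedge\ud q^\alpha$. This is the graded analogue of Darboux coordinates: pairing $\omega$ with the coordinate vector fields, $\omega^\#$ sends $\partial/\partial q^\alpha$ to $\pm\ud p_\alpha$ and $\partial/\partial p_\alpha$ to $\pm\ud q^\alpha$, so it carries the coordinate frame of $T(T^*\M)$ onto the coordinate coframe. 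Since $\omega^\#$ thus sends a local frame to a local coframe, it is a fibrewise isomorphism, hence an isomorphism of the projective modules of vector fields and one-forms; therefore $\omega$ is nondegenerate and $(T^*\M,\omega)$ is graded symplectic.

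The main obstacle I anticipate is bookkeeping rather than conceptual, namely keeping the Koszul signs consistent throughout the graded de Rham calculus: the signs in the Leibniz rule $\ud(fg)=(-1)^{|f||g|}g\,\ud f+f\,\ud g$, in the antisymmetry of $\wedge$ quoted above, and in the duality $\<\ud q^\alpha,\partial/\partial q^\beta\>=\delta^\alpha_\beta$ must be combined so that the contractions defining $\omega^\#$ come out with matching parities and the pairing is genuinely nondegenerate. A secondary point I would verify carefully is that under a graded change of coordinates $q^\alpha\mapsto\tilde q^\alpha$ the momenta transform tensorially, $p_\alpha\mapsto(\partial\tilde q^\beta/\partial q^\alpha)\,\tilde p_\beta$ up to higher-degree corrections reflecting the non-linear part of graded coordinate changes, so that the chart expression $p_\alpha\,\ud q^\alpha$ really is the restriction of the intrinsic $\theta$; because $\theta$ was defined invariantly, this is only a consistency check and not a separate gluing argument.
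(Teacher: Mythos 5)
Your proposal is correct and coincides with the paper's (implicit) approach: the paper states this proposition without proof, treating it---like the adjacent proposition on graded Poisson brackets---as carrying over verbatim from the ungraded case. Your write-up supplies exactly that classical argument (tautological one-form, $\omega=\ud\theta$, closedness from $\ud^2=0$, nondegeneracy from the local normal form $\ud p_\alpha\wedge\ud q^\alpha$), with the degree bookkeeping ($|p_\alpha|=-|q^\alpha|$, hence $\omega$ homogeneous of coordinate-degree $0$) consistent with the paper's convention $\rk T^*\M=(\ldots,p_1,p_0,p_{-1},\ldots)$.
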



A \emph{symplectic realization} of a Poisson manifold $M$ is a Poisson morphism $\pi:S\to M$ from a symplectic manifold $S$. Minimality is considered with respect to the dimension of the fibers.

Recall that $T^*E$, the cotangent bundle over a vector bundle, is canonically isomorphic to $T^*E^*$, the cotangent bundle over the dual vector bundle, via Legendre transformation, see e.g.\ \cite{Mack94,Tul77}.  The two projections $T^*E\to E$ and $T^*E\cong T^*E^*\to E^*$ add up to a map $p:T^*E\to E\oplus E^*$.  Therefore we have
\begin{prop}  $T^*[2]E[1]$ is a minimal symplectic realization of the odd Poisson manifold $(E\oplus E^*)[1]$.
\end{prop}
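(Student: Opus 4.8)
The plan is to exhibit explicit Darboux-type coordinates on $T^*[2]E[1]$, read off the induced Poisson bracket, and then produce the realization map as the graded lift of the classical fibrewise map $p\colon T^*E\to E\oplus E^*$ recalled just above the statement.

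First I would set up coordinates. Choosing a local frame of $E$ over a chart $U\subset M$ with base coordinates $x^i$ of degree $0$, the manifold $E[1]$ has fibre coordinates $\xi^a$ of degree $1$. By the proposition that $T^*\M$ is symplectic with the Liouville form as potential, $T^*E[1]$ carries canonical conjugate momenta $p_i$ (conjugate to $x^i$) of degree $0$ and $\theta_a$ (conjugate to $\xi^a$) of degree $-1$; after the shift $[2]$ these become $p_i$ of degree $2$ and $\theta_a$ of degree $1$, and the canonical symplectic form (up to the graded signs fixed by the conventions above)
\[
  \omega=\ud p_i\wedge\ud x^i+\ud\theta_a\wedge\ud\xi^a
\]
is homogeneous of degree $2$ and nondegenerate. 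Thus $T^*[2]E[1]$ is a graded symplectic manifold of degree $2$, and by the proposition relating symplectic and Poisson structures it carries a Poisson bracket of degree $-2$ whose only nonvanishing brackets of generators are $\{x^i,p_j\}=\delta^i_j$ and $\{\xi^a,\theta_b\}=\delta^a_b$.

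Next I would identify the target. The bundle $E\oplus E^*$ carries the canonical pseudo-Euclidean pairing $(X\oplus\alpha,Y\oplus\beta)=\alpha(Y)+\beta(X)$, so by the earlier example $(E\oplus E^*)[1]$ is a graded Poisson manifold with bracket of degree $-2$; in the frame above its degree-$1$ generators are exactly $\xi^a$ (spanning $E^*$) and $\theta_a$ (spanning $E$), with $\{\xi^a,\theta_b\}=\delta^a_b$ and all base directions $x^i$ degenerate. I would then define $\pi\colon T^*[2]E[1]\to(E\oplus E^*)[1]$ as the graded analogue of the classical map $p\colon T^*E\to E\oplus E^*$: on structure sheaves $\pi^*$ is the inclusion of the subsheaf of functions not involving the momenta $p_i$. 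Comparing the brackets computed above, the relations $\{\xi^a,\theta_b\}=\delta^a_b$ agree on both sides while the remaining target brackets vanish, so $\pi^*$ is a morphism of degree-$(-2)$ Poisson algebras; hence $\pi$ is a Poisson morphism and $T^*[2]E[1]$ is a symplectic realization of $(E\oplus E^*)[1]$.

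Finally, minimality. I would compute the rank of the Poisson bivector on $(E\oplus E^*)[1]$: it is nondegenerate in the odd directions $\xi^a,\theta_a$ (the $2\rk E$ fibre coordinates), while its kernel is exactly the base distribution spanned by the $x^i$, of dimension $\dim M$. Any symplectic realization must resolve this degeneracy, so its fibres have dimension at least $\dim M$; since the fibres of $\pi$ are coordinatised precisely by the $\dim M$ momenta $p_i$, the realization is minimal. The main obstacle is this last step: the classical principle ``minimal fibre dimension equals the corank of the Poisson structure'' has to be transported to the graded setting, which requires verifying that the degenerate directions are exactly the even base directions and that completing each one to a symplectic pair forces precisely one conjugate coordinate of degree $2$; the attendant sign and degree-shift bookkeeping in the graded symplectic-to-Poisson correspondence is the other place demanding care.
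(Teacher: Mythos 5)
Your proposal is correct and follows essentially the same route as the paper: the realization map is the graded lift of the Legendre-transform projection $p\colon T^*E\to E\oplus E^*$ that the paper constructs immediately before the proposition, and your corank count for minimality is precisely the content of the paper's one-line argument, which invokes the local splitting $T^*E\cong T^*M\oplus E\oplus E^*$ together with the classical fact that $T^*M$ is the minimal symplectic realization of the trivial Poisson manifold $M$ --- i.e.\ the fibre must carry the $\dim M$ momenta $p_i$ conjugate to the Casimirs $x^i$, which is exactly your degeneracy-resolution bound. The graded bookkeeping you flag is indeed all that remains, and it closes as you indicate: a bracket of degree $-2$ forces the conjugates of the degree-$0$ Casimirs into degree $2$, yielding the fibre $\R^{\dim M}[2]$ and hence minimality.
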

\begin{proof}  It remains to check minimality, but this is obvious from the local trivialization $T^*E=T^*M\oplus E\oplus E^*$ where the first factor is the minimal symplectic realization of the trivial Poisson manifold $M$.
\end{proof}

\begin{prop}[{\v S}evera, Roytenberg]\label{p:sympReal}  Let $(E\to M,(.,.))$ be a pseudo Euclidean vector bundle. A minimal symplectic realization is
 \[ \E:= E[1]\times_{(E\oplus E^*)[1]} T^*[2]E[1] = i^*T^*[2]E[1]
 \] where $i\:E\to E\oplus E^*:\psi\mapsto \psi\oplus\frac12(\psi,.)$.
\end{prop}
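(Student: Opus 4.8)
The plan is to verify the three defining properties of a minimal symplectic realization directly in a local frame, leaning on the previous proposition (that $T^*[2]E[1]$ realizes $(E\oplus E^*)[1]$) and on the Example giving $E[1]$ its degree $-2$ bracket. First I would fix a local frame $\{e_a\}$ of $E$ with $g_{ab}:=(e_a,e_b)$ and write the Darboux-type coordinates on $T^*[2]E[1]$ as $x^i$ (degree $0$), $\xi^a$ (degree $1$, the fibre coordinates of $E[1]$), their conjugate momenta $\pi_a$ (degree $1$, which play the role of the $E^*$-summand coordinates of $(E\oplus E^*)[1]$) and $p_i$ (degree $2$), in which the canonical symplectic form reads $\omega=\ud x^i\wedge\ud p_i+\ud\xi^a\wedge\ud\pi_a$ and the projection $p\colon T^*[2]E[1]\to(E\oplus E^*)[1]$ of the previous proposition simply drops the $p_i$. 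In these coordinates the map $i\colon E\to E\oplus E^*$, $\psi\mapsto\psi\oplus\tfrac12(\psi,.)$, is the graph $\pi_a=\tfrac12 g_{ab}\xi^b$, so the fibre product $\E=i^*T^*[2]E[1]$ is identified with the graded submanifold of $T^*[2]E[1]$ cut out by $\pi_a-\tfrac12 g_{ab}\xi^b=0$, with coordinates $x^i,\xi^a,p_i$ in degrees $0,1,2$.

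Next I would restrict $\omega$ to $\E$ via the inclusion $j\colon\E\hookrightarrow T^*[2]E[1]$. Substituting $\ud\pi_a=\tfrac12(\partial_i g_{ab})\xi^b\,\ud x^i+\tfrac12 g_{ab}\,\ud\xi^b$ gives $j^*\omega=\ud x^i\wedge\ud p_i+\tfrac12 g_{ab}\,\ud\xi^a\wedge\ud\xi^b+\tfrac12(\partial_i g_{ab})\xi^b\,\ud\xi^a\wedge\ud x^i$, a closed two-form of degree $2$. Its nondegeneracy is the crucial point: the $\ud x^i\wedge\ud p_i$ block pairs the degree-$0$ and degree-$2$ directions perfectly, the $g_{ab}\,\ud\xi^a\wedge\ud\xi^b$ block is nondegenerate precisely because $(.,.)$ is pseudo-Euclidean, and the remaining cross term is strictly lower triangular for the degree filtration and cannot spoil invertibility. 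Hence $\E$ is a graded symplectic manifold of degree $2$. Conceptually this expresses that $i$ realizes $E[1]$ as a non-coisotropic submanifold of the Poisson manifold $(E\oplus E^*)[1]$: the bracket of the two constraints, $\{\pi_a-\tfrac12 g_{ac}\xi^c,\pi_b-\tfrac12 g_{bd}\xi^d\}=-g_{ab}$, shows them to be second class, which is exactly why the restricted form stays nondegenerate.

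Then I would check that $\tilde\pi\colon\E\to E[1]$, $(x^i,\xi^a,p_i)\mapsto(x^i,\xi^a)$, is a Poisson map onto $E[1]$ with the degree $-2$ bracket of the Example. Inverting $j^*\omega$ one reads off $\{\xi^a,\xi^b\}_\E=g^{ab}$, while $\{x^i,x^j\}_\E$ and $\{x^i,\xi^a\}_\E$ vanish for degree reasons (a degree $-2$ bracket of generators in degrees $\le1$ would land in negative degree); these agree with the brackets of $\tilde\pi^*$ of the corresponding functions on $E[1]$, where $\{\xi^a,\xi^b\}=g^{ab}$ under the metric identification $\Gamma(E)\cong\Gamma(E^*)$ and the base coordinates are Casimirs. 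Thus $\tilde\pi$ intertwines the two Poisson structures, so $\E$ is a symplectic realization of $E[1]$.

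Finally, for minimality I would observe that the fibres of $\tilde\pi$ are the degree-$2$ directions spanned by the $p_i$, a copy of $T^*[2]M$, which by the proof of the previous proposition is the minimal symplectic realization of the trivial Poisson manifold $M$ sitting in degree $0$; since $E[1]$ and $(E\oplus E^*)[1]$ share the same body $M$ and the degree-$1$ part is already nondegenerately paired by $g$, no further fibre directions are forced, so the fibre dimension $\dim M$ is minimal. The step I expect to be the main obstacle, and the one needing the most care, is the nondegeneracy in the second paragraph: the embedding $i$ is deliberately not a Poisson map ($E[1]$ is not a Poisson submanifold of $(E\oplus E^*)[1]$), so one must justify that pulling back the cotangent realization along this second-class embedding nonetheless produces a genuine symplectic manifold together with a Poisson submersion to $E[1]$, and the sign and ordering conventions of the graded symplectic form have to be tracked so that the induced bracket comes out as $+g^{ab}$ rather than its negative.
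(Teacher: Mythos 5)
Your proof is correct and follows essentially the same route as the paper: realize $\E$ as the pullback (the graph of $\psi\mapsto\tfrac12(\psi,.)$) inside $T^*[2]E[1]$, restrict the canonical symplectic form, and verify nondegeneracy in local coordinates --- the paper's brief proof does exactly this ``by inspection,'' working in an orthonormal frame so that $g_{ab}$ is constant and your $\partial_i g_{ab}$ cross term disappears. Your extra explicit checks --- that the cross term cannot spoil invertibility, that the projection to $E[1]$ with its degree $-2$ bracket is Poisson, and that the fibre dimension $\dim M$ is minimal --- are details the paper leaves implicit, with minimality handled only in the preceding proposition's proof, to which you correctly defer.
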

The first notation describes $\E$ as a fiber product over $E\oplus E^*[1]$.  The second notation describes $\E$ as a pullback bundle over $i$.
 $$\begin{CD}  \E @>>> T^*[2]E[1] \\
 @VVV @V{p}VV \\
 E[1] @>{i}>> (E\oplus E^*)[1]
\end{CD}$$
\begin{proof} Consider the pullback bundle.  It is a presymplectic manifold.  By inspection it is symplectic where the symplectic form and the Poisson bivector field read as:
\begin{align*} \omega &= \ud p_i\wedge\ud x^i +\frac12g_{ab}\cdot\ud\xi^a \ud\xi^b \\
  \Pi &= \pfrac{}{p_i}\wedge\pfrac{}{x^i} +\frac12g^{ab}\cdot\pfrac{}{\xi^a}\,\pfrac{}{\xi^b}
\end{align*}  Here $x^i$ are the coordinates on the base $M$, $\xi^a$ are the fiber-linear coordinates of $E\to M$, which gain degree 1, and $p_i$ are the symplectic dual to the $x^i$ and have degree 2.  Finally $g_{ab}$ are the coefficients of the metric $(.,.)$ in the orthonormal base $\xi_a$ (dual to the $\xi^a$) and $g^{ab}$ is the inverse matrix.
\end{proof}

\subsection{Derived bracket}\label{s:derived}
\index{Courant!Super language}

The Courant bracket can be seen as a derived bracket, as observed by {\v S}evera and Roytenberg--Weinstein \cite{Royt02}.



Note that in the case of a symplectic N-manifold of degree $-d$, the Q-structure is equivalent to a nilpotent Hamiltonian $H$ of degree $d+1$, $H=\frac1{d+1}\i_Q\i_\oldepsilon\omega$, where $\oldepsilon$ is the Euler vector field.

\begin{thm}[Roytenberg \cite{Royt02}]\label{p:dBrack} Symplectic NQ-manifolds of degree 2 are in one-to-one
  correspondence with Courant algebroids.

Every Courant algebroid determines a minimal symplectic realization and a cubic function $H$.

A symplectic NQ-manifold gives rise to a derived bracket defined by
\[ \dor{\phi}{\psi}=\{\{H,\phi\},\psi\}\;,\label{dBrack}\] for functions of 
degree 1.  This is a bracket for the sections of the vector bundle $E\to M$ generated by the functions of degree 1 over the 
smooth manifold $M$,  whose structure sheaf are the functions of degree 0.
The anchor map reads  $$\rho(\psi).=\{\{\psi,H\},.\}\;,$$
and the inner product is
 $$ \<\phi,\psi\>= \{\phi,\psi\}$$
\end{thm}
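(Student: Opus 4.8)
The plan is to realise the asserted bijection as two mutually inverse constructions and to check that each lands in the correct category. By Proposition~\ref{p:sympReal} a degree-$2$ symplectic $\NN$-manifold is exactly the minimal realisation $\E$ of a pseudo-Euclidean bundle $(E,\<.,.\>)$: its degree-$0$ functions are $\smooth(M)$, its degree-$1$ functions are $\Gamma(E)$, and since the Poisson bracket has degree $-2$ its restriction to $\Gamma(E)$ is the fibre metric, so $\<\phi,\psi\>=\{\phi,\psi\}$ is forced. The remark preceding the statement already identifies a $Q$-structure with a degree-$3$ Hamiltonian $H$, the nilpotency being equivalent to $\{H,H\}=0$. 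Hence the real content is to match $(\dor{\cdot}{\cdot},\rho)$ with $H$ and to prove the equivalence of $\{H,H\}=0$ with the Courant axioms.

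For the direction NQ-manifold $\to$ Courant algebroid I would first pin down the degrees ($\dor{\phi}{\psi}=\{\{H,\phi\},\psi\}$ has degree $1$, $\rho(\phi)[f]=\{\{H,\phi\},f\}$ has degree $0$) and then dispatch the three routine axioms. Axiom~\eqref{ax:Leibn} is immediate from the graded Leibniz rule for $\{\{H,\phi\},f\psi\}$, whose cross term is exactly $\rho(\phi)[f]\cdot\psi$. The preliminary identity here is $\rho^*\ud f=\{H,f\}$, which I verify by pairing with an arbitrary $\psi$ and using that $\{f,\psi\}$ has negative degree (hence vanishes) together with nondegeneracy of $\<.,.\>$. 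Granting this, $\tfrac12\rho^*\ud\<\phi,\phi\>=\tfrac12\{H,\{\phi,\phi\}\}$ expands by Leibniz and graded antisymmetry to $\{\{H,\phi\},\phi\}=\dor{\phi}{\phi}$, which is~\eqref{ax:nSkew}, and the analogous expansion of $\{\{H,\phi\},\{\psi,\psi\}\}$ gives $2\<\dor{\phi}{\psi},\psi\>$, which is~\eqref{ax:adInv}.

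The substantive axiom is the Jacobi identity~\eqref{ax:Jacobi}, and this is where $\{H,H\}=0$ is indispensable. Writing $D:=\{H,\,\cdot\,\}$, an odd derivation of the Poisson bracket (a restatement of the graded Jacobi identity), the graded Jacobi identity applied to $D\phi,D\psi_1,\psi_2$ gives
\[
  \{D\phi,\{D\psi_1,\psi_2\}\}=\{\{D\phi,D\psi_1\},\psi_2\}+\{D\psi_1,\{D\phi,\psi_2\}\}
\]
with trivial sign, since $D\phi,D\psi_1$ are even of degree $2$. The left-hand side is $\dor{\phi}{(\dor{\psi_1}{\psi_2})}$ and the second summand is $\dor{\psi_1}{(\dor{\phi}{\psi_2})}$. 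To recognise the first summand I apply $D$ to $\dor{\phi}{\psi_1}=\{D\phi,\psi_1\}$; the derivation property yields $D(\dor{\phi}{\psi_1})=\{D^2\phi,\psi_1\}+\{D\phi,D\psi_1\}$, and because $\{H,H\}=0$ forces $D^2=0$ (from $\{\{H,H\},a\}=2D^2a$) the first term drops, whence $\{\{D\phi,D\psi_1\},\psi_2\}=\dor{(\dor{\phi}{\psi_1})}{\psi_2}$. This is precisely~\eqref{ax:Jacobi}; the morphism property~\eqref{rhomor} and the reconstruction of $\dia$ from the skew bracket then follow as in the earlier Proposition.

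For the converse I would build $\E$ from $(E,\<.,.\>)$ via Proposition~\ref{p:sympReal} and, in a local orthonormal frame $\{e_a\}$, set $H=\rho^i_a\,p_i\,\xi^a-\tfrac16 T_{abc}\,\xi^a\xi^b\xi^c$, where $\rho(e_a)=\rho^i_a\,\partial_{x^i}$ and $T_{abc}=\<\dor{e_a}{e_b},e_c\>$ (totally skew by~\eqref{ax:nSkew} and~\eqref{ax:adInv}). One checks that $H$ is frame-independent, hence globally defined, and that its derived brackets return $\<.,.\>,\rho,\dia$. The main obstacle is the remaining implication: that the Courant axioms force $\{H,H\}=0$. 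Since $\{H,H\}=0\iff D^2=0$ and $D^2$ is an even derivation, it suffices to test it on the generators $x^i,\xi^a,p_i$; the three resulting batches of equations read off as compatibility of the anchor with the metric (giving $\rho\circ\rho^*=0$), the morphism property~\eqref{rhomor}, and the vanishing of the Jacobiator~\eqref{ax:Jacobi}. Organising this degree-$4$ computation and matching each batch to its axiom is the technical heart; once done, uniqueness of $H$ (from nondegeneracy of $\omega$) shows the two constructions are mutually inverse, completing the bijection.
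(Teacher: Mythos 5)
Your proposal is correct and follows, in essence, the same route as the paper: the same derived-bracket verification in one direction, the same local Ansatz $H=\rho^i_a\xi^a p_i-\tfrac16 C_{abc}\xi^a\xi^b\xi^c$ in the other, and the same pivot that the Jacobi identity is equivalent to nilpotency of $Q$. In fact you are more explicit than the paper, which compresses both verifications into ``straightforward computation''; your chain $\rho^*\ud f=\{H,f\}$, the expansions yielding \eqref{ax:nSkew} and \eqref{ax:adInv}, and the $D^2=0$ manipulation for \eqref{ax:Jacobi} are exactly what that computation amounts to. The one point where you genuinely diverge is the globalization of $H$: you propose a direct check that the local formula is frame-independent, whereas the paper argues that two local Hamiltonians inducing the same anchor and bracket differ by a Casimir function of degree $3$, and that Casimirs form the $0$th Poisson cohomology of $\E$, which coincides with de Rham cohomology since $\E$ is symplectic and whose degree-$\geq1$ part vanishes by Roytenberg's Lemma \ref{l:Higher}; hence the ambiguity is zero and the local pieces glue. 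Your route is more elementary but costs a real computation: under a change of frame and coordinates the degree-$2$ coordinates $p_i$ acquire corrections quadratic in the $\xi^a$, so the two summands of $H$ mix and only their sum is invariant; the paper's Casimir argument sidesteps this entirely. Finally, a simplification for what you call the technical heart: testing $D^2$ on the $p_i$ is unnecessary. Since $D^2$ is the Hamiltonian vector field of the degree-$4$ function $\tfrac12\{H,H\}$, its vanishing on the $x^i$ and $\xi^a$ already forces $\{H,H\}$ to depend on the $x^i$ alone, hence to vanish for degree reasons; so only the two batches of equations coming from $x^i$ and $\xi^a$ (which, as you say, are consequences of the Courant axioms) need to be matched.
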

\begin{proof}  In one direction there is the straightforward computation that the derived bracket fulfills the axioms of a Courant algebroid.  The Jacobi identity is equivalent to the nil-potency of $Q$.  Note that $\{\psi,H\}=Q[\psi]$.

For the converse direction we need a construction of $H$. Locally we can trivialize $E$ and $\E$ over $M$.  We choose an orthonormal frame $\xi_a$ of $E$ for this.  Let further $x^i$ be coordinates of $M$ and $p_i$ their symplectic duals in $T^*M$, here having degree 2.  Now we can interpret the anchor map $\rho$ as a cubic function on $\E$ and the 
trilinear form $C(\xi_1,\xi_2,\xi_3):=\langle\dor{\xi_1}{\xi_2},\xi_3\rangle$ as another cubic function on $E$.  
 $$ H(x,\xi,p):= \rho^i_a(x)\xi^ap_i -\frac16C_{abc}(x)\xi^a\xi^b\xi^c
 $$
It is another straightforward computation that this hamiltonian gives the original bracket on $E$.  This computation also shows that components $\partial/\partial x^i$ and $\partial/\partial\xi^a$ of $Q$ are unique in order to produce all components of the operation $\dia$ and anchor map $\rho$.

It remains to check whether the local choices of $H$ can be glued together to a global $H$ and whether all components of $Q$ are uniquely determined.  The ambiguity of $H$ is change by a Casimir function of degree 3.  All Casimir functions form the 0th class of the Poisson cohomology of $\E$.  Note that $\E$ is symplectic and thus Poisson cohomology coincides with the de Rham cohomology.  Moreover Roytenberg showed the following property of de Rham cohomology of graded manifolds
\begin{lemma}[Roytenberg \cite{Royt02}]\label{l:Higher} The higher de Rham cohomologies, i.e.\ of functions and forms of degree $\ge1$, vanish.
\end{lemma}
Finally the uniqueness of the whole $Q$ follows, because any $Q$ that leaves the symplectic form invariant determines a unique Hamiltonian $H$ via $H=\frac13\i_Q\i_\oldepsilon\omega$ where $\oldepsilon$ is the Euler vector field that assigns the grades to the coordinates.
\end{proof}

\begin{Exam} For the generalized twisted Courant algebroid on $E=A\oplus 
A^*$ the cubic Hamiltonian is $H=H_d+H_{twist}$, where $H_d$ is the 
Hamiltonian lift of the Lie algebroid differential $\ud_A$ to $\E:=T^*[2]A[1]$ which can be seen as an odd vector field on $A[1]$, and $H_{twist}\in\Gamma(\Lambda^3A^*)$ any A-3-form. In order for $H$ to be nil-potent we need 
 $\ud_AH_{twist}=0$.
\end{Exam}

\begin{Exam} Starting with the tangent bundle and its associated de Rham 
differential one can construct the supermanifold $\E=T^*[2]T[1]M$ and lift the 
de Rham differential to a Hamiltonian $H_d$. This Hamiltonian is nilpotent, 
because the de Rham differential is. Moreover it produces the Dorfman bracket 
(see \eqref{Cbrack}) on $TM\oplus T^*M$ as a derived bracket. Analog to the 
general Lie algebroid case we can twist by a closed 3-form $H_{twist}$ that 
will only occur in the 1-form component of the Dorfman bracket. This proves 
the axioms for the Dorfman bracket in a short, but abstract way. 
\end{Exam}

Moreover the cubic Hamiltonian permits us to define Courant cohomology as follows.
\begin{vdef}[Roytenberg \cite{Royt02}]\label{def:Q}\index{Courant!cohomology}  Let 
 $(E\to M,\dia,\rho,\langle.,.\rangle)$ 
be a Courant algebroid.  Let $H$ be a cubic Hamiltonian that generates the
Courant bracket on a minimal symplectic realization $\E$.  Then the \emph{Courant
cohomology} is the cohomology of the (graded) functions with respect to the
nilpotent operator $Q:=\{H,.\}$.
\end{vdef}

The lowest cohomology groups have some common interpretations.  $H^0$ is the space of functions on the base $M$ that are constant along the leaves of the anchor foliation. $H^1$ is the space of sections of $E$ that act trivially as $\psi\dia$ on $E$ modulo the exact sections, i.e.\ images of smooth functions under the operator $\D=\rho^*\ud$.  $H^2$ is the space of linear vector fields on $E$ that preserve the Courant algebroid structure, modulo those vector fields that are generated by sections of $E$ as $\psi\dia$.  $H^3$ is the space of infinitesimal deformations of the Courant algebroid structure modulo the ones generated by functions of degree 2.  $H^4$ governs the obstructions of extending infinitesimal deformations to formal ones.  Note that compared to usual deformation theory the labels of the cohomology groups are shifted by two.

Roytenberg's Lemma \ref{l:Higher} shows that the deformations of the standard Dorfman bracket are only the {\v S}evera--Dorfman brackets, see Example \ref{ex:Sev}.

\subsection{Dirac structures II, superlanguage}
Given a subbundle $L\to N$ of $E$, one can construct a submanifold of $\E$
as follows:  $$\L:= N^*[2]L[1]$$ 
Note that conormal bundle $N^*L$ is a subbundle of the cotangent bundle $T^*E$
restricted to $L$.  We call the embedding $j:\L\to\E$.

\begin{prop}A subbundle $L\to N$ of $E$ is Lagrangian iff $\L$ is
  Lagrangian in $\E$.
\end{prop}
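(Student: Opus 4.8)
The plan is to reduce the statement to a local computation in the minimal symplectic realization $\E$ of Proposition~\ref{p:sympReal}, where the symplectic form decouples into a ``cotangent'' sector and a ``metric'' sector along $\L$. First I would choose adapted local coordinates: split the base coordinates $x^i=(x^\mu,x^s)$ so that $N=\{x^s=0\}$, and split the fiber-linear coordinates $\xi^a=(\xi^\alpha,\xi^r)$ so that $\{\xi^\alpha\}$ is a frame of $L$ along $N$ and $L=\{x^s=0,\ \xi^r=0\}$. In these coordinates $\E$ carries $x^i$ (degree $0$), $\xi^a$ (degree $1$) and $p_i$ (degree $2$), the fiberwise $E^*$-momenta having been eliminated by the metric through the map $i\:\psi\mapsto\psi\oplus\tfrac12(\psi,.)$, and the symplectic form reads $\omega=\ud p_i\wedge\ud x^i+\tfrac12 g_{ab}\,\ud\xi^a\wedge\ud\xi^b$. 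The submanifold $\L=N^*[2]L[1]$ is then cut out by $x^s=0$, $\xi^r=0$, $p_\mu=0$, so that its free coordinates are $x^\mu$ (degree $0$), $\xi^\alpha$ (degree $1$) and the conormal momenta $p_s$ (degree $2$); in particular its graded dimension is $(\dim N,\rk L,\operatorname{codim} N)$.

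Next I would compute $j^*\omega$ and read off both conditions at once. The point is that $\omega$ decouples along $\L$ into its degree $0$--$2$ part and its degree $1$ part. On $\L$ one has $\ud x^s=0$ and $\ud p_\mu=0$, whence $j^*(\ud p_i\wedge\ud x^i)=0$ identically; this is the familiar fact that the (shifted) conormal bundle of $N$ in $M$ is automatically Lagrangian inside the cotangent directions $T^*[2]M$, and it already accounts for exactly half of the degree $0$ and degree $2$ coordinates since $\dim N+\operatorname{codim} N=\dim M$. Thus the cotangent sector contributes nothing to either the isotropy or the half-dimension condition, whatever $L$ may be. Everything is therefore concentrated in the degree $1$ sector, where $j^*\bigl(\tfrac12 g_{ab}\,\ud\xi^a\wedge\ud\xi^b\bigr)=\tfrac12 g_{\alpha\beta}\,\ud\xi^\alpha\wedge\ud\xi^\beta$.

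From here both implications are immediate. The restriction $j^*\omega$ vanishes if and only if $g_{\alpha\beta}=0$, i.e.\ the inner product vanishes on $L$, which is precisely isotropy of $L$; and $\L$ has half the graded dimension of $\E$ if and only if its degree $1$ part does, i.e.\ $\rk L=\tfrac12\rk E$. Since a Lagrangian submanifold of $\E$ is an isotropic one of half dimension, and a Lagrangian subbundle $L$ is an isotropic one of half rank, the two notions match term by term, which proves the equivalence. Equivalently, one may argue invariantly: the symplectic orthogonal $(T\L)^{\perp_\omega}$ decomposes as the conormal part, which is self-orthogonal, together with $L^\perp[1]$, so that $T\L=(T\L)^{\perp_\omega}$ holds exactly when $L=L^\perp$ in $(E,\langle.,.\rangle)$, and the latter is equivalent to $L$ being Lagrangian.

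I expect the main obstacle to be the correct identification of $\L$ inside the \emph{reduced} realization $\E$ rather than inside the full cotangent bundle $T^*[2]E[1]$. One must verify that eliminating the $E^*$-momenta via the metric leaves the degree $1$ directions of $\L$ equal to $L[1]$ with its induced pairing and the degree $2$ directions equal to the conormal of $N$ in $M$, so that no spurious degree $1$ conormal directions $\theta_r$ survive to contaminate the computation. Once this bookkeeping from Proposition~\ref{p:sympReal} is in place, the restriction of $\omega$ and the dimension count are entirely routine.
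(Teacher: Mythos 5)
Your proof is correct and takes essentially the same approach as the paper: localize, split $\L$ into the conormal part $N^*N$ (automatically Lagrangian in the cotangent sector --- this is exactly the paper's conormal-bundle lemma, and your observation that $j^*(\ud p_i\wedge\ud x^i)=0$) and the degree-one part $L[1]$, where isotropy and half-rank of $L$ are precisely what is required. Your explicit coordinates, dimension count, and handling of the eliminated $\theta$-momenta merely spell out details that the paper's very terse proof leaves implicit.
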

\begin{proof}  Being Lagrangian is clearly a local property.  Therefore given a
  point $(x,l)$ in $\L$ one can choose a trivialized neighborhood and write
  $\L$ as $L\oplus N^*N$.  Now the proposition goes back to the following
  lemma.
\begin{lemma} Suppose $S$ is a submanifold of $M$.  Then $N^*S$ is a
  Lagrangian submanifold of $T^*M$.
\end{lemma}
\end{proof}

One defines the map $\ell:\Gamma E \to \gsmooth(\E)$ as follows.  A
section of $E$ can be identified with a section of $E^*$ via the inner
product.  Further one identifies sections of $E^*$ with fiberwise linear
functions on $E$.  These can further be pulled back to a function on $\E$.  The super
language makes it an odd linear function.

\begin{lemma}  Let $L$ be a Lagrangian subbundle of $E$, $\L$ the
  corresponding Lagrangian submanifold of $\E$ embedded via $j$.  Then a
  section $\psi$ of $E$ is a section of $L$ iff $j^*\ell\psi=0 $, i.e.\ the
  Hamiltonian vector field is tangent to $\L$. \qed
\end{lemma}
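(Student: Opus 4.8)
The plan is to establish the equivalence by unwinding the two maps involved. A section $\psi\in\Gamma(E)$ is identified, via the inner product $\langle.,.\rangle$, with a section of $E^*$, which in turn becomes a fiberwise-linear function on $E$ and, after pullback, the odd linear function $\ell\psi\in\gsmooth(\E)$ of degree $1$. Concretely, in a local orthonormal frame $\xi_a$ of $E$ with fiber-linear coordinates $\xi^a$ (of degree $1$) as in Proposition~\ref{p:sympReal}, a section $\psi=\psi^a\xi_a$ gives $\ell\psi=g_{ab}\psi^a\xi^b$, i.e.\ the linear function dual to $\psi$ under the metric. Thus $j^*\ell\psi$ is the restriction of this linear function to the Lagrangian submanifold $\L=N^*[2]L[1]$.

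First I would set up compatible local coordinates: since being a section of $L$ is a local and pointwise condition, I can work over a trivializing neighborhood and split the frame $\{\xi_a\}$ so that $\{\xi_1,\dots,\xi_l\}$ restrict to a frame of $L$ and the remaining $\{\xi_{l+1},\dots,\xi_k\}$ span a complement. Because $L$ is Lagrangian (isotropic of half rank), in an orthonormal-type frame the metric pairs the $L$-directions with the complementary directions, so the fiber coordinates $\xi^a$ for $a\le l$ (those dual to $L$) are precisely the ones that vanish upon restriction to $\L$, while the coordinates dual to the complement survive. This is exactly the content of the sublemma that $N^*S$ is Lagrangian in $T^*M$, transported to the graded setting via $\L=L\oplus N^*N$ as in the proof of the preceding proposition.

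Then I would compute both sides. On the one hand, $\psi\in\Gamma(L)$ means $\psi^a=0$ for the complementary indices, so $\psi$ lies in the span of the first $l$ frame vectors. On the other hand, $\ell\psi$ is the linear function $\langle\psi,.\rangle$, and its restriction $j^*\ell\psi$ vanishes exactly when $\langle\psi,\cdot\rangle$ annihilates all vectors tangent to $L$ along $N$ — that is, when $\psi$ is $\langle.,.\rangle$-orthogonal to $L$. Since $L$ is Lagrangian, $L=L^\perp$, so orthogonality to $L$ is equivalent to membership in $L$. This gives both implications at once: $j^*\ell\psi=0 \iff \psi\perp L \iff \psi\in L$. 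The final clause, that this is equivalent to the Hamiltonian vector field of $\ell\psi$ being tangent to $\L$, follows because $\L$ is Lagrangian: a function restricts to a constant on a Lagrangian submanifold iff its Hamiltonian vector field is tangent to it (the symplectic orthogonal of $T\L$ is $T\L$ itself), and here the constant is forced to be zero since $\ell\psi$ has positive degree.

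The main obstacle I anticipate is bookkeeping the metric identification cleanly rather than any conceptual difficulty: one must verify that the coordinate description of $\ell\psi$ as $g_{ab}\psi^a\xi^b$ interacts correctly with the splitting that presents $\L$ locally as $L\oplus N^*N$, and in particular that ``the fiber coordinates dual to $L$'' are the same ones that the conormal/Lagrangian condition kills. Once the Lagrangian identity $L=L^\perp$ is invoked, the equivalence is immediate, so the real work is choosing the frame so that the graded-symplectic pairing and the pseudo-Euclidean pairing are simultaneously in normal form.
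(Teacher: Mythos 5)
Your proof is correct, and since the paper states this lemma without any argument (it ends in a bare QED), what you supply is precisely the intended unwinding of definitions: $j^*\ell\psi$ is the restriction of the fiberwise-linear function $\langle\psi,\cdot\rangle$ to the degree-one directions $L[1]$ of $\L$ over $N$, so it vanishes iff $\psi|_N\perp L$, which by $L=L^\perp$ (isotropy plus half rank plus nondegeneracy) is equivalent to $\psi|_N\in\Gamma(L)$. The Hamiltonian-vector-field clause is also handled correctly: on a Lagrangian submanifold, $X_f$ is tangent iff $f$ restricts to a locally constant function, and the constant must be $0$ here because $\ell\psi$ has degree $1$.

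One caution on your coordinate bookkeeping. The sentence claiming that the fiber coordinates $\xi^a$, $a\le l$, ``those dual to $L$,'' are precisely the ones vanishing upon restriction to $\L$ is correct only if by $\xi^a$ you mean the degree-one functions $\ell\xi_a=\langle\xi_a,\cdot\rangle$ attached to the $L$-frame via the metric: these span $\Ann(L)$ and indeed vanish on $L[1]$, exactly because the metric pairs $L$ with its complement. Read literally as the dual-frame coordinates (the functions with $\xi^a(\xi_b)=\delta^a_b$), the claim is backwards --- it is the coordinates dual to a \emph{complement} of $L$ that cut out $L[1]$, while the coordinates dual to the $L$-frame survive as coordinates on $L[1]$ --- and following the literal reading through the formula $\ell\psi=g_{ab}\psi^a\xi^b$ would produce the opposite conclusion, namely $j^*\ell\psi=0$ iff $\psi$ lies in the complement. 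Your proof survives this ambiguity because the decisive step is the frame-independent one: $j^*\ell\psi=0$ iff $\langle\psi,\cdot\rangle$ annihilates $L$ along $N$, which does not rely on that sentence.
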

Consider taking the Hamiltonian vector field as a(n anti)morphism of Lie
algebras, i.e.\ \(\left[X_{\ell\psi},Q\right]=\pm X_{\{\ell\psi,H\}}\).  So one 
can express the derived bracket as iterated commutators of Hamiltonian vector 
fields.
\[ X_{\{\{\ell\psi,H\},\ell\phi\}} = \left[\left[X_{\ell\psi},Q\right],
  X_{\ell\phi}\right]
\]

Therefore we have the following theorem.
\begin{thm}\index{Dirac structure}  A Lagrangian subbundle $L\to N$ of a 
  Courant algebroid $E$ is a Dirac structure iff the pullback of the generating
  function $H$ under the embedding $j$ vanishes.
\end{thm}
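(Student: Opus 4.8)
The plan is to read off each clause of the Dirac condition (Definition~\ref{d:Dirac}) as a statement about $H$, using the dictionary furnished by $\ell$ and the lemma preceding the theorem. That lemma says a section $\psi\in\Gamma(E)$ restricts to a section of $L$ exactly when $j^*\ell\psi=0$, equivalently when the Hamiltonian vector field $X_{\ell\psi}$ is tangent to $\L$. Combined with the identity $X_{\{\{\ell\psi,H\},\ell\phi\}}=[[X_{\ell\psi},Q],X_{\ell\phi}]$ together with $\ell(\dor{\psi}{\phi})=\{\{H,\ell\psi\},\ell\phi\}$, this means $(\dor{\psi}{\phi})|_N\in\Gamma(L)$ precisely when $[[X_{\ell\psi},Q],X_{\ell\phi}]$ is tangent to $\L$. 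So being Dirac amounts to the assertion: whenever $X_{\ell\psi}$ and $X_{\ell\phi}$ are tangent to $\L$, so is $[[X_{\ell\psi},Q],X_{\ell\phi}]$.

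The implication $j^*H=0\Rightarrow L$ Dirac is then conceptually clear. Because $\L$ is a graded Lagrangian through the origin and $H$ is homogeneous of positive degree, $j^*H=0$ is equivalent to $Q=X_H$ being tangent to $\L$ (a Hamiltonian vector field is tangent to a Lagrangian iff its generating function is constant there, here necessarily $0$). Since the vector fields tangent to $\L$ form a Lie subalgebra of $\X(\E)$, tangency of $X_{\ell\psi}$, $X_{\ell\phi}$ and $Q$ propagates to the iterated bracket $[[X_{\ell\psi},Q],X_{\ell\phi}]$, giving bracket-closure $\dor{L}{L}\subset L$; projecting the tangency of $[X_{\ell\psi},Q]$ to the base yields the anchor condition $\rho(L)\subset TN$. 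Hence $L$ is Dirac.

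For the converse, and to see exactly which data $j^*H$ records, I would compute in the minimal-realization coordinates of Proposition~\ref{p:sympReal}: $x^i$ (degree $0$), $\xi^a$ (degree $1$), $p_i$ (degree $2$), with $H=\rho^i_a(x)\,\xi^a p_i-\tfrac16 C_{abc}(x)\,\xi^a\xi^b\xi^c$ and $C_{abc}=\langle\dor{\xi_a}{\xi_b},\xi_c\rangle$. Choosing $x^i=(x^\mu,x^s)$ with $N=\{x^s=0\}$ and an orthonormal frame $\xi_a=(\xi_\alpha,\xi_r)$ with $\xi_\alpha$ framing $L$, the submanifold $\L=N^*[2]L[1]$ is cut out by $x^s=0$, $\xi^r=0$, $p_\mu=0$, so
\[ j^*H=\rho^s_\alpha(x^\mu,0)\,\xi^\alpha p_s-\tfrac16\,C_{\alpha\beta\gamma}(x^\mu,0)\,\xi^\alpha\xi^\beta\xi^\gamma. \]
The two summands are independent as polynomials in $(\xi,p)$, so $j^*H=0$ splits into $\rho^s_\alpha|_N=0$ and $C_{\alpha\beta\gamma}|_N=0$. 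The first is exactly $\rho(L)\subset TN$. For the second, since $L$ is isotropic one may take the frame with constant $g_{\alpha\beta}$, and then axioms \eqref{ax:nSkew} and \eqref{ax:adInv} force $C_{\alpha\beta\gamma}$ to be totally antisymmetric on $L$; hence the odd monomial $\xi^\alpha\xi^\beta\xi^\gamma$ sees all of $C_{\alpha\beta\gamma}$, and its vanishing is equivalent to $\langle\dor{\xi_\alpha}{\xi_\beta},\xi_\gamma\rangle=0$ for all frame sections of $L$, i.e.\ $\dor{L}{L}\perp L$, i.e.\ (as $L=L^\perp$) $\dor{L}{L}\subset L$. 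These are precisely the Dirac clauses, closing both directions.

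The step I expect to fight hardest is this last one: a priori the odd cubic $\xi^\alpha\xi^\beta\xi^\gamma$ only records the totally skew part of $C_{\alpha\beta\gamma}$, so one must show that on an isotropic subbundle $C_{abc}$ is \emph{already} totally antisymmetric---which is exactly where the non-skew Courant axioms \eqref{ax:nSkew} and \eqref{ax:adInv} enter and must be applied with a frame of locally constant inner products. A secondary point to reconcile is that $j^*H$ delivers only $\rho(L)\subset TN$, whereas well-definedness of the induced bracket on $L$ (Proposition~\ref{p:induced}) asks for $\rho(E|_N)\subset TN$; one has to check that under the Lagrangian hypothesis the closure clause $\dor{L}{L}\subset L$ is interpreted over extensible sections consistently with that analysis, so that the two notions of ``Dirac'' coincide.
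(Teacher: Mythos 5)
Your argument is correct, and while your forward direction coincides with the paper's (tangency of $Q$ to $\L$, plus the fact that vector fields tangent to $\L$ form a Lie subalgebra, fed into the iterated-commutator formula), your converse takes a genuinely different route. The paper argues by contraposition through a ``modification of a lemma from symplectic geometry'' --- if $[X_\chi,X_{\ell\psi}]$ is tangent to $\L$ for every $\psi\in\Gamma(L)$, then $X_\chi$ is tangent to $\L$ --- applied first to $\chi=\{H,\ell\psi\}$ and then to $\chi=H$; that lemma is stated without proof, and the paper's proof of the theorem is explicitly left unfinished. You instead compute $j^*H$ in adapted coordinates on the minimal realization, split it by polynomial type into the $\xi p$ piece (exactly the condition $\rho(L)\subset TN$) and the $\xi^3$ piece, and supply the one nontrivial algebraic input: on an isotropic subbundle the trilinear form $\<\dor{\psi_1}{\psi_2},\psi_3\>$ is already totally skew, obtained by polarizing \eqref{ax:nSkew} and \eqref{ax:adInv}, both of whose left-hand sides vanish identically on $L$; hence the odd cubic monomial detects all of $C_{\alpha\beta\gamma}$, and its vanishing is equivalent to $\dor{L}{L}\subset L^\perp=L$. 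This buys a self-contained proof of both implications at once, which the paper does not have; what it costs is a frame choice --- and note that strictly one cannot take a frame that is both orthonormal and adapted to an isotropic $L$, only one of locally constant Gram matrix (e.g.\ hyperbolic), a point you implicitly concede later in your argument. Your flagged ``secondary point'' does need to be closed, but it closes easily: for arbitrary extensions $\psi_i'$ of sections of $L$, the terms in $(\dor{\psi_1'}{\psi_2'})|_N$ beyond the frame computation are of two kinds --- derivative terms $\rho(\psi_1)[g]\cdot\xi_r|_N$ with $g\in I(N)$, which vanish because $\rho(\psi_1)$ is tangent to $N$ (Lemma~\ref{l:emb}), and the symmetric term $\rho^*\ud\<\psi_1',\psi_2'\>|_N$, which lies in $\rho^*(N^*N)\subset L^\perp=L$ precisely because $\rho(L)\subset TN$ and $L$ is Lagrangian --- so closure on frame extensions is indeed equivalent to the closure clause of Definition~\ref{d:Dirac}, and your equivalence does establish the theorem as stated.
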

\begin{proof}\MG{to be completed}  Suppose the pullback of $H$ vanishes.  Then its
   Hamiltonian vector field is tangential to $\L$.  As the previous lemma
   states the Hamiltonian vector fields of two sections $\psi$ and $\phi$
   of $L$ are also tangential to $\L$.  Therefore the derived bracket has a
   Hamiltonian vector field tangential to $\L$.  Again by the last lemma one
   concludes that it comes from a section of $L$, therefore the bracket
   gives a section of $L$.  The differential $\D$ maps functions that vanish on $N$ to sections of $L$, because $\D$ is just $Q$ restricted to graded functions of degree 0.  Therefore the anchor maps $L$ to $TN$.
 
 Consider conversely that the pullback of $H$ is not vanishing.  The
 following modification of a lemma from symplectic geometry implies that there are two
 sections whose bracket does not close in $L$.
 \begin{lemma}  Let $L$, $E$ and $\E$ be as before and suppose $\chi$ is a
   super function on $\E$.  Suppose further that for all sections $\psi\in
   \Gamma(L)$ the bracket of the Hamiltonian vector fields of $\chi$ and
   $\ell\psi$ gives a vector field tangential to $\L$.  Then the Hamiltonian
   vector field of $\chi$ is tangential to $\L$.
 \end{lemma}
\end{proof}

\section{Spectral Sequence of a filtered complex \MG{90\%}}
For details about spectral sequences refer to the original work by 
Cartan and Eilenberg \cite{CEha} or to McCleary's \emph{``Users' Guide \ldots''} \cite{SSeq85}.  
We will give a brief summary and an example here.

\begin{vdef} A \emph{spectral sequence} is a collection of differential bigraded 
vector spaces $\{E^{p,q}_r,d_r\}_{p,q\ge0}$ where the differential maps 
 $d_r:E^{p,q}_r\to E^{p+r,q+1-r}_r$, i.e.\ has bidegree $(r,1-r)$, $d_r\circ d_r=0$
and the next sheet $E^{\bullet,\bullet}_{r+1}$ is the cohomology of the previous
one.  It is understood that the spaces $E^{p,q}_r$ vanish for $p<0$ or $q<0$.
\end{vdef}
The picture of one sheet of a spectral sequence is given in figure \ref{f:sseq}, where the arrows represent the differential $d_r$ and each dot is a vector space $E^{p,q}_r$.  Note that in the lower left corner the differentials vanish.
\begin{figure}[h!]\begin{center}
\includegraphics[height=5cm,clip=true]{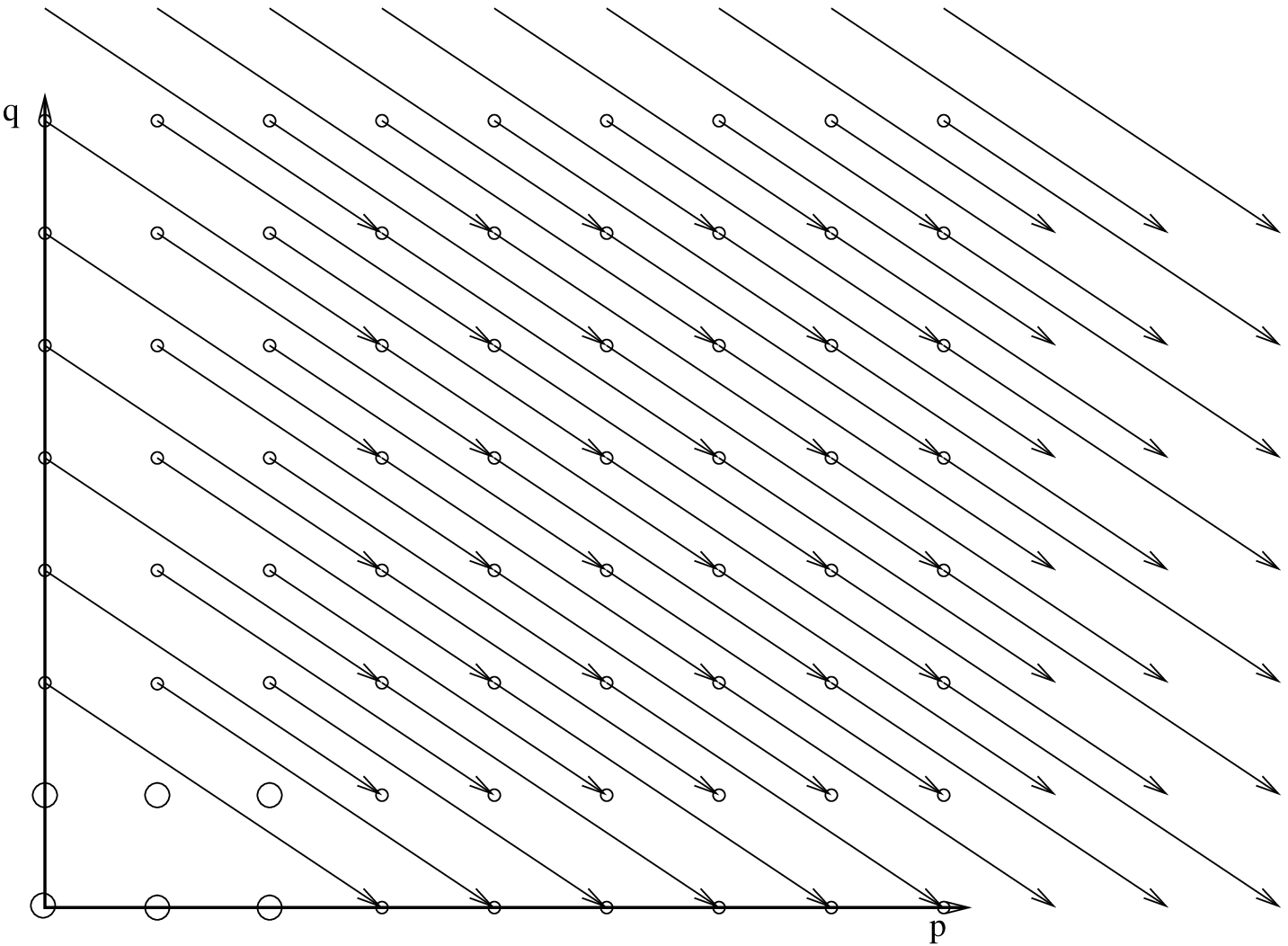}
\caption{\label{f:sseq}$\{E^{\bullet,\bullet}_3,d_3\}$ of a spectral sequence}
\end{center}
\end{figure}

Given a first sheet, e.g.\ $\{E^{\bullet,\bullet}_0\}$ and all differentials 
 $d_{r\ge0}$ then one can compute all higher sheets and indeed has a convergence to 
 $E^{\bullet,\bullet}_\infty$, because the higher differentials vanish on
the spaces in the lower left corner (see also the figure).\par\medskip

The question to ask next is how spectral sequences arise.  A broad family of 
spectral sequences occurs from filtrations of a differential complex.
\begin{vdef}  A (decreasing) \emph{filtered complex} $(F^pA^\bullet,d)$ is a 
  family of graded vector subspaces $F^{p+1}A^\bullet\subset F^pA^\bullet\ldots
   \subset F^0A^\bullet=A^\bullet$, such that the differential maps 
  $d(F^pA^n)\subset F^pA^{n+1}$.
\end{vdef}  The aim in this filtered complex is to compute the cohomology of the
complex which is often very complicated. It can be made easier when one has
such a filtration and descends to the associated graded (vector space):
 $$ E^{p,q}_0 := F^pA^{p+q}/F^{p+1}A^{p+q} \;.$$  
The associated graded has induced a simpler differential 
 $d_0:F^pA^{p+q}/F^{p+1}A^{p+q}\to F^pA^{p+q+1}/F^{p+1}A^{p+q+1}$, i.e.\ all 
the terms of $d$ in $F^{p+1}A^\bullet$ are suppressed.

The claim is now that this continues to all sheets and the spectral sequence 
computes the cohomology of the (unfiltered) complex:

\begin{thm}\label{thm:sseq}  A filtered differential graded module 
 $(A^\bullet,d,F^p)$ determines a spectral sequence (starting as written above).

Suppose further that the filtration is (totally) bounded, i.e.\ for each degree 
 $n$ there is an upper value $t=t(n)$ such that $F^pA^n=\{0\}$ for all $p\ge t(n)$.
Then the spectral sequence converges to H(A,d), namely
 $$ E^{p,q}_\infty\cong F^pH^{p+q}(A,d)/F^{p+1}H^{p+q}(A,d) \;.$$
\end{thm}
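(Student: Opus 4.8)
The plan is to manufacture the pages of the spectral sequence from \emph{approximate cocycles} and then identify the limit with the associated graded of the filtration that $F^\bullet$ induces on $H(A,d)$. Writing $n=p+q$ for the total degree and setting $F^jA:=A$ for $j\le 0$, I would introduce for each $r\ge 0$ the spaces
\begin{equation*}
  Z_r^{p,q}:=\{x\in F^pA^{n}: dx\in F^{p+r}A^{n+1}\},\qquad
  B_r^{p,q}:=F^pA^{n}\cap d\big(F^{p-r}A^{n-1}\big),
\end{equation*}
i.e.\ the elements of filtration level $p$ whose differential drops at least $r$ further levels, and the coboundaries in level $p$ coming from level $p-r$. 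Both are nested ($Z_{r+1}^{p,q}\subset Z_r^{p,q}$ and $B_{r-1}^{p,q}\subset B_r^{p,q}$), and I would define the $r$-th page as the subquotient
\begin{equation*}
  E_r^{p,q}:=\frac{Z_r^{p,q}}{Z_{r-1}^{p+1,q-1}+B_{r-1}^{p,q}}.
\end{equation*}
A direct check at $r=0$, using that $d$ preserves the filtration, collapses this to $F^pA^{n}/F^{p+1}A^{n}$, matching the statement.

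The differential is induced by $d$ itself. If $x\in Z_r^{p,q}$ then $dx\in F^{p+r}A^{n+1}$ and $d(dx)=0$, so $dx\in Z_r^{p+r,q-r+1}$; I would set $d_r[x]:=[dx]$. This has bidegree $(r,1-r)$ by construction, and $d_r\circ d_r=0$ is immediate from $d^2=0$. The only point to verify is independence of the representative: an $x$ in $Z_{r-1}^{p+1,q-1}$ has $dx\in d(F^{p+1}A)\cap F^{p+r}A=B_{r-1}^{p+r,q-r+1}$ (using $p+1=(p+r)-(r-1)$), while an $x$ in $B_{r-1}^{p,q}$ is closed; in either case $[dx]=0$ in the target, so $d_r$ descends to the subquotient.

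The heart of the construction --- and the step I expect to be the main obstacle --- is the identity $E_{r+1}^{p,q}\cong H\big(E_r^{p,q},d_r\big)$. This is pure bookkeeping with the nested quotients, but it is delicate: one must show that the preimage of $\ker d_r$ is exactly $Z_{r+1}^{p,q}+Z_{r-1}^{p+1,q-1}$ and that the preimage of $\im d_r$ contributes precisely $B_r^{p,q}$, so that $\ker d_r/\im d_r$ rearranges, via the second isomorphism theorem, into $Z_{r+1}^{p,q}/(Z_r^{p+1,q-1}+B_r^{p,q})=E_{r+1}^{p,q}$. I would carry this out by a diagram chase lifting classes to the $Z_r,B_r$ level and repeatedly using $d^2=0$ together with the defining inclusions (for a kernel element, writing $dx=z+dw$ with $z\in F^{p+r+1}A$ and $w\in F^{p+1}A$ exhibits $x-w\in Z_{r+1}^{p,q}$ and $w\in Z_{r-1}^{p+1,q-1}$). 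Keeping the three indices straight is where all the care goes.

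Finally, for convergence I would exploit boundedness twice. For fixed $(p,q)$, once $p+r\ge t(n+1)$ the constraint $dx\in F^{p+r}A^{n+1}=0$ forces $Z_r^{p,q}$ to stabilize to the genuine cocycles $Z^{n}\cap F^pA^{n}$, and $Z_{r-1}^{p+1,q-1}$ to stabilize to $Z^{n}\cap F^{p+1}A^{n}$; meanwhile, since $F^{p-r}A=A$ for $r$ large, $B_{r-1}^{p,q}$ stabilizes to $B^{n}\cap F^pA^{n}$, the honest coboundaries lying in $F^pA^{n}$. Hence
\begin{equation*}
  E_\infty^{p,q}\cong\frac{Z^{n}\cap F^pA^{n}}{\big(Z^{n}\cap F^{p+1}A^{n}\big)+\big(B^{n}\cap F^pA^{n}\big)}.
\end{equation*}
Defining $F^pH^{n}(A,d)$ as the image of $H^{n}(F^pA)\to H^{n}(A)$ and applying the second isomorphism theorem to $H^{n}=Z^{n}/B^{n}$, the right-hand side is exactly $F^pH^{n}/F^{p+1}H^{n}$, which completes the identification $E_\infty^{p,q}\cong F^pH^{p+q}/F^{p+1}H^{p+q}$.
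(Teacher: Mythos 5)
Your proposal is correct, and it is essentially the paper's own route: the thesis does not prove Theorem~\ref{thm:sseq} but defers to \cite[chap.~2.2]{SSeq85}, and your construction via approximate cocycles $Z_r^{p,q}$, boundaries $B_r^{p,q}$, pages $E_r^{p,q}=Z_r^{p,q}/(Z_{r-1}^{p+1,q-1}+B_{r-1}^{p,q})$, and stabilization under the bounded filtration is precisely the standard Cartan--Eilenberg/McCleary proof cited there. The step you flag as delicate does go through as you sketch: your kernel lift $x=(x-w)+w$ together with the modular-law identity $Z_{r+1}^{p,q}\cap\bigl(Z_{r-1}^{p+1,q-1}+B_r^{p,q}\bigr)=Z_r^{p+1,q-1}+B_r^{p,q}$ (using $Z_{r+1}^{p,q}\cap Z_{r-1}^{p+1,q-1}=Z_r^{p+1,q-1}$ and $B_r^{p,q}\subset Z_{r+1}^{p,q}$) yields $H(E_r,d_r)\cong E_{r+1}$, and the same second-isomorphism-theorem manipulation identifies the stabilized quotient with $F^pH^{p+q}/F^{p+1}H^{p+q}$.
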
  The proof can be found, e.g.\ in \cite[chap.\ 2.2]{SSeq85}

To see the theorem in a smaller example before applying it to the complex of a 
Courant algebroid let us compute the de Rham cohomology of the torus 
 $\T^2=\S^1\times\S^1$. One way to compute this cohomology is to use the 
K\"unneth formula. We will go a more general way which also applies to 
computing the cohomology in chapter \ref{s:regcase}. The cochains are
 $$A^n=\Omega^n(\T^2)\cong\begin{cases}
  \smooth(\S^1)\otimes\smooth(\S^1) \quad\quad \text{for }n=0\\
  \Omega^1(S^1)\otimes\smooth(\S^1)\oplus\smooth(\S^1)\otimes\Omega^1(\S^1)  \quad n=1 \\
  \Omega^1(\S^1)\otimes\Omega^1(\S^1) \quad\quad n=2  \\
  0 \quad\quad n>2
  
\end{cases}$$ i.e.\ just the tensor product of the forms on each factor 
 $\S^1$. Also the de Rham differential is the sum of the two de Rham 
differentials of each factor $\ud=\ud_A\otimes1+1\otimes\ud_B$. Note that in the 
tensor product notation as above with the odd 1-form $\alpha$, $(\ud_A+\ud_B)
(\alpha\otimes g)=(-1)\alpha\otimes \ud_B g$, because we changed the odd 
operator $\ud_B$ with $\alpha$. Let us observe that $I:=\Omega^\bullet(\S^1)
\otimes\Omega^{\ge1}(\S^1)$ is an ideal in the algebra $A^\bullet$ (the 
multiplication being the wedge product). Moreover this ideal is preserved by 
the differential, because $\ud(\alpha\otimes\beta)=(\ud_A\alpha)\otimes\beta$ 
for a form $\alpha$ of arbitrary degree and a 1-form $\beta$. Such an ideal 
implies a filtration, namely $F^pA^\bullet=I^p$ with $I^0:=A$ and here $I^2=0$,
 thus the situation is exactly as in the theorem $\{0\}=F^2A\subset 
F^1A\subset F^0A=A$, a bounded decreasing filtration.

The 0th sheet of the spectral sequence reads as follows:
 $$ E^{p,q}_0:=F^pA^{p+q}/F^{p+1}A^{p+q}\cong \begin{cases}
  \Omega^q(\S^1)\otimes\smooth(\S^1)+I \quad\text{for } p=0\\
  \Omega^{q}(\S^1)\otimes\Omega^1(\S^1) \quad p=1\\
  0 \quad\quad p\ge2
 \end{cases}
 $$  The differential is induced as explained earlier by
 $d_0=(\ud\:F^pA^{p+q}/F^{p+1}\to F^pA^{p+q+1}/F^{p+1})$.  The remaining part is 
 $d_0=\ud_A$.

Therefore the first sheet computes as:
 $$ E^{p,q}_1:=H^{p,q}(E_0,d_0)\cong\begin{cases}
  \smooth(\S^1)  \quad\text{for } q=0\text{ or }1, p=0\\
  \Omega^1(\S^1) \quad q=0\text{ or 1}, p=1\\
  0        \quad \text{otherwise}
\end{cases}$$  
The proof of theorem \ref{thm:sseq} also shows that there exists a unique
differential $d_1$ constructively.  We know that $d_1:E^{p,q}_1\to E^{p+1,q}_1$.
Moreover the only part remaining from the original $\ud$ is $\ud_B$ which maps 
exactly this way.  So it is not surprising that $d_1=\ud_B$.

Thus the second sheet reads as
 $$E^{p,q}_2:=H^{p,q}(E_1,d_1)\cong\begin{cases}
  \R \quad\text{for } q=0\text{ or }1, p=0 \\
  \R \quad  q=0\text{ or }1, p=1\\
  0  \quad\text{otherwise}
\end{cases}$$  At this step the new differential maps 
 $d_2:E^{p,q}_2\to E^{p+2,q-1}_2$, but either source or target space are trivial, 
so $d_2=0$ and also all the higher differentials.

Therefore the spectral sequence converges to 
 $E^{\bullet,\bullet}_\infty=E^{\bullet,\bullet}_2$.  In terms of vector spaces
the cohomology reads now:
 $$H^n(\T^2,\ud)\cong\bigoplus_{n=p+q}E^{p,q}_\infty=\begin{cases}
  \R \quad\text{for }n=0\text{ or }2  \\
  \R\oplus\R \quad n=1  \\
  0 \quad\text{otherwise}
\end{cases}$$

Some remarks about the underlying algebraic structure. Clearly $\ud$ is not 
just a differential of vector spaces, but a differential of the algebra 
 $\Omega^\bullet(\T^2)$. Since the ideal also preserves the product (is an 
 ideal rather than a mere normal subgroup), all sheets of the spectral sequence 
are also bigraded differential algebras. This can simplify the computation of 
the next sheets. However, the algebraic structure of the cohomology can be 
reconstructed when $E^{\bullet,\bullet}_\infty$ is a 
free bigraded commutative algebra. Thus in our case we obtain $H^\bullet(\T^2)
\cong \R\{\xi_1,\xi_2\}$ the graded commutative algebra in two odd generators 
(in accordance with the K\"unneth formula).

\chapter[Computation of the Cohomology of Courant \ldots]{Computation of the Cohomology
 of Courant algebroids with split base \MG{80\%}}\label{s:comput}
In this chapter we 
compute the spectral sequence in the case of Courant algebroids with split 
base \cite{GG08}. In particular we prove the Conjecture of Sti\'enon--Xu (see Corollary~\ref{c:trans}).

Remember that a Courant algebroid has a generating cubic Hamiltonian $H$ on the
symplectic realization $\E$ of the underlying pseudo Euclidean vector bundle.
This implies a differential $\{H,.\}=Q$, of degree 1, and acting on functions
on $\E$.  Therefore it is possible to define
\begin{vdef}\label{def:Q}\index{Courant!cohomology}  Let 
 $(E\to M,\dia,\rho,\langle.,.\rangle)$ 
be a Courant algebroid.  Let $H$ be a cubic Hamiltonian that generates the
Courant bracket on a minimal symplectic realization $\E$.  Then the \emph{Courant
cohomology} is the cohomology of the (graded) functions with respect to the
nilpotent operator $Q:=\{H,.\}$.
\end{vdef}

The price for the generalization of Lie algebroid cohomology is that one has
to deal with graded manifolds instead of an exterior algebra.

\section{Naive cohomology}\label{s:naive}
In this section, we recall the definition of the naive cohomology of a
Courant algebroid~\cite{SX07}. It is  less involved than
Definition~\ref{def:Q}; for instance, it does not use a symplectic realization
of $E$.

Mimicking the definition of the differential giving rise to the cohomology of Lie   algebroids, the idea is to consider an operator $\ud: \Gamma(\Lambda^\bullet E)\to \Gamma(\Lambda^{\bullet +1} E)$ given    by
the Cartan formula\,:
\begin{align} \label{eq:d} \langle\ud\alpha, \psi_1\wedge\ldots\psi_{n+1}\rangle :=
  &\sum_{i=1}^{n+1} (-1)^{i+1}\rho(\psi_i)\langle\alpha,\psi_1\wedge\ldots
    \widehat{\psi_i}\ldots\wedge\psi_{n+1}\rangle  \\
  &+\sum_{i<j} (-1)^{i+j} \langle\alpha,\psi_i\dia\psi_j\wedge\psi_1\wedge
    \ldots\widehat{\psi_i}\ldots\widehat{\psi_j}\ldots\psi_{n+1}\rangle  \nonumber
\end{align}
where $\psi_1,\dots \psi_{n+1}$ are sections of $E$ and $\alpha\in 
\Gamma(\Lambda^n E)$ is identified with an $n$-form on $E$, \emph{i.e.}, a 
section of $\Lambda^n E^*$ by the pseudo-metric. However, the 
formula~\eqref{eq:d} is not well defined because it is not $\smooth(M)$-linear 
in the $\psi_i$ (due, for instance, to axioms~\ref{ax:Leibn} and 
\ref{ax:nSkew} in Definition~\ref{d:Courant}.
 The operator $\ud$ does not square to zero either since it is not skew-symmetric
in the $\psi_i$.\footnote{Using the skew-symmetric bracket does not help either, 
because it only fulfills a modified Jacobi identity.}  Nevertheless, 
Sti{\'e}non--Xu~\cite{SX07} noticed that
the formula~\eqref{eq:d} for $\ud$ becomes $\smooth(M)$-linear in the
 $\psi_i$ when one restricts to $\alpha\in\Gamma(\Lambda^n(\ker\rho))$.\footnote{
In this case using the skew-symmetric or Jacobi-fulfilling bracket does not matter
since the difference is exact, thus vanishes in the inner product with $\ker\rho$.} 
In fact they proved the following.

\begin{lemma}\label{lem:d}
 Formula~\eqref{eq:d} yields a well defined operator $\ud:\Gamma(\Lambda^\bullet \ker\rho)\to \Gamma(\Lambda^{\bullet +1} \ker\rho)$. Moreover, one has $\ud \circ \ud =0$.
\end{lemma}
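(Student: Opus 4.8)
The plan is to exploit one structural fact repeatedly: since $\alpha\in\Gamma(\Lambda^n\ker\rho)$, the pairing $\langle\alpha,-\rangle$ annihilates any wedge one of whose entries lies in $\im\rho^*$, because $\im\rho^*=(\ker\rho)^\perp$ (I assume $\rho$ has locally constant rank, the regular setting of this chapter, so that $\ker\rho$ is a subbundle and this identification holds). In particular $\langle\alpha,-\rangle$ cannot see any $\D$-term $\D h=\rho^*\ud h$, and it is insensitive to replacing the Dorfman bracket by its skew-symmetrisation, since polarising \eqref{ax:nSkew} gives $\dor{\phi}{\psi}+\dor{\psi}{\phi}=\D\langle\phi,\psi\rangle\in\im\rho^*$. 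This is the precise content of the two footnotes and is the engine of the whole proof.

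Well-definedness splits into tensoriality and the claim that $\ud\alpha$ again lands in $\Lambda^{\bullet+1}\ker\rho$. First I would record that, on $\alpha\in\Lambda^n\ker\rho$, the right-hand side of \eqref{eq:d} is skew-symmetric in $\psi_1,\dots,\psi_{n+1}$: the anchor sum is visibly alternating, and the bracket sum becomes alternating once one is allowed to skew-symmetrise $\dia$, which the previous paragraph permits. Granting skew-symmetry, $\smooth(M)$-linearity need only be checked in the last slot $\psi_{n+1}\mapsto f\psi_{n+1}$; there the right Leibniz rule \eqref{ax:Leibn} produces no $\D$-term, and the derivative contributions $\rho(\psi_i)[f]$ coming from the anchor sum and from the pairs $(i,n+1)$ of the bracket sum appear with opposite signs and cancel, exactly as in the classical Cartan-formula computation. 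Hence $\ud\alpha$ is a genuine section of $\Lambda^{n+1}E^*\cong\Lambda^{n+1}E$. To see it lies in $\Lambda^{n+1}\ker\rho$ I would feed in one argument of the form $\D g$: the anchor term with $i=1$ vanishes because $\rho\circ\rho^*=0$, the pairs $(1,j)$ vanish because $\dor{\D g}{\psi_j}=0$, and every remaining term carries $\D g\in\im\rho^*$ inside a pairing with $\alpha$ and so dies. The identity $\dor{\D g}{\psi}=0$ I would get cheaply from the super-language of Theorem~\ref{p:dBrack}: $\D g=\{H,g\}=Qg$, whence $\{H,\D g\}=\tfrac12\{\{H,H\},g\}=0$ by $Q^2=0$, so $\dor{\D g}{\psi}=\{\{H,\D g\},\psi\}=0$.

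For $\ud\circ\ud=0$ the point is that, by the previous step, both $\alpha$ and $\ud\alpha$ lie in $\Lambda^\bullet\ker\rho$. Expanding $\langle\ud\ud\alpha,\psi_1\wedge\dots\wedge\psi_{n+2}\rangle$ by two applications of \eqref{eq:d} therefore produces only pairings of $\ker\rho$-forms, each of which ignores $\D$-terms and the symmetric part of $\dia$. I may consequently replace $\dia$ throughout by the skew bracket and treat its Jacobiator as exact (it equals $\tfrac16\D(\langle[\psi_i,\psi_j],\psi_k\rangle+\mathrm{cycl.})\in\im\rho^*$), which reduces the whole expression verbatim to the classical Chevalley--Eilenberg identity behind $\ud_A^2=0$ for the Lie-algebroid differential \eqref{dLie}: the anchor--anchor terms recombine through the morphism property \eqref{rhomor}, $\rho(\dor{\psi_i}{\psi_j})=[\rho\psi_i,\rho\psi_j]$, with the diagonal anchor--bracket terms, while the double-bracket terms cancel by the Jacobi identity \eqref{ax:Jacobi}. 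Every correction generated by the non-skewness of $\dia$ lands in $(\ker\rho)^\perp$ and is annihilated.

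The routine but delicate part — the main obstacle — is the sign-and-index bookkeeping of the $\ud^2$ double sum together with the verification that each non-classical correction term genuinely lands in $\im\rho^*$. I would package this as a single observation (the symmetric part of $\dia$, the Jacobiator of the skew bracket, and all $\D$-terms lie in $\im\rho^*=(\ker\rho)^\perp$) so that the surviving combinatorics is literally the Lie-algebroid computation and can be quoted rather than reproduced.
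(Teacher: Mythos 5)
Your proof is correct and follows essentially the same route as the paper's, which merely records that the failures of $\smooth(M)$-linearity, skew-symmetry, and the Jacobi identity are all exact terms in $\im\rho^*$ killed by pairing against $\Lambda^\bullet\ker\rho$, so that everything reduces to the classical Lie-algebroid Cartan/Chevalley--Eilenberg computation (details deferred to \cite{SX07}); your explicit verification that $\ud\alpha$ lands back in $\Gamma(\Lambda^{\bullet+1}\ker\rho)$, via $\dor{\D g}{\psi}=0$, is precisely what the paper compresses into the remark that $\rho\circ\rho^*=0$. One small caveat: your standing assumption that $\rho$ has locally constant rank is not made in the paper (its footnote explicitly allows $\ker\rho$ to be singular) and is in fact unnecessary, since every fact you invoke --- that $\langle\rho^*\xi,e\rangle=0$ for $e\in\ker\rho$ and that $(\im\rho^*_p)^{\perp}=\ker\rho_p$ --- is pointwise linear algebra valid at each point, so your argument goes through verbatim with $\Gamma(\Lambda^\bullet\ker\rho)$ understood in the pointwise sense.
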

Note that  $\ker \rho$ may be a singular vector bundle.\footnote{In this
 case $\Gamma(\ker \rho)$ means smooth
 sections of $E$ that are pointwise in the kernel of $\rho$.  We define
 similarly sections of  $\Lambda^\bullet\ker\rho$ where  $\rho$ has been
 extended as an odd $\smooth(M)$-linear derivation $\Lambda^\bullet E\to
 \Lambda^\bullet E\otimes TM$.}

\begin{proof} 
The first claim follows 
from the fact that the failure of the Leibniz rule in the left-hand side
is an exact term, \emph{i.e.}, is in the image of $\rho^*$ and that $\rho
 \circ \rho^*=0$.  Now the terms for $\ud\circ\ud$ add up to zero using
the Jacobi identity as in the Lie algebroid case, since all terms are equivalent
to those using the skew-symmetric bracket. See~\cite{SX07} Section 1 for more details.
\end{proof}

\begin{rem}\label{rem:deRham}
For a Lie algebroid $A$, the space $(\Gamma(\Lambda^\bullet A^*),\ud)$, where
 $\ud$ is the operator given by formula~\eqref{eq:d}, defines its cohomology.  
In particular, it calculates the de Rham cohomology of $M$ when $A=TM$.
\end{rem}

Thanks to the pseudo-metric $\langle.,.\rangle $ on $E$, one
can view $\Gamma(\Lambda^n E) $ as graded functions on $E[1]$ (of degree n),
which can further be pulled back to the minimal symplectic
realization $\pi: \E\to E[1]$.  Thus, we can identify
 $\Gamma(\Lambda^n(\ker\rho))$ with a subalgebra of $\smooth(\E)$.  Sti\'enon--Xu \cite{SX07} proved the following Proposition.
\begin{prop}\label{lem:Q=d}
 The Q-structure $Q=\{H,.\}$ (see Definition~\ref{def:Q}) maps
 $\Gamma(\Lambda^\bullet(\ker\rho))$ to itself. Moreover, if $\alpha \in \Gamma(\Lambda^\bullet(\ker\rho))$, then $Q(\alpha)=\ud(\alpha)$.
\end{prop}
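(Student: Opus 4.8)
The plan is to compute $Q\alpha=\{H,\alpha\}$ in a local trivialisation and to compare it term by term with the Cartan formula~\eqref{eq:d}. Choose coordinates $x^i$ of degree $0$, fibre coordinates $\xi^a$ of degree $1$ attached to a local frame $\xi_a$ of $E$ with $g_{ab}=\langle\xi_a,\xi_b\rangle$, and their symplectic duals $p_i$ of degree $2$; by the construction in the proof of Theorem~\ref{p:dBrack} one has $H=H_\rho+H_C$ with $H_\rho=\rho^i_a(x)\,\xi^a p_i$ and $H_C=-\tfrac16 C_{abc}(x)\,\xi^a\xi^b\xi^c$, where $C_{abc}=\langle\dor{\xi_a}{\xi_b},\xi_c\rangle$. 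An element $\alpha\in\Gamma(\Lambda^n(\ker\rho))$, pulled back from $E[1]$, is a $p$-independent function $\alpha=\tfrac1{n!}\alpha_{a_1\cdots a_n}(x)\,\xi^{a_1}\cdots\xi^{a_n}$, and the Poisson bracket has the local form coming from $\Pi=\pfrac{}{p_i}\wedge\pfrac{}{x^i}+\tfrac12 g^{ab}\pfrac{}{\xi^a}\pfrac{}{\xi^b}$, so that writing out $\{H_\rho,\alpha\}+\{H_C,\alpha\}$ is a finite, mechanical computation.

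First I would isolate the only contribution that could leave the subalgebra of functions pulled back from $E[1]$, namely the $p$-linear term produced by the $\xi$-contraction in $\{H_\rho,\alpha\}$, which up to sign equals $g^{ab}\rho^i_a\,p_i\,\pfrac{\alpha}{\xi^b}$. The operator $g^{ab}\rho^i_a\,\pfrac{}{\xi^b}=\imath_{\rho^*\ud x^i}$ is the interior product by the section $\rho^*\ud x^i=g^{ab}\rho^i_a\,\xi_b\in\im\rho^*$. Since $\langle\rho^*\beta,e\rangle=\beta(\rho e)$ for $\beta\in T^*M$ and $e\in E$, one has $(\im\rho^*)^\perp=\ker\rho$; hence, under the metric identification $E\cong E^*$, the multivector $\alpha\in\Lambda^n\ker\rho$ corresponds to an exterior product of covectors each annihilating $\im\rho^*$, and contracting such a form with $\rho^*\ud x^i\in\im\rho^*$ gives zero. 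Therefore the $p$-linear term vanishes and $Q\alpha$ is again $p$-independent, i.e.\ a section of $\Lambda^{n+1}E$.

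It then remains to match the surviving $p$-independent part with~\eqref{eq:d}. The term $\rho^i_a\,\xi^a\,\pfrac{\alpha}{x^i}$ coming from the $\{p,x\}$-part of $\{H_\rho,\alpha\}$ reproduces, after pairing with $\psi_1\wedge\cdots\wedge\psi_{n+1}$ and using $\rho(\xi_a)=\rho^i_a\,\partial_{x^i}$, the first (anchor) sum; and $\{H_C,\alpha\}$, with $\dor{\xi_a}{\xi_b}=g^{cd}C_{abd}\,\xi_c$, reproduces the second (bracket) sum, since $C_{abc}=\langle\dor{\xi_a}{\xi_b},\xi_c\rangle$. Carrying this out gives $Q\alpha=\ud\alpha$ as sections of $\Lambda^{n+1}E$; and because Lemma~\ref{lem:d} guarantees $\ud\alpha\in\Gamma(\Lambda^{n+1}(\ker\rho))$, this simultaneously proves that $Q$ preserves $\Gamma(\Lambda^\bullet(\ker\rho))$ and that $Q=\ud$ there. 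A coordinate-free variant is to write the pairing $\langle Q\alpha,\psi_1\wedge\cdots\wedge\psi_{n+1}\rangle$ as the iterated bracket $\{\cdots\{\{H,\alpha\},\ell\psi_1\}\cdots,\ell\psi_{n+1}\}$ and to reorganise it with the graded Jacobi identity using the identities $\langle\phi,\psi\rangle=\{\phi,\psi\}$, $\rho(\psi)[f]=\{\{\psi,H\},f\}$ and $\dor{\phi}{\psi}=\{\{H,\phi\},\psi\}$ of Theorem~\ref{p:dBrack}.

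The main obstacle is the graded sign-bookkeeping: confirming that the $p$-linear contribution is exactly the interior product $\imath_{\rho^*\ud x^i}$ rather than a sign-twisted cousin, and then tracking the Koszul signs and the $1/n!$ combinatorial factors closely enough to land precisely on the signs $(-1)^{i+1}$ and $(-1)^{i+j}$ of~\eqref{eq:d}. No deeper difficulty is expected, since $Q$ and $\ud$ are both globally defined, so a single local computation suffices and the choice of frame is immaterial.
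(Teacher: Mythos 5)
The thesis does not actually prove Proposition~\ref{lem:Q=d}: it is quoted as a result of Sti\'enon--Xu, with the proof deferred to \cite{SX07}. So there is no in-text argument to measure you against; what you have written supplies the missing proof, and its core is correct. The crucial step --- and the only place where the hypothesis $\alpha\in\Gamma(\Lambda^\bullet\ker\rho)$ is used --- is exactly the one you isolate: the single term of $\{H,\alpha\}$ that is linear in the degree-$2$ coordinates is $g^{ab}\rho^i_a\,p_i\,\partial\alpha/\partial\xi^b$, and the operator $g^{ab}\rho^i_a\,\partial/\partial\xi^b$ is contraction with $\rho^*\ud x^i$; since $\langle\rho^*\beta,e\rangle=\beta(\rho e)=0$ for $e\in\ker\rho$, i.e.\ $(\ker\rho)^\perp=\im\rho^*$ fiberwise, this contraction annihilates $\Lambda^\bullet\ker\rho$, so $Q\alpha$ descends to $E[1]$. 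That is precisely why $Q$ preserves the naive subalgebra while failing to preserve $\Gamma(\Lambda^\bullet E)$ in general. Two small points deserve tightening. First, the formula $H=\rho^i_a\xi^ap_i-\tfrac16C_{abc}\xi^a\xi^b\xi^c$ from the proof of Theorem~\ref{p:dBrack} is written in an \emph{orthonormal} frame; either take $g_{ab}$ constant (as the thesis does) or account for derivative-of-metric corrections to $H$ --- the vanishing argument is unaffected, but the matching with \eqref{eq:d} is cleanest in the orthonormal frame. Second, your term-by-term identification (anchor term with the first sum, $\{H_C,\alpha\}$ with the second) is literally valid only when the $\psi_k$ are frame sections with constant coefficients: for general $\psi_k$ the two sums of \eqref{eq:d} separately contain derivative-of-coefficient terms that cancel only against each other. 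This reduction to frame sections is legitimate because both $Q\alpha$ and (by Lemma~\ref{lem:d}) $\ud\alpha$ are tensorial, but you should say so explicitly; note this uses Lemma~\ref{lem:d} already at the matching stage, not only in the final sentence. With those provisos, and modulo the sign bookkeeping you acknowledge, the argument is complete; your coordinate-free variant via iterated brackets $\{\cdots\{\{H,\alpha\},\ell\psi_1\}\cdots,\ell\psi_{n+1}\}$ is in fact the cleaner way to do the matching and is closer in spirit to the argument of \cite{SX07}.
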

In other words, $Q$ restricted to $\Gamma(\Lambda^\bullet(\ker\rho))$ coincides
with the differential $\ud$ given by formula~\eqref{eq:d} (note that
Proposition~\ref{lem:Q=d} also implies that $\ud\circ \ud=0$).
Since $\ud$ squares to 0,  Sti{\'e}non--Xu defined\,:

\begin{vdef}\label{d:naive}\index{Cohomology!naive}  Let $(E,[.,.],\rho,\langle.,.\rangle)$ be a
Courant algebroid.  The \emph{naive cohomology} of $E$ is the cohomology of  the sections of
 $\Lambda^\bullet\ker\rho$ equipped with the differential $\ud$ given by  the Cartan-formula~\eqref{eq:d}.
\end{vdef}
We denote $H_{naive}^\bullet(E)=H^\bullet(\Gamma(\Lambda^\bullet\ker\rho),\ud)$
the naive cohomology groups of $E$.  By Proposition~\ref{lem:Q=d}, there is a
canonical morphism $\phi:H_{naive}^\bullet(E)\to H^\bullet_{std}(E) $ from the
naive cohomology to the  (standard) cohomology of Courant algebroids, see~\cite{SX07}. We will
prove that this morphism is an isomorphism in the transitive case, see
Corollary~\ref{c:trans}.

\section[Geometric spectral sequence for cohomology of Courant \ldots]{Geometric 
 spectral sequence for cohomology of Courant algebroids with split base}
 \label{s:regcase}
In this section we define a spectral sequence converging to the cohomology of a
Courant algebroid.  Then, in the case of Courant algebroids with split base,
we compute the spectral sequence in terms of \notneeded{smooth} geometrical
data.  For details about spectral sequences, refer to \cite{CEha, SSeq85}.

\subsection{The naive ideal spectral sequence}\label{s:naiveSS}
The algebra $A^\bullet:=C^\infty(\E)$ of graded functions on $\E$ is endowed
with a natural filtration induced by the ideal generated by the kernel of the
anchor map $\rho: E\to TM$.   More precisely, let $I$ be the ideal
 $$I:=\Gamma(\Lambda^{\ge1}\ker\rho)\cdot C^\infty(\E)\;,$$
{\it i.e.},  the ideal of functions containing at least one coordinate of
 $\ker\rho$, where we identify sections of $E$ to odd functions on $E$ by the
pseudo-metric (as in Section~\ref{s:naive}). Since $\ker \rho$ gives rise to
the naive cohomology, we call $I$ the naive ideal of $\E$.
\begin{lemma} $I$ is a differential ideal of the differential graded algebra
 $(A^\bullet,Q)$.
\end{lemma}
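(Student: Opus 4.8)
The plan is to verify the two defining properties of a differential ideal separately: that $I$ is an ideal of $A^\bullet$, and that $Q(I)\subseteq I$. The first is immediate, since $I$ is \emph{defined} as the ideal $\Gamma(\Lambda^{\ge1}\ker\rho)\cdot C^\infty(\E)$ generated by the kernel coordinates; nothing needs to be checked beyond recalling the definition.

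The real content is the stability of $I$ under $Q$, and here I would exploit that $Q=\{H,.\}$ is an odd derivation of the graded-commutative algebra $A^\bullet$. Indeed, the graded Leibniz rule for the degree $-2$ Poisson bracket, applied with $H$ of degree $3$, gives $Q(fg)=Q(f)\,g+(-1)^{|f|}f\,Q(g)$ for homogeneous $f$. Because $Q$ is a derivation, it suffices to test it on a generating set of the ideal. Writing a general element of $I$ as a finite sum $\sum_i \alpha_i g_i$ with $\alpha_i\in\Gamma(\Lambda^{\ge1}\ker\rho)$ and $g_i\in C^\infty(\E)$, the Leibniz rule reduces the claim to showing that $Q(\alpha)\in I$ for every $\alpha\in\Gamma(\Lambda^{\ge1}\ker\rho)$: the cross term $\alpha_i\,Q(g_i)$ already lies in $I$ since $\alpha_i$ is a kernel generator.

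For the remaining term I would simply invoke Proposition~\ref{lem:Q=d}, which asserts that $Q$ preserves the subalgebra $\Gamma(\Lambda^\bullet\ker\rho)$ and there coincides with the naive differential $\ud$. Since $Q$ has degree $+1$, it sends $\Gamma(\Lambda^{k}\ker\rho)$ into $\Gamma(\Lambda^{k+1}\ker\rho)$; thus for $k\ge1$ we get $Q(\alpha)\in\Gamma(\Lambda^{\ge2}\ker\rho)\subseteq\Gamma(\Lambda^{\ge1}\ker\rho)\subseteq I$, and consequently $Q(\alpha)\,g_i\in I$. Combining the two contributions yields $Q\big(\sum_i\alpha_i g_i\big)\in I$, as desired.

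I do not expect a genuine obstacle: once Proposition~\ref{lem:Q=d} is available, the argument is purely formal, resting only on the derivation property of $Q$ and a degree count. The one point deserving care is the bookkeeping that $Q$ cannot lower an element out of $\Lambda^{\ge1}$---which the degree shift $+1$ rules out---so that the image lands inside the \emph{positive}-degree part of the kernel subalgebra rather than merely inside $\Gamma(\Lambda^\bullet\ker\rho)$, which is exactly what keeps us in $I$.
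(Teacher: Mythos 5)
Your proposal is correct and is essentially the paper's own argument: the paper likewise reduces the claim to the facts that $Q$ is a derivation and that $Q(\Gamma(\Lambda^n\ker\rho))\subset\Gamma(\Lambda^{n+1}\ker\rho)$, citing Proposition~\ref{lem:Q=d} together with Lemma~\ref{lem:d}. Your only cosmetic difference is deducing the exterior-degree shift from the total degree $+1$ of $Q$ rather than quoting Lemma~\ref{lem:d} for $\ud$; both routes are equivalent once Proposition~\ref{lem:Q=d} identifies $Q$ with $\ud$ on the kernel subalgebra.
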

\begin{proof}
According to Proposition~\ref{lem:Q=d} and Lemma~\ref{lem:d}, we have  $Q(\Gamma(\Lambda^n \ker\rho)) \subset\Gamma(\Lambda^{n+1} \ker \rho)$. The result follows since $Q$
is a derivation.
\end{proof}

Since $I$ is a differential ideal, we have a decreasing bounded (since $E$ is 
finite dimensional) filtration of differential graded algebras $A^\bullet= 
 F^0A^\bullet \supset F^1A^\bullet \supset F^2A^\bullet \dots$, where $F^pA^q:= 
 I^p\cap A^q$. Therefore\,:
\begin{prop}\label{l:SS}
There is a  spectral sequence of algebras
 $$E_0^{p,q}:=F^pA^{p+q}/F^{p+1}A^{p+q} \Longrightarrow H^{p+q}_{std}(E) $$
converging to the cohomology of the Courant algebroid  $E$.
\end{prop}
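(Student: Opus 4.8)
The plan is to recognize $(A^\bullet,Q,F^\bullet)$ as a bounded filtered differential graded algebra and to invoke Theorem~\ref{thm:sseq}. First I would check that the filtration is compatible with the differential. The filtration $F^pA^\bullet=I^p\cap A^\bullet$ is decreasing with $F^0A^\bullet=A^\bullet$, and the preceding Lemma shows that $I$ is a differential ideal, so $Q(I)\subset I$. Since $Q=\{H,.\}$ is a derivation of the product, a short induction via the Leibniz rule $Q(I\cdot I^{p-1})\subset Q(I)\cdot I^{p-1}+I\cdot Q(I^{p-1})$ gives $Q(I^p)\subset I^p$. As $Q$ raises the total degree by one, $Q(A^n)\subset A^{n+1}$, and therefore $Q(F^pA^n)\subset F^pA^{n+1}$. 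Thus $(A^\bullet,Q,F^\bullet)$ is genuinely a filtered differential graded module, and the first part of Theorem~\ref{thm:sseq} produces a spectral sequence with $E_0^{p,q}=F^pA^{p+q}/F^{p+1}A^{p+q}$ and differential $d_0$ induced by $Q$.

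Next I would verify boundedness so that the convergence clause of Theorem~\ref{thm:sseq} applies. The ideal $I$ is generated by the degree-one odd coordinates dual to $\ker\rho$ under the pseudo-metric, each generator carrying total degree one; hence every element of $I^p$ has total degree at least $p$, so $F^pA^n=I^p\cap A^n=0$ whenever $p>n$. For each fixed $n$ the filtration therefore terminates, $\{0\}=F^{n+1}A^n\subset\cdots\subset F^0A^n=A^n$, which is exactly the total boundedness hypothesis with $t(n)=n+1$; this is the only place where finite-dimensionality of $E$ is used. Note that although $A^\bullet$ is itself unbounded in the total degree (high powers of the degree-two symplectic coordinates $p_i$ occur), this causes no trouble, since convergence in Theorem~\ref{thm:sseq} is tested degree by degree. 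Consequently the spectral sequence converges, with $E_\infty^{p,q}\cong F^pH^{p+q}(A^\bullet,Q)/F^{p+1}H^{p+q}(A^\bullet,Q)$.

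Then I would identify the abutment and the algebra structure. By Definition~\ref{def:Q} the standard Courant cohomology $H^\bullet_{std}(E)$ is precisely the cohomology of the graded functions $A^\bullet=C^\infty(\E)$ with respect to $Q$, so $H^\bullet(A^\bullet,Q)=H^\bullet_{std}(E)$ tautologically, and the spectral sequence converges to $H^{p+q}_{std}(E)$ as stated. To upgrade this to a spectral sequence \emph{of algebras}, I would observe that the filtration is multiplicative, $F^pA^\bullet\cdot F^{p'}A^\bullet=I^p\cdot I^{p'}\subset I^{p+p'}=F^{p+p'}A^\bullet$, and that $Q$ is a derivation of the product; exactly as in the torus computation of Section~\ref{thm:sseq}'s illustration, the product then descends to each page, making every $E_r^{\bullet,\bullet}$ a bigraded differential algebra and the edge identifications ring morphisms.

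I would expect no deep obstacle here: once the naive ideal is known to be differential and to have bounded powers, the statement is a direct application of the general machinery of Theorem~\ref{thm:sseq}. The only genuine points requiring care are the stability $Q(I^p)\subset I^p$ (handled by the derivation property) and the per-degree boundedness of the filtration (handled by the degree count), together with the bookkeeping that the abutment is, by the very Definition~\ref{def:Q}, the standard cohomology.
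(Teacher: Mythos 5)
Your proposal is correct and takes essentially the same route as the paper: filter $(A^\bullet,Q)$ by the powers of the naive differential ideal $I$ and invoke the general Theorem~\ref{thm:sseq} for filtered differential graded modules, with convergence coming from boundedness of the filtration. The only (harmless) difference is in how boundedness is justified: the paper appeals to finite-dimensionality of $E$, which gives the uniform bound $I^p=0$ for $p>\rk E$ since $\Lambda^p\ker\rho$ vanishes in high exterior degree, whereas you use the per-degree count $F^pA^n=0$ for $p>n$ coming from the degree-one generators of $I$ --- both satisfy the hypothesis of Theorem~\ref{thm:sseq}.
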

We call this spectral sequence the \emph{naive ideal spectral sequence}.
\begin{proof} The spectral sequence is the one induced by the filtration $F^\bullet A^\bullet$ of the complex $(A^\bullet,Q)$. It is convergent because the filtration is bounded.
\end{proof}
\begin{rem}
In order for the naive ideal spectral sequence to be useful, one needs to be
able to calculate the higher sheets $E_k^{p,q}$ of the spectral sequence in
terms of (smooth) geometry of $E$. Such calculations involve the image of the
anchor map. Thus it seems reasonable to restrict to the class of regular
Courant algebroids. \GGi{For general regular Courant algebroids,
the foliation groupoid given by the integral leaves of $D$ may be complicated
and suggests to make the computation in the framework of stacks, see
Remark~\ref{rem:stack} below.}\MG{It might not even be a stack, because the
groupoid is not Lie either.  See my notes after Remark~\ref{rem:stack}.}
\end{rem}

\subsection{Courant algebroids with split base} \label{s:split}

In this section we compute explicitly the naive ideal spectral sequence
of Proposition~\ref{l:SS} for what we call Courant algebroid with split base
and then prove the conjecture of Sti\'enon--Xu as a special case.

\medskip

By the bracket preserving property of the anchor-map $\rho$, $D:=\img\rho$ is 
an integrable (possibly singular) distribution.
\begin{vdef}\label{d:split}  A Courant algebroid $(E\to M,\langle.,.\rangle,
 [.,.],\rho)$ is said to have \emph{split base} iff $M\cong L\times N$ and the image of
 the anchor map is $D:=\im\rho\cong TL\times N\subset TM$.
\end{vdef}
In particular, a Courant algebroid with split base is a regular Courant
algebroid.  Furthermore, the integral leaves of the distribution $D$ are
smoothly parametrized by the points of $N$; thus the quotient $M/D$ by the
integral leaves of the foliation is isomorphic to $N$.

\begin{rem} \label{rem:lcoo}
The local coordinates $\xi^a$ for $E$ introduced in proof of Theorem \ref{p:dBrack} can 
be splitted accordingly to the isomorphism $D\cong TL\times N$. This splitting 
is useful in order to do local computations. More precisely, over a 
chart-neighborhood $U$ of $M$, $D:=\img\rho$ can be spanned by coordinate 
vector fields $\pfrac{}{x^I}$. Let $\xi^I:=\rho^*\ud x^I$ ($\in \ker \rho$) be 
vectors that span the image of $\rho^*$, and let $\xi_I$ be preimages of 
 $\partial_I$, dual to the $\xi^I$. Then choose coordinates $\xi^A\in \ker 
\rho$ normal (with respect to the pseudo-metric) and orthogonal to both the 
 $\xi^I$s and the $\xi_I$s. Therefore we have split the coordinates $\xi^a$s in 
the three subsets consisting of the $\xi^I$s, the $\xi_I$s and the $\xi^A$s. 
Furthermore this splitting also induces a splitting of the degree 2 
coordinates $p_i$s (the conjugates of the coordinates on $M$) into the $p_I$s, 
which are the symplectic duals of the $x^I$, and the $p_{I'}$s (their 
complements for which the Poisson bracket with the $x^I$s vanish). Since 
 $D\cong TL\times N$, the coordinates $x^I$ and $x^{I'}$ can be chosen to be 
coordinates of $L$ and $N$ respectively.

The Hamiltonian in these coordinates reads as $H=p_I\xi^I+\frac16C_{abc}(x)
\xi^a\xi^b\xi^c$.
Note that in order to compute structure functions like $C_{abc}$ you need
 $\xi_A$ which is the dual frame of $\xi^A$ or due to the pseudo orthonormality
 $\xi_A=\pm\xi^A$.
\end{rem}

\medskip

We denote $\X^q(N)$ the space of (degree q) symmetric multivector fields 
 $\Gamma_N(S^{q/2}(TN))$ with the convention that $S^{q/2}(TN)$ is $\{0\}$ for 
odd $q$'s, \emph{i.e.}, $\X^\bullet(N)$ is concentrated in even degrees. With 
these notations, the sheet $E_1^{\bullet,\bullet}$ of the spectral sequence of 
Lemma~\ref{l:SS} is given by\,:
\begin{lemma}\label{l:E1} For a Courant algebroid with split base $D\cong TL\times N$,
  one has
\begin{align*}
  E_1^{p,q}\cong \Gamma_M\Lambda^p(\ker\rho)\otimes \X^q(N).
\end{align*}
\end{lemma}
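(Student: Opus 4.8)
The plan is to work in the adapted local coordinates of Remark~\ref{rem:lcoo} and to recognize the page-zero differential $d_0$ as a \emph{deformation} of a Koszul differential whose cohomology can be read off directly. First I would make $E_0^{\bullet,\bullet}$ explicit. Writing $A^\bullet=\smooth(\E)$ in the coordinates $x^I,x^{I'}$ (degree $0$, on $L$ and $N$), $\xi^I=\rho^*\ud x^I$ and $\xi^A$ (degree $1$, spanning $\ker\rho$), $\xi_I$ (degree $1$, the preimages of $\partial_{x^I}$, not in $\ker\rho$) and $p_I,p_{I'}$ (degree $2$), the filtration degree is $p=\#\{\xi^I,\xi^A\text{-factors}\}$ and the complementary degree is $q=\#\{\xi_I\}+2\,\#\{p\}$. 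Thus, as a bigraded module, $E_0\cong\smooth(M)\otimes\Lambda[\xi^I,\xi^A,\xi_I]\otimes S[p_I,p_{I'}]$, and the induced differential $d_0$ is exactly the bidegree-$(0,1)$ (i.e.\ filtration-preserving) part of $Q=\{H,.\}$, with $H=p_I\xi^I+\tfrac16 C_{abc}(x)\xi^a\xi^b\xi^c$.

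Next I would compute $Q$ on the coordinate generators and extract $d_0$. The quadratic part $\{p_I\xi^I,.\}$ produces $\xi_I\mapsto p_I$ (which preserves $p$) together with $x^I\mapsto-\xi^I$ (which raises $p$, so belongs to $d_1$, not $d_0$), while the cubic part contributes $p_i\mapsto\tfrac16\partial_{x^i}C_{abc}\xi^a\xi^b\xi^c$ and analogous terms on $\xi^I,\xi^A$. So, unlike in the torus example, $d_0$ is \emph{not} simply a Koszul differential: the cubic structure functions — among them the ``curvature'' $\langle\dor{\xi_J}{\xi_K},\xi_L\rangle$, which need not vanish — survive into $d_0$. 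This is the main obstacle.

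The device that resolves it is to refine $E_0$ by the auxiliary count $s:=\#\{\xi_I\}+\#\{p_I\}$ of the $L$-directed ``Koszul'' variables, and to run the spectral sequence of this ($s$-bounded) filtration on the complex $(E_0,d_0)$. A bookkeeping of bidegree together with $s$ shows that the only $s$-preserving part of $d_0$ is the Koszul differential $\delta=\sum_I p_I\,\partial_{\xi_I}$: indeed every cubic contribution creates at least one extra $\xi_I$ and hence strictly raises $s$ (for instance $\langle\dor{\rho^*\ud x^I}{\xi_J},\xi_K\rangle=\langle\rho^*\ud x^I,\dor{\xi_J}{\xi_K}\rangle=\ud x^I([\partial_{x^J},\partial_{x^K}])=0$ by \eqref{rhomor}, and the other cubic terms land in $s\ge 2$). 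Since $\delta$ is the Koszul differential of the regular sequence $(p_I)$ it is acyclic, with cohomology $\smooth(M)\otimes\Lambda[\xi^I,\xi^A]\otimes S[p_{I'}]$ concentrated in $s=0$; all higher differentials of the auxiliary spectral sequence then vanish, since they raise $s$ into the zero region.

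Finally I would identify this cohomology geometrically. Over $\smooth(M)$ the exterior algebra $\Lambda[\xi^I,\xi^A]$ is $\Gamma_M(\Lambda^\bullet\ker\rho)$, while $S[p_{I'}]$ is $\Gamma_N(S^\bullet TN)$ pulled back from $N\cong M/D$, the $p_{I'}$ being the symplectic conjugates of the $N$-coordinates $x^{I'}$. Matching bidegrees, the $p$-part contributes $\Lambda^p(\ker\rho)$ and the degree-$q$ part contributes $S^{q/2}(TN)$, so that $E_1^{p,q}\cong\Gamma_M\Lambda^p(\ker\rho)\otimes\X^q(N)$, as claimed.
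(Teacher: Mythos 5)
Your final formula is the right one, but the central step of your argument contains a genuine error, and the device you invent to repair it would not work if the problem it addresses were real. The error is in the index bookkeeping: in the frame of Remark~\ref{rem:lcoo} the metric pairs the lower coordinates $\xi_I$ with the upper ones $\xi^I=\rho^*\ud x^I$, so the structure constant $\langle\dor{\xi_J}{\xi_K},\xi_L\rangle$ you single out is the coefficient in $H$ of the monomial $\xi^J\xi^K\xi^L$ --- three factors of $\ker\rho$, hence filtration degree $3$ --- and not of a filtration-zero monomial. In fact \emph{every} monomial of the cubic part of $H$ has exactly three $\ker\rho$-factors: the coefficient of a monomial containing a factor $\xi_I$ is (up to ordering) a structure constant with the section $\rho^*\ud x^I=\D x^I$ in one of its slots, and all such constants vanish, because (i) $\dor{(\D f)}{\psi}=0$ for any $f,\psi$ (polarize \eqref{ax:nSkew} and use $\dor{\psi}{(\D f)}=\D\langle\psi,\D f\rangle$, which follows from \eqref{ax:adInv} and \eqref{rhomor}); (ii) $\dor{\psi}{(\D x^I)}=\D\big(\rho(\psi)[x^I]\big)=0$ for the frame sections, since $\rho(\psi)[x^I]$ is constant there; and (iii) $\langle\dor{\psi_1}{\psi_2},\D x^I\rangle=[\rho\psi_1,\rho\psi_2][x^I]=0$ for the frame sections by \eqref{rhomor}. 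Since the Poisson bracket contracts at most one $\xi$-factor of $H_{\mathrm{cubic}}$ against at most one $\xi$-factor of its argument, $\{H_{\mathrm{cubic}},\cdot\}$ raises the filtration by at least one. Hence $d_0=p_I\pfrac{}{\xi_I}$ \emph{exactly}, which is what the paper proves; your ``main obstacle'' does not exist. The curvature terms you worry about feed the \emph{higher} differentials: they are precisely the transgression $\ud_3=\tfrac16 C_{IJK,L'}\xi^I\xi^J\xi^K\pfrac{}{p_{L'}}$ of Proposition~\ref{P:D=C}, not $d_0$.

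Moreover, had those terms really sat inside $d_0$, your auxiliary $s$-filtration would not dispose of them: the operator $\partial_{x^{I'}}C_{JKL}\,\xi^J\xi^K\xi^L\pfrac{}{p_{I'}}$ \emph{preserves} your count $s$ (neither $p_{I'}$ nor the upper $\xi^J$ are counted in $s$), and its companion $\partial_{x^{I}}C_{JKL}\,\xi^J\xi^K\xi^L\pfrac{}{p_{I}}$ \emph{lowers} $s$, so the claim that every cubic contribution strictly raises $s$ is false --- your sample computation $\langle\dor{\rho^*\ud x^I}{\xi_J},\xi_K\rangle=0$ is correct (though the middle equality needs the observation that the pairings $\langle\rho^*\ud x^I,\xi_J\rangle$ are constant; it is not a Courant axiom), but it only concerns coefficients of monomials of type $\xi^J\xi^K\xi_I$, not the type $\xi^J\xi^K\xi^L$ you were worried about. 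Once $d_0=p_I\pfrac{}{\xi_I}$ is established, your remaining steps --- acyclicity of the Koszul differential for the regular sequence $(p_I)$ and the identification of its cohomology with $\smooth(M)\otimes\Lambda[\xi^I,\xi^A]\otimes S[p_{I'}]\cong\Gamma_M\Lambda^\bullet(\ker\rho)\otimes\X^\bullet(N)$ --- coincide with the paper's proof, except that the paper also takes care to globalize the chart computation (via the bundles $B$ and $B^1$ and the sheaf quasi-isomorphism $\eta$), a point your coordinate argument passes over.
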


\begin{proof}
The differential $\ud_0:E_0^{p,q}\to E_0^{p,q+1}$  is the
differential induced on the associated graded $\bigoplus
 F^pA^{p+q}/F^{p+1}A^{p+q}$ of the naive filtration $F^\bullet
 A^\bullet$. Hence, it is obtained from $Q$ by neglecting all terms which
contain at least one term in $I$.  Using the local coordinates of
Remark~\ref{rem:lcoo}, we find $\ud_0 = p_I\pfrac{}{\xi_I}$.  Globally, the
algebra $E_0^{0,\bullet}$ is isomorphic to $\gsmooth(B)$ for the graded
manifold $B:=\E/I$ which, on a local chart $U\subset M$ is isomorphic to
 $B_{|U}\cong T^*[2]U\otimes_U D[1]|U$.  Globally, $B$ fits into the short exact
sequence of graded fiber bundles over $M$\,:
\begin{equation}\label{eq:Bseq}  0\to T^*[2]M\to B\to D[1]\to 0\;,
\end{equation} where the latter map
  $B\to D[1]$ is the map induced by the anchor map $\rho: E \to D=\im\rho$
on the quotient of $\E$ by $I$.
The differential $\ud_0$ on $E_0^{0,\bullet}\cong \smooth(B)$ is canonically 
identified with the odd vector field $\tilde\rho_0$ induced by $Q$ on the 
quotient $B=\E/I$.

There is a similar interpretation of $E_0^{1,\bullet}$. Precisely, $E_0^{1,
\bullet}$ is isomorphic to $\Gamma_B(B_1)$ for a (graded) vector bundle 
 $B^1\to B$ which, locally, is the fiber product $B^1_{|U}\cong 
\ker\rho[1]\times_U B$. In particular, $B^1$ fits into the short exact 
sequence of graded fiber bundles over $M$\,:
\begin{equation}\label{eq:B1seq} 0\to\ker\rho[1]\to B^1\to B\to 0
\end{equation}
The isomorphism $E_0^{1,\bullet}\cong \Gamma_B(B_1)$ identifies the 
differential $\ud_0:E_0^{1,\bullet}\to E_0^{1,\bullet+1}$ with the odd vector 
field $\tilde\rho: \Gamma_B(B^1)\to \Gamma_B(B^1)$ defined as the covariant 
derivative $\tilde\rho=\nabla_{\tilde\rho_0}$ along $\tilde{\rho}_0$ where $\nabla$ 
is a local connection on $B_1$ vanishing on a local frame $\xi^a$ of $B^1$. 
This is well defined, because the transition functions between such frames 
come from functions on $M$ and $\tilde{\rho_0}$ projected to $M$ vanishes (we 
extend to arbitrary sections of $B^1$ via Leibniz rule).

This identification of $E_0^{1,\bullet}$ extends to the other lines
 $E_0^{p\geq 2,\bullet}$ of the spectral sequence easily.  Namely, there is an
isomorphism $E_0^{p,\bullet}\cong \Gamma_B(S^pB^1)$, where $S^p$ stands for
the graded symmetric (hence it is skew-symmetric since the fibers
 $\ker\rho[1]$ are of odd degree) product over $B$. We extend $\tilde{\rho}$
to $ \Gamma_B(S^pB^1)$ by the Leibniz rule. Since, by Lemma~\ref{l:SS},
 $\ud_0$ is a derivation, we also have the identification of $\ud_0:E_0^{p\geq
  2,\bullet}\to E_0^{p\geq 2,\bullet+1}$ and $\tilde{\rho}$.

\smallskip

According to Lemma~\ref{l:SS}, $\ud_0$ is a derivation, thus it is sufficient 
to compute the cohomology of the complex $(E_0^{0,\bullet},d_0)$.\MGI{It
  seems we had to compute $E^0$ and $E^1$} The 
sequence~\eqref{eq:Bseq} yields a morphism of sheaves $\eta: \smooth(B) \to 
\Gamma_M(S^\bullet(TM)[2]) \to \Gamma_M(S^\bullet(TM/D)[2])$. On a local chart,
 the complex $(E_0^{0,\bullet},d_0)$ is isomorphic to the Koszul complex of 
 $\B^\bullet:=\Gamma_M(S^\bullet(TM)[2]\otimes\Lambda^\bullet(E/\ker\rho))$ 
with respect to the differential $\tilde\rho= p_I\pfrac{}{\xi_I}$ induced by 
the regular family given by the $p_I$s. Since the image of $\tilde \rho$ spans 
 $D\cdot \B^\bullet$, the morphism of sheaves $\eta:\smooth(B) \to 
\Gamma_M(S^\bullet(TM/D)[2])$ is locally a quasi-isomorphism. Thus we have 
\begin{align*}
  H^q(E_0^{0,\bullet},d_0)&\cong \Gamma_M(S^\bullet(TM/D)[2]) \\ 
  &\cong \Gamma_{N}(S^\bullet T(N)[2])\otimes_{\smooth(N)}\smooth(L\times N) \\ 
  &\cong \X^q(N) \otimes_{\smooth(N)}\smooth(M) \;,
\end{align*} where the second line follows from the fact that $E$ has split 
base $D\cong TL\times N$. 
The computation of the line $E_1^{1,\bullet}$ is similar since $\tilde\rho$ is a 
horizontal lift of $\tilde{\rho}_0$. In particular, $\tilde{\rho}$ does not 
act on the fibers $\ker\rho[1]$ of $B^1\to B$, but only on the base $B$. Hence 
 $$H^q(E_0^{1,\bullet},d_0)\cong \X^q(N) \otimes \Gamma_M(\ker\rho).$$ Since 
the lines, $E_0^{p,0}$ are generated by (products of elements of) $E_0^{1,0}$ 
and $\ud_0$ is a derivation, the result follows.
\end{proof}

\begin{rem} \label{rem:stack}\PX{Shorten this remark\,!}
Lemma~\ref{l:E1} is the main reason to restrict to Courant algebroids with
split base.  In general one can consider the quotient $M/D$  of $M$ by the 
integral leaves of the integrable distribution $D:=\img\rho$.  Let
 $\smooth(M)^D$ be the space of smooth functions on $M$ constant
along the leaves and $\X_{flat}(M/D)$  the space of derivations of 
 $\smooth(M)^D$.  To describe the spectral sequence using 
smooth geometry,  we would like a formula of the form\,: $$E_1^{p,q}\cong 
\Gamma_M(\Lambda^p \ker \rho)\otimes S^{q/2}(\X_{flat}(M/D)). $$ A quick analysis of 
the proof of Lemma~\ref{l:E1} shows that this formula will hold if and only if 
we have the relation
\begin{equation}\label{eq:badquotient}\Gamma_M(TM/D)
  \cong \Gamma_{M/D}(T_{flat}(M/D))\otimes_{\smooth(M)^D}
\smooth(M) \;.
\end{equation}
Courant algebroids with split base are a large class for which 
relation~\eqref{eq:badquotient} holds. However Relation~\eqref{eq:badquotient} 
does not hold for every regular Courant algebroid. For instance, take the Lie 
algebroid $D$ underlying the irrational torus. That is $M=\T^2$ is foliated by 
the action of a non-compact one parameter subgroup of $\T^2$ and $D$ is the 
subbundle of $TM$ associated to the foliation. The leaves are dense. Let 
 $E=D\oplus D^*$ be a generalized exact Courant algebroid (as in 
Example~\ref{e:genexactsplitbase}). Note that $D$ is regular of rank $1$, thus 
 $\Gamma_M(TM/D)$ is non zero but $\smooth(M)^D\cong \R$, thus $\X_{flat}(M/D)
 =0$. Thus formula~\eqref{eq:badquotient} does not hold for $E$.
\end{rem}

\bigskip

Recall from Section~\ref{s:naive} that the naive cohomology
 $H_{naive}^\bullet(E)$ is the cohomology of the complex
 $(\Gamma_M\Lambda^\bullet(\ker\rho),\ud)$.  The second sheet of the spectral sequence is
computed by $H_{naive}^\bullet(E)$\,:
\begin{prop} \label{p:E2} Let $E$ be a Courant algebroid with split base
  $D\cong TL\times N$. Then, one has an isomorphism of graded algebras
 $$E_2^{p,q} \cong H_{naive}^p(E)\otimes \X^q(N). $$
\end{prop}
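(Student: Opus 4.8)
The plan is to show that, under the identification $E_1^{p,q}\cong\Gamma_M\Lambda^p(\ker\rho)\otimes\X^q(N)$ of Lemma~\ref{l:E1}, the differential $d_1$ is precisely $\ud\otimes\id$, where $\ud$ is the naive differential of Definition~\ref{d:naive}; taking cohomology in the $p$-direction then gives $E_2^{p,q}\cong H_{naive}^p(E)\otimes\X^q(N)$. Since the naive ideal spectral sequence is one of algebras (Proposition~\ref{l:SS}), $d_1$ is a derivation of the bigraded algebra $E_1^{\bullet,\bullet}$, so it is enough to compute it on the algebra generators, namely on $E_1^{1,0}=\Gamma_M(\ker\rho)$ and on the transverse generators $E_1^{0,2}=\X^2(N)=\Gamma_N(TN)$; the resulting identification then respects products automatically, yielding the asserted isomorphism of graded algebras.

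First I would treat the bottom row $q=0$. An element $\alpha\in\Gamma_M\Lambda^p(\ker\rho)$ has total degree $p$ and naive (filtration) degree $p$, hence lies in $E_0^{p,0}$ and is annihilated by $d_0=p_I\,\partial/\partial\xi_I$, as it involves none of the coordinates $\xi_I$; thus it represents a class in $E_1^{p,0}$ and $d_1[\alpha]=[Q\alpha]$. By Proposition~\ref{lem:Q=d} one has $Q\alpha=\ud\alpha\in\Gamma_M\Lambda^{p+1}(\ker\rho)$, which already lies in $F^{p+1}$ and is again $d_0$-closed, so $d_1$ on the bottom row is exactly the naive differential $\ud$. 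In particular $(E_1^{\bullet,0},d_1)$ is the naive complex and $E_2^{p,0}\cong H_{naive}^p(E)$.

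The heart of the matter is to show that $d_1$ vanishes on the transverse factor $\X^\bullet(N)$. Using the adapted coordinates of Remark~\ref{rem:lcoo}, a generator of $E_1^{0,2}=\Gamma_N(TN)$ is represented by one of the flat momenta $p_{I'}$. Because the anchor image is $D\cong TL\times N$, the Hamiltonian $H=p_I\xi^I+\tfrac16 C_{abc}(x)\xi^a\xi^b\xi^c$ contains no $p_{I'}$ and its anchor part is $x$-independent in these coordinates, so $Q p_{I'}=\{H,p_{I'}\}=\pm\tfrac16(\partial_{I'}C_{abc})\,\xi^a\xi^b\xi^c$, a function cubic in the $\xi$'s with no factor $p_{J'}$. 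Now the $d_0$-cohomology of each line is computed by the Koszul complex for the regular family $(p_I)$ (the image of the differential $\tilde\rho=p_I\,\partial/\partial\xi_I$), whose cohomology retains only the transverse coordinates $p_{J'}$; since $Qp_{I'}$ carries no such factor, its class in $E_1^{1,2}$ is forced to be a Koszul coboundary and hence vanishes. Thus $d_1[p_{I'}]=0$, and as $d_1$ is a derivation it vanishes on all of $\X^q(N)$.

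Combining the two computations with the derivation property gives $d_1(\alpha\otimes v)=\ud\alpha\otimes v$ for $\alpha\in\Gamma_M\Lambda^\bullet(\ker\rho)$ and $v\in\X^q(N)$, i.e.\ $d_1=\ud\otimes\id$. Passing to cohomology in $p$ (the transverse factor being a projective, hence flat, $\smooth(N)$-module carrying the zero differential) yields $E_2^{p,q}\cong H_{naive}^p(E)\otimes\X^q(N)$ by a Künneth argument, and the isomorphism is one of graded algebras by multiplicativity of the spectral sequence. The main obstacle is exactly the vanishing of $d_1$ on the transverse directions: this is where the split-base hypothesis $D\cong TL\times N$ is essential, since it guarantees both that $H$ is free of the transverse momenta $p_{I'}$ and that the transverse vector fields are globally flat; as the irrational-torus example of Remark~\ref{rem:stack} shows, this step genuinely fails for a general regular Courant algebroid.
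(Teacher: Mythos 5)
Your proof is correct, and it rests on the same core identity as the paper's: under the isomorphism $E_1^{p,q}\cong\Gamma_M\Lambda^p(\ker\rho)\otimes\X^q(N)$ of Lemma~\ref{l:E1}, the first differential is $\ud_1=\ud\otimes 1$. The difference is organizational. The paper describes $\ud_1$ as the connecting homomorphism and writes out its complete expression in the adapted coordinates of Remark~\ref{rem:lcoo}, then reads off $\ud_1=\ud\otimes 1$; you instead invoke multiplicativity (Proposition~\ref{l:SS}) to reduce to algebra generators, treat the bottom row conceptually via Proposition~\ref{lem:Q=d}, compute only $Qp_{I'}$ in coordinates, and make explicit the flatness/K\"unneth step that the paper compresses into ``the result follows''. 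This buys a cleaner and more conceptual argument, at the price of one step whose justification is weaker than it could be: your claim that the class of $Qp_{I'}$ ``is forced to be a Koszul coboundary'' is valid only because $\ud_0=p_I\pfrac{}{\xi_I}$ preserves the grading by the transverse momenta $p_{J'}$, so that a class of $p_{J'}$-degree zero in bidegree $(1,2)$ must vanish; this grading argument should be stated. In fact a sharper observation makes the step immediate: in the adapted frame the cubic part of $H$ involves only the $\ker\rho$-coordinates $\xi^I,\xi^A$ --- no factor $\xi_I$ can occur, since the coefficient of any such monomial is, after total antisymmetrization, of the form $\langle\rho^*\ud x^I\dia\,\psi,\chi\rangle$ and $\rho^*\ud x^I\dia\,\cdot=\D x^I\dia\,\cdot=0$. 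Hence $Qp_{I'}=\pm\tfrac16(\partial_{I'}C_{abc})\xi^a\xi^b\xi^c$ lies in $F^3$, its image in $F^1A/F^2A$ is zero on the nose, and $\ud_1[p_{I'}]=0$ needs no cohomological argument at all. This stronger fact is also what the paper exploits afterwards: it gives $\ud_2[p_{I'}]=0$ for free and identifies $\ud_3$ on the transverse generators with the transgression $T_3$ of Theorem~\ref{thm:split} and Proposition~\ref{P:D=C}.
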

\begin{proof}
It is a standard fact of spectral sequences~\cite{CEha, SSeq85} that the
differential
 $$\ud_1: E_1^{p,q}\cong H^q(F^pA/F^{p+1}A,\ud_0) \to E_1^{p+1,q}\cong
 H^q(F^{p+1}A/F^{p+2}A,\ud_0)$$
is the connecting homomorphism in the
cohomology long exact sequence induced by  the short exact sequence of
complexes $0\to F^{p+1}A/F^{p+2}A \to F^{p}A/F^{p+2}A \to F^{p}A/F^{p+1}A \to
0$.
 On a local chart, we obtain that $\ud_1$ is given by the following formula
\begin{align*}\ud_1
  =& \xi^I\pfrac{}{x^I}+C_{IA}^K\xi^I\xi^A\pfrac{}{\xi^K}
    +\frac12C_{AB}^K\xi^A\xi^B\pfrac{}{\xi^K}
    +\frac12C_{IJ}^A\xi^I\xi^J\pfrac{}{\xi^A} \\
  &+C_{AI}^B\xi^A\xi^I\pfrac{}{\xi^B}+\frac12C_{AB}^C\xi^A\xi^B\pfrac{}{\xi^C}
\end{align*}
where we use the local coordinates introduced in Remark~\ref{rem:lcoo}.
Now it follows from the isomorphism $E_1^{p,q}\cong
\Gamma_M\Lambda^p(\ker\rho)\otimes \X^q(N)$ given by Lemma~\ref{l:E1} and the
above formula for $\ud_1$ that  $$\ud_1=\ud\otimes 1:
\Gamma_M\Lambda^p(\ker\rho)\otimes \X^q(N) \to
\Gamma_M\Lambda^{p+1}(\ker\rho)\otimes \X^q(N)$$ where $\ud$ is the naive
differential. The result follows.
\end{proof}

The third sheet of the spectral sequence is trivially deduced from Proposition~\ref{p:E2}.
\begin{cor}\label{c:E3} Let $E$ be a Courant algebroid with split base. There
  is a canonical isomorphism  of bigraded algebras
 $E_3^{\bullet,\bullet} \cong E_2^{\bullet,\bullet}$.
\end{cor}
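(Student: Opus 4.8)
The plan is to show that the differential $d_2$ on the second sheet vanishes identically for degree-theoretic reasons, so that $E_3^{\bullet,\bullet}$, being by definition the $d_2$-cohomology of $E_2^{\bullet,\bullet}$, coincides with $E_2^{\bullet,\bullet}$. First I would recall the bidegree of the differential on the second sheet: by the definition of a spectral sequence, $d_2\colon E_2^{p,q}\to E_2^{p+2,q-1}$. The crucial input is Proposition~\ref{p:E2}, which identifies $E_2^{p,q}\cong H_{naive}^p(E)\otimes\X^q(N)$. Since $\X^q(N)=\Gamma_N(S^{q/2}(TN))$ with the convention that $S^{q/2}(TN)=\{0\}$ for odd $q$, the entire sheet $E_2^{\bullet,\bullet}$ is concentrated in even $q$-degree.

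Next I would run the parity argument. The differential $d_2$ lowers the second grading by one and hence changes its parity, so for any bidegree $(p,q)$ at least one of the source $E_2^{p,q}$ and the target $E_2^{p+2,q-1}$ vanishes: if $q$ is odd the source is already zero, while if $q$ is even then $q-1$ is odd and the target is zero. Consequently $d_2=0$ on every $E_2^{p,q}$, and therefore on all of $E_2^{\bullet,\bullet}$.

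Finally, since $E_3^{\bullet,\bullet}$ is the cohomology of $(E_2^{\bullet,\bullet},d_2)$ and $d_2=0$, we obtain $E_3^{p,q}=\ker d_2/\im d_2=E_2^{p,q}$ for every $(p,q)$, with no choices involved. Because the naive ideal spectral sequence is a spectral sequence of algebras (Proposition~\ref{l:SS}), this identity respects the bigraded algebra structure, yielding the asserted canonical isomorphism. I do not expect a genuine obstacle here; the only point requiring care is matching the bidegree $(2,-1)$ of $d_2$ against the even-degree concentration of $\X^\bullet(N)$, which is precisely what forces the differential to vanish.
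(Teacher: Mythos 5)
Your proof is correct and follows essentially the same route as the paper: both use Proposition~\ref{p:E2} to see that $E_2^{\bullet,\bullet}$ is concentrated in even $q$-degree, then note that $\ud_2$ has bidegree $(2,-1)$ and hence shifts the parity of $q$, forcing $\ud_2=0$ and so $E_3^{\bullet,\bullet}\cong E_2^{\bullet,\bullet}$. Your additional remark that the identification respects the algebra structure (via the spectral sequence of algebras from Proposition~\ref{l:SS}) makes explicit a point the paper leaves implicit in asserting an isomorphism of bigraded algebras.
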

\begin{proof}
 Since $\X^q(N)$ is concentrated in even degrees for $q$, so is $E_2^{p,q}$
 by Proposition~\ref{p:E2}. Therefore, the differential $\ud_2:E_2^{p,q}\to
 E_2^{p+2,q-1}$ is necessarily $0$. Hence $E_3^{\bullet,\bullet} \cong
 E_2^{\bullet,\bullet}$.
\end{proof}

\bigskip

We  now prove the conjecture of Sti{\'e}non--Xu.  There is a canonical
morphism $\phi:H^\bullet_{naive}(E) \to H^\bullet_{std}(E)$, see~\cite{SX07}
and  Section~\ref{s:naive}.
\begin{cor}\label{c:trans} Let $E$ be a transitive Courant algebroid. Then
  the canonical map $\phi$ is an isomorphism $\phi:H^\bullet_{naive}(E)\cong 
 H^\bullet_{std}(E)$, {\em i.e.}, the Courant algebroid cohomology
coincides with the naive cohomology.
\end{cor}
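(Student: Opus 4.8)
The plan is to recognize the transitive case as the extreme instance of a split base and then read off the result from the degeneration of the naive ideal spectral sequence. First I would observe that transitivity means $\im\rho = TM$, so the integrable distribution $D := \im\rho$ equals all of $TM$. Hence $E$ has split base in the sense of Definition~\ref{d:split} with $L = M$ and $N = \{\pt\}$ a point: indeed $M \cong M \times \{\pt\}$ and $D \cong TM \cong TL \times N$. In particular every result of Section~\ref{s:regcase} applies, most importantly Proposition~\ref{p:E2}.

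Next I would compute the relevant symmetric multivector fields. Since $N$ is a point, $TN = 0$, so $\X^q(N) = \Gamma_N(S^{q/2}(TN))$ equals $\R$ when $q = 0$ and vanishes for $q \geq 1$. Feeding this into Proposition~\ref{p:E2} gives
$$ E_2^{p,q} \cong H^p_{naive}(E) \otimes \X^q(N) \cong \begin{cases} H^p_{naive}(E) & q = 0, \\ 0 & q \geq 1, \end{cases} $$
so the second sheet is concentrated in the single row $q = 0$. Degeneration is then immediate: for $r \geq 2$ the differential $d_r \colon E_r^{p,q} \to E_r^{p+r,\,q+1-r}$ either starts from a vanishing space (when $q \geq 1$) or lands in one (when $q = 0$, since $q + 1 - r \leq -1 < 0$). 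Hence all $d_r$ vanish for $r \geq 2$ and $E_2^{\bullet,\bullet} \cong E_\infty^{\bullet,\bullet}$. By the convergence statement of Proposition~\ref{l:SS} one has $E_\infty^{p,q} \cong F^p H^{p+q}_{std}(E)/F^{p+1}H^{p+q}_{std}(E)$; concentration in $q = 0$ forces the filtration on $H^n_{std}(E)$ to collapse to a single nonzero graded piece (all other $E_\infty^{p,n-p}$ vanish, and $F^{n+1}H^n_{std}(E)=0$ by boundedness), so $H^n_{std}(E) \cong E_\infty^{n,0} \cong E_2^{n,0} \cong H^n_{naive}(E)$.

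The one point that genuinely requires care — and which I regard as the main obstacle — is to check that this abstract isomorphism is the canonical map $\phi$ of Section~\ref{s:naive}, and not merely some isomorphism of vector spaces. For this I would unwind the edge homomorphism along the row $q = 0$. A naive cohomology class in $E_2^{n,0}$ is represented by some $\alpha \in \Gamma(\Lambda^n \ker\rho)$ with $\ud\alpha = 0$; viewing $\alpha$ as a function on $\E$ it lies in $F^n A^n$, and by Proposition~\ref{lem:Q=d} we have $Q\alpha = \ud\alpha = 0$, so $\alpha$ is a genuine cocycle of $(A^\bullet, Q)$ representing a class of $H^n_{std}(E)$. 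By definition of $\phi$ this class is exactly $\phi([\alpha])$.

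On the other hand, in our situation $F^n H^n_{std}(E) = H^n_{std}(E)$ and $F^{n+1}H^n_{std}(E) = 0$, so the very same assignment $[\alpha] \mapsto [\alpha]$ is precisely the edge identification $E_\infty^{n,0} \cong H^n_{std}(E)$ precomposed with the degeneration isomorphism $E_2^{n,0} = E_\infty^{n,0}$. Therefore $\phi$ coincides with the isomorphism produced by the spectral sequence, and the corollary follows.
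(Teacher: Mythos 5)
Your proposal is correct and takes essentially the same route as the paper: recognize the transitive case as split base with $N=\pt$, apply Proposition~\ref{p:E2} to concentrate $E_2^{\bullet,\bullet}$ in the row $q=0$, and conclude degeneration and convergence to $H^\bullet_{std}(E)$. The only difference is cosmetic and lies in the final identification with $\phi$: the paper notes that $\phi$ preserves the naive filtration and hence induces on the first sheet the inclusion $(E_1^{\bullet,0},\ud_1)\hookrightarrow(\bigoplus_{p+q=\bullet}E_1^{p,q},\ud_1)$, which is a quasi-isomorphism, whereas you unwind the edge homomorphism at the cocycle level --- two standard formalizations of the same point.
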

\begin{proof}
A transitive Courant algebroid satisfies $D:=\img\rho=TM$.  Therefore, $E$ is
trivially with split base and
 $M/D=\pt=N$.  Hence $\X^q(N)$ is non zero only for $q=0$, where it is
 $\R$. Therefore, by Proposition~\ref{p:E2},  $E_2^{p,q}=0$ if $q\neq 0$. It
follows that all the higher differentials $\ud_{n\geq 3}$ are null. Thus the
cohomology of the Courant algebroid is isomorphic to $E_2^{\bullet,0}\cong
H^\bullet_{naive}(E)\otimes \R$.

\smallskip

Furthermore, by definition of the naive ideal $I$, the map $\phi$ preserves
the filtration by $I$. Thus $\phi$ passes to the spectral
sequence and coincides with the morphism of complexes
 $\phi_1:(\Gamma(\Lambda^n \ker \rho),\ud)\cong  (E_1^{n,0},\ud_1)
\hookrightarrow (\oplus_{p+q=n} E_1^{p,q},\ud_1)$  on the first sheet of the
spectral sequence. By the first paragraph of this proof,  $\phi_1$ is a
quasi-isomorphism. Hence $\phi$ is indeed an isomorphism.
\end{proof}

\begin{rem}\label{rem:conjectureexact}
When $E$ is an exact Courant algebroid,  Corollary~\ref{c:trans} was obtained
by Roytenberg~\cite{Royt02}. Indeed, it was one of the examples motivating the
conjecture of Sti\'enon--Xu.
\end{rem}
\bigskip

For general Courant algebroids $E$ with split base, the spectral sequence
does not collapse on the sheet $E_2^{\bullet,\bullet}$ but is
controlled by a map from the vector fields on $N$ to the naive
cohomology of $E$ which we now describe.

It is a general fact from the spectral sequence theory that there exists a 
differential $\ud_3:E_3^{p,q}\to E_3^{p+3,q-2}$. In particular, $\ud_3$ 
induces an $E_3^{0,0}=\smooth(N)$-linear map 
\begin{equation} \label{eq:T3} 
  T_3:\X(N) \to H^3_{naive}(E) 
\end{equation} given as the composition 
 $$ T_3:\X(N)\cong \X^2(N)\cong E_3^{0,2}\stackrel{\ud_3}\longrightarrow E_3^{3,
0}\cong H^3_{naive}(E) \;.$$ We call the map $T_3$ the \emph{transgression 
homomorphism} of the Courant algebroid $E$. Let $\X^{kil}(N)$ 
be the kernel of $T_3$ above (we 
like to think of elements of $\X^{kil}(N)$ as Killing vector fields 
preserving the structure function $H$). Note that $\X^{kil}(N)$ may be 
singular, {\em i.e.}, its rank could vary. We denote $\X^{kil, q}$ the 
space of ``symmetric Killing multivector fields'' $S^{q/2}_{\smooth(N)
}(\X^{kil}(N))$ with the convention that $\X^{kil, q}=\{0\}$ for odd 
 $q$.

\begin{thm}\label{thm:split} The  cohomology of a Courant algebroid $E$ with
  split base is given by
\begin{equation}\label{eq:split}  H^n_{std}(E)
  \cong \bigoplus_{p+q=n} H^p_{naive}(E)/(T_3) \otimes \X^{kil, q} \;,
\end{equation} where $(T_3)$ is the ideal in $H^\bullet_{naive}(E)$ which
is generated by the image $T_3(\X(N))$ of $T_3$.
\end{thm}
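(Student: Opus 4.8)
The plan is to push the naive ideal spectral sequence of Proposition~\ref{l:SS} beyond the sheet $E_3^{\bullet,\bullet}\cong H^\bullet_{naive}(E)\otimes\X^\bullet(N)$ (Corollary~\ref{c:E3}) and to identify $E_\infty$ with the right-hand side of \eqref{eq:split}. Since $\X^\bullet(N)=\Gamma_N(S^\bullet TN)$ is concentrated in even $q$-degree while $H^\bullet_{naive}(E)$ sits in $q=0$, every sheet is concentrated in even $q$; as $d_r$ has $q$-degree $1-r$, only the odd-page differentials $d_3,d_5,d_7,\dots$ can be nonzero. The strategy is therefore: first determine $d_3$ explicitly; then compute $E_4$; then show the sequence collapses at $E_4$; and finally assemble $H^\bullet_{std}(E)$, using that over $\R$ the bounded filtration of Proposition~\ref{l:SS} splits, so that $H^n_{std}(E)\cong\bigoplus_{p+q=n}E_\infty^{p,q}$ with no extension problem.

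First I would pin down $d_3$. Because the naive ideal spectral sequence is multiplicative, $d_3$ is a derivation of the bigraded algebra $E_3=H^\bullet_{naive}(E)\otimes\X^\bullet(N)$ and is hence determined by its values on algebra generators. The bottom row $H^\bullet_{naive}(E)=E_3^{\bullet,0}$ consists of permanent cocycles, since $d_r$ maps $E_r^{p,0}$ into $q$-degree $1-r<0$; in particular $d_3$ vanishes there. The remaining generators are $\X(N)\cong\X^2(N)=E_3^{0,2}$, which generate the polynomial factor $\X^\bullet(N)=S^\bullet\X(N)$, and on them $d_3$ is by construction the transgression $T_3\colon\X(N)\to H^3_{naive}(E)$ of \eqref{eq:T3}. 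Thus $d_3$ is the unique derivation extending $T_3$, namely $d_3=\sum_i T_3(w_i)\,\partial_{w_i}$ in a local frame $\{w_i\}$ of $\X(N)$.

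Next I would compute $E_4$. Splitting $\X(N)=\X^{kil}(N)\oplus W$ with $\X^{kil}(N)=\ker T_3$ and $T_3|_W$ injective gives $S^\bullet\X(N)=S^\bullet\X^{kil}(N)\otimes S^\bullet W$, with $d_3$ trivial on the first factor. By the K\"unneth theorem over $\R$ this yields $E_4\cong \X^{kil,\bullet}\otimes H\big(H^\bullet_{naive}(E)\otimes S^\bullet W,\,d_3\big)$. The heart of the computation is the identification $H\big(H^\bullet_{naive}(E)\otimes S^\bullet W, d_3\big)\cong H^\bullet_{naive}(E)/(T_3)$, concentrated in $W$-degree $0$. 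Granting this, $E_4\cong H^\bullet_{naive}(E)/(T_3)\otimes\X^{kil,\bullet}$, which is generated as a bigraded algebra by its bottom row $E_4^{\bullet,0}=H^\bullet_{naive}(E)/(T_3)$ together with $\X^{kil}(N)=E_4^{0,2}$. Both families are permanent cocycles for bidegree reasons: the bottom row as above, and $d_r$ sends $E_r^{0,2}$ into $q$-degree $3-r<0$ for $r\ge4$, while $d_3|_{\X^{kil}(N)}=0$ by definition of the kernel. Since $d_{r\ge4}$ is a derivation vanishing on a generating set, it vanishes identically, so $E_4=E_\infty$ and \eqref{eq:split} follows after reassembling with the split filtration.

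The main obstacle is precisely the identification $H\big(H^\bullet_{naive}(E)\otimes S^\bullet W,d_3\big)\cong H^\bullet_{naive}(E)/(T_3)$. This is a Koszul-type statement, but a delicate one: the generators $w_i\in W$ have even (degree $2$) total degree yet transgress to the \emph{odd} classes $\theta_i:=T_3(w_i)\in H^3_{naive}(E)$, which satisfy $\theta_i^2=0$ and are therefore zero-divisors. Consequently $d_3=\sum_i\theta_i\,\partial_{w_i}$ is not the Koszul differential of a regular sequence, and a priori the complex carries surplus cohomology of the form $\Ann(\theta)/(\theta)$ in positive $W$-degree. To finish, one must show these potential classes vanish, i.e.\ that the transgressed classes behave like a regular sequence in $H^\bullet_{naive}(E)$ --- equivalently, that the odd-page differentials $d_5,d_7,\dots$ annihilate every class of positive $W$-degree. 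I would attack this using the specific structure of the naive cohomology (the Lie-algebroid-type cohomology of the quadratic bundle $\ker\rho$ together with its compatibility with the de Rham differential along $N$), aiming to build an explicit contracting homotopy that removes the positive-$W$-degree part; failing a direct homotopy, I would instead analyze the secondary differentials $d_{2k+1}$ to verify that they kill any surviving classes, forcing collapse at $E_4$ and yielding the stated formula.
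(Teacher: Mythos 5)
Your outline retraces the paper's own proof almost step for step: the paper likewise uses that $E_3^{\bullet,\bullet}\cong H^\bullet_{naive}(E)\otimes\X^\bullet(N)$ is concentrated in even $q$-degree, that $\ud_3$ is the unique derivation extending $T_3$, that $E_4$ should be $H^\bullet_{naive}(E)/(T_3)\otimes\X^{kil,\bullet}$, and that the sequence then collapses at the fourth sheet because every $\ud_{r\geq4}$ is a derivation vanishing for degree reasons on the generators, which lie in $E_4^{\bullet,0}$ and $E_4^{0,2}$ (the explicit splitting $\X(N)=\X^{kil}(N)\oplus W$ and the K\"unneth step are yours; the paper simply asserts the outcome of the $E_4$ computation). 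The one step you flag as the ``main obstacle'' and leave open --- the Koszul-type identification $H\bigl(H^\bullet_{naive}(E)\otimes S^\bullet W,\ud_3\bigr)\cong H^\bullet_{naive}(E)/(T_3)$ concentrated in $W$-degree zero --- is precisely the step the paper dispatches with the single clause ``since $\X^\bullet(N)$ is a free graded commutative algebra''. Your objection to that justification is correct: freeness of the polynomial factor is beside the point, and what is really needed is that the odd classes $\theta_i=T_3(w_i)$ be regular in $H^\bullet_{naive}(E)$, i.e.\ $\Ann(\theta_i)=(\theta_i)$; nothing in the hypotheses guarantees this. So your proposal is incomplete at exactly the spot where the paper's own argument is incomplete, and your diagnosis of why is sharper than the paper's treatment.

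The more serious point is that neither of your proposed repairs (a contracting homotopy, or showing that $\ud_5,\ud_7,\dots$ kill the surplus classes) can succeed, because surplus classes, when present, are automatically permanent: every $\ud_{r\geq4}$ out of $E_r^{3,2}$ lands in $q$-degree $3-r<0$, and every $\ud_{r\geq4}$ into $E_r^{3,2}$ would start in $p$-degree $3-r<0$, so $E_\infty^{3,2}=E_4^{3,2}$ unconditionally. Concretely, take $L=(\S^3\times\S^3)\#(\S^3\times\S^3)$, $N=\R$, and the generalized exact Courant algebroid of Example~\ref{e:genexactsplitbase} with {\v S}evera $3$-form $C=\alpha_1\otimes t$, where $[\alpha_1]=a_1$ is a degree-$3$ generator coming from the first summand; by Proposition~\ref{P:D=C}, $T_3(g\,\partial_t)=a_1\otimes g$, so $\ker T_3=\X^{kil}(N)=0$. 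If $a_2$ is a degree-$3$ generator from the second summand, then $a_1a_2=0$ while $a_2\notin(a_1)$, so $a_2\otimes\partial_t$ is a $\ud_3$-cocycle in $E_3^{3,2}$, and the $\ud_3$-coboundaries in that bidegree are only $\{a_1\otimes g\,\partial_t\}$; hence $0\neq[a_2\otimes\partial_t]\in E_4^{3,2}=E_\infty^{3,2}$, and by convergence $H^5_{std}(E)\neq0$. But the right-hand side of \eqref{eq:split} in total degree $5$ reduces to $H^5_{naive}(E)/(T_3)\cong\bigl(H^5_{dR}(L)\otimes\smooth(\R)\bigr)/(T_3)=0$, since $H^5_{dR}(L)=0$. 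So the missing step is not a lemma awaiting proof: Theorem~\ref{thm:split} needs an additional regularity hypothesis on the transgressed classes (e.g.\ $\Ann(T_3(w))=(T_3(w))$ in $H^\bullet_{naive}(E)$), and both your argument and the paper's are incomplete --- indeed the stated formula fails --- without it.
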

\begin{proof} According to Proposition~\ref{p:E2} and Corollary~\ref{c:E3}, 
the third sheet of the spectral sequence is given by $E_3^{p,q}\cong 
H_{naive}^p(E)\otimes \X^q(N)$. Note that $\X^q(N)= \Gamma_N(S^{q/2}(TN))$ 
is generated as an algebra by its degree 2 elements. Since the differential 
 $\ud_3:E_3^{p,q}\to E_3^{p+3,q-2}$ is a derivation, it is necessarily the 
unique derivation extending its restriction $T_3=\ud_3: \X(N)\to H_{naive}^3(E)
 $. Since $\X^\bullet(N)=\Gamma_N(S^{q/2}(TN))$ is a free graded commutative 
algebra, the cohomology $E_4^{\bullet,\bullet}=H^\bullet(E_3^{\bullet,\bullet},
 \ud_3)$ is given by
\begin{align*}
  E_4^{\bullet,q} &\cong H_{naive}^\bullet(E)/({\rm im}(T_3)) \otimes
    S^{q/2} (\ker T_3) \\
  &\cong  H^\bullet_{naive}(E)/(T_3) \otimes \X^{kil, q} \;.
\end{align*}
Now, it is sufficient to prove that all higher differentials $\ud_{r\geq 4}$ 
vanish. Since $\ud_4:E_4^{p,q}\to E_4^{p+4,q-3}$ is a derivation, it is 
completely determined by its restriction to the generators of $E_4^{\bullet,
\bullet}$ which lie in $E_4^{\bullet,0}$ and $E_4^{0,2}$. Thus, for obvious 
degree reasons, $\ud_4=0$ and similarly for all $\ud_{r>4}$. Therefore, the 
spectral sequence collapses at the fourth sheet\,: $E_\infty^{p,q}\cong E_4^{p,
 q}$.
\end{proof}
\begin{rem}\label{rem:algebra}
Theorem~\ref{thm:split} gives an isomorphism of vector spaces, not of
algebras in general. However,  by standard results on spectral sequences of
algebras, the  isomorphism~\eqref{eq:split} is an isomorphism of graded
algebras if the right hand side of~\eqref{eq:split} is free as a graded
commutative algebra.
\end{rem}
\begin{rem} Theorem~\ref{thm:split} implies that all the cohomological 
information of a Courant algebroid with split base is encoded in the 
transgression homomorphism $T_3$ together with the naive cohomology (and image 
of the anchor map). We like to think of $T_3$ as a family of closed 3-sections 
of $\ker \rho$ obtained by transgression from $\X(N)$. This idea is made more 
explicit in the case of generalized exact Courant algebroids in 
Section~\ref{s:gexsplit}. In that case, the transgression homomorphism is 
closely related to a generalization of the characteristic class of the Courant 
algebroids as defined by  {\v S}evera~\cite{SevLett}, that is the 
cohomology class of the structure 3-form (see Proposition~\ref{P:Severaclass} 
and Proposition~\ref{P:D=C}) parameterizing such Courant algebroids.
\end{rem}

\section{Generalized exact Courant algebroids with split base}\label{s:gexsplit}
In this section, we consider a generalization of exact Courant
algebroids. These Courant algebroids are parametrized by the cohomology class
of closed $3$-forms from which an explicit formula for the transgression
homomorphism $T_3$ can be given.

An \emph{exact}
Courant algebroid $E\to M$ is a Courant algebroid  such  that the following
sequence 
 $$ 0\to T^*M \xrightarrow{\rho^*} E\xrightarrow{\rho} TM\to 0 $$
is exact.

Assume that $D:=\im\rho\subset TM$ is a subbundle, \emph{i.e.}, $E$ is
regular. Then the anchor maps surjectively $E\stackrel{\rho}\to D$ and its
dual $\rho^T:T^*M\to E^*\cong E$ factors through an injective map
 $D^*\stackrel{\rho^*}\to E$ (again $E$ and $E^*$ are identified by the
pseudo-metric). 
\begin{vdef}\label{d:genexact} A regular Courant algebroid
  $(E,\rho,\ldots)$ is generalized exact if the following sequence of bundle
  morphisms over $M$
\begin{eqnarray}\label{eq:genexact} 0\to D^*\xrightarrow{\rho^*} E\xrightarrow{\rho} D\to 0 \end{eqnarray}
is exact.
\end{vdef}
Based on the above discussion, the only condition to check in order for the
sequence~\eqref{eq:genexact} to be exact is exactness in $E$.

\medskip

There is a simple geometric classification of exact Courant algebroids due to
{\v S}evera~\cite{SevLett} which extends easily to generalized exact Courant
algebroids as follows. Note that $D$ is Lie subalgebroid of $TM$. Given a Lie algebroid $D$, we write $(\Omega^\bullet(D),\ud_D)$ the complex of $D$-forms, $\Hald^\bullet(D)$ its cohomology and $Z^\bullet(D)=\ker(\ud_D)$ the closed $D$-forms~\cite{MacK87, CaWe99}.

Let $(E\to M,\rho,[.,.],\langle.,.\rangle)$ be a generalized exact Courant
algebroid  and assume given a splitting of the exact
sequence~\eqref{eq:genexact} as a pseudo-Euclidean vector bundle, that is, an
isotropic (with respect to the pseudo-metric) section $\sigma:D\to E$ of
 $\rho$. Thus the section
 $\sigma$ identifies $E$ with $D^*\oplus D$ endowed with its standard
pseudo-metric: $\langle \alpha\oplus X, \beta\oplus Y\rangle=
 \alpha(Y)+\beta(X)$. Since $\rho$ preserves the bracket, for any 
 $X,Y\in \Gamma(D)$, one has  $$[\sigma(X),\sigma(Y)]=\sigma([X,Y]_{TM}) \oplus
\tilde{C}_\sigma(X,Y)$$ where $\tilde{C}_\sigma(X,Y) \in \rho^*(D^*)$. Let
 $C_\sigma$ be the dual of $\tilde{C}_\sigma$, that is, for $X,Y,Z\in
\Gamma(D)$, we 
define $C_\sigma(X,Y,Z)=\langle \tilde{C}_\sigma(X,Y),Z\rangle$. It follows
from axiom~\eqref{ax:nSkew} and axiom~\eqref{ax:adInv} of a Courant
algebroid (see Definition~\ref{d:Courant}) that $C_\sigma$ is
skew-symmetric. Moreover, by axiom~\eqref{ax:Leibn} and
axiom~\eqref{ax:adInv}, $C_\sigma$ is $\smooth(M)$-linear. Thus $C_\sigma$
is indeed a $3$-form on the Lie algebroid $D$,\emph{ i.e.}, $C_\sigma\in
\Omega^3(D)$. Furthermore, the (specialized) Jacobi identity, \emph{i.e.},
axiom~\eqref{ax:Jacobi} implies that $C_\sigma$ is closed, that is,
 $\ud_D(C_\sigma)=0$.
\begin{prop}[The {\v S}evera characteristic class] \label{P:Severaclass} Let 
 $E$ be a generalized exact Courant algebroid.
\begin{enumerate}\item There is a splitting of the exact sequence~\eqref{eq:genexact} 
as a pseudo-Euclidean bundle; in particular there is an isotropic section 
 $\sigma: D\to E$ of $\rho$.
\item \label{Severaclass} If $\sigma':D\to E$ is another isotropic section, 
then $C_\sigma -C_{\sigma'}$ is an exact $3$-form, that is, 
 $C_\sigma -C_{\sigma'}\in \im \ud_D$.
\end{enumerate}
\end{prop}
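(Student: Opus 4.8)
The plan is to produce an isotropic splitting first, and then to show that any two such splittings differ by a Lie-algebroid $2$-form on $D$, whose $\ud_D$-differential is exactly the discrepancy between the two structure forms.

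\emph{Part (1).} Since $E$ is an ordinary finite-rank vector bundle over $M$, the sequence~\eqref{eq:genexact} splits in the category of vector bundles, so there is a bundle map $s\:D\to E$ with $\rho\circ s=\id_D$. This $s$ need not be isotropic. I set $b(X,Y):=\langle s(X),s(Y)\rangle$, a symmetric bilinear form on $D$, write $b^\flat\:D\to D^*,\ X\mapsto b(X,\cdot)$, and define $\sigma:=s-\tfrac12\,\rho^*\circ b^\flat$. Using that $\ker\rho=\rho^*(D^*)$ is isotropic together with the identity $\langle\rho^*\mu,e\rangle=\mu(\rho e)$ (which is how $\rho^*$ is defined in Definition~\ref{d:Courant}), a short expansion gives $\langle\sigma X,\sigma Y\rangle=b(X,Y)-\tfrac12 b(X,Y)-\tfrac12 b(Y,X)=0$ by symmetry of $b$. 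Hence $\sigma$ is an isotropic section of $\rho$; as $\sigma(D)$ and $\ker\rho=\rho^*(D^*)$ are then complementary isotropic subbundles paired dually by the metric, this is precisely a splitting of~\eqref{eq:genexact} as a pseudo-Euclidean bundle.

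\emph{Part (2).} Let $\sigma'$ be a second isotropic section. Since $\rho(\sigma'-\sigma)=0$, there is a unique bundle map $B\:D\to D^*$ with $\sigma'=\sigma+\rho^*B$. Subtracting $\langle\sigma X,\sigma Y\rangle=0$ from $\langle\sigma' X,\sigma' Y\rangle=0$ and using the dual pairing of $\sigma(D)$ against $\ker\rho$ forces $B(X)(Y)+B(Y)(X)=0$; thus $B\in\Omega^2(D)$ is a $2$-form on the Lie algebroid $D\subset TM$. I then compute $C_{\sigma'}$ from $C_\sigma(X,Y,Z)=\langle\sigma X\dia\sigma Y,\sigma Z\rangle$ (the cross term $\langle\sigma[X,Y],\sigma Z\rangle$ vanishes by isotropy, so this agrees with the definition via $\tilde C_\sigma$). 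The required input is how $\dia$ behaves on $\ker\rho$: from the invariance axiom~\eqref{ax:adInv} one gets $\sigma X\dia\rho^*\alpha=\rho^*(\Lie_X\alpha)$; polarizing axiom~\eqref{ax:nSkew} then yields $\rho^*\alpha\dia\sigma Y=\rho^*(\ud_D(\alpha(Y)))-\rho^*(\Lie_Y\alpha)=-\rho^*(\imath_Y\ud_D\alpha)$; and the same two axioms give $\rho^*\alpha\dia\rho^*\beta=0$. Here $\Lie_X=[\imath_X,\ud_D]$ and $\ud_D$ is the differential of $D$, using $\rho^*\ud_M f=\rho^*\ud_D f$ on functions. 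Equivalently, $\sigma$ realizes $E$ as the generalized twisted Courant algebroid on $D\oplus D^*$ of the earlier Example with twist $C_\sigma$, and $\sigma'$ as the graph $W\mapsto W\oplus\imath_W B$ of a $B$-field transform.

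Expanding $C_{\sigma'}(X,Y,Z)=\langle\sigma' X\dia\sigma' Y,\sigma' Z\rangle$ with these rules, every term carrying two or more factors of $B$ drops out (either through $\rho^*\alpha\dia\rho^*\beta=0$ or through the pairing of two elements of the isotropic $\ker\rho$), leaving
\begin{align*}
  C_{\sigma'}(X,Y,Z)-C_\sigma(X,Y,Z)=(\Lie_X\imath_Y B)(Z)-(\imath_Y\ud_D\imath_X B)(Z)+B(Z,[X,Y]).
\end{align*}
A Cartan-calculus computation, substituting $\Lie_X=\imath_X\ud_D+\ud_D\imath_X$ and regrouping the derivative and bracket terms, identifies the right-hand side with the Koszul formula~\eqref{dLie} for $(\ud_D B)(X,Y,Z)$. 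Therefore $C_{\sigma'}-C_\sigma=\ud_D B\in\im\ud_D$, which is the claim. I expect the main obstacle to be exactly this last step: pinning down the interior-product sign conventions and verifying that the three surviving terms reassemble into $\ud_D B$, while checking that the $T^*M$-valued operator $\rho^*\ud_M$ contributes only through its $D$-directions, so that the Lie-algebroid differential $\ud_D$ (and not $\ud_M$) is what appears.
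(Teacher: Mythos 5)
Your proof is correct, and it follows exactly the standard route that the paper's own ``proof'' merely cites (\v{S}evera's letter and Roytenberg's thesis, Section 3.8, for the exact case): correct an arbitrary splitting $s$ by $-\tfrac12\rho^*\circ b^\flat$ to make it isotropic, observe that two isotropic splittings differ by $\rho^* B$ with $B\in\Omega^2(D)$, and verify via the bracket identities $\sigma X\dia\rho^*\alpha=\rho^*(\Lie_X\alpha)$, $\rho^*\alpha\dia\sigma Y=-\rho^*(\imath_Y\ud_D\alpha)$, $\rho^*\alpha\dia\rho^*\beta=0$ that $C_{\sigma'}-C_\sigma=\ud_D B$. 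In effect you have supplied, correctly (including the Cartan-calculus check that the three surviving terms reassemble into the Koszul formula for $\ud_D B$), the details the paper outsources to the literature.
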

\begin{proof}  The proof is the same as the one for exact Courant algebroids; 
for instance, see {\v S}evera Letter~\cite{SevLett} or~\cite[Section 3.8]{Royt99}.
\end{proof}
In particular the cohomology class $[C_\sigma]\in \Hald^3(D)$ is independent of 
 $\sigma$. We will simply denote it $[C]$ henceforth.  We call the class $[C]\in \Hald^3(D)$ the 
{\v S}evera class of $(E\to M,\rho,[.,.],\langle.,.\rangle)$.

\smallskip

Given a closed $3$-form $C \in \Omega^3(D)$, one can define a bracket on the 
pseudo-Euclidean vector bundle $D^*\oplus D$ given, for $X,Y\in \Gamma(D)$,
 $\alpha, \beta \in \Gamma(D^*)$, by the formula
\begin{equation}\label{eq:formbracket}
  [\alpha \oplus X, \beta \oplus Y] = \Lie_X\beta -\imath_Y\ud_D(\alpha)+C(X,Y,.) \oplus [X,Y]_{TM}.
\end{equation}
It is straightforward to check that this bracket makes the pseudo-Euclidean 
bundle $D^*\oplus D$ a Courant algebroid, where the anchor map is the 
projection $D^*\oplus D\to D$~\cite[Section 3.8]{Royt99}. Clearly its 
{\v S}evera class is $C$. Moreover two cohomologous closed 
 $3$-forms $C,C'\in \Omega^3(D)$ yield isomorphic Courant 
algebroids~\cite[Section 3.8]{Royt99}. Therefore 
\begin{prop}[Analog of the {\v S}evera classification]\label{P:classification} 
Let $D$ be a Lie 
subalgebroid of a smooth manifold $M$. The isomorphism classes of generalized 
exact Courant algebroids with fixed image $\im \rho =D$ are in one to one 
correspondence with $\Hald^3(D)$, the third cohomology group of the Lie 
algebroid $D$.

The correspondence assigns to a Courant algebroid $E$ its {\v S}evera 
characteristic class $[C]$.
\end{prop}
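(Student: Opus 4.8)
The plan is to show that the assignment $[E]\mapsto[C]$, sending a generalized exact Courant algebroid with $\im\rho=D$ to the {\v S}evera class constructed in Proposition~\ref{P:Severaclass}, is a well-defined bijection onto $\Hald^3(D)$. I would split this into three checks — that the assignment descends to isomorphism classes, that it is onto, and that it is one-to-one — each of which reduces to material already assembled before the statement.

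First I would settle well-definedness. By part~(\ref{Severaclass}) of Proposition~\ref{P:Severaclass} the class $[C_\sigma]$ is independent of the isotropic splitting $\sigma$, so $[C]$ depends only on $E$. It then remains to see that an isomorphism $\Phi\:E\to E'$ of generalized exact Courant algebroids over $\id_M$ (with $\im\rho=\im\rho'=D$) produces the same class. Since $\Phi$ is $\smooth(M)$-linear and preserves bracket and pseudo-metric, applying it to the Leibniz rule~\eqref{ax:Leibn} forces $\rho'\circ\Phi=\rho$; hence $\Phi$ carries $\ker\rho$ into $\ker\rho'$ and induces the identity on $D$. Consequently, for an isotropic splitting $\sigma\:D\to E$ the composite $\Phi\circ\sigma$ is again an isotropic splitting of $\rho'$ (isotropic because $\Phi$ preserves the pseudo-metric), and because $\Phi$ intertwines the brackets and metrics one gets
$$ C_{\Phi\circ\sigma}(X,Y,Z)=\langle[\Phi\sigma X,\Phi\sigma Y]',\Phi\sigma Z\rangle'=\langle[\sigma X,\sigma Y],\sigma Z\rangle=C_\sigma(X,Y,Z), $$
so that $[C_{E'}]=[C_E]$ and the map descends to isomorphism classes.

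Surjectivity I would read off directly from the construction preceding the proposition: given $[C]\in\Hald^3(D)$, choose a closed representative $C\in\Omega^3(D)$, and formula~\eqref{eq:formbracket} makes $D^*\oplus D$ into a generalized exact Courant algebroid with anchor the projection onto $D$ and {\v S}evera class $[C]$ (computed via the tautological isotropic splitting $X\mapsto 0\oplus X$). For injectivity, the decisive point is that a chosen isotropic splitting $\sigma$ identifies $E$, as a pseudo-Euclidean bundle, with $D^*\oplus D$ in such a way that the transported bracket is exactly~\eqref{eq:formbracket} with structure form $C=C_\sigma$. Granting this, two generalized exact Courant algebroids with equal {\v S}evera class become $(D^*\oplus D,[\cdot,\cdot]_{C_\sigma})$ and $(D^*\oplus D,[\cdot,\cdot]_{C_{\sigma'}})$ with $C_\sigma$ and $C_{\sigma'}$ cohomologous, and the fact (recalled before the statement) that cohomologous closed $3$-forms give isomorphic Courant algebroids then yields $E\cong E'$.

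The hard part, and the only genuinely computational step, will be verifying this last identification: that once $\sigma$ is fixed, the remaining bracket components $[\sigma X,\rho^*\beta]$, $[\rho^*\alpha,\sigma Y]$ and $[\rho^*\alpha,\rho^*\beta]$ are completely forced by~\eqref{ax:Leibn}, \eqref{ax:adInv} and the identity $\langle A,\D f\rangle=\rho(A)[f]$, leaving $C_\sigma$ as the sole free datum in~\eqref{eq:formbracket}. I expect to carry this out verbatim as in the exact case treated by Roytenberg~\cite[Section 3.8]{Royt99} and {\v S}evera~\cite{SevLett}; the only change is that the de Rham differential and the commutator of vector fields are replaced throughout by the Lie algebroid differential $\ud_D$ and the bracket on $\Gamma(D)$, which leaves the argument untouched since $D$ is itself a genuine Lie algebroid.
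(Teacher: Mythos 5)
Your proposal is correct and follows essentially the same route as the paper: the paper derives this proposition directly from Proposition~\ref{P:Severaclass} (independence of the splitting), the bracket construction~\eqref{eq:formbracket} for surjectivity, and the fact that cohomologous closed $3$-forms yield isomorphic Courant algebroids for injectivity, deferring the computational details to Roytenberg~\cite[Section 3.8]{Royt99} and {\v S}evera~\cite{SevLett} exactly as you do. Your additional explicit checks — that an isomorphism $\Phi$ automatically intertwines the anchors and hence preserves the class, and that a splitting identifies $E$ with the standard model $(D^*\oplus D,[\cdot,\cdot]_{C_\sigma})$ — are details the paper leaves implicit, not a different argument.
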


\begin{rem}\label{rem:Hgenexact} Generalized exact Courant algebroids are easy 
to describe in terms of the derived bracket construction. Indeed, since we can 
choose an isomorphism $E\cong D\oplus D^*$, the minimal symplectic realization 
of $E$ is isomorphic to $T^*[2]D[1]$. From the explicit 
formula~\eqref{eq:formbracket} for the Courant bracket, we found that the 
generating cubic Hamiltonian $H$ (encoding the Courant algebroid structure) is 
given, in our adapted coordinates, by 
\begin{equation} \label{eq:Hgenexact} 
  H=p_I\xi^I+\frac16C_{IJK}\xi^I\xi^J\xi^K 
\end{equation} where $C_{IJK}$ are 
the components of the {\v S}evera $3$-form $C$ induced by the splitting $\cong 
D\oplus D^*$.
\end{rem}

\medskip

Now assume that $E$ is a generalized exact Courant algebroid with split base 
 $D\cong TL\times N$. Then, there is an isomorphism $\Omega^\bullet(D)\cong 
\Omega^\bullet(L)\otimes_\R\smooth(N)$ and the de Rham differential $\ud_D$ of 
the Lie algebroid $D$ is identified with $\ud_L \otimes 1$, the de Rham 
differential of the smooth manifold $L$, see Remark~\ref{rem:deRham}. \notneeded{In other 
words, the de Rham complex of the Lie algebroid $D$ is the de Rham complex of 
the manifold $L$ tensored by smooth functions on $N$.} In particular, $Z^3(D)
\cong Z^3(L)\otimes_\R\smooth(N) $. Furthermore, since $\ker \rho \cong 
D^*\cong T^* L\times N$, the naive complex $\big(\Gamma_M(\Lambda^\bullet \ker 
\rho),\ud\big)$ is isomorphic to the complex $\big(\Omega^\bullet(L)\otimes 
\smooth(N), \ud_L\otimes 1\big)$ of de Rham forms on $L$ tensored by smooth 
functions on $N$. Therefore $H_{naive}^\bullet(E)\cong H^\bullet_{DR}(L)\otimes 
\smooth(N)$, where $H^\bullet_{DR}(L)$ is the de Rham cohomology of the
manifold $L$.

\begin{Exam} \label{e:genexactsplitbase}
Let $L$ and $N$ be two smooth manifolds and define $M:=L\times N$, 
$D:=TL\times N$ which is a subbundle of $TM$. Pick $\omega\in Z^3(L)$ any 
closed $3$-form on $L$ and $f\in \smooth(N)$ be any function on $N$. Then 
$C:=\omega \otimes f$ is a closed $3$-form on $D$. The $3$-form $C$ induces a 
generalized exact Courant algebroid with split base structure on $E:= D\oplus 
D^*$ where the Courant bracket is given by formula~\eqref{eq:formbracket}, the 
pseudo-metric is the standard pairing between $D$ and $D^*$ and the anchor map 
the projection $E\to D$ on the first summand. By 
Proposition~\ref{P:classification}, any generalized exact Courant algebroid 
with split base is isomorphic to such a Courant algebroid.
\end{Exam}

\bigskip

 For any vector field $X\in \X(N)$ on $N$, there is the map 
\begin{equation} \label{eq:T3gen} \Omega_{naive}^\bullet(E)\cong
  \Omega^\bullet(L)\otimes \smooth(N) \xrightarrow{\; 1\otimes X\; }
  \Omega^\bullet(L)\otimes \smooth(N)\cong
  \Omega_{naive}^\bullet(E)
\end{equation} 
defined, for $\omega \in \Omega^\bullet(L)$ and $f\in \smooth(N)$ by
 $(1\otimes X)(\omega\otimes f)=\omega \otimes X[f]$. 
Applying this map to the {\v S}evera $3$-form $C\in Z^3(L)\otimes
\smooth(N)\cong Z^3(D)$ yields the map  $$\X(N) \ni X\mapsto [(1 \otimes X)(C)]\in
H^3_{naive}(E)$$ which is well defined and depends only on the {\v S}evera
class of $E$ (and not on the particular choice of a $3$-form
representing it) by  Proposition~\ref{P:Severaclass}.
\begin{prop}\label{P:D=C}
 Let $E$ be a generalized exact Courant algebroid with {\v S}evera
 class $[C]\in \Hald^3(D)\cong H^3_{DR}(L)\otimes \smooth(N)$. The
 transgression homomorphism $T_3:\X(N)\to H^3_{naive}(E)$ is the map given,
 for any vector field $X\in \X(N)$, by 
 $$T_3(X) = [(1 \otimes X)(C)] \in H^3_{naive}(E).$$
\end{prop}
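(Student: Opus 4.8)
The plan is to compute the transgression differential $\ud_3\colon E_3^{0,2}\to E_3^{3,0}$ of the naive ideal spectral sequence directly, using the explicit generating Hamiltonian of Remark~\ref{rem:Hgenexact}, and then to recognise the answer as $[(1\otimes X)(C)]$. I would work throughout in the adapted coordinates of Remark~\ref{rem:lcoo}, where $\E$ carries coordinates $x^I$ on $L$, $x^{I'}$ on $N$, the odd fibre coordinates $\xi^I\in\ker\rho$ and their duals $\xi_I$, and the degree-$2$ coordinates $p_I$, $p_{I'}$ symplectically dual to $x^I$, $x^{I'}$. In the generalized exact case $\ker\rho\cong D^*$ is spanned by the $\xi^I$ alone and, by \eqref{eq:Hgenexact}, $H=p_I\xi^I+\frac16 C_{IJK}\xi^I\xi^J\xi^K$. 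Under the identification of $E_1^{0,\bullet}$ with $\X^\bullet(N)\otimes_{\smooth(N)}\smooth(M)$ from Lemma~\ref{l:E1}, which sends $p_{I'}\mapsto\partial_{x^{I'}}\in\Gamma(TM/D)$, a vector field $X=X^{I'}\partial_{x^{I'}}\in\X(N)$ with $X^{I'}\in\smooth(N)$ is represented by the degree-$2$ function $a:=X^{I'}p_{I'}$ on $\E$.

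The key step is a single application of $Q=\{H,\cdot\}$ to $a$. Since $X^{I'}$ depends only on the $N$-coordinates, while $H$ differentiates only along the $L$-directions (through the $p_I$ in its quadratic part), I expect $Q[X^{I'}]=0$, together with
$$ Q[p_{I'}]=\{H,p_{I'}\}=\tfrac16\pfrac{C_{IJK}}{x^{I'}}\,\xi^I\xi^J\xi^K , $$
because $p_I\xi^I$ Poisson-commutes with $p_{I'}$ and only the structure functions $C_{IJK}(x)$ pair nontrivially with $p_{I'}$. Hence
$$ Q(a)=X^{I'}\,Q[p_{I'}]=\tfrac16\,X^{I'}\pfrac{C_{IJK}}{x^{I'}}\,\xi^I\xi^J\xi^K=(1\otimes X)(C), $$
the last equality being exactly the coordinate form of differentiating the $\smooth(N)$-factor of $C\in Z^3(L)\otimes\smooth(N)$ along $X$. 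This expression is cubic in the $\xi^I$, hence lies in $\Gamma(\Lambda^3\ker\rho)=F^3A^3$. Therefore $a$ is already a $Z_3^{0,2}$-representative: its images under $\ud_0,\ud_1,\ud_2$ (each represented by $Q(a)$ reduced modulo the appropriate filtration step) all vanish, so $a$ survives to $E_3^{0,2}$ and represents $X$, and by the standard description of the spectral-sequence differential $\ud_3[X]=[Q(a)]=[(1\otimes X)(C)]$ in $E_3^{3,0}$.

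It then remains to read this class off in $H^3_{naive}(E)$. Via the isomorphisms $E_3^{3,0}\cong E_2^{3,0}\cong H^3_{naive}(E)$ of Proposition~\ref{p:E2} and Corollary~\ref{c:E3}, and using that $Q$ restricted to $\Gamma(\Lambda^\bullet\ker\rho)$ is the naive differential $\ud$ (Proposition~\ref{lem:Q=d}), the element $(1\otimes X)(C)$ is a naive cochain whose class is $\ud_3[X]$. Its $\ud$-closedness follows from survival to $E_3^{3,0}$, but I would also check it directly: under $\Gamma(\Lambda^\bullet\ker\rho)\cong\Omega^\bullet(D)$ the naive differential is $\ud_D=\ud_L\otimes1$, which commutes with $1\otimes X$, so $\ud_D\big((1\otimes X)(C)\big)=(1\otimes X)(\ud_DC)=0$ as $C$ is closed. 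This yields $T_3(X)=\ud_3[X]=[(1\otimes X)(C)]$; independence of the chosen coordinates and of the $3$-form representing $[C]$ is the content of the discussion preceding the proposition, resting on Proposition~\ref{P:Severaclass}.

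The hard part will be the spectral-sequence bookkeeping of the second paragraph: I must make sure that applying $Q$ once to the naive representative $X^{I'}p_{I'}$ already lands in the filtration level $F^3$, so that it legitimately computes $\ud_3$ with no further zig-zag corrections by lower-filtration terms, and that the resulting cubic expression genuinely matches the coordinate-free operator $1\otimes X$ applied to the {\v S}evera $3$-form. Both points depend crucially on the reduced shape of $H$ in \eqref{eq:Hgenexact} (only $\xi^I$ appears in the cubic term); the remaining care is merely with Poisson-bracket sign conventions, which only affect an overall sign fixed by our conventions and not the resulting cohomology class.
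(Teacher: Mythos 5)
Your proposal is correct and takes essentially the same approach as the paper: both work in the adapted coordinates of Remark~\ref{rem:lcoo} with the reduced Hamiltonian \eqref{eq:Hgenexact}, obtain the local formula $\ud_3=\tfrac16 C_{IJK,L'}\xi^I\xi^J\xi^K\pfrac{}{p_{L'}}$, and identify the result with $(1\otimes X)(C)$ via Proposition~\ref{p:E2} and Corollary~\ref{c:E3}. The only difference is expository: you derive that formula by the explicit zig-zag computation $Q(X^{I'}p_{I'})=(1\otimes X)(C)\in F^3A^3$, noting that no lower-filtration corrections arise, whereas the paper asserts the coordinate expression for $\ud_3$ directly.
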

\begin{proof} Fix a {\v S}evera $3$-form $C$ representing the {\v S}evera
  characteristic class. The transgression homomorphism is induced by the
  differential $\ud_3:E_3^{\bullet,\bullet}\to E_3^{\bullet+3,\bullet-2}$.
  Corollary~\ref{c:E3} and Proposition~\ref{p:E2} together with above remarks
  about generalized exact Courant algebroids give\,:
  $E_3^{p,q}\cong\Gamma(\Lambda^p\im\rho^*)\otimes\X^q(N).$  Therefore our
  adapted coordinates \ref{rem:lcoo} still apply and give on a local chart
  the map $\ud_3$ as
\begin{align*}
 \ud_3
  =& \frac16C_{IJK,L'}\xi^I\xi^J\xi^K\pfrac{}{p_{L'}}
\end{align*} \MG{Note that there are no further $\xi^A$, because of
  generalized exactness.}
By formula~\eqref{eq:Hgenexact}, the functions
 $\frac16C_{IJK}\xi^I\xi^J\xi^K$ are given by the components of the
{\v S}evera $3$-form $C$ (identified with a function on 
 $E[1]$ via $\langle.,.\rangle$ and pulled back to $\E$). Now the result
follows since $T_3$ is the restriction of $\ud_3$ to $E_3^{0,2}\cong \X(N)$.
\end{proof}

\medskip

 We denote $\Ann(C)$ the kernel of $T_3$, that is the vector fields $X$ on $N$ such that $[(1\otimes X)(C)]=0\in H^3_{naive}(E)$.  We also denote $\big((1\otimes \X(N))(C)\big)$ the ideal in $H^\bullet_{naive}(E)$ generated by the vector subspace $\im T_3 =\{(1\otimes X)(C) \, , \, X\in \X(N)\}$.
\begin{cor}\label{c:genexact} The Courant algebroid cohomology of a generalized exact Courant
 algebroid $E$ with split base $D\cong TL\times N$  is given by
\begin{align*} H^n_{std}(E)
  &\cong \bigoplus_{p+2q=n} H^p_{DR}(L)\otimes \smooth(N)/\big((1\otimes \X(N))(C)\big) \otimes_{\smooth(N)}
    S^{q}(\Ann(C))
\end{align*}
where $[C]$ is the  {\v S}evera class of $E$.
\end{cor}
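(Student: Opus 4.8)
The plan is to obtain this corollary as a direct specialization of Theorem~\ref{thm:split} to the generalized exact case, feeding in the two explicit descriptions—of the naive cohomology and of the transgression homomorphism—that are available here. Theorem~\ref{thm:split} already gives the general answer
\[ H^n_{std}(E)\cong \bigoplus_{p+q=n} H^p_{naive}(E)/(T_3)\otimes \X^{kil,q}, \]
so the entire task is to rewrite each factor on the right in terms of $L$, $N$, and the {\v S}evera $3$-form $C$.

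First I would insert the identification $H^\bullet_{naive}(E)\cong H^\bullet_{DR}(L)\otimes \smooth(N)$ established in the discussion preceding Example~\ref{e:genexactsplitbase}, which comes from $\ker\rho\cong D^*\cong T^*L\times N$ and the resulting isomorphism of the naive complex with $\big(\Omega^\bullet(L)\otimes\smooth(N),\ud_L\otimes 1\big)$. Next I would use Proposition~\ref{P:D=C}, which computes the transgression as the map $T_3\colon X\mapsto [(1\otimes X)(C)]$ for $X\in\X(N)$. This single input identifies both pieces of the formula simultaneously: its image is $\im T_3=\{(1\otimes X)(C):X\in\X(N)\}$, so the ideal $(T_3)$ in $H^\bullet_{naive}(E)$ appearing in Theorem~\ref{thm:split} is exactly the ideal $\big((1\otimes \X(N))(C)\big)$, while its kernel $\X^{kil}(N)=\ker T_3$ is by definition $\Ann(C)$, whence $\X^{kil,q}=S^{q/2}_{\smooth(N)}(\Ann(C))$.

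Finally I would substitute these identifications into \eqref{eq:split} and reindex. Since $\X^{kil,q}$ vanishes in odd degrees, only even $q$ contribute; writing $q=2q'$ converts $S^{q/2}_{\smooth(N)}(\Ann(C))$ into $S^{q'}(\Ann(C))$ and the summation condition $p+q=n$ into $p+2q'=n$. Relabeling $q'$ as $q$ yields precisely the claimed
\[ H^n_{std}(E)\cong \bigoplus_{p+2q=n} \big(H^p_{DR}(L)\otimes\smooth(N)\big)/\big((1\otimes \X(N))(C)\big)\otimes_{\smooth(N)} S^q(\Ann(C)). \]

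There is no genuine obstacle here, since the corollary is a bookkeeping specialization of the already-proved Theorem~\ref{thm:split}; the only point requiring care is tracking the coefficient ring of the tensor product. Both $H^p_{naive}(E)/(T_3)$ and $S^q(\Ann(C))$ are modules over $\smooth(N)=E_3^{0,0}$, and the tensor product in Theorem~\ref{thm:split} must be read over this ring, which is why the corollary writes $\otimes_{\smooth(N)}$. As in Remark~\ref{rem:algebra}, the stated isomorphism is one of $\smooth(N)$-modules (of graded algebras when the right-hand side happens to be free as a graded commutative algebra), and I would note this parenthetically rather than belabor it.
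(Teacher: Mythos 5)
Your proposal is correct and follows exactly the paper's own route: the paper's proof is a one-line citation of Theorem~\ref{thm:split}, Proposition~\ref{P:D=C}, and the isomorphism $H^\bullet_{naive}(E)\cong H^\bullet_{DR}(L)\otimes\smooth(N)$, which are precisely the three ingredients you assemble. Your write-up merely makes explicit the bookkeeping (identifying $\X^{kil}(N)$ with $\Ann(C)$, the ideal $(T_3)$ with $\big((1\otimes\X(N))(C)\big)$, and the reindexing $q\mapsto 2q$) that the paper leaves implicit.
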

\begin{proof} This is an immediate consequence of Theorem~\ref{thm:split}, 
Proposition~\ref{P:D=C} and the isomorphism $H^\bullet_{naive}(E)\cong 
H^\bullet_{DR}(L)\otimes \smooth(N)$.
\end{proof}

\begin{rem}\label{e:D3triv}
 Let $E$ be a generalized exact Courant algebroid with split base $D\cong 
 TL\otimes N$. Assume that the {\v S}evera class of $E$ can be represented by a 
  $3$-form $C\in \Omega^3(L)\otimes \gsmooth(N)$ which is constant as a function 
 of $N$, \emph{i.e.}, $C\in \Omega^3(L)\otimes \R \subset 
 \Omega^3(L)\otimes \gsmooth(N)$. Then, by Proposition~\ref{P:D=C} $T_3=0$ and 
 thus, by Corollary~\ref{c:genexact}, the cohomology of $E$ is 
  $$H^n_{std}(E) \cong \bigoplus_{p+q=n} H^p_{DR}(L)\otimes \X^{q}(N) \,.$$ 
 An example of such a 
 Courant algebroid is obtained as in Example~\ref{e:genexactsplitbase} by 
 taking $C=\omega \otimes 1$ for the {\v S}evera $3$-form, where $\omega$ is 
 any closed $3$-form on $L$.
\end{rem}

\begin{Exam}  Let $G$ be a Lie group with a bi-invariant metric
 $\langle.,.\rangle$. Then $G$ has a canonical closed $3$-form which  is the
Cartan $3$-form $C=\langle [\theta^L,\theta^L],\theta^L\rangle$ where
 $\theta^L$ is the left-invariant Maurer--Cartan $1$-form\MG{ and bracket and 
metric are understood on the Lie algebra component}. Note that, by
ad-invariance of  $\langle.,.\rangle$, $C$ is bi-invariant (hence closed).
\MG{That's also what Roytenberg said.  Seems I'm the only one who doesn't know 
this triviality.}

Thus, by Example~\ref{e:genexactsplitbase}, there is a generalized exact
Courant algebroid structure on $G\times N$ with {\v S}evera class $[C\otimes
  f]$ for any manifold $N$ and function $f\in\smooth(N)$. This example (for
 $N=\{*\}$) was suggested by  Alekseev (see also \cite[example
  5.5]{Royt02}).  Explicitly, the Courant algebroid is $E=(\g\oplus\g)
\times N\to G\times N$. The structure maps are given by\,:
\begin{align*}
  \langle X\oplus Y, X'\oplus Y'\rangle = \langle X,X'\rangle -\langle
  Y,Y'\rangle  \\
  \rho: E\to TG\boxplus TN: (X\oplus Y,g,n) \mapsto (X^l-Y^r)(g)\boxplus0
\intertext{where the superscript $^l$ (resp.\ $^r$) means that an element of
  the Lie algebra $\g\cong T_eG$ is extended as a left (right) invariant
  vector field on $G$. The bracket is given for $(g,n)\in G\times N$ and 
  $X,X',Y,Y'\in\g\subset\Gamma(\g\times G),$ by
} 
  [X\oplus X', Y\oplus Y']_{(g,n)} = ([X,X']\oplus  f(n)[Y,Y'])_{(g,n)}&
\end{align*}  Choosing the splitting $\sigma(Z,g,n):=(Z\oplus-\Ad_gZ,g,n)$, one 
finds that the Cartan $3$-form $C$ is indeed the {\v S}evera 
class of $E$.

Now assume $G$ is a compact simple Lie group.  Then, $C$ spans $H^3(G)$. If we 
take $N=\R$ and $f$ to be constant, then, by 
Remark~\ref{e:D3triv}, the cohomology of $E$ is $H^\bullet_{std}(E)\cong H^\bullet(G)
\otimes \X^\bullet(\R)$, thus two copies (in different degrees) of the 
cohomology of $G$. Now take $f\in \smooth(\R)$ to be $f(t)=t$. Then $\Ann(C)$ 
is trivial and $ H^\bullet_{std}(E)= H^\bullet(G)/(C)$. Note that $H^\bullet(G)$
is the exterior algebra $H^\bullet(G)\cong 
 \Lambda^\bullet(C,x_2,\dots,x_r)$, thus $H^\bullet_{std}(E)\cong 
\Lambda^\bullet(x_2,\dots,x_r)$ as an algebra (see Remark~\ref{rem:algebra}). 
\end{Exam}

\chapter{Matched pairs of Courant algebroids}
\nocite{GSi09}
\section{Matched pairs}
\begin{vdef}  Given an anchored vector bundle $E\xrightarrow{\rho}TM$ and a 
  vector bundle $V$ over a smooth manifold $M$, an \emph{$E$-connection} on $V$ is an
 $\R$-bilinear operator 
  $\nabla:\Gamma(E)\otimes\Gamma(V)\to\Gamma(V)$ fulfilling:
\begin{gather}
  \conn_{f\psi}v = f\conn_\psi v , \label{dog} \\
  \conn_\psi(fv) = \big(\rho(\psi)[f]\big) v + f \conn_\psi v \label{cat} 
\end{gather}
for all $f\in\smooth(M)$, $\psi\in\Gamma(E)$, and $v\in\Gamma(V)$.
\end{vdef}

\begin{vdef}[Mokri \cite{Mor97}] Two Lie algebroids $A$ and $A'$ form a \emph{matched pair} 
if there exists a flat $A$-connection $\nablaright$ on $A'$ and a flat $A'$-connection $\nablaleft$ on $A$ 
satisfying 
\begin{gather}
  \lconn_\alpha[b,c] = [\lconn_\alpha b,c] +[b, \lconn_\alpha c] 
    +\lconn_{\rconn_c\alpha}b -\lconn_{\rconn_b\alpha}c \label{crocodile} \\
  \rconn_a[\beta,\gamma] = [\rconn_a \beta,\gamma] +[\beta, \rconn_a \gamma] 
    +\rconn_{\lconn_\gamma a}\beta -\rconn_{\lconn_\beta a}\gamma \label{alligator}
\end{gather}
(for all $\alpha,\beta,\gamma\in\Gamma(A')$ and $a,b,c\in\Gamma(A)$) are specified. 
\end{vdef}

Let $(E,\ip{.}{.},\rho,\dia)$ be a Courant algebroid. Assume we are given two subbundles $E_1,E_2$ of $E$ such that $E=E_1\oplus E_2$ and $E_1^\perp=E_2$. 
Let $\pr_1$ (resp.\ $\pr_2$) denote the projection of $E$ onto $E_1$ (resp.\ $E_2$) and 
$\i_1$ (resp.\ $\i_2$) denote the inclusion of $E_1$ (resp.\ $E_2$) 
into $E$. 
Assume that
 $E_k$ is itself a Courant algebroid with anchor $\rho_k=\rho\circ\i_k$, inner product 
$\ip{a}{b}_k=\ip{\i_k a}{\i_k b}$, and Dorfman bracket 
$\dbi{a}{b}{k}=\pr_k(\dor{\i_k a}{\i_k b})$.
A natural question is how to recover the Courant algebroid
structure on $E$ from  $E_1$ and $E_2$.

\begin{prop}  The inner product and the anchor map of $E$ are uniquely 
determined by their restrictions to $E_1$ and $E_2$.  Indeed, for $a,b\in\Gamma(E_1)$ 
and $\alpha,\beta\in\Gamma(E_2)$, we have 
\begin{gather}  
\<a\oplus\alpha,b\oplus\beta\> = \<a,b\>_1 +\<\alpha,\beta\>_2 , \label{bull} \\
\rho(a\oplus\alpha) = \rho_1(a)+\rho_2(\alpha) , \label{bear}
\end{gather}
where $a\oplus\alpha$ is the shorthand for $\i_1(a)+\i_2(\alpha)$.
\end{prop}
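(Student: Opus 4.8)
The plan is to exploit the direct-sum decomposition $E = E_1 \oplus E_2$ together with the orthogonality hypothesis $E_1^\perp = E_2$. First I would observe that, since $E = E_1 \oplus E_2$ as vector bundles, every section of $E$ admits a unique decomposition $s = \i_1 a + \i_2 \alpha$ with $a = \pr_1 s \in \Gamma(E_1)$ and $\alpha = \pr_2 s \in \Gamma(E_2)$; this is precisely the shorthand $a \oplus \alpha$. Hence it suffices to compute $\langle.,.\rangle$ and $\rho$ on such decomposed sections.

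For the inner-product formula \eqref{bull}, I would expand $\langle \i_1 a + \i_2 \alpha, \i_1 b + \i_2 \beta \rangle$ by $\R$-bilinearity into four terms. The two diagonal terms $\langle \i_1 a, \i_1 b \rangle$ and $\langle \i_2 \alpha, \i_2 \beta \rangle$ are, by the very definitions of the restricted pairings $\langle a, b \rangle_1 = \langle \i_1 a, \i_1 b \rangle$ and $\langle \alpha, \beta \rangle_2 = \langle \i_2 \alpha, \i_2 \beta \rangle$, equal to $\langle a, b \rangle_1$ and $\langle \alpha, \beta \rangle_2$ respectively. The two off-diagonal terms $\langle \i_1 a, \i_2 \beta \rangle$ and $\langle \i_2 \alpha, \i_1 b \rangle$ vanish precisely because $E_1^\perp = E_2$, i.e.\ sections of $E_1$ pair to zero with sections of $E_2$. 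Adding the surviving terms gives \eqref{bull}.

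For the anchor formula \eqref{bear}, I would simply use that $\rho$ is a vector-bundle map, hence $\R$-linear on sections, so that $\rho(\i_1 a + \i_2 \alpha) = \rho(\i_1 a) + \rho(\i_2 \alpha)$; applying the definition $\rho_k = \rho \circ \i_k$ then yields \eqref{bear}. The uniqueness assertion is immediate from these two formulas, since both $\langle.,.\rangle$ and $\rho$ have thereby been expressed entirely in terms of the data $\langle.,.\rangle_1$, $\langle.,.\rangle_2$, $\rho_1$, $\rho_2$ carried by the two summands.

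I do not expect a genuine obstacle here; the computation is forced by bilinearity and linearity. The only step that uses the hypotheses rather than formal manipulation is the vanishing of the two cross terms in the inner product, which is exactly where $E_1^\perp = E_2$ enters — and this is the reason the restrictions alone suffice to determine $\langle.,.\rangle$, since otherwise the pairing between $E_1$ and $E_2$ would carry independent information invisible to either $\langle.,.\rangle_1$ or $\langle.,.\rangle_2$.
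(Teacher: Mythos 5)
Your proof is correct, and it is exactly the argument the paper has in mind: the paper states this proposition without proof precisely because, as you note, the formulas are forced by bilinearity of $\<.,.\>$, linearity of $\rho$, the definitions $\<.,.\>_k = \<\i_k\,\cdot\,,\i_k\,\cdot\,\>$ and $\rho_k = \rho\circ\i_k$, and the orthogonality hypothesis $E_1^\perp = E_2$ killing the cross terms. Your identification of the orthogonality condition as the one genuinely substantive ingredient is also the right observation.
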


\begin{prop} 
The bracket on $E$ induces an $E_1$-connection on $E_2$: 
\beq{bird} \rconn_{a} \beta=\pr_2\big(\dor{(\i_1 a)}{(\i_2 \beta)}\big) \eeq
and an $E_2$-connection on $E_1$: 
\beq{fish} \lconn_{\beta} a=\pr_1\big(\dor{(\i_2 \beta)}{(\i_1 a)}\big) .\eeq
These connections preserve the inner products on $E_1$ and $E_2$: 
\begin{gather}
\rho(\alpha)\ip{a}{b}_1= \ip{\lconn_{\alpha}a}{b}_1 +\ip{a}{\lconn_{\alpha}b}_1 \label{cow} \\
\rho(a)\ip{\alpha}{\beta}_2= \ip{\rconn_a \alpha}{\beta}_2 +\ip{\alpha}{\rconn_{a}\beta}_2 \label{calf}
\end{gather}

Moreover, we have 
\beq{parrot} (a\oplus0)\dia(0\oplus\beta) = -\lconn_{\beta} a \oplus \rconn_{a} \beta \eeq
for $a\in\Gamma(E_1)$ and $\beta\in\Gamma(E_2)$.
\end{prop}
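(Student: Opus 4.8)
The whole proposition rests on two elementary facts: the orthogonality hypothesis $E_1^\perp = E_2$, which forces $\langle \i_1 a, \i_2\beta\rangle = 0$ for all $a\in\Gamma(E_1)$, $\beta\in\Gamma(E_2)$, and the polarization of axiom~\eqref{ax:nSkew}, namely
$$\dor{\phi}{\psi} + \dor{\psi}{\phi} = \rho^*\ud\langle\phi,\psi\rangle$$
for all $\phi,\psi\in\Gamma(E)$. The plan is to exploit these two facts to reduce every claim to the defining axioms of a Courant algebroid. Note that the formulas~\eqref{bird} and \eqref{fish} define $\rconn$ and $\lconn$ as exactly the two projections of $\dor{\i_1 a}{\i_2\beta}$ and $\dor{\i_2\beta}{\i_1 a}$.

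I would start with \eqref{parrot}, since it already packages the two connections. Writing $\dor{\i_1 a}{\i_2\beta} = \i_1\pr_1(\dor{\i_1 a}{\i_2\beta}) + \i_2\pr_2(\dor{\i_1 a}{\i_2\beta})$, the $E_2$-component is $\rconn_a\beta$ by definition~\eqref{bird}. For the $E_1$-component, the polarized axiom together with $\langle\i_1 a,\i_2\beta\rangle = 0$ gives $\dor{\i_1 a}{\i_2\beta} = -\dor{\i_2\beta}{\i_1 a}$; projecting with $\pr_1$ and using definition~\eqref{fish} yields $\pr_1(\dor{\i_1 a}{\i_2\beta}) = -\lconn_\beta a$. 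Assembling the two components gives \eqref{parrot}.

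Next I would verify that \eqref{bird} and \eqref{fish} are genuine connections. The Leibniz rule~\eqref{cat} in the $E_2$-slot is immediate from axiom~\eqref{ax:Leibn}, giving $\pr_2(\dor{\i_1 a}{f\i_2\beta}) = \rho_1(a)[f]\,\beta + f\rconn_a\beta$. For $\smooth(M)$-linearity~\eqref{dog} in the $E_1$-slot I would first derive, from the polarized axiom and the Leibniz rule, the auxiliary identity
$$\dor{(f\phi)}{\psi} = f\,\dor{\phi}{\psi} - \rho(\psi)[f]\,\phi + \langle\phi,\psi\rangle\,\rho^*\ud f.$$
Setting $\phi = \i_1 a$, $\psi = \i_2\beta$, the last term drops out because $\langle\i_1 a,\i_2\beta\rangle = 0$, and the middle term $\rho(\i_2\beta)[f]\,\i_1 a$ lies in $E_1$, hence is killed by $\pr_2$; therefore $\rconn_{fa}\beta = f\rconn_a\beta$. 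The same computation with the roles of $E_1$ and $E_2$ exchanged handles $\lconn$.

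Finally, the invariance identities \eqref{cow}–\eqref{calf} follow from the polarized form of axiom~\eqref{ax:adInv}. Taking $\phi = \i_1 a$, $\psi_1 = \i_2\alpha$, $\psi_2 = \i_2\beta$ gives $\rho(a)\langle\alpha,\beta\rangle_2 = \langle\dor{\i_1 a}{\i_2\alpha},\i_2\beta\rangle + \langle\i_2\alpha,\dor{\i_1 a}{\i_2\beta}\rangle$; since pairing against $\i_2\beta$ (resp.\ $\i_2\alpha$) annihilates the $E_1$-component by orthogonality, each pairing reduces to its $E_2$-part, giving exactly \eqref{calf}, and symmetrically \eqref{cow}. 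The only nonroutine point anywhere is the linearity check~\eqref{dog}: one must produce the auxiliary formula for $\dor{(f\phi)}{\psi}$ and then observe that both correction terms vanish precisely because of $E_1^\perp = E_2$. Once this is in place, the remainder is bookkeeping with the projections.
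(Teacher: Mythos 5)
Your proof is correct and takes essentially the same route as the paper, whose own proof is just the one-line remark that the connection properties follow from the Leibniz rule \eqref{ax:Leibn}, the metric invariance \eqref{cow}--\eqref{calf} from the ad-invariance \eqref{ax:adInv}, and \eqref{parrot} additionally from axiom \eqref{ax:nSkew}. Your write-up simply supplies the details the paper omits, most usefully the auxiliary identity for $\dor{(f\phi)}{\psi}$ (which in fact needs \eqref{ax:nSkew} as well as \eqref{ax:Leibn}) together with the observation that orthogonality $E_1^\perp=E_2$ kills both correction terms in the linearity check \eqref{dog}.
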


\begin{proof} The first two equations follow from the Leibniz rule \eqref{ax:Leibn} of a Courant algebroid.  The next two equations follow from the ad-invariance \eqref{ax:adInv}.  The last equation uses in addition axiom \eqref{ax:nSkew}.
\end{proof}

\begin{prop} 
\begin{enumerate}
\item For all $a,b\in\Gamma(E_1)$, we have 
\beq{pig} \dor{(a\oplus0)}{(b\oplus0)} = (\dbi{a}{b}{1}) \oplus 
\big( \tfrac{1}{2} \D_2 \ip{a}{b}_1 + \oo(a,b) \big) ,\eeq 
where $\oo\:\wedge^2\Gamma(E_1)\to \Gamma(E_2)$ is defined by the relation 
\beq{monkey} \oo(a,b)=\tfrac12 \pr_2 (\dor{\i_1 a}{\i_1 b}-\dor{\i_1 b}{\i_1 a}) .\eeq 
In fact, $\oo$ is entirely determined by the connection 
$\nablaleft:\Gamma(E_2)\otimes\Gamma(E_1)\to\Gamma(E_1)$ through the relation 
\beq{donkey} \ip{\gamma}{\oo(a,b)}_2 = \tfrac12 (\ip{\lconn_{\gamma} a}{b}_1 
- \ip{a}{\lconn_{\gamma} b}_1) \eeq 
\item For all $\alpha,\beta\in\Gamma(E_2)$, we have 
\beq{snake} \dor{(0\oplus\alpha)}{(0\oplus\beta)} = 
\big( \tfrac{1}{2} \D_1 \ip{\alpha}{\beta}_2 + \w(\alpha,\beta)\big) 
\oplus (\dbi{\alpha}{\beta}{2}) ,\eeq
where $\w\:\wedge^2\Gamma(E_2)\to \Gamma(E_1)$ is defined by the relation 
\beq{elephant} \w(\alpha,\beta)=\tfrac12 \pr_1 (\dor{\i_2 \alpha}{\i_2 \beta} 
- \dor{\i_2 \beta}{\i_2 \alpha}) .\eeq 
In fact, $\w$ is entirely determined by the connection 
$\nablaright:\Gamma(E_1)\otimes\Gamma(E_2)\to\Gamma(E_2)$ through the relation 
\beq{shark} \ip{c}{\w(\alpha,\beta)}_1 = \tfrac12 (\ip{\rconn_{c} \alpha}{\beta}_2 
- \ip{\alpha}{\rconn_{c} \beta}_2) .\eeq 
\end{enumerate}
\end{prop}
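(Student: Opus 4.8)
The plan is to obtain both displayed brackets by resolving $\dor{(\i_1 a)}{(\i_1 b)}\in\Gamma(E)$ into its $E_1$- and $E_2$-components via the projections $\pr_1,\pr_2$, and then to identify each component with the asserted data. The $E_1$-component needs no work: $\pr_1(\dor{\i_1 a}{\i_1 b})=\dbi{a}{b}{1}$ is exactly the definition of the induced Dorfman bracket on $E_1$. Hence everything reduces to computing the $E_2$-component $\pr_2(\dor{\i_1 a}{\i_1 b})$, which I would split into its symmetric and skew-symmetric parts in the pair $(a,b)$, since $\pr_2$ does not involve $a$ or $b$.

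For the symmetric part I would polarize axiom~\eqref{ax:nSkew}: applying it to $\i_1 a+\i_1 b$ and subtracting the pure terms gives $\dor{\i_1 a}{\i_1 b}+\dor{\i_1 b}{\i_1 a}=\rho^*\ud\langle a,b\rangle_1=\D\langle a,b\rangle_1$, so the symmetric $E_2$-part is $\tfrac12\pr_2\D\langle a,b\rangle_1$. It then remains to recognize $\pr_2\circ\D$ as the differential $\D_2$ of the sub-Courant algebroid $E_2$: for $\gamma\in\Gamma(E_2)$ and $f\in\smooth(M)$, using $E_1^\perp=E_2$ to discard the $E_1$-part, one has $\ip{\gamma}{\pr_2\D f}_2=\ip{\i_2\gamma}{\D f}=\rho(\i_2\gamma)[f]=\rho_2(\gamma)[f]=\ip{\gamma}{\D_2 f}_2$, whence $\pr_2\D=\D_2$ by nondegeneracy of $\ip{.}{.}_2$. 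The skew-symmetric part is, by definition, precisely $\oo(a,b)$ as in~\eqref{monkey}. Combining these yields~\eqref{pig}, and equation~\eqref{snake} is the mirror statement obtained by exchanging the roles of $E_1$ and $E_2$ (so $a\leftrightarrow\alpha$, $\D_2\leftrightarrow\D_1$, $\oo\leftrightarrow\w$).

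The substantive step is the identification~\eqref{donkey} of $\oo$ with the connection $\nablaleft$. I would pair $\oo(a,b)$ against an arbitrary $\gamma\in\Gamma(E_2)$; since the $E_1$-component is orthogonal to $\i_2\gamma$, the projection $\pr_2$ in~\eqref{monkey} is invisible to this pairing, giving
\[ \ip{\gamma}{\oo(a,b)}_2=\tfrac12\big(\ip{\dor{\i_1 a}{\i_1 b}}{\i_2\gamma}-\ip{\dor{\i_1 b}{\i_1 a}}{\i_2\gamma}\big). \]
Each term is then opened with the invariance identity~\eqref{ax:adInv}: taking $\phi=\i_1 a$, $\psi_1=\i_1 b$, $\psi_2=\i_2\gamma$ one has $\rho(\i_1 a)\ip{\i_1 b}{\i_2\gamma}=0$ (again by $E_1\perp E_2$), so $\ip{\dor{\i_1 a}{\i_1 b}}{\i_2\gamma}=-\ip{\i_1 b}{\dor{\i_1 a}{\i_2\gamma}}$. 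Finally~\eqref{parrot} identifies the $E_1$-part of the cross bracket $\dor{\i_1 a}{\i_2\gamma}$ as $-\lconn_{\gamma} a$, so the right-hand side collapses to $\ip{\lconn_{\gamma} a}{b}_1$; the symmetric term gives $\ip{a}{\lconn_{\gamma} b}_1$, which is exactly~\eqref{donkey}. The same computation, but reading off the $E_2$-part of the cross bracket in~\eqref{parrot}, proves~\eqref{shark}.

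I expect the main obstacle to be bookkeeping rather than anything conceptual: one must consistently invoke $E_1\perp E_2$ both to kill the mixed pairings and to drop the projections, and must feed~\eqref{parrot} into~\eqref{ax:adInv} in the correct slot so that only the $\nablaleft$ (resp.\ $\nablaright$) part of the cross bracket survives. The signs are controlled entirely by the single minus sign emitted by~\eqref{ax:adInv} together with the minus sign in front of $\lconn_{\gamma} a$ in~\eqref{parrot}, which combine to produce the stated skew combination.
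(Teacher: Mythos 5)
Your proposal is correct and takes essentially the same route as the paper's own proof: both extract the symmetric part of $\dor{(a\oplus0)}{(b\oplus0)}$ from axiom~\eqref{ax:nSkew} and identify the skew part with the connections by pairing against $0\oplus\gamma$, killing the left-hand side of~\eqref{ax:adInv} via $E_1\perp E_2$, and substituting~\eqref{parrot} for the resulting cross bracket. The only cosmetic differences are that you verify $\pr_2\circ\D=\D_2$ explicitly and reach~\eqref{donkey} directly from the skew combination, whereas the paper computes $\langle 0\oplus\gamma,\dor{(a\oplus0)}{(b\oplus0)}\rangle=\langle\lconn_\gamma a,b\rangle_1$ in one stroke and implicitly uses~\eqref{cow} to separate the two contributions.
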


\begin{proof}  From axiom \eqref{ax:nSkew} we conclude that the symmetric part of the bracket is given by
\begin{align*}
  (a\oplus0)\dia(a\oplus0) &= \tfrac12\D\<a,a\>_1 = \tfrac12\D_1\<a,a\>_1 \oplus \tfrac12\D_2\<a,a\>_2 \;.
\intertext{Moreover, using axiom \eqref{ax:adInv} we get}
 \<0\oplus\gamma,(a\oplus0)\dia(b\oplus0)\> &= \rho(a\oplus0)\<0\oplus\gamma,b\oplus0\>-\<(a\oplus0)\dia(0\oplus\gamma),(b\oplus0)\> \\ &= \<\lconn_\gamma a,b\>_1 \;.
\end{align*}  The formula for $\mho$ occurs under analog considerations for $0\oplus\alpha$ and $0\oplus\beta$.
\end{proof}

As a consequence we obtain the formula 
\begin{multline} 
\dor{(a\oplus\alpha)}{(b\oplus\beta)} 
= \big( \dbi{a}{b}{1} +\lconn_{\alpha} b-\lconn_{\beta} a 
+\w(\alpha,\beta)+\tfrac12\D_1\ip{\alpha}{\beta}_2 \big) \\
\oplus \big( \dbi{\alpha}{\beta}{2} +\rconn_{a} \beta-\rconn_{b} \alpha 
+\oo(a,b)+\tfrac12\D_2\ip{a}{b}_1 \big)  \label{fullBracket} \;.
\end{multline}
This shows that the Dorfman bracket on $\Gamma(E)$ can be recovered from 
the Courant algebroid structures on $E_1$ and $E_2$ together with the 
connections  $\nablaleft$ and $\nablaright$.

\begin{lemma} \label{l:connExact}
For any $f\in\smooth(M)$, $b\in\Gamma(E_1)$, and $\beta\in\Gamma(E_2)$ we have $\rconn_{\D_1 f} \beta =0$ and $\lconn_{\D_2 f} b =0$.
\end{lemma}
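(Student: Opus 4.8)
The plan is to reduce the lemma to a single general fact about Courant algebroids, namely that an \emph{exact} section acts trivially from the left:
\[ \dor{\D g}{\psi}=0 \qquad\text{for all } g\in\smooth(M),\ \psi\in\Gamma(E). \]
I would establish this auxiliary identity first. Pairing $\dor{\psi}{\D g}$ with an arbitrary $\chi\in\Gamma(E)$, using the ad-invariance axiom~\eqref{ax:adInv}, the defining relation $\<A,\D g\>=\rho(A)[g]$, and the fact that $\rho$ is a morphism of brackets~\eqref{rhomor}, one computes
\begin{align*}
  \<\dor{\psi}{\D g},\chi\> &= \rho(\psi)\<\D g,\chi\>-\<\D g,\dor{\psi}{\chi}\> \\
  &= \rho(\psi)[\rho(\chi)[g]]-[\rho(\psi),\rho(\chi)][g]
   = \rho(\chi)[\rho(\psi)[g]] = \<\D(\rho(\psi)[g]),\chi\>,
\end{align*}
so $\dor{\psi}{\D g}=\D(\rho(\psi)[g])$ by nondegeneracy of $\<.,.\>$. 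Polarizing axiom~\eqref{ax:nSkew} (with $\D=\rho^*\ud$) gives the symmetric part $\dor{\phi}{\psi}+\dor{\psi}{\phi}=\D\<\phi,\psi\>$; applying it with $\phi=\D g$ yields $\dor{\D g}{\psi}=\D\<\D g,\psi\>-\dor{\psi}{\D g}=\D(\rho(\psi)[g])-\D(\rho(\psi)[g])=0$.

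Next I would record how the exact operators of the two pieces sit inside that of $E$. Writing $\D_k$ for the operator of the sub-Courant algebroid $E_k$, characterized by $\<a,\D_k g\>_k=\rho_k(a)[g]$, I claim $\D f=\i_1\D_1 f+\i_2\D_2 f$. Indeed, for $a\in\Gamma(E_1)$ the orthogonality $E_1^\perp=E_2$ gives $\<\i_1 a,\D f\>=\rho(\i_1 a)[f]=\rho_1(a)[f]=\<a,\D_1 f\>_1=\<\i_1 a,\i_1\D_1 f+\i_2\D_2 f\>$, and symmetrically for $\alpha\in\Gamma(E_2)$; nondegeneracy then forces the claimed decomposition. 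In particular $\i_1\D_1 f=\D f-\i_2\D_2 f$ and $\i_2\D_2 f=\D f-\i_1\D_1 f$.

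Finally I would combine the two steps. Substituting the decomposition into the definition~\eqref{bird} of $\nablaright$,
\[ \rconn_{\D_1 f}\beta=\pr_2\big(\dor{\i_1\D_1 f}{\i_2\beta}\big)=\pr_2\big(\dor{\D f}{\i_2\beta}\big)-\pr_2\big(\dor{\i_2\D_2 f}{\i_2\beta}\big). \]
The first term vanishes by the auxiliary identity applied in $E$, while the second equals $\dbi{\D_2 f}{\beta}{2}$, which vanishes by the auxiliary identity applied \emph{inside} the Courant algebroid $E_2$ (where $\D_2 f$ is exact). Hence $\rconn_{\D_1 f}\beta=0$, and the same computation with the roles of $E_1$ and $E_2$ exchanged, using~\eqref{fish}, gives $\lconn_{\D_2 f} b=0$. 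The conceptual crux — and the only nonroutine point — is the auxiliary identity $\dor{\D g}{\psi}=0$, together with the observation that exactness is inherited by the two sub-Courant algebroids, so that this left-trivial action can be invoked simultaneously in $E$ and in $E_2$ (resp.\ $E_1$); the rest is bookkeeping with the orthogonal splitting.
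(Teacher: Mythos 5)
Your proposal is correct and is essentially the paper's own argument, fully spelled out: the paper's one-line proof $\rconn_{\D_1 f}\beta = \pr_2\big((\D_1 f\oplus0)\dia(0\oplus\beta)\big) = \pr_2\big(\D f\dia(0\oplus\beta)\big) = 0$ rests on exactly your two ingredients, namely the decomposition $\D f=\i_1\D_1 f+\i_2\D_2 f$ (with the correction term killed because $\D_2 f\dia_2\beta=0$ inside $E_2$) and the identity that exact sections act trivially from the left, $\dor{\D g}{\psi}=0$. Your write-up merely makes explicit the auxiliary identity and the bookkeeping that the paper leaves implicit.
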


\begin{proof} 
We have $ \conn_{\D_1 f}\beta = \pr_2((\D_1 f\oplus0)\dia(0\oplus\beta)) = \pr_2(\D f\dia(0\oplus\beta)) = 0 $
\end{proof}

Set
\begin{align}\label{dolphin} \Rright(a,b)\alpha &:= \rconn_{a}\rconn_{b} \alpha -\rconn_{b}\rconn_{a} \alpha -\rconn_{\dbi{a}{b}{1}}\alpha \\
\label{wale} \Rleft(\alpha,\beta)a &:= \lconn_{\alpha}\lconn_{\beta}a -\lconn_{\beta}\lconn_{\alpha}a -\lconn_{\dbi{\alpha}{\beta}{2}}a \;.
\end{align}
\begin{lemma}
The operators $\Rright$ and $\Rleft$ are sections of $\wedge^2 E_1^*\otimes\mathfrak{o}(E_2)\iso\wedge^2 E_1^*\otimes\wedge^2 E_2^*$, where $\mathfrak{o}(E_2)$ is the bundle of skew-symmetric endomorphisms of $E_2$.
\end{lemma}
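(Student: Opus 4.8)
The plan is to prove the statement for $\Rright$; the argument for $\Rleft$ is identical after exchanging the roles of $E_1$ and $E_2$, using the compatibility \eqref{cow} in place of \eqref{calf}. Saying that $\Rright$ is a section of $\wedge^2 E_1^*\otimes\mathfrak{o}(E_2)$ amounts to three things: that $\Rright(a,b)\alpha$ is $\smooth(M)$-linear in each of $a$, $b$, and $\alpha$ (so that $\Rright$ is a genuine tensor valued in $\End(E_2)$); that it is skew-symmetric in the pair $(a,b)$; and that for fixed $a,b$ the endomorphism $\alpha\mapsto\Rright(a,b)\alpha$ is skew-symmetric with respect to $\ip{.}{.}_2$, i.e.\ lies in $\mathfrak{o}(E_2)$. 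The stated isomorphism $\mathfrak{o}(E_2)\iso\wedge^2 E_2^*$ is then just the lowering of an index by the nondegenerate pseudo-metric on $E_2$.

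First I would check $\smooth(M)$-linearity. Linearity in $\alpha$ is the classical curvature computation: expanding $\Rright(a,b)(f\alpha)$ with the connection axioms \eqref{dog} and \eqref{cat}, all terms carrying a derivative of $f$ collect into $\bigl([\rho_1(a),\rho_1(b)]-\rho_1(\dbi{a}{b}{1})\bigr)[f]\,\alpha$, which vanishes because the anchor-morphism property \eqref{rhomor} of the Courant algebroid $E_1$ gives $\rho_1(\dbi{a}{b}{1})=[\rho_1(a),\rho_1(b)]$. Linearity in $b$ is the same kind of expansion, now also invoking the Leibniz rule \eqref{ax:Leibn} for the Dorfman bracket on $E_1$, $\dbi{a}{fb}{1}=f\,\dbi{a}{b}{1}+\rho_1(a)[f]\,b$; linearity in $a$ then follows from the skew-symmetry in $(a,b)$. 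For that skew-symmetry, observe that $\Rright(a,b)\alpha+\Rright(b,a)\alpha=-\rconn_{\dbi{a}{b}{1}+\dbi{b}{a}{1}}\alpha$. Since the symmetric part of the Dorfman bracket on $E_1$ is exact by axiom \eqref{ax:nSkew}, namely $\dbi{a}{b}{1}+\dbi{b}{a}{1}=\D_1\ip{a}{b}_1$, and Lemma~\ref{l:connExact} gives $\rconn_{\D_1 g}\alpha=0$ for every $g\in\smooth(M)$, this sum is zero.

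The main step is the skew-symmetry of the endomorphism, i.e.\ $\ip{\Rright(a,b)\alpha}{\beta}_2+\ip{\alpha}{\Rright(a,b)\beta}_2=0$; this is the assertion that the curvature of a metric connection is skew, and it is where \eqref{calf} is used. I would apply \eqref{calf} twice to rewrite $\ip{\rconn_a\rconn_b\alpha}{\beta}_2+\ip{\alpha}{\rconn_a\rconn_b\beta}_2$ as $\rho_1(a)\bigl[\rho_1(b)[\ip{\alpha}{\beta}_2]\bigr]$ minus the symmetric pairing $\ip{\rconn_b\alpha}{\rconn_a\beta}_2+\ip{\rconn_a\alpha}{\rconn_b\beta}_2$, and likewise with $a,b$ interchanged. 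Subtracting, the symmetric pairings cancel, so the contribution of the second-order terms to the left-hand side is $[\rho_1(a),\rho_1(b)]\,\ip{\alpha}{\beta}_2$. On the other hand \eqref{calf} applied to the bracket term gives $\ip{\rconn_{\dbi{a}{b}{1}}\alpha}{\beta}_2+\ip{\alpha}{\rconn_{\dbi{a}{b}{1}}\beta}_2=\rho_1(\dbi{a}{b}{1})\,\ip{\alpha}{\beta}_2$, which equals $[\rho_1(a),\rho_1(b)]\,\ip{\alpha}{\beta}_2$ again by \eqref{rhomor}. The two contributions cancel, establishing the identity.

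The main obstacle here is bookkeeping rather than any conceptual difficulty: in each of the linearity and skewness computations one must track the first- and second-order pieces, and in every case it is precisely the anchor-morphism property \eqref{rhomor} of $E_1$ that makes the residual second-order term cancel against the bracket term. The one feature not present in the usual Lie-algebroid curvature formula is that $\dbi{.}{.}{1}$ is only a Dorfman (non-skew) bracket, so the skew-symmetry of $\Rright$ in $(a,b)$ rests genuinely on the exactness of its symmetric part together with Lemma~\ref{l:connExact}. Running the same three arguments with \eqref{cow} in place of \eqref{calf} shows that $\Rleft(\alpha,\beta)$ is tensorial, skew in $(\alpha,\beta)$, and a skew-symmetric endomorphism of $E_1$, which completes the proof.
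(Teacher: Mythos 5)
Your proof is correct and follows essentially the same route as the paper: tensoriality via the anchor-morphism property $\rho_1(\dbi{a}{b}{1})=[\rho_1(a),\rho_1(b)]$ together with the Leibniz rules, and skew-symmetry in $(a,b)$ via the exactness of the symmetric part of the Dorfman bracket combined with Lemma~\ref{l:connExact}. In fact your argument is more complete than the paper's three-line sketch, which never explicitly verifies that $\Rright(a,b)$ is a skew-symmetric endomorphism of $E_2$; your derivation of this $\mathfrak{o}(E_2)$-valuedness from the metric-compatibility \eqref{calf} and the anchor property \eqref{rhomor} fills exactly that omitted step.
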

\begin{proof} The $\smooth(M)$-linearity of $\Rright(a,b)\alpha$ in $\alpha$ follows from
 $$ \rho_1(a\dia_1 b) = [\rho_1(a),\rho_1(b)] \;.$$  $\Rright$ is also $\smooth(M)$-linear in $b$. In view of lemma \ref{l:connExact} it is also skew-symmetric under the exchange of $a$ and $b$.
\end{proof}

\begin{prop}\label{glass} Assume we are given two Courant algebroids $E_1$ and $E_2$ over the same manifold $M$ and two connections 
$\nablaright:\Gamma(E_1)\otimes\Gamma(E_2)\to\Gamma(E_2)$
 and $\nablaleft:\Gamma(E_2)\otimes\Gamma(E_1)\to\Gamma(E_1)$ preserving the fiberwise metrics and satisfying $\rconn_{\D_1 f} \beta =0$ and $\lconn_{\D_2 f} b =0$ for all $f\in\smooth(M)$, $b\in\Gamma(E_1)$, and $\beta\in\Gamma(E_2)$.
Then the inner product \eqref{bull}, the anchor map \eqref{bear}, and the bracket \eqref{fullBracket} on the direct sum $E=E_1\oplus E_2$ satisfy \eqref{ax:Leibn}, \eqref{ax:nSkew}, and \eqref{ax:adInv}. 
Moreover, the Jacobi identity \eqref{ax:Jacobi} is equivalent to the following group of properties:
\begin{gather}
  \begin{split}&\lconn_\alpha(a_1\dia_1 a_2)-(\lconn_\alpha a_1)\dia_1 a_2-a_1\dia_1(\lconn_\alpha a_2)
    -\lconn_{\rconn_{a_2}\alpha}a_1 +\lconn_{\rconn_{a_1}\alpha}a_2 \\
    &\qquad =-\w (\alpha,\oo(a_1,a_2)+\tfrac12\D_2\<a_1,a_2\>)
      -\tfrac12 \D_1\big\<\alpha,\oo(a_1,a_2)+\tfrac12 \D_2\<a_1,a_2\>\,\big\>
  \end{split}\label{derofBr1}\\
  \begin{split}&\rconn_{a}(\alpha_1\dia_2\alpha_2)-(\rconn_{a} \alpha_1)\dia_2\alpha_2-\alpha_1\dia_2(\rconn_{a} \alpha_2)
    -\rconn_{\lconn_{\alpha_2}a}\alpha_1 +\rconn_{\lconn_{\alpha_1}a}\alpha_2 \\
    &\qquad=-\oo(a,\w (\alpha_1,\alpha_2)+\tfrac12 \D_1\<\alpha_1,\alpha_2\>)
      -\tfrac12 \D_2\big\<a,\w (\alpha_1,\alpha_2)+\tfrac12 \D_1\<\alpha_1,\alpha_2\>\,\big\>
  \end{split}\label{derofBr2}\\
  \Rright+\Rleft = 0\label{curvCompat}\\
  \lconn_{\oo(a_1,a_2)} a_3 +\text{c.p.}=0 \label{new4X} \\
  \rconn_{\w (\alpha_1,\alpha_2)} \alpha_3 +\text{c.p.}=0  \label{new4Y}
\end{gather}
\end{prop}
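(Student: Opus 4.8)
The plan is to prove the two assertions separately: first that the three ``easy'' axioms \eqref{ax:Leibn}, \eqref{ax:nSkew}, \eqref{ax:adInv} hold for the data \eqref{bull}, \eqref{bear}, \eqref{fullBracket} without any further hypothesis, and then that the Jacobi identity \eqref{ax:Jacobi} is equivalent to the five relations \eqref{derofBr1}--\eqref{new4Y}. For the first part I would substitute the explicit formulas into each axiom and collect the $E_1$- and $E_2$-components separately. The non-skew axiom \eqref{ax:nSkew} is essentially built into the construction: since $\oo$ and $\w$ are defined in \eqref{monkey}, \eqref{elephant} as the \emph{skew} parts of the projected brackets, the symmetric part of \eqref{fullBracket} is exactly $\tfrac12\D\<\cdot,\cdot\>$, so \eqref{ax:nSkew} can be read off from \eqref{pig}, \eqref{snake}, \eqref{parrot}. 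Ad-invariance \eqref{ax:adInv} follows by combining the ad-invariance of $\dia_1$ and $\dia_2$ with the metric-preservation \eqref{cow}, \eqref{calf} of the connections and the characterizations \eqref{donkey}, \eqref{shark} of $\oo$, $\w$. The Leibniz rule \eqref{ax:Leibn} is the most delicate: scaling the second argument by $f$ produces, besides the expected anchor term, a residual $\tfrac12\<\alpha,\beta\>_2\D_1 f$ coming from $\tfrac12\D_1\<\alpha,\beta\>_2$; the key observation is that $\oo$ and $\w$ are \emph{not} $\smooth(M)$-bilinear, and that the compensating term $-\tfrac12\<\alpha,\beta\>_2\D_1 f$ hidden in $\w(\alpha,f\beta)$ cancels it exactly. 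I would record this non-tensoriality of $\oo$ and $\w$ (a direct consequence of \eqref{ax:nSkew} applied inside $E_1$, $E_2$) as a preliminary lemma, and note that $\D=\D_1\oplus\D_2$ under the splitting.

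For the equivalence I would first use the first part: once \eqref{ax:Leibn}, \eqref{ax:nSkew}, \eqref{ax:adInv} hold, the Jacobiator $J(\phi,\psi_1,\psi_2):=\dor{\phi}{(\dor{\psi_1}{\psi_2})}-\dor{(\dor{\phi}{\psi_1})}{\psi_2}-\dor{\psi_1}{(\dor{\phi}{\psi_2})}$ is $\smooth(M)$-linear in each slot, by the same tensoriality argument used for the Poisson bracket earlier in this chapter. Hence it suffices to test $J=0$ on sections lying purely in $E_1$ or purely in $E_2$, and by the $E_1\leftrightarrow E_2$ symmetry of the setup (interchanging $\nablaleft\leftrightarrow\nablaright$, $\oo\leftrightarrow\w$ and $\D_1\leftrightarrow\D_2$) I would reduce to the four input-types $(E_1,E_1,E_1)$, $(E_2,E_1,E_1)$, $(E_1,E_2,E_2)$, $(E_2,E_2,E_2)$. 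Expanding each through \eqref{fullBracket} and \eqref{parrot}, the ``pure'' contributions cancel by the Jacobi identities of $E_1$ and $E_2$, and I claim that each surviving component of $J$ is precisely one of the named relations. Concretely: the $E_1$-component of $J(\alpha,a_1,a_2)$ is \eqref{derofBr1} and the $E_2$-component of $J(a,\alpha_1,\alpha_2)$ is \eqref{derofBr2}; the two off-diagonal components (the $E_2$-part of $J(\alpha,a_1,a_2)$, built from \eqref{dolphin}, and the $E_1$-part of $J(a,\alpha_1,\alpha_2)$, built from \eqref{wale}) coincide after using ad-invariance and together give the curvature compatibility \eqref{curvCompat}, $\Rright+\Rleft=0$; finally the feedback $\oo$- and $\w$-terms in $J(a_1,a_2,a_3)$ and $J(\alpha_1,\alpha_2,\alpha_3)$ give the cyclic identities \eqref{new4X} and \eqref{new4Y}. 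Showing that $J$ splits into exactly these five independent pieces yields both implications simultaneously.

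The hard part will be the bookkeeping in the mixed triples: separating the first-order connection contributions (which must organize into \eqref{derofBr1}, \eqref{derofBr2}) from the second-order ones (which must organize into \eqref{curvCompat}), and checking that the numerous $\oo$-, $\w$- and $\tfrac12\D\<\cdot,\cdot\>$-terms recombine correctly. Here I would lean on Lemma~\ref{l:connExact} to kill spurious exact contributions $\rconn_{\D_1 f}\beta$, $\lconn_{\D_2 f}b$, on the skew-symmetry and $\smooth(M)$-linearity of $\Rright,\Rleft$ (the preceding lemma) to recognize the curvature part and justify identifying the two off-diagonal components as a single relation, and on \eqref{donkey}, \eqref{shark} to rewrite the $\oo$- and $\w$-corrections back in terms of $\nablaleft$ and $\nablaright$. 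The cleanest packaging is to prove, type by type, that the vanishing of each component of $J$ is equivalent to exactly one of \eqref{derofBr1}--\eqref{new4Y}, so that \eqref{ax:Jacobi} holds if and only if all five relations do.
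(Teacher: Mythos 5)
Your skeleton is the paper's own: verify \eqref{ax:Leibn}, \eqref{ax:nSkew}, \eqref{ax:adInv} by direct computation from \eqref{fullBracket} using \eqref{donkey}, \eqref{shark}, then test the Jacobi identity on triples of pure type and recognize \eqref{derofBr1}--\eqref{new4Y} in the resulting components, the relations \eqref{curvCompat}--\eqref{new4Y} appearing only after pairing against an arbitrary section. But your justification of the reduction to pure triples is circular. Writing $J$ for the Jacobiator, the Leibniz rule alone gives
\begin{equation*}
J(\phi,\psi_1,f\psi_2)=f\,J(\phi,\psi_1,\psi_2)+\bigl([\rho\phi,\rho\psi_1]-\rho(\dor{\phi}{\psi_1})\bigr)[f]\,\psi_2\,,
\end{equation*}
so $\smooth(M)$-linearity of $J$ is \emph{equivalent} to the anchor-morphism property $\rho(\dor{\phi}{\psi})=[\rho\phi,\rho\psi]$, equivalently to $\dor{(\D f)}{\psi}=0$. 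This is \emph{not} a consequence of the three easy axioms: for the matched sum it amounts to the nontrivial compatibility $\rho_2(\rconn_{a}\beta)-\rho_1(\lconn_{\beta}a)=[\rho_1 a,\rho_2\beta]$ (compare \eqref{zzzbb}), which is not among the hypotheses of Proposition~\ref{glass} and which, in the direction ``five relations $\Rightarrow$ Jacobi'', can itself only be extracted from \eqref{derofBr1} by scaling $a_2\mapsto fa_2$ --- i.e.\ by the very kind of argument you are invoking tensoriality to avoid. The analogy with the Poisson computation of Chapter~2 hides exactly this point: there the anchor property of the bracket $[.,.]_\pi$ is available independently, here it is part of what is being proved. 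Fortunately the gap is cheap to close: $J$ is $\R$-trilinear and every section of $E_1\oplus E_2$ is the \emph{sum} of its two components, so plain additivity already reduces \eqref{ax:Jacobi} to pure triples; no $\smooth(M)$-linearity is needed at all.

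The second gap is the case count. The $E_1\leftrightarrow E_2$ symmetry partitions the eight pure types into the four orbits $\{(1,1,1),(2,2,2)\}$, $\{(2,1,1),(1,2,2)\}$, $\{(1,2,1),(2,1,2)\}$, $\{(1,1,2),(2,2,1)\}$; your four representatives $(E_1,E_1,E_1)$, $(E_2,E_1,E_1)$, $(E_1,E_2,E_2)$, $(E_2,E_2,E_2)$ cover only the first two orbits, each twice, so the types with the exceptional argument in the \emph{second} or \emph{third} slot are never reached by your symmetry argument. Since the Dorfman bracket is not skew-symmetric, $J$ has no a priori symmetry in its slots, so these cases cannot simply be dropped. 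They can in fact be reduced to your cases, but only via the permutation identities
\begin{equation*}
J(\phi,\psi_1,\psi_2)+J(\psi_1,\phi,\psi_2)=-\dor{\bigl(\D\<\phi,\psi_1\>\bigr)}{\psi_2}\,,\qquad
J(\phi,\psi_1,\psi_2)+J(\phi,\psi_2,\psi_1)=-\dor{\bigl(\D\<\psi_1,\psi_2\>\bigr)}{\phi}\,,
\end{equation*}
which follow from \eqref{ax:nSkew}, \eqref{ax:adInv} and $\<E_1,E_2\>=0$ --- an ingredient your proposal does not contain and which the symmetry cannot replace. Finally, the assertion that $J$ ``splits into exactly these five independent pieces'' is too strong: the pure triples have more components than relations; for instance $J(a_1,a_2,a_3)$ also has an $E_2$-component, which is none of \eqref{derofBr1}--\eqref{new4Y} verbatim and must instead be deduced from \eqref{derofBr1} by pairing with an arbitrary $\gamma\in\Gamma(E_2)$ and using \eqref{ax:adInv}, \eqref{donkey} and metric invariance. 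This duality step is precisely the paper's device of ``multiplying the Jacobi identity with an arbitrary section'', and it is where the real bookkeeping lives; as written, your plan does not supply it.
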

\begin{proof} Axioms \eqref{ax:Leibn}, \eqref{ax:nSkew}, and \eqref{ax:adInv} are straightforward computations from the definition \eqref{fullBracket} using \eqref{donkey} and \eqref{shark}.  

To check the Jacobi identity, it is helpful to compute for triples of sections of $E_1$ and $E_2$ respectively, which gives 8 cases.  The equations \eqref{curvCompat}--\eqref{new4Y} occur when one multiplies the Jacobi identity with an arbitrary section of $E_1\oplus E_2$. This will be helpful in evaluating the terms of the Jacobi identity.
\end{proof}


\begin{vdef}\label{pair}
Two Courant algebroids $E_1$ and $E_2$ over the same manifold $M$ together with a pair of connections satisfying the properties listed in Proposition~\ref{glass} are said to form a \emph{matched pair}.
\end{vdef}

If $(E_1,E_2)$ is a matched pair of Courant algebroids, the induced Courant algebroid structure on the direct sum vector bundle $E_1\oplus E_2$ is called the \emph{matched sum Courant algebroid}.

\section{Examples}
\subsection{Complex manifolds}

Let $X$ be a complex manifold. Its tangent bundle $T_X$ is a holomorphic vector bundle. 
Consider the almost complex structure $j:T_X\to T_X$. 
Since $j^2=-\id$, the complexified tangent bundle $T_X\otimes\C$ decomposes as the direct sum of $T_X\zo$ and $T_X\oz$. Let $\pr\zo\:T_X\otimes\C\to T\zo$ and $\pr\oz\:T_X\otimes\C\to T\oz$ denote the canonical projections.
The complex vector bundles $T_X$ and $T_X\oz$ are canonically identified to one another by the bundle map $\tfrac12(\id-i j):T_X\to T_X\oz$. 

\begin{vdef} A \emph{complex Courant algebroid} is a complex vector bundle $E\to M$ endowed with a symmetric nondegenerate $\C$-bilinear form $\ip{.}{.}$ on the fibers of $E$ with values in $\C$, a $\C$-bilinear Dorfman bracket $\dia$ on the space of sections $\Gamma(E)$ and an anchor map $\rho:E\to TM\otimes\C$ satisfying relations 
\eqref{ax:Jacobi}, \eqref{ax:Leibn}, \eqref{ax:nSkew}, and \eqref{ax:adInv}
where $\D=\rho^*\circ\ud:\smooth(M;\C)\to\Gamma(E)$ and $\phi,\phi_1,\phi_2,\phi'\in\Gamma(E)$, $f\in\smooth(M;\C)$.
\end{vdef}

The complexified tangent bundle $T_X\otimes\C$ of a complex manifold $X$ is a smooth complex Lie algebroid. 
As a vector bundle, it is the direct sum of $T_X\zo$ and $T_X\oz$, which are both Lie subalgebroids. 

The Lie bracket of (complex) vector fields induces a $T_X\oz$-module structure on $T_X\zo$ 
\beq{horse} \rep_X Y=\pr\zo \lie{X}{Y} \qquad (X\in\Xx\oz, Y\in\Xx\zo) \eeq
and a $T_X\zo$-module structure on $T_X\oz$ 
\beq{deer} \rep_Y X=\pr\oz \lie{Y}{X} \qquad (X\in\Xx\oz, Y\in\Xx\zo) .\eeq
The flatness of these connections is a byproduct of the  integrability of $j$.
We use the same symbol $\rep$ for the induced connections on the dual spaces: 
\begin{gather} \rep_X \beta=\pr\zo (\mathcal{L}_X \beta) \qquad (X\in\Xx\oz, \beta\in\Omega\zo) ,\label{squirrel} \\ 
\rep_Y \alpha=\pr\oz (\mathcal{L}_Y \alpha) \qquad (Y\in\Xx\zo, \alpha\in\Omega\oz) .\label{frog}
\end{gather}

As in Example~\ref{ex:Sev},  associated to
 each $H^{3,0}\in\Omega^{3,0}(X)$
 such that $\partial H^{3,0}=0$, there is a complex twisted
 Courant algebroid structure on $C\oz_X=T\oz_X\oplus(T\oz_X)^*$.
  Moreover, if two $(3,0)$-forms $H^{3,0}$ and $H'^{3,0}$ are $\partial$-cohomologous, then the associated twisted
 Courant algebroid structures on $C\oz$ are isomorphic.
Similarly, associated to each $H^{0, 3}\in\Omega^{0, 3}(X)$ 
 such that $\bar{\partial} H^{0, 3}=0$, there is a complex twisted
 Courant algebroid structure on 
$C\zo_X=T_X\zo\oplus(T_X\zo)^*$.

\begin{prop} 
Let $H=H^{3,0}+H^{2, 1}+H^{1, 2}+H^{0, 3}\in \Omega^3_\C (X)$,
where $H^{i, j}\in \Omega^{i, j}(X)$, be a closed $3$-form.
Let $(C^{1,0}_X)_{H^{3,0}}$ be the complex
 Courant algebroid structure on $C\oz_X=T\oz_X\oplus(T\oz_X)^*$
twisted by $H^{3,0}$, and $(C^{0, 1}_X)_{H^{0, 3}}$ be the complex 
 Courant algebroid structure on
$C\zo_X=T_X\zo\oplus(T_X\zo)^*$ twisted by $H^{0, 3}$.
Then $(C^{1,0}_X)_{H^{3,0}}$ and $(C^{0, 1}_X)_{H^{0, 3}}$ 
form a matched pair of Courant algebroids, with
connections given by

\begin{align}
\rconn_{X\oplus\alpha}(Y\oplus \beta)
  =&\, \nabla^o_X Y \oplus \nabla^o_X \beta
    +H^{1, 2}(X,Y, \cdot) \label{plane} \\
  \lconn_{Y\oplus\beta}(X\oplus \alpha)
  =&\, \nabla^o_Y X \oplus \nabla^o_Y \alpha
    +H^{2, 1}(Y,X, \cdot) \label{car} 
\end{align} 
$\forall X\in \Xx^{1,0}$, $\alpha\in\Omega^{1,0}$, $Y\in\Xx^{0,1}$.

The resulting matched sum Courant algebroid is isomorphic to
the standard complex Courant algebroid $(T_X\oplus T_X^*)\otimes \C$
twisted by $H$.
\end{prop}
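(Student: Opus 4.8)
The plan is to prove both claims at once through the evident bundle isomorphism $\Phi\colon E_1\oplus E_2\to(T_X\oplus T_X^*)\otimes\C$ that regroups a section $(X\oplus\xi)\oplus(Y\oplus\eta)$ into $(X+Y)\oplus(\xi+\eta)$, where throughout $X\in\Xx\oz$, $\xi\in\Omega\oz$, $Y\in\Xx\zo$, $\eta\in\Omega\zo$. That $\Phi$ preserves the inner product and intertwines the anchors is immediate: by \eqref{bull} and \eqref{bear} the matched-sum pairing and anchor are $\ip{a}{b}_1+\ip{\alpha}{\beta}_2$ and $\rho_1+\rho_2$, which under $\Phi$ become the standard pairing $\zeta(V)+\omega(U)$ and the projection $U\oplus\zeta\mapsto U$, once one uses that the cross-type pairings $\Omega\oz\times\Xx\zo$ and $\Omega\zo\times\Xx\oz$ vanish. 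The real content is therefore to match the brackets.

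Before using \eqref{fullBracket} I would verify the hypotheses of Proposition~\ref{glass}. The operators $\nablaright$ and $\nablaleft$ are genuine connections: their $\rep$-parts are connections whose flatness, as noted, reflects integrability of $j$, while the $H^{1,2}$- and $H^{2,1}$-corrections are $\smooth(M;\C)$-linear tensors and so do not disturb \eqref{dog}--\eqref{cat}. Metric compatibility \eqref{cow}--\eqref{calf} reduces to that of the $\rep$-connections, since the two $H$-corrections cancel by antisymmetry, e.g.\ $H^{1,2}(X,Y,Y')+H^{1,2}(X,Y',Y)=0$. Lastly $\rconn_{\D_1 f}\beta=0$ and $\lconn_{\D_2 f}b=0$ hold because $\D_1 f=\partial f$ lies in $\Omega\oz$ (a pure form) while $\rconn_{X\oplus\xi}$ is independent of $\xi$ and vanishes for $X=0$, and symmetrically for $\nablaleft$. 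Proposition~\ref{glass} then already grants axioms \eqref{ax:Leibn}, \eqref{ax:nSkew}, \eqref{ax:adInv} for the matched-sum data.

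The crux is to expand \eqref{fullBracket} and identify it, via $\Phi$, with the standard $H$-twisted value $[U,V]\oplus(\Lie_U\omega-\imath_V\ud\zeta+\imath_U\imath_V H)$. For the vector part I would split $[U,V]=[X+Y,X'+Y']$ into types and use the type decomposition of the mixed Lie brackets, e.g.\ $\pr\oz[X,Y']=-\rep_{Y'}X$ and $\pr\zo[X,Y']=\rep_X Y'$, whereupon the mixed brackets coincide termwise with the vector parts of $\dbi{a}{b}{1}$, $\lconn_\alpha b$ and $\lconn_\beta a$ in \eqref{fullBracket}; the curvature and $\D_i$-terms do not contribute here, as \eqref{donkey}, \eqref{shark} together with the $\xi$-independence of the connections force $\w(\alpha,\beta)\in\Omega\oz$ and $\oo(a,b)\in\Omega\zo$ to be pure forms. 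For the form part I would decompose $\imath_U\imath_V H$ into its four bidegree pieces and locate each: $\imath_X\imath_{X'}H^{3,0}$ and $\imath_Y\imath_{Y'}H^{0,3}$ are supplied by the twists already present in $\dbi{a}{b}{1}$ and $\dbi{\alpha}{\beta}{2}$; the purely $(1,0)$ piece $\imath_Y\imath_{Y'}H^{1,2}$ comes out of $\w(\alpha,\beta)$ through \eqref{shark}, and dually $\imath_X\imath_{X'}H^{2,1}$ out of $\oo(a,b)$ through \eqref{donkey}; the remaining mixed pieces are exactly the explicit tensorial corrections $H^{2,1}$ in $\nablaleft$ and $H^{1,2}$ in $\nablaright$. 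In parallel the $\rep$-derivatives, the derivative parts of $\dia_i$, and the $\tfrac12\D_1\ip{\alpha}{\beta}_2$, $\tfrac12\D_2\ip{a}{b}_1$ terms reassemble into $\Lie_U\omega-\imath_V\ud\zeta$ once $\ud=\partial+\bar\partial$ is split by type.

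The main obstacle is precisely this last bookkeeping. The $(2,1)$- and $(1,2)$-parts of the twist are not confined to one summand of \eqref{fullBracket} but are distributed between the tensorial $H$-corrections of the connections and the curvature terms $\oo,\w$, so one must track the $\tfrac12$-factors and antisymmetrizations with care to see them coalesce into $\imath_U\imath_V H$; similarly one must check that $\tfrac12\D_1\ip{\alpha}{\beta}_2$ and $\tfrac12\D_2\ip{a}{b}_1$ furnish exactly the $\partial$- and $\bar\partial$-parts of $\imath_V\ud\zeta$ left over by the $\rep$-connections (a short computation using metric compatibility of $\rep$ confirms the half-factors match). Once the bracket identity is in hand, $\Phi$ is an isomorphism of complex Courant algebroids onto $(T_X\oplus T_X^*)\otimes\C$ twisted by $H$. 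Since the latter is a genuine Courant algebroid (the complexification of Example~\ref{ex:Sev}), it satisfies the Jacobi identity \eqref{ax:Jacobi}; transporting along $\Phi$, so does the matched-sum structure. By Proposition~\ref{glass} this is equivalent to \eqref{derofBr1}--\eqref{new4Y}, so $(E_1,E_2)$ form a matched pair in the sense of Definition~\ref{pair}, and the matched sum is the asserted twisted standard Courant algebroid. Both assertions follow together.
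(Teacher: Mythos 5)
Your proposal is correct and is essentially the paper's proof run in the opposite direction: the paper starts from the $H$-twisted Courant algebroid $(T_X\oplus T_X^*)\otimes\C$ and checks that projecting its bracket onto the two summands induces the $H^{3,0}$- and $H^{0,3}$-twisted brackets and the stated connections, whereas you assemble \eqref{fullBracket} from those pieces and transport the Jacobi identity back through $\Phi$ via Proposition~\ref{glass} --- the same bidegree bookkeeping either way. The only step the paper records that you skip is the observation that $\partial H^{3,0}=0$ and $\bar\partial H^{0,3}=0$ (the $(4,0)$- and $(0,4)$-components of $\ud H$ vanish), which is what makes the two summand twisted structures genuine Courant algebroids and is needed before invoking Proposition~\ref{glass}.
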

\begin{proof}  The Courant algebroid $(C\oz_X)_{H^{3,0}}$ is the generalized twisted complex standard Courant algebroid of the complex Lie algebroid $T\oz_X$.  Therefore the 3-form $H^{3,0}$ must be closed under $\partial$.  That is true, because the $(4,0)$-component of $\ud H$ vanishes.  The analog considerations are true for $(C\zo_X)_{H^{0,3}}$.

It is clear that $(T_X\oplus T^*_X)\otimes\C$ can be twisted by $H$.  Straightforward computations show that this induces the twisted standard brackets on $C\oz$ and $C\zo$.  Also the connections are induced by this Courant bracket.
\end{proof}

\subsection{Holomorphic Courant algebroids}
Let $X$ be a complex manifold.
We denote by $C\oz_X=T_X\oz\oplus(T_X\oz)^*$ and $C\zo_X=T_X\zo\oplus(T_X\zo)^*$ the generalized standard Courant algebroids.

\begin{vdef} 
A \emph{holomorphic Courant algebroid} consists of a holomorphic vector bundle $E$ over a complex manifold $X$, with sheaf of holomorphic sections $\mathcal{E}$, endowed with a $\C$-valued non-degenerate inner product $\ip{.}{.}$ on its fibers inducing a homomorphism of sheaves of $\holom_X$-modules $\mathcal{E}\otimes_{\holom_X}\mathcal{E}\xto{\ip{.}{.}}\holom_X$, a bundle map $E\to T_X$ inducing a homomorphism of sheaves of $\holom_X$-modules $\mathcal{E}\xto{\rho}\hX_X$, where $\hX_X$ denotes the sheaf of holomorphic sections of $T_X$, 
and a homomorphism of sheaves of $\C$-modules $\mathcal{E}\otimes_{\C}\mathcal{E}\xto{\dia}\mathcal{E}$ satisfying relations 
\eqref{ax:Jacobi}, \eqref{ax:Leibn}, \eqref{ax:nSkew}, and \eqref{ax:adInv},
where $\D=\rho^*\circ\partial:\holom_X\to\mathcal{E}$ and $\phi,\phi_1,\phi_2,\phi'\in\mathcal{E}$, $f\in\holom_X$.
\end{vdef} 
 
\begin{lemma}[see also Thm.~2.6.26 in \cite{Huy05}]\label{wind}
Let $E$ be a complex vector bundle over a complex manifold $X$, and let $\mathcal{E}$ be the sheaf of $\holom_X$-modules of sections of $E\to X$ such that each $x\in X$ has an open neighborhood $U\in X$ with $\Gamma(U;E)=\smooth(U,\C)\cdot\mathcal{E}(U)$. Then the following assertions are equivalent. 
\begin{enumerate}
\item The vector bundle $E$ is holomorphic with sheaf of holomorphic sections $\mathcal{E}$. 
\item There exists a (unique) flat $T_X\zo$-connection $\nabla$ on $E\to X$ such that 
\[ \mathcal{E}(U)=\{\sigma\in\Gamma(U;E) \text{ s.t. } \conn_Y\sigma=0,\;\forall Y\in\Xx\zo(X) \} .\]
\end{enumerate} 
\end{lemma}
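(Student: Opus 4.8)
The plan is to read this as the Koszul--Malgrange theorem: a flat $T_X\zo$-connection is precisely a Dolbeault ($\bar\partial$-)operator on $E$ whose square vanishes, and such operators are in bijection with holomorphic structures. Accordingly I would phrase everything through the partial connection $\bar\partial_E:=\nabla$ and its defining equation $\conn_Y\sigma=0$ for $Y\in\Xx\zo$.

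For the implication $(1)\Rightarrow(2)$, suppose $E$ is holomorphic with sheaf $\mathcal{E}$. Around each point choose a holomorphic frame $s_1,\dots,s_r\in\mathcal{E}(U)$; by the standing hypothesis every smooth section writes uniquely as $\sigma=f^i s_i$ with $f^i\in\smooth(U,\C)$. Define $\conn_Y\sigma:=Y[f^i]\,s_i$ for $Y\in\Xx\zo$. This satisfies the two connection axioms by construction. I would then check independence of the holomorphic frame: a change of holomorphic frame is given by a matrix of holomorphic functions $g$, so $Y[g]=0$ for $Y\in\Xx\zo$, and the Leibniz rule shows the two local definitions agree; hence $\nabla$ is globally defined. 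Flatness is immediate, since on a holomorphic frame $\conn_Y\conn_{Y'}s_i=0$ and $T_X\zo$ is involutive, so the curvature vanishes on the frame and therefore on all sections. Finally $\conn_Y\sigma=0$ for all $Y\in\Xx\zo$ reads $Y[f^i]=0$, i.e.\ the $f^i$ are holomorphic, i.e.\ $\sigma\in\mathcal{E}(U)$; this identifies the kernel sheaf with $\mathcal{E}$.

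For the converse $(2)\Rightarrow(1)$, the content is to manufacture a holomorphic atlas for $E$ out of the flat connection. I would invoke the integrability theorem (Koszul--Malgrange, the bundle version of Newlander--Nirenberg, i.e.\ the cited Huybrechts Thm.~2.6.26): because $\nabla$ is a flat $T_X\zo$-connection, equivalently $\bar\partial_E^2=0$, around every point there is a local frame $s_1,\dots,s_r$ solving $\conn_Y s_i=0$ for all $Y\in\Xx\zo$. Declaring these frames holomorphic defines the holomorphic structure: given two such flat frames, their transition matrix $g$ satisfies $\conn_Y s_j=0$ on both sides, which forces $Y[g]=0$, so $g$ is holomorphic and the frames glue $E$ into a holomorphic bundle. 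The sheaf of holomorphic sections of this structure is exactly the sheaf of local $\nabla$-flat sections; since $\smooth\cdot\mathcal{E}$ exhausts $\Gamma(E)$ and $\mathcal{E}$ is locally free of rank $r$, these flat sections coincide with $\mathcal{E}(U)$, yielding (1) with the prescribed sheaf.

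Uniqueness of $\nabla$ in (2) is formal: if $\nabla$ has kernel sheaf $\mathcal{E}$, then on a local frame $s_i\in\mathcal{E}(U)$ one must have $\conn_Y s_i=0$, and Leibniz forces $\conn_Y(f^i s_i)=Y[f^i]s_i$, determining $\nabla$ completely from $\mathcal{E}$. The main obstacle is the existence of local flat frames in $(2)\Rightarrow(1)$: this is the genuinely analytic step, as solving $\conn_Y s=0$ is an overdetermined $\bar\partial$-system whose solvability rests precisely on the integrability condition $\bar\partial_E^2=0$. I would either cite Koszul--Malgrange (or the stated Huybrechts Thm.~2.6.26) directly, or, for a self-contained argument, run the standard Newlander--Nirenberg-type iteration; everything else reduces to routine bookkeeping with the Leibniz rule.
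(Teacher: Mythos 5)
Your proposal is correct and matches the paper's treatment: the paper offers no independent argument for this lemma, deferring entirely to the cited Huybrechts Thm.~2.6.26 (Koszul--Malgrange), which is exactly where you place the analytic core (existence of local $\nabla$-flat frames), with the remaining implications handled by the same routine Leibniz-rule bookkeeping you describe.
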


\begin{lemma}\label{dream}
Let $E\to X$ be a holomorphic vector bundle with $\mathcal{E}$ being the sheaf of holomorphic sections, and $\nabla$ the corresponding flat $T_X\zo$-connection on $E$. Let $\ip{}{}$ be a smoothly varying $\C$-valued symmetric nondegenerate $\C$-bilinear form on the fibers of $E$. The following assertions are equivalent: 
\begin{enumerate}
\item The inner product $\ip{.}{.}$ induces a homomorphism of sheaves of $\holom_X$-modules $\mathcal{E}\otimes_{\holom_X}\mathcal{E}\to\holom_X$. 
\item For all $\phi,\psi\in\Gamma(E)$ and $Y\in\Xx\zo(X)$, we have 
\[ Y\ip{\phi}{\psi}=\ip{\conn_Y\phi}{\psi}+\ip{\phi}{\conn_Y\psi} .\] 
\end{enumerate}
\end{lemma}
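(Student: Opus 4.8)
The plan is to localize and to reduce both assertions to a single condition on a local holomorphic frame. By Lemma~\ref{wind}, the holomorphic structure on $E$ is the same datum as the flat $T_X\zo$-connection $\nabla$, and $\mathcal{E}(U)$ is exactly the space of $\conn_Y$-flat sections ($\conn_Y\sigma=0$ for all $Y\in\Xx\zo$). Since $E$ is a holomorphic vector bundle, it is locally free over $\holom_X$, so Lemma~\ref{wind} provides, near any point, a frame $\{e_a\}$ of local holomorphic sections, i.e.\ $\conn_Y e_a=0$ for all $Y\in\Xx\zo$. I will show that each of the two assertions is equivalent to the condition $(\star)$: $Y\ip{e_a}{e_b}=0$ for all indices $a,b$ and all $Y\in\Xx\zo$. (The symmetry and nondegeneracy of $\ip{.}{.}$ play no role.)

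First I treat (2)$\iff(\star)$. Consider the expression $D_Y(\phi,\psi):=Y\ip{\phi}{\psi}-\ip{\conn_Y\phi}{\psi}-\ip{\phi}{\conn_Y\psi}$, whose vanishing for all $Y,\phi,\psi$ is precisely assertion (2). Replacing $\phi$ by $h\phi$ for $h\in\smooth(M;\C)$ and using the Leibniz rule~\eqref{cat} for $\nabla$ (whose anchor is the inclusion $\Xx\zo\into TM\otimes\C$, so $\rho(Y)[f]=Y[f]$), the ordinary Leibniz rule for $Y$ on products, and the fiberwise $\C$-bilinearity of $\ip{.}{.}$, one checks that the two $Y[h]$-terms cancel and $D_Y(h\phi,\psi)=h\,D_Y(\phi,\psi)$; likewise in the second argument. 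Thus $D_Y$ is $\smooth(M;\C)$-linear in each of $\phi,\psi$, so it suffices to evaluate it on frame elements, where $D_Y(e_a,e_b)=Y\ip{e_a}{e_b}$ because $\conn_Y e_a=\conn_Y e_b=0$. Hence (2) holds iff $(\star)$ holds.

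Next I treat (1)$\iff(\star)$. Because $\ip{.}{.}$ is fiberwise bilinear it is automatically $\smooth(M;\C)$-bilinear on sections, so the only content of (1) is that the pairing of two \emph{holomorphic} sections is a \emph{holomorphic} function, i.e.\ is annihilated by every $Y\in\Xx\zo$. Writing a holomorphic section in the frame as $\phi=\sum_a f^a e_a$, flatness gives $\sum_a (Y[f^a])e_a=\conn_Y\phi=0$, whence $Y[f^a]=0$, so the coefficients $f^a$ are holomorphic; similarly $\psi=\sum_b g^b e_b$ with $g^b$ holomorphic. Then $\ip{\phi}{\psi}=\sum_{a,b} f^a g^b \ip{e_a}{e_b}$, which is holomorphic for all holomorphic $\phi,\psi$ if and only if every $\ip{e_a}{e_b}$ is holomorphic, i.e.\ exactly $(\star)$. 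Combining the two equivalences yields (1)$\iff$(2).

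The computations are routine once the framework is in place; the only points requiring care are the correct reading of condition (1) as \emph{holomorphic sections pair to holomorphic functions} and the invocation of a local $\conn_Y$-flat (holomorphic) frame, both of which are supplied by Lemma~\ref{wind}. I therefore do not anticipate a genuine obstacle beyond this bookkeeping.
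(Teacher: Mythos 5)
Your proof is correct. Note, however, that the paper states Lemma~\ref{dream} with no proof at all (just as it does Lemmas~\ref{wind} and~\ref{child}), so there is no argument in the paper to compare yours against; you have in effect supplied the missing proof. Your route is the natural one: you check that the defect $D_Y(\phi,\psi)=Y\ip{\phi}{\psi}-\ip{\conn_Y\phi}{\psi}-\ip{\phi}{\conn_Y\psi}$ is $\smooth(X;\C)$-linear in $\phi$ and $\psi$ (the $Y[h]$ terms indeed cancel, by the Leibniz rule~\eqref{cat} applied to the $T_X\zo$-connection), so that assertion (2) reduces to its evaluation on a local $\nabla$-flat frame $\{e_a\}$ furnished by Lemma~\ref{wind}; and you correctly read assertion (1) as saying precisely that holomorphic sections pair to holomorphic functions, which, after expanding in the same frame with coefficients annihilated by $\Xx\zo$, is again the condition $Y\ip{e_a}{e_b}=0$. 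Both reductions, and the identification of holomorphic functions as those killed by all $Y\in\Xx\zo$, are sound, so the two assertions are equivalent as claimed.
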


\begin{lemma}\label{child}
Let $E\to X$ be a holomorphic vector bundle with $\mathcal{E}$ being the sheaf of holomorphic sections, and $\nabla$ the corresponding flat $T_X\zo$-connection on $E$.  Let $\rho$ be a homomorphism of (complex) vector bundles from $E$ to $T_X$. 
The following assertions are equivalent:
\begin{enumerate}
\item The homomorphism $\rho$ induces a homomorphism of sheaves of $\holom_X$-modules $\mathcal{E}\to\hX_X$. 
\item The homomorphism $\rho\oz:E\to T_X\oz$ obtained from $\rho$ by identifying $T_X$ to $T_X\oz$ satisfies the relation \begin{equation}\label{zzzb} \rho\oz(\conn_Y \phi)=\pr\oz\lie{Y}{\rho\oz\phi} , \quad \forall Y\in\Gamma(T_X\zo),\;\phi\in\Gamma(E\oz) .\end{equation}
\end{enumerate}
\end{lemma}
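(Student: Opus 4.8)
The plan is to read both conditions through Lemma~\ref{wind}, which identifies holomorphic sections with flat sections of the relevant $(0,1)$-connections. By that lemma a section $\phi$ of $E$ is holomorphic iff $\conn_Y\phi=0$ for all $Y\in\Xx\zo$, and by \eqref{deer} a section $Z$ of $T_X\oz$ is a holomorphic vector field iff $\rep_Y Z=\pr\oz\lie{Y}{Z}=0$ for all such $Y$. Thus condition (2), i.e.\ \eqref{zzzb}, says exactly that $\rho\oz$ intertwines the connection $\nabla$ on $E$ with the connection $\rep$ on $T_X\oz$, while condition (1) says (the $\holom_X$-linearity being automatic, as $\rho$ is already a bundle map) that $\rho\oz$ sends $\nabla$-flat sections to $\rep$-flat sections. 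The organising idea I would use is that the \emph{defect} $\Delta(\phi):=\rho\oz(\conn_Y\phi)-\pr\oz\lie{Y}{\rho\oz\phi}$ is $\smooth(M)$-linear in $\phi$, so that proving $\Delta\equiv0$ reduces to evaluating it on a local holomorphic frame.

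For the implication (2)$\Rightarrow$(1) I would simply substitute a holomorphic section: if $\phi\in\mathcal{E}(U)$ then $\conn_Y\phi=0$, and \eqref{zzzb} gives $\rep_Y(\rho\oz\phi)=\pr\oz\lie{Y}{\rho\oz\phi}=\rho\oz(\conn_Y\phi)=0$ for every $Y\in\Xx\zo$. Hence $\rho\oz\phi$ is $\rep$-flat, i.e.\ a holomorphic section of $T_X\oz$, which under $T_X\cong T_X\oz$ means $\rho(\phi)\in\hX_X(U)$. So $\rho$ carries $\mathcal{E}$ into $\hX_X$, and the resulting map of sheaves is $\holom_X$-linear because $\rho$ is $\smooth(M)$-linear.

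For (1)$\Rightarrow$(2) I would first establish the tensoriality of $\Delta$. Using the Leibniz rule of the $T_X\zo$-connection $\nabla$ and of the Lie bracket, and the fact that applying $\pr\oz$ to $\rho\oz\phi$ returns $\rho\oz\phi$ since $\rho\oz$ already lands in $T_X\oz$, one sees that for $g\in\smooth(M;\C)$ both terms of $\Delta(g\phi)$ pick up the same correction $Y[g]\,\rho\oz\phi$; therefore $\Delta(g\phi)=g\,\Delta(\phi)$ and $\Delta$ is a genuine bundle map $E\to T_X\oz$. By Lemma~\ref{wind}, near any point $E$ has a holomorphic frame $\{\sigma_i\}$ with $\conn_Y\sigma_i=0$; on it $\rho\oz(\conn_Y\sigma_i)=0$, while (1) makes each $\rho\oz\sigma_i$ a holomorphic section of $T_X\oz$, so $\pr\oz\lie{Y}{\rho\oz\sigma_i}=\rep_Y(\rho\oz\sigma_i)=0$. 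Thus $\Delta(\sigma_i)=0$, and tensoriality forces $\Delta\equiv0$, which is precisely \eqref{zzzb}.

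I expect the only delicate point to be the tensoriality computation: one must check that inserting the projection $\pr\oz$ does not destroy the cancellation of the $Y[g]$-terms, which is exactly where the identity $\pr\oz\rho\oz=\rho\oz$ and the precise form of the $(0,1)$-Leibniz rule are used. The remaining inputs---existence of a local holomorphic frame and the flat-section description of $\hX_X$---are supplied directly by Lemma~\ref{wind} and \eqref{deer}.
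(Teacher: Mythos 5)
Your proof is correct: the paper actually states Lemma~\ref{child} without giving any proof, and your argument supplies exactly the intended one, reading both conditions through the flat-section characterization of Lemma~\ref{wind} (with $\rep$ of \eqref{deer} as the canonical flat $T_X\zo$-connection on $T_X\oz$). In particular, your key step --- that the defect $\Delta_Y(\phi)=\rho\oz(\conn_Y\phi)-\pr\oz\lie{Y}{\rho\oz\phi}$ is $\smooth$-linear in $\phi$ because the $Y[g]$-terms cancel (using $\pr\oz\circ\rho\oz=\rho\oz$), so that vanishing on a local holomorphic frame suffices --- is sound and closes the only nontrivial direction (1)$\Rightarrow$(2).
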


\begin{lemma} Let $(E,\ip{.}{.},\rho,\dia)$ be a holomorphic Courant algebroid over a complex manifold $X$.  Denote the sheaf of holomorphic sections of the underlying holomorphic vector bundle by $\mathcal{E}$ and the corresponding flat $T_X\zo$-connection on $E$ by $\nabla$. 

Then, there exists a unique complex Courant algebroid structure on $E\to X$ with inner product $\ip{.}{.}$ and anchor map 
$\rho\oz=\tfrac12(\id-i j)\circ\rho:E\to T_X\oz\subset T_X\otimes\C$, the restriction of whose Dorfman bracket to $\mathcal{E}$ coincides with $\dia$.
This complex Courant algebroid is denoted by $E^{1,0}$.
\end{lemma}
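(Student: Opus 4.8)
The plan is to build the smooth complex Courant bracket on all of $\Gamma(E)$ by extending the holomorphic bracket $\dia$ from the sheaf $\mathcal{E}$ of holomorphic sections, and to observe that this extension is \emph{forced}, which delivers uniqueness simultaneously. The crucial structural input is the hypothesis recorded in Lemma~\ref{wind}: locally every smooth section is a finite $\smooth(\cdot,\C)$-combination of holomorphic ones, $\Gamma(U;E)=\smooth(U,\C)\cdot\mathcal{E}(U)$. Thus a local holomorphic frame $\{\sigma_a\}$ of $\mathcal{E}$ is at the same time a smooth frame of $E$; any smooth section reads $\phi=\phi^a\sigma_a$ with $\phi^a\in\smooth(U,\C)$; and transition matrices between holomorphic frames are holomorphic, which is what will make the construction glue.

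First I would pin down the two Leibniz rules any such bracket must obey. Axiom~\eqref{ax:Leibn} gives the right rule $\dor{\phi}{(f\psi)}=\rho\oz(\phi)[f]\,\psi+f\,(\dor{\phi}{\psi})$, with anchor $\rho\oz=\tfrac12(\id-i j)\circ\rho$. Writing $\dia$ through the skew bracket $[\phi,\psi]=\tfrac12(\dor{\phi}{\psi}-\dor{\psi}{\phi})$ and using axioms~\eqref{ax:nSkew} and~\eqref{ax:adInv} (equivalently, the Dorfman reformulation), one derives the left rule
\[ \dor{(f\phi)}{\psi}=f\,(\dor{\phi}{\psi})-\rho\oz(\psi)[f]\,\phi+\ip{\phi}{\psi}\,\D f, \]
with $\D=\rho^*\circ\ud$. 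Since $\rho\oz$ takes values in $T_X\oz$, its transpose annihilates $\Omega\zo$, so $\D f=\rho^*\partial f$ and $\rho\oz(\phi)[f]$ involve only the $\partial$-part of $f$; in particular both rules reduce, on holomorphic arguments, to the holomorphic structure. Iterating the two rules expresses $\dor{(\phi^a\sigma_a)}{(\psi^b\sigma_b)}$ entirely through the holomorphic bracket $\sigma_a\dia\sigma_b$, the anchor, the pairing, and $\partial$-derivatives of the coefficients. This formula is forced, giving uniqueness, and I take it as the definition. I then check frame-independence: because the transitions are holomorphic, the data $\sigma_a\dia\sigma_b$, $\ip{\sigma_a}{\sigma_b}$ and the anchor coefficients all transform holomorphically, and the forced formula is covariant under such changes, so the local brackets glue to a global smooth $\dia$ extending the holomorphic one. (Lemmas~\ref{dream} and~\ref{child} guarantee that the pairing and anchor of $E\oz$ restrict on $\mathcal{E}$ to the given holomorphic data and are compatible with the flat $T_X\zo$-connection $\nabla$.)

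It remains to verify the four axioms. Axiom~\eqref{ax:Leibn} holds by construction. For the other three I argue by tensoriality plus restriction to a holomorphic frame. Polarizing~\eqref{ax:nSkew}, set
\[ S(\phi,\psi)=\dor{\phi}{\psi}+\dor{\psi}{\phi}-\rho^*\ud\ip{\phi}{\psi}, \]
so that~\eqref{ax:nSkew} is equivalent to $S\equiv0$; likewise put
\[ A(\phi,\psi_1,\psi_2)=\ip{\dor{\phi}{\psi_1}}{\psi_2}+\ip{\psi_1}{\dor{\phi}{\psi_2}}-\rho\oz(\phi)\ip{\psi_1}{\psi_2} \]
for~\eqref{ax:adInv}, and let $J(\phi,\psi_1,\psi_2)=\dor{\phi}{(\dor{\psi_1}{\psi_2})}-\dor{(\dor{\phi}{\psi_1})}{\psi_2}-\dor{\psi_1}{(\dor{\phi}{\psi_2})}$ be the Jacobiator of~\eqref{ax:Jacobi}. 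Using the two Leibniz rules together with $\rho\oz(\dor{\phi}{\psi})=[\rho\oz\phi,\rho\oz\psi]$ and $\rho\circ\rho^*=0$, a direct computation shows $S$, $A$ and $J$ are $\smooth(M,\C)$-linear in each argument; e.g.\ $J(\phi,\psi_1,f\psi_2)=f\,J(\phi,\psi_1,\psi_2)$, the correction terms cancelling against the second-order term $\rho\oz(\dor{\phi}{\psi_1})[f]$. Being tensorial, each defect vanishes identically as soon as it vanishes on a local holomorphic frame $\{\sigma_a\}$, where $S$, $A$, $J$ reduce exactly to axioms~\eqref{ax:nSkew},~\eqref{ax:adInv} and~\eqref{ax:Jacobi} for the holomorphic Courant algebroid (sections in $\mathcal{E}$, $f\in\holom_X$, $\D=\rho^*\partial$). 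Hence all axioms hold and $(E,\ip{.}{.},\rho\oz,\dia)$ is a complex Courant algebroid, namely $E\oz$.

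The main obstacle I anticipate is the tensoriality computation for the Jacobiator $J$ (and, to a lesser extent, for $A$): because $\dia$ is not skew-symmetric the left and right Leibniz rules differ, so one must track the many correction terms $\rho\oz(\cdot)[f]$ and $\ip{\cdot}{\cdot}\,\D f$ and see that they cancel in triples. This is the one genuinely lengthy calculation; the rest is bookkeeping with the two Leibniz rules and the holomorphicity of frame transitions. A secondary point needing care is verifying that $\D$ and $\rho\oz$ see only the $\partial$-part of functions, which is what reconciles the complex anchor $\rho\oz$ landing in $T_X\oz$ with the holomorphic differential $\partial$ appearing in the holomorphic axioms.
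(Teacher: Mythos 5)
Your proposal is correct and takes essentially the same route as the paper: uniqueness because the two Leibniz rules force the bracket on $\smooth$-combinations of holomorphic sections, existence by adopting that forced extension formula as the definition (checking it is well defined/consistent), and verification of the four axioms by reducing to holomorphic arguments. Your defect-tensor organization ($S$, $A$, $J$ shown to be $\smooth$-linear) is just a cleaner packaging of what the paper calls the ``straightforward but tedious computation,'' not a different method.
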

\begin{proof}  To prove uniqueness, assume that there are two complex Courant algebroid structures on the smooth vector bundle $E\to X$ whose anchor maps and inner products coincide.  Moreover the Dorfman brackets coincide on $\E$ the subsheaf of holomorphic sections.  Then the Dorfman brackets coincide on all of $\Gamma(E)$, because the holomorphic sections over a coordinate neighborhood of $X$ are dense in the set of smooth sections.

To argue for existence, note that we want the Leibniz rules
\begin{align*}
  \dor{\phi}{(g\cdot\psi)} &= \rho\oz(\phi)[g]\cdot\psi + g\cdot(\dor{\phi}{\psi})  \\
  \dor{(f\cdot\phi)}{\psi} &= -\rho\oz(\psi)[f]\cdot\phi +f\cdot(\dor{\phi}{\psi}) +\<\phi,\psi\>\cdot\rho^{(1,0)*}\ud f 
\end{align*}  for $\phi,\psi\in\Gamma(E)$ and $f,g\in\smooth(X)$.  Therefore we define the Dorfman bracket of the smooth sections $f\phi$, $g\psi$, for $\phi,\psi\in\E$ as
\begin{align*}
  \dor{(f\cdot\phi)}{(g\cdot\psi)} =& f\rho\oz(\phi)[g]\cdot\psi + fg\cdot(\dor{\phi}{\psi})  \\
  &-g\rho\oz(\psi)[f]\cdot\phi +\<\phi,\psi\>g\cdot\rho^{(1,0)*}\ud f  \;.
\end{align*}
First we should argue that this extension is consistent with the bracket defined for holomorphic sections.  Assume therefore that $f\phi$ is again holomorphic, where $f\in\smooth(M)$ and $\phi\in\E$.  But this implies that $f$ is holomorphic on the open set where $\phi$ is not zero.  Therefore on this open set the extension says
\begin{align*}
  f\rho\oz(\phi)[g]\cdot\psi &+ fg\cdot(\dor{\phi}{\psi}) 
  -g\rho\oz(\psi)[f]\cdot\phi +\<\phi,\psi\>g\cdot\rho^{(1,0)*}\ud f  \\
  &= f\rho(\phi)[g]\cdot\psi + fg\cdot(\dor{\phi}{\psi}) 
  -g\rho(\psi)[f]\cdot\phi +\<\phi,\psi\>g\cdot\rho^{*}\partial f 
\intertext{where we have used that the two anchor maps coincide for holomorphic sections and $\rho^{(1,0)*}\ud f=\rho^*\partial f$.  Therefore the term is just}
  \dor{(f\cdot\phi)}{(g\cdot\psi)} \;.
\end{align*} Since the first derivatives of $f$ and $g$ are continuous the above relation also holds for the closure of the open domain where both $\phi$ and $\psi$ do not vanish.  But on the complement, which is open, the term is just 0, because at least one of the terms $\phi$ or $\psi$ is also 0.

Then follows a straightforward but tedious computation that shows that this bracket fulfills the four axioms \eqref{ax:Jacobi}--\eqref{ax:adInv}.  As an example we show the proof of the axiom \eqref{ax:nSkew}:
\begin{align*}
  \dor{(f\cdot\phi)}{(f\cdot\phi)} &= f^2\cdot(\dor{\phi}{\phi}) +\<\phi,\phi\>f\cdot\rho^{(1,0)*}\ud f \\
  &= \tfrac12\rho^*\partial\<\phi,\phi\> +\<\phi,\phi\>f\cdot\rho^{(1,0)*}\ud f \\
  &= \tfrac12\rho^{(1,0)*}\ud\<f\phi,f\phi\>
\end{align*}  where in the last relation we used again the fact that $\rho^*\partial=\rho^{(1,0)*}\ud$ when applied to holomorphic functions.
\end{proof}

Define a flat $C_X\zo$-connection $\nablaright$ on $E$ by 
\beq{squalor} \rconn_{Y\oplus\eta} e=\conn_Y e \eeq
and a flat $E$-connection $\nablaleft$ on $C_X\zo$ by 
\beq{filth} \lconn_{e} (Y\oplus\eta)=\nabla^o_{\rho\oz(e)}(Y\oplus\eta)=\pr\zo\big(\mathcal{L}_{\rho\oz(e)}(Y\oplus\eta)\big) 
=\pr\zo\big(\lie{\rho\oz(e)}{Y}\oplus\mathcal{L}_{\rho\oz(e)}\eta\big) ,\eeq
where $\pr\zo$ denotes the projection of $(T_X\oplus T_X^*)\otimes\C$ onto $T_X\zo\oplus(T_X\zo)^*$.

We can write the identity \eqref{zzzb} as
\begin{equation}\label{zzzbb} \rho(\rconn_Y\phi)-\lconn_\phi Y = [Y,\rho(\phi)] \;.
\end{equation} \MG{Mathieu please check again, because that is what you erased from my proposition.}

Thus we have  the following

\begin{prop}  Let $(E,\ip{.}{.},\rho,\dia)$ be a holomorphic Courant algebroid over a complex manifold $X$.  Denote the sheaf of holomorphic sections of the underlying holomorphic vector bundle by $\mathcal{E}$ and the corresponding flat $T_X\zo$-connection on $E$ by $\nabla$. 

The two complex Courant algebroids $C_X\zo$ and $E\oz$, together with the 
flat connections $\nablaleft$ and $\nablaright$ given by \eqref{squalor} and \eqref{filth},
  constitute a matched pair of complex Courant algebroids, which we call the companion matched pair of the holomorphic Courant algebroid $E$. 
\end{prop}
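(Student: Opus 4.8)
The plan is to verify that the data $(C_X\zo,E\oz,\nablaleft,\nablaright)$ meets every hypothesis of Proposition~\ref{glass} and satisfies each of the identities \eqref{derofBr1}--\eqref{new4Y}; by Definition~\ref{pair} this is precisely what it means for the two complex Courant algebroids to form a matched pair. Throughout I set $E_1=C_X\zo$ and $E_2=E\oz$, so that $\nablaright$ is the $E_1$-connection on $E_2$ from \eqref{squalor} and $\nablaleft$ the $E_2$-connection on $E_1$ from \eqref{filth}, and I abbreviate $X:=\rho\oz(\phi)\in\Xx\oz$ for a section $\phi$ of $E$. Note that $\nablaright_{Y\oplus\eta}e=\nabla_Y e$ depends on its subscript only through the anchor $\rho_1(Y\oplus\eta)=Y$, and $\nablaleft_e=\pr\zo\Lie_{\rho\oz(e)}$ depends on $e$ only through $\rho\oz(e)$; this observation is used repeatedly below.

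First I would dispose of the structural hypotheses. The metric-preservation conditions \eqref{cow} and \eqref{calf} hold because $\nablaright=\nabla$ preserves $\ip{.}{.}$ by Lemma~\ref{dream}, while $\nablaleft=\pr\zo\Lie_{\rho\oz(e)}$ preserves the canonical pairing on $C_X\zo$ since the Lie derivative is a derivation of the contraction of a $(0,1)$-field against a $(0,1)$-form, the projection $\pr\zo$ being harmless because $\eta\in\Omega\zo$ annihilates $\Xx\oz$. The exactness conditions $\rconn_{\D_1 f}\beta=0$ and $\lconn_{\D_2 f}b=0$ required by \ref{glass} are immediate from the identity $\rho\circ\rho^*=0$, valid in any Courant algebroid: both $\D_1 f$ and $\D_2 f$ are anchored to zero, and each connection factors through the anchor of its subscript. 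Finally, both connections are flat: $\Rright(a,b)e=R^\nabla(\rho_1 a,\rho_1 b)e=0$ by Lemma~\ref{wind} and the bracket-morphism property \eqref{rhomor} for $\rho_1$, while $\Rleft$ is the curvature of $\nabla^o$ along $\Xx\oz$, which vanishes by integrability of $j$ together with \eqref{rhomor} for $\rho\oz$. In particular $\Rright+\Rleft=0$, which is exactly \eqref{curvCompat}.

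Next I would treat the two cyclic identities \eqref{new4X} and \eqref{new4Y}, which collapse for a pleasant reason: the correction terms $\oo$ and $\w$ take values in the kernels of the anchors. Comparing the total anchor of \eqref{pig} with \eqref{rhomor} and using $\rho\oz\circ\D_2=0$ yields $\rho\oz(\oo(a_1,a_2))=0$, that is $\oo(a_1,a_2)\in\ker\rho\oz$; symmetrically \eqref{snake}, \eqref{rhomor} and $\rho_1\circ\D_1=0$ give $\w(\alpha_1,\alpha_2)\in\ker\rho_1$ (here one uses again that $\Xx\oz$ is closed under Lie bracket). Since $\lconn$ and $\rconn$ depend on their subscripts only through the anchor, every summand $\lconn_{\oo(a_i,a_j)}a_k$ and $\rconn_{\w(\alpha_i,\alpha_j)}\alpha_k$ vanishes identically, so \eqref{new4X} and \eqref{new4Y} hold term by term.

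This leaves the two derivation-of-bracket identities \eqref{derofBr1} and \eqref{derofBr2}, which I expect to be the main obstacle. The cleanest route is to test each on holomorphic sections and then extend by continuity. For \eqref{derofBr2}, whose variable sections $\alpha_1,\alpha_2$ lie in $E_2$, I would take $\alpha_1,\alpha_2\in\mathcal{E}$: then $\alpha_1\dia_2\alpha_2\in\mathcal{E}$ because the holomorphic bracket closes on holomorphic sections, so every term on the left is $\nabla$ applied to a holomorphic section and hence vanishes, while the right-hand side vanishes too since $\w(\alpha_1,\alpha_2)=0$ and $\D_1\ip{\alpha_1}{\alpha_2}_2=0$ on holomorphic data. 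Both sides thus vanish on $\mathcal{E}$, and since $\mathcal{E}$ generates $\Gamma(E)$ over $\smooth(X;\C)$ one concludes equality for all smooth sections by checking that both sides acquire matching Leibniz anomalies under multiplication by a smooth function---exactly the density argument used in the existence proof of $E\oz$. Identity \eqref{derofBr1}, whose variable sections lie in the standard algebroid $C_X\zo$, reduces on holomorphic $\alpha$ (where the terms $\lconn_{\rconn_{a_i}\alpha}$ drop out because $\rconn_{a_i}\alpha=\nabla_{\rho_1 a_i}\alpha=0$) to the classical statement that $\pr\zo\Lie_{\rho\oz\alpha}$ is a derivation of the standard Dorfman bracket $\dia_1$, the residual mismatch being absorbed by the $\w$- and $\D_1$-terms on the right; the interaction between $\rho\oz\alpha\in\Xx\oz$ and the $(0,1)$-directions is governed throughout by \eqref{zzzbb}. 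The bookkeeping of the projections $\pr\zo$ and of these anomaly terms is the genuinely tedious point, but it is forced and introduces no idea beyond those already used above.
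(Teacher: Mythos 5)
Your proposal takes essentially the same route as the paper's own proof: reduce everything to Proposition~\ref{glass}, observe that the conditions \eqref{derofBr1}--\eqref{new4Y} hold when evaluated on holomorphic sections of $E$ and antiholomorphic sections of $C\zo_X$, and extend to all smooth sections by density, matching the Leibniz anomalies that both sides acquire under multiplication by smooth functions, with the crucial cancellation supplied by \eqref{zzzbb}. Your handling of \eqref{curvCompat}, \eqref{new4X}, \eqref{new4Y} is a nice structural refinement over the paper's uniform restriction-plus-tensoriality argument: both curvatures vanish separately, and the cyclic sums vanish term by term because $\oo$ and $\w$ land in the kernels of the anchors while both connections factor through the anchor of their subscript.

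However, the way you justify that last key fact is circular. You obtain $\rho\oz(\oo(a_1,a_2))=0$ by comparing \eqref{pig} with the anchor property \eqref{rhomor} of the matched-sum bracket. But \eqref{rhomor} is a consequence of the Jacobi identity \eqref{ax:Jacobi} together with the Leibniz rule \eqref{ax:Leibn} (Uchino's observation), and the Jacobi identity for the matched sum is precisely what is being proved; indeed, for the bracket defined by \eqref{fullBracket}, the anchor property on sections of $E_1$ is \emph{equivalent} to $\rho_2\circ\oo=0$, so as stated the argument assumes its conclusion. The claim is nevertheless true, and the repair costs one line using only hypotheses of Proposition~\ref{glass} that you have already verified: by \eqref{donkey},
\begin{equation*}
  \ip{\D_2 f}{\oo(a_1,a_2)}_2 \;=\; \tfrac12\bigl(\ip{\lconn_{\D_2 f}a_1}{a_2}_1-\ip{a_1}{\lconn_{\D_2 f}a_2}_1\bigr)\;=\;0
\end{equation*}
for every $f\in\smooth(X;\C)$, since $\lconn_{\D_2 f}=0$; as the values $\D_2 f=\rho_2^*\ud f$ span $\im\rho_2^*$ pointwise and $(\im\rho_2^*)^\perp=\ker\rho_2$ by nondegeneracy of the pairing, this gives $\oo(a_1,a_2)\in\ker\rho_2$. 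The symmetric computation with \eqref{shark} and $\rconn_{\D_1 f}=0$ gives $\w(\alpha_1,\alpha_2)\in\ker\rho_1$. With this repair your term-by-term proof of \eqref{new4X} and \eqref{new4Y} is valid, and the remainder of your proposal, including the treatment of \eqref{derofBr1}--\eqref{derofBr2}, coincides with the paper's argument and goes through.
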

\begin{proof}  We check that $C_X\zo\oplus E^{1,0}$ is a Courant algebroid. Verifying \eqref{ax:Leibn}--\eqref{ax:adInv} is straightforward.

It remains to check that the Jacobi identity holds.  By Proposition \ref{glass}, it suffices to check that the 5 properties \eqref{derofBr1}--\eqref{new4Y} hold.   All equations are trivially satisfied when  $a_i\in\E$ and $\alpha_i\in\bar{\Theta}\oplus\bar\Omega_X\subset \Gamma(C\zo_X)$.  So we are left to check that by multiplying the $a_i$ (or the $\alpha_i$ respectively) with smooth functions, we get the same additional terms on each side of the equations.  This will be demonstrated for \eqref{derofBr1}.  Set
\begin{align*}
  LHS(a_2) &:= \lconn_\alpha(a_1\dia a_2) - (\lconn_\alpha a_1)\dia a_2 -a_1\dia(\lconn_\alpha a_2) -\lconn_{\rconn_{a_2}\alpha} a_1 +\lconn_{\rconn_{a_1}\alpha} a_2 \;. \\
 \intertext{Then}
  LHS(f\cdot a_2) &= f\cdot LHS(a_2) +\left(\left[\rho(\alpha),\rho(a_1)\right]+\rho(\rconn_{a_1}\alpha) 
    -\rho(\lconn_\alpha a_1)\right)[f]\cdot a_2
\intertext{and the second term on the right hand side vanishes due to \eqref{zzzbb}.  Thus the left hand side of \eqref{derofBr1} is $\smooth$-linear in $a_2$.  Similarly, setting}
  RHS(a_2) &:= -\w(\alpha,\oo(a_1,a_2)+\tfrac12\ud\<a_1,a_2\>) -\tfrac12\D\<\alpha, \oo(a_1,a_2)+\tfrac12 \ud\<a_1,a_2\>\,\> \;,  \\
 \intertext{we get}
  RHS(f\cdot a_2) &= f\cdot RHS(a_2) \;.
\end{align*} 
Thus the right hand side of \eqref{derofBr1} is also $\smooth$-linear in $a_2$.  Analog considerations result in coinciding terms for both sides of \eqref{derofBr1} when multiplying $a_1$ or $\alpha$ with a smooth function.
\end{proof}

In fact, the converse is also true.

\begin{prop} 
Let $X$ be  a complex manifold. Assume that
$(C\zo_X,B)$ is a   matched pair of complex Courant algebroids 
 such that the anchor of $B$ takes values in $T_X\oz$,
both connections $\nablaleft$ and $\nablaright$ are flat with
the $B$-connection $\nablaleft$ on $C\zo_X$ 
being given by $\lconn_{e} (Y\oplus\eta)=\nabla^o_{\rho(e)}(Y\oplus\eta)$.
Then there is a unique holomorphic Courant algebroid   $E$ such that
$B=E^{1, 0}$.
\end{prop}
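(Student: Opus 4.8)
The plan is to reconstruct the holomorphic structure on the bundle $E$ underlying $B$ out of the $C\zo_X$-connection $\nablaright$, and then to check that the inner product, anchor and bracket of $B$ descend to holomorphic data by means of Lemmas~\ref{wind}, \ref{dream} and \ref{child}. First I would show that $\nablaright$ depends only on the $T_X\zo$-component of its first argument. By axiom \eqref{dog} the map $\nablaright$ is $\smooth$-linear in its first slot, so $\rconn_{0\oplus\eta}e$ is tensorial in $\eta\in\Gamma((T_X\zo)^*)$; by Lemma~\ref{l:connExact} it vanishes on the exact forms $\D_1 f=0\oplus\bar\partial f$, which span $(T_X\zo)^*$ locally, whence $\rconn_{0\oplus\eta}e=0$ for all $\eta$. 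Thus $\conn_Y e:=\rconn_{Y\oplus 0}e$ (for $Y\in\Xx\zo$ regarded as the section $Y\oplus 0$ of $C\zo_X$) is a genuine $T_X\zo$-connection $\nabla$ on $E$, recovering the form \eqref{squalor}. Flatness of $\nablaright$, i.e.\ $\Rright=0$, together with the fact that the $T_X\zo$-part of $\dbi{Y\oplus0}{Y'\oplus0}{1}$ is $[Y,Y']$, yields $R^\nabla=0$; so $\nabla$ is flat and, by Lemma~\ref{wind}, makes $E$ holomorphic with $\mathcal{E}$ the sheaf of $\nabla$-flat sections.

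Next I would record holomorphicity of the structure maps. The metric-compatibility \eqref{calf} of $\nablaright$ reads $Y\ip{\phi}{\psi}=\ip{\conn_Y\phi}{\psi}+\ip{\phi}{\conn_Y\psi}$ for $Y\in\Xx\zo$, which is exactly condition (2) of Lemma~\ref{dream}, so $\ip{.}{.}$ induces a pairing of $\holom_X$-modules. For the anchor, I would apply the bracket-morphism identity $\rho(\dor{u}{v})=[\rho u,\rho v]$ of the matched sum Courant algebroid to the sections $\i_1(Y\oplus 0)$ and $\i_2(e)$: using \eqref{parrot} and the prescribed form $\lconn_e(Y\oplus\eta)=\rep_{\rho(e)}(Y\oplus\eta)$ of $\nablaleft$ one gets $-\pr\zo[\rho\oz e,Y]+\rho\oz(\conn_Y e)=[Y,\rho\oz e]$, i.e.\ precisely relation \eqref{zzzb}, $\rho\oz(\conn_Y e)=\pr\oz[Y,\rho\oz e]$. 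By Lemma~\ref{child} the anchor is holomorphic, and since the anchor of $B$ already takes values in $T_X\oz$ it coincides with the $(1,0)$-anchor $\rho\oz$.

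The main step is to show that the Dorfman bracket of $B$ preserves $\mathcal{E}$. For $\nabla$-flat $\phi,\psi$, relation \eqref{shark} gives $\ip{c}{\w(\phi,\psi)}_1=\tfrac12(\ip{\conn_Y\phi}{\psi}_2-\ip{\phi}{\conn_Y\psi}_2)=0$ for every $c=Y\oplus\eta$, so $\w(\phi,\psi)=0$ by nondegeneracy of the metric on $C\zo_X$; moreover $\ip{\phi}{\psi}_2$ is holomorphic by the previous paragraph, hence $\D_1\ip{\phi}{\psi}_2=\rho_1^*\bar\partial\ip{\phi}{\psi}_2=0$. Feeding $\w(\phi,\psi)+\tfrac12\D_1\ip{\phi}{\psi}_2=0$ into the matched-pair identity \eqref{derofBr2} with $a=Y\oplus 0$, $\alpha_1=\phi$, $\alpha_2=\psi$ kills its entire right-hand side; on the left-hand side the terms $(\conn_Y\phi)\dia_2\psi$ and $\phi\dia_2(\conn_Y\psi)$ vanish, while $\rconn_{\lconn_\psi(Y\oplus0)}\phi=\conn_{\pr\zo[\rho\oz\psi,Y]}\phi$ and the symmetric term vanish because $\nablaright$ factors through $T_X\zo$ and $\phi,\psi$ are $\nabla$-flat. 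Hence $\conn_Y\dbi{\phi}{\psi}{2}=0$, so $\dbi{\phi}{\psi}{2}\in\mathcal{E}$. Therefore $\dia:=\dbi{.}{.}{2}\vert_{\mathcal{E}}$ is a well-defined bracket on $\mathcal{E}$, and the complex Courant algebroid axioms \eqref{ax:Jacobi}--\eqref{ax:adInv} for $B$ restrict to the holomorphic ones (with $\D=\rho^*\partial$ on $\holom_X$), making $(E,\ip{.}{.},\rho,\dia)$ a holomorphic Courant algebroid.

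To conclude, $B$ is a complex Courant algebroid on the smooth bundle $E$ with inner product $\ip{.}{.}$, anchor $\rho\oz$, and Dorfman bracket restricting to $\dia$ on $\mathcal{E}$; by the uniqueness clause of the lemma constructing $E^{1,0}$ this forces $B=E^{1,0}$, and uniqueness of $E$ follows from uniqueness of the flat $T_X\zo$-connection in Lemma~\ref{wind}. I expect the main obstacle to be exactly this third step: the cancellation of the curvature- and transgression-type terms on the two sides of \eqref{derofBr2} hinges on the two nonobvious vanishings $\w(\phi,\psi)=0$ and $\D_1\ip{\phi}{\psi}_2=0$ on holomorphic sections, together with the reduction of $\nablaright$ to its $T_X\zo$-part; once these are in place the remaining verifications are routine consequences of $B$ being a complex Courant algebroid.
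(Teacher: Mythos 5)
Your proof is correct and takes essentially the same route as the paper's: construct a flat $T_X\zo$-connection on $B$ from $\nablaright$, invoke Lemmas~\ref{wind}, \ref{dream}, and \ref{child} to obtain the holomorphic bundle, inner product, and anchor, and then use the matched-pair Jacobi compatibility (your \eqref{derofBr2}, the paper's \eqref{derofBr1} with the two factors exchanged) evaluated on holomorphic sections to conclude that the Dorfman bracket preserves $\mathcal{E}$. You merely make explicit several steps the paper leaves implicit, namely the factoring of $\nablaright$ through $T_X\zo$ via Lemma~\ref{l:connExact}, the derivation of \eqref{zzzb} from \eqref{parrot} and the anchor--bracket morphism property, and the key vanishings $\w(\phi,\psi)=0$ and $\D_1\ip{\phi}{\psi}_2=0$ on holomorphic sections.
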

\begin{proof} The flat $C^{0,1}$-connection induces a flat $T^{0,1}$-connection on $B$.  Hence there is a holomorphic vector bundle $E\to X$ such that $E\oz=B$, according to Lemma \ref{wind}.  Since the connection $\nablaleft$ preserves the inner product on $B$ and is compatible with the anchor map \eqref{zzzb}, $E$ inherits a holomorphic inner product and a holomorphic anchor map.  It remains to check that the induced Dorfman bracket is holomorphic as well.  If $a_1,a_2\in\E$ are two holomorphic sections of $E$ and $\alpha\in\overline{\Theta}_X\oplus\overline{\Omega}_X$ is an anti-holomorphic section of $C\oz_X$, then all terms of equation \eqref{derofBr1} except the first one vanish.  But then the first one $\lconn_\alpha(a_1\dia a_2)$ also has to vanish. This shows that the Dorfman bracket of two holomorphic sections is itself holomorphic.
\end{proof}

\subsection{Flat regular Courant algebroid}

A Courant algebroid $E$ is said to be \textbf{regular} iff $F:=\rho(E)$ has constant rank, in which case $\rho(E)$ is an integrable distribution on the base manifold $M$ and $\ker\rho/(\ker\rho)^\perp$ is a bundle of quadratic Lie algebras over $M$. It was proved by Chen et al.\ \cite{CSX09}
that the vector bundle underlying a regular Courant algebroid $E$ is isomorphic to
$F^*\oplus \GG\oplus F$, where $F$ is the integrable subbundle $\rho(E)$ of $TM$ 
and $\GG$  the bundle $\ker\rho/(\ker\rho)^\perp$ of quadratic Lie algebras over $M$.
Thus we can confine ourselves to 
those Courant algebroid structures on $\fsgf$ where the anchor map is
\begin{equation} \label{Eqt:Standardrho} \rho(\xi_1+\br_1+x_1)=x_1 ,\end{equation}
the pseudo-metric is
\begin{equation} \label{Eqt:Standardip}
\ip{\xi_1+\br_1+\fx_1}{\xi_2+\br_2+\fx_2} = \duality{\xi_1}{\fx_2}+\duality{\xi_2}{\fx_1}+\ipG{\br_1}{\br_2} \;,\end{equation}
and the Dorfman bracket satisfies
\begin{equation} \label{Eqt:prGdb} \pr_{\GG} (\dor{\br_1}{\br_2})=\lbG{\br_1}{\br_2} \;.\end{equation}
Here $\xi_1,\xi_2\in F^*$, $\br_1,\br_2\in\GG$, and $\fx_1,\fx_2\in F$.
We call them \textbf{standard} Courant algebroid structures on $\fsgf$.

\begin{thm}{\cite{CSX09}}
\label{Thm:StandardCourantStructure}
A Courant algebroid structure on $\fsgf$,
with pseudo-metric \eqref{Eqt:Standardip} and anchor map
\eqref{Eqt:Standardrho}, and satisfying \eqref{Eqt:prGdb}, is
completely determined by the following data:
\begin{itemize}
\item an $F$-connection $\nablaDer$ on $\GG$,
\item a bundle map $\mathcal{R}:~\wedge^2 F\,\rightarrow\, \GG$,
\item and a $3$-form $H\in \Gamma(\wedge^3 F^*)$
\end{itemize}
satisfying the compatibility conditions
\begin{align}\label{ipGinvariant} & \L_x\ipG{\br}{\bs}=\ipG{\ConnectionDer_{x}{\br}}{\bs}+\ipG{\br}{\ConnectionDer_{x}{\bs}},\\
\label{lbGinvariant} & \ConnectionDer_{x}\lbG{\br}{\bs}=
\lbG{\ConnectionDer_x\br}{\bs}+\lbG{\br}{\ConnectionDer_x\bs},\\
\label{dCurvautre0}
& \big(\ConnectionDer_x\mathcal{R}(y,z)-\mathcal{R}([x,y],z)\big) +c.p.=0,\\
\label{CurvatureConnectionDer} &\ConnectionDer_x\ConnectionDer_y
\br-\ConnectionDer_y\ConnectionDer_x \br
-\ConnectionDer_{[x,y]}\br=\lbG{\mathcal{R}(x,y)}{\br}, \\
\label{dH}&\ud_F H=\ipG{\mathcal{R}}{\mathcal{R}}
\end{align}
for all $x,y,z\in \Gamma(F)$ and $\br,\bs\in\Gamma(\GG)$.
Here $\ipG{\mathcal{R}}{\mathcal{R}}$ denotes the $4$-form on $F$ given by
\[ \ipG{\mathcal{R}}{\mathcal{R}}(x_1,x_2,x_3,x_4) = \tfrac{1}{4} \sum_{\sigma\in S_4} \sgn(\sigma)
\ipG{\mathcal{R}(x_{\sigma(1)},x_{\sigma(2)})}{\mathcal{R}(x_{\sigma(3)},x_{\sigma(4)})}, \]
where $x_1,x_2,x_3,x_4\in F$.
\end{thm}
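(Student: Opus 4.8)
The plan is to establish a bijection between standard Courant algebroid structures on $\fsgf$ and triples $(\nabla,\mathcal{R},H)$ satisfying \eqref{ipGinvariant}--\eqref{dH}, by decomposing the Dorfman bracket along the splitting $E=F^*\oplus\GG\oplus F$ and matching each axiom to a compatibility condition. First I would record the structural features forced by \eqref{Eqt:Standardip}: both $F$ and $F^*$ are isotropic, $F^*$ pairs only with $F$, and the operator $\D=\rho^*\ud$ takes values in $F^*$, where in fact $\D f=\ud_F f$. Locally $\Gamma(F^*)$ is spanned over $\smooth(M)$ by such $\D f$, so by the Leibniz rule \eqref{ax:Leibn} the whole bracket is determined once it is known on sections of $F$ and $\GG$ together with the behaviour of $\D$.

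Next I would read off the three pieces of data. By the anchor-morphism property \eqref{rhomor}, the $F$-component of $\dor{x}{y}$ (for $x,y\in\Gamma(F)$) is $[x,y]$, while \eqref{Eqt:prGdb} and projection onto $\GG$ lead me to set $\mathcal{R}(x,y):=\pr_\GG(\dor{x}{y})$ and $\nabla_x\br:=\pr_\GG(\dor{x}{\br})$. The non-skewness axiom \eqref{ax:nSkew}, applied where the relevant inner products vanish, makes $\mathcal{R}$ skew and identifies the $\GG$-part of $\dor{\br}{x}$ with $-\nabla_x\br$, and the Leibniz rule \eqref{ax:Leibn} gives tensoriality of $\mathcal{R}$ in both slots and the connection axioms for $\nabla$. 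The $F^*$-components are then pinned down by ad-invariance \eqref{ax:adInv}: pairing $\dor{x}{y}$ with a section $z\in\Gamma(F)$ defines the $3$-form $H(x,y,z):=\ip{\dor{x}{y}}{z}$, and the same axiom expresses the $F^*$-parts of $\dor{x}{\br}$ and $\dor{\br}{\bs}$ as pairings of the $\GG$-entries against $\mathcal{R}$ and against $\nabla$. Skew-symmetry and $\smooth(M)$-linearity of $H$ follow from \eqref{ax:nSkew} and \eqref{ax:Leibn}. Assembling these components produces one explicit formula for the Dorfman bracket in terms of $(\nabla,\mathcal{R},H)$.

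With this formula, the Leibniz rule \eqref{ax:Leibn}, non-skewness \eqref{ax:nSkew}, and ad-invariance \eqref{ax:adInv} hold by construction, the latter on sections of $\GG$ being precisely \eqref{ipGinvariant}. The remaining content is that the Jacobi identity \eqref{ax:Jacobi} is equivalent to \eqref{lbGinvariant}--\eqref{dH}, which I would extract by evaluating the Jacobiator on homogeneous triples drawn from $\Gamma(F)$ and $\Gamma(\GG)$: the $\GG$-component on $(x,\br,\bs)$ gives \eqref{lbGinvariant}; the $\GG$-component on $(x,y,\br)$ gives the curvature relation \eqref{CurvatureConnectionDer}; the $\GG$-component on a triple $(x,y,z)$ in $\Gamma(F)$ gives the Bianchi identity \eqref{dCurvautre0}; and the $F^*$-component of that same triple, paired against a fourth section of $F$, gives \eqref{dH}. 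Triples involving $F^*$ reduce, via the local description $\Gamma(F^*)=\smooth(M)\cdot\D(\smooth(M))$ and the special bracket relations for $\D f$, to the cases already treated, so no further condition arises.

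The main obstacle is the last relation \eqref{dH}, namely $\ud_F H=\ipG{\mathcal{R}}{\mathcal{R}}$. It is the only condition quadratic in the data and the only one coupling all three pieces: extracting it requires carefully collecting the $F^*$-components of the three Courant brackets in the Jacobiator of a triple in $\Gamma(F)$, where the $\mathcal{R}$-terms of the inner brackets feed into the ad-invariance-determined $F^*$-parts of the outer brackets and recombine, together with the terms coming from $H$, into $\ud_F H$ and the $4$-form $\ipG{\mathcal{R}}{\mathcal{R}}$. Tracking the signs under cyclic permutation and matching the normalisation $\tfrac14$ in the definition of $\ipG{\mathcal{R}}{\mathcal{R}}$ is the delicate bookkeeping. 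Once \eqref{dH} is secured, the converse is immediate: defining the bracket by the assembled formula from any triple satisfying \eqref{ipGinvariant}--\eqref{dH} and running the computation backwards verifies all four axioms, which completes the claimed bijection.
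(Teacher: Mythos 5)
Your overall strategy is sound, but note first that the thesis itself contains no proof of this theorem: it is quoted from \cite{CSX09}, so the only available comparison is with the cited source, whose argument is the same component-by-component analysis you propose (decompose the Dorfman bracket along $F^*\oplus\GG\oplus F$, read off $\nabla$, $\mathcal{R}$, $H$ from the mixed components, and match the Jacobi identity against the five compatibility conditions). Your case bookkeeping is correct where you carry it out: the $\GG$-components of the Jacobiator on triples $(x,\br,\bs)$, $(x,y,\br)$ and $(x,y,z)$ do yield \eqref{lbGinvariant}, \eqref{CurvatureConnectionDer} and \eqref{dCurvautre0} respectively; \eqref{ipGinvariant} is indeed just ad-invariance \eqref{ax:adInv} evaluated on $\GG$-sections; and the reduction of triples with $F^*$-entries to the exact generators, using $\D f\dia\psi=0$ and $\psi\dia\D f=\D(\rho(\psi)[f])$ together with the Leibniz rules in both arguments, is legitimate in both directions of the equivalence.

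The one place where your proof, carried out literally, does not deliver the statement is exactly the step you defer as ``delicate bookkeeping''. With the pseudo-metric as printed in \eqref{Eqt:Standardip} (no factor $\tfrac12$ on the $F^*$--$F$ pairing) and your definition $H(x,y,z)=\ip{\dor{x}{y}}{z}$, ad-invariance forces $\ip{\dor{x}{\br}}{w}=-\ipG{\br}{\mathcal{R}(x,w)}$, and collecting the $F^*$-component of the Jacobiator on $(x,y,z)$ against a fourth section $w$ gives $\ud_F H(x,y,z,w)=\ipG{\mathcal{R}(x,y)}{\mathcal{R}(z,w)}-\ipG{\mathcal{R}(x,z)}{\mathcal{R}(y,w)}+\ipG{\mathcal{R}(x,w)}{\mathcal{R}(y,z)}$, which under the stated normalization $\tfrac14\sum_{\sigma\in S_4}$ equals $\tfrac12\ipG{\mathcal{R}}{\mathcal{R}}$, not $\ipG{\mathcal{R}}{\mathcal{R}}$. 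The constant in \eqref{dH}, like the factors $2$ in \eqref{Eqt:definationofPForm} and \eqref{Eqt:definationofQ}, is carried over from \cite{CSX09}, where the $F^*$--$F$ part of the pairing carries a factor $\tfrac12$; under that convention your computation closes exactly onto \eqref{dH}. So you must either adopt that convention at the outset, or your argument proves the theorem with \eqref{dH} replaced by $\ud_F H=\tfrac12\ipG{\mathcal{R}}{\mathcal{R}}$. This is at bottom an inconsistency of conventions inside the quoted statement rather than a structural flaw in your plan, but since you identified this exact computation as the crux and left it undone, it is the one point you genuinely have to settle before the proof is complete.
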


The Dorfman bracket on  $E=\fsgf$ is then given by
\begin{align}
\label{FSGFx1x2}
&\dor{\fx_1}{\fx_2}=H(\fx_1,\fx_2,\_)  + \mathcal{R}
(\fx_1,\fx_2)+[\fx_1,\fx_2], \\\label{FSGFg1g2}
&\dor{\br_1}{\br_2}=\mathcal{P}
(\br_1,\br_2)+\lbG{\br_1}{\br_2},\\\label{FSGFxi1g2}
&\dor{\xi_1}{\br_2}=\dor{\br_1}{\xi_2}=\dor{\xi_1}{\xi_2}=0,\\\label{FSGFx1xi2}
&\dor{\fx_1}{\xi_2}=\L_{\fx_1}\xi_2, \\\label{FSGFxi1x2}
&\dor{\xi_1}{\fx_2}=-\L_{\fx_2}\xi_1+
\ud_F\duality{\xi_1}{\fx_2},\\\label{FSGFx1g2}
&\dor{\fx_1}{\br_2}=-\dor{\br_2}{\fx_1}=-2\mathcal{Q}(\fx_1,\br_2)+
\ConnectionDer_{\fx_1}\br_2,
\end{align}
for all $\xi_1,\xi_2\in\Gamma(F^*)$, $\br_1,\br_2\in\Gamma(\GG)$,
$\fx_1,\fx_2\in\Gamma(F)$.
Here $\ud_F:\smooth(M)\to\Gamma(F^*)$ denotes the leafwise de~Rham differential,
and the map $\mathcal{P}:\Gamma(\GG)\otimes\Gamma(\GG)\to\Gamma(F^*)$ and
$\mathcal{Q}: \Gamma(F)\otimes\Gamma(\GG)\to
\Gamma(F^*)$ are defined by the relation
\begin{equation} \label{Eqt:definationofPForm}
\duality{\mathcal{P}(\br_1,\br_2)}{y} = 2\ipG{\br_2}{\ConnectionDer_y\br_1} \end{equation}
and
\begin{equation} \label{Eqt:definationofQ}
\duality{\mathcal{Q}(x,\br)}{y} = \ipG{\br}{\mathcal{R}(x,y)} .\end{equation}

\begin{vdef}
A standard Courant algebroid  $E=\fsgf$ is said to be flat
if $\ipG{\mathcal{R}}{\mathcal{R}}$ vanishes.
\end{vdef}

\begin{prop}
Let $E=\fsgf$ be a flat
standard Courant algebroid. Then $(F\oplus F^*)_{H}$ and $\GG$
form a matched pair of Courant algebroids, 
and  $E$ is isomorphic to  their matched sum, where
$(F\oplus F^*)_{H}$ denotes the twisted Courant algebroid
on $F\oplus F^*$ by the $3$-form $H$.  Here
the $(F\oplus F^*)_{H}$ connection on $\GG$
is given by

\[\rconn_{x+\xi}\br=\ConnectionDer_x\br \;,  \label{Schnecke}\]

while the $\GG$-connection on $(F\oplus F^*)_{H}$ is is given by

\[ \lconn_r (X\oplus\xi) = 2\mathcal{Q}(X,r)\oplus0  \label{Wurm}
\]

\end{prop}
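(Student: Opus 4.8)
The plan is to realize $E=\fsgf$ inside the orthogonal-splitting framework that precedes Definition~\ref{pair}, with $E_1 = F\oplus F^*$ and $E_2 = \GG$, and then to deduce the matched-pair identities from the fact that $E$ is already a Courant algebroid. First I would check the splitting hypotheses: by the pseudo-metric \eqref{Eqt:Standardip}, $E_1$ and $E_2$ are mutually orthogonal and $E_1^\perp = E_2$, while the anchor \eqref{Eqt:Standardrho} restricts to the projection $\rho_1\:F\oplus F^*\to F$ and to $\rho_2 = 0$ on $\GG$. Thus $E = E_1\oplus E_2$ with $E_1^\perp=E_2$, which is exactly the situation of the propositions leading to the reconstruction formula \eqref{fullBracket}.

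Next I would compute the induced Courant structures $\pr_k(\dor{\i_k a}{\i_k b})$. On $E_2=\GG$, \eqref{Eqt:prGdb} gives the bracket $\lbG{\cdot}{\cdot}$, so $E_2$ is the bundle of quadratic Lie algebras with vanishing anchor; this is a Courant algebroid and $\D_2=\rho_2^*\ud=0$. On $E_1$, projecting \eqref{FSGFx1x2}, \eqref{FSGFx1xi2} and \eqref{FSGFxi1x2} onto $F\oplus F^*$ (discarding the $\GG$-valued terms such as $\mathcal{R}(\fx_1,\fx_2)$) and using the Cartan identity $-\Lie_{\fx}\xi+\ud_F\duality{\xi}{\fx}=-\imath_{\fx}\ud_F\xi$ yields precisely the $H$-twisted Dorfman bracket $[\fx_1,\fx_2]\oplus(\Lie_{\fx_1}\eta-\imath_{\fx_2}\ud_F\xi+\imath_{\fx_1}\imath_{\fx_2}H)$ of $(F\oplus F^*)_H$. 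This is the one essential use of flatness: the twisted bracket satisfies the Jacobi identity \eqref{ax:Jacobi} if and only if $\ud_F H=0$, and by \eqref{dH} we have $\ud_F H=\ipG{\mathcal{R}}{\mathcal{R}}$, which vanishes exactly because $E$ is flat. Hence $E_1=(F\oplus F^*)_H$ is a genuine Courant algebroid.

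Then I would read off the induced connections from \eqref{bird} and \eqref{fish}. Using \eqref{FSGFx1g2} and \eqref{FSGFxi1g2}, projection onto $\GG$ gives $\rconn_{x+\xi}\br=\pr_\GG(\dor{(x+\xi)}{\br})=\ConnectionDer_x\br$, and projection onto $F\oplus F^*$ gives $\lconn_{\br}(X\oplus\xi)=\pr_{F\oplus F^*}(\dor{\br}{(X\oplus\xi)})=2\mathcal{Q}(X,\br)\oplus0$; these are the formulas \eqref{Schnecke} and \eqref{Wurm}. By the propositions preceding Definition~\ref{pair} these connections preserve the fibrewise metrics (equations \eqref{cow} and \eqref{calf}, consequences of the ad-invariance \eqref{ax:adInv} of $E$), and by Lemma~\ref{l:connExact} they satisfy $\rconn_{\D_1 f}\beta=0$ and $\lconn_{\D_2 f}b=0$, the latter trivially since $\D_2=0$.

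Finally, since the original Dorfman bracket on $E$ agrees with the bracket \eqref{fullBracket} reconstructed from $E_1$, $E_2$ and these two connections, and since $E$ is a Courant algebroid by Theorem~\ref{Thm:StandardCourantStructure}, the bracket \eqref{fullBracket} satisfies the Jacobi identity. The ``only if'' direction of Proposition~\ref{glass} then forces the five identities \eqref{derofBr1}--\eqref{new4Y}, so $(E_1,E_2)$ together with $\nablaleft,\nablaright$ is a matched pair in the sense of Definition~\ref{pair}. The matched sum carries the inner product \eqref{bull}, anchor \eqref{bear} and bracket \eqref{fullBracket}, which coincide with \eqref{Eqt:Standardip}, \eqref{Eqt:Standardrho} and the original Dorfman bracket under the obvious reordering $\fsgf\cong(F\oplus F^*)\oplus\GG$; this reordering is the claimed isomorphism. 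The main obstacle is thus confined to showing that $(F\oplus F^*)_H$ is itself a Courant algebroid, which is exactly where the flatness hypothesis is indispensable; all remaining identities come for free from the general splitting machinery once $E$ is known to be a Courant algebroid.
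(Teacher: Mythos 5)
Your proposal is correct and takes essentially the same route as the paper: flatness plus \eqref{dH} gives $\ud_F H=0$ so that $(F\oplus F^*)_H$ is a genuine Courant algebroid, and the connections \eqref{Schnecke}, \eqref{Wurm} are obtained by matching the components \eqref{FSGFx1x2}--\eqref{FSGFx1g2} against the reconstructed bracket \eqref{fullBracket}, the matched-pair identities then being forced by Proposition~\ref{glass} because $E$ already satisfies the Jacobi identity. The paper's own (much terser) proof is exactly this componentwise comparison; your write-up merely makes explicit the appeal to the general splitting propositions and to the equivalence in Proposition~\ref{glass}.
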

\begin{proof} Due to the flatness of the Courant algebroid the 3-form $H\in\Omega^3_M(F)$ is closed.  Therefore we can construct the twisted standard Courant algebroid $(F\oplus F^*)_H$.  We compare the components of the Dorfman bracket of the sum Courant algebroid $\g\oplus(F\oplus F^*)_H$ with the components of the bracket \eqref{FSGFx1x2}--\eqref{FSGFx1g2} and see that for the choice \eqref{Schnecke} and \eqref{Wurm} both coincide.
\end{proof}

\section{Matched pairs of Dirac structures}

\begin{vdef}  A Dirac structure $D$ in a Courant algebroid $(E,\<.,.\>,\dia,\rho)$ with split signature is a maximal isotropic and integrable subbundle.
\end{vdef}

\begin{prop} Given a matched pair $(E_1,E_2)$ of Courant algebroids and Dirac structures $D_1\subset E_1$ and $D_2\subset E_2$, the direct sum $D_1\oplus D_2$ is a Dirac structure in the Courant algebroid $E_1\oplus E_2$ iff $\lconn_\alpha a\in\Gamma(D_1)$ and $\rconn_a \alpha\in\Gamma(D_2)$ for all $\alpha\in\Gamma(D_2)$ and $a\in\Gamma(D_1)$.
\end{prop}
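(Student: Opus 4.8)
The plan is to first reduce the statement to a closure condition and then compute the bracket on the matched sum explicitly. Since $D_1$ and $D_2$ are Dirac structures they are in particular maximal isotropic, hence Lagrangian, so that $D_1\oplus D_2$ is automatically maximal isotropic in $E_1\oplus E_2$: isotropy follows from the form~\eqref{bull} of the inner product, which gives $\<a\oplus\alpha,b\oplus\beta\>=\<a,b\>_1+\<\alpha,\beta\>_2=0$ for $a,b\in\Gamma(D_1)$, $\alpha,\beta\in\Gamma(D_2)$, while the rank count $\rk(D_1\oplus D_2)=\tfrac12\rk E_1+\tfrac12\rk E_2=\tfrac12\rk(E_1\oplus E_2)$ gives maximality. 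Thus $D_1\oplus D_2$ is a Dirac structure precisely when it is closed under the Dorfman bracket, and it suffices to characterize this closure. I will also use repeatedly that a maximal isotropic subbundle equals its own orthogonal, $D_i=D_i^\perp$.

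For the implication ``$\Rightarrow$'' I would specialize formula~\eqref{parrot}: for $a\in\Gamma(D_1)$ and $\alpha\in\Gamma(D_2)$ one has $(a\oplus0)\dia(0\oplus\alpha)=-\lconn_{\alpha}a\oplus\rconn_{a}\alpha$. Both factors $a\oplus0$ and $0\oplus\alpha$ lie in $\Gamma(D_1\oplus D_2)$, so if this subbundle is closed then the right-hand side lies in $\Gamma(D_1\oplus D_2)$, forcing $\lconn_{\alpha}a\in\Gamma(D_1)$ and $\rconn_{a}\alpha\in\Gamma(D_2)$, which is exactly the asserted condition.

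For the converse ``$\Leftarrow$'' I would evaluate the full bracket~\eqref{fullBracket} on two arbitrary sections $a\oplus\alpha$ and $b\oplus\beta$ of $D_1\oplus D_2$ and verify each summand separately. In the $E_1$-component, $\dbi{a}{b}{1}$ lies in $D_1$ by integrability of $D_1$, the terms $\lconn_{\alpha}b$ and $\lconn_{\beta}a$ lie in $D_1$ by hypothesis, and the term $\tfrac12\D_1\<\alpha,\beta\>_2$ vanishes since $\<\alpha,\beta\>_2=0$ by isotropy of $D_2$; symmetrically for the $E_2$-component with $\dbi{\alpha}{\beta}{2}$, $\rconn_{a}\beta$, $\rconn_{b}\alpha$ and the vanishing term $\tfrac12\D_2\<a,b\>_1$. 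The one point requiring work is to show that the correction terms satisfy $\w(\alpha,\beta)\in\Gamma(D_1)$ and $\oo(a,b)\in\Gamma(D_2)$. For the former I would use the defining relation~\eqref{shark}: for any $c\in\Gamma(D_1)$, $\<c,\w(\alpha,\beta)\>_1=\tfrac12(\<\rconn_{c}\alpha,\beta\>_2-\<\alpha,\rconn_{c}\beta\>_2)$, and by hypothesis $\rconn_{c}\alpha,\rconn_{c}\beta\in\Gamma(D_2)$, so both pairings vanish by isotropy of $D_2$; hence $\w(\alpha,\beta)\in D_1^\perp=D_1$. The inclusion $\oo(a,b)\in\Gamma(D_2)$ follows identically from~\eqref{donkey} using $\lconn_{\gamma}a,\lconn_{\gamma}b\in\Gamma(D_1)$ and isotropy of $D_1$.

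The main obstacle is precisely this last step: the skew-symmetric terms $\w$ and $\oo$ are not directly controlled by the connection hypothesis, and one must instead test them against $D_1$ and $D_2$ through their duality characterizations~\eqref{shark} and~\eqref{donkey}, the vanishing then coming from isotropy together with $D_i=D_i^\perp$. Once $\w(\alpha,\beta)\in\Gamma(D_1)$ and $\oo(a,b)\in\Gamma(D_2)$ are in hand, collecting all summands of~\eqref{fullBracket} shows the $E_1$-component lies in $\Gamma(D_1)$ and the $E_2$-component in $\Gamma(D_2)$, so $D_1\oplus D_2$ is closed and therefore Dirac, which completes the equivalence.
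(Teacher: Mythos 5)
Your proof is correct and follows essentially the same route as the paper: necessity via the identity $(a\oplus0)\dia(0\oplus\alpha)=-\lconn_{\alpha}a\oplus\rconn_{a}\alpha$, and sufficiency by pairing the anomalous terms against test sections of the complementary Dirac structure and invoking maximal isotropy ($D_i^\perp=D_i$). Your version merely spells out, term by term via~\eqref{fullBracket}, \eqref{shark} and~\eqref{donkey}, what the paper's terser argument compresses into a single pairing computation.
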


\begin{proof} Maximal isotropy is obvious.  Both conditions are
  necessary, because $(a\oplus0)\dia(0\oplus\alpha)= -\lconn_\alpha a \oplus \rconn_a\alpha$.  It 
  remains to check that also the $E_2$-component of the Dorfman bracket of two sections of $D_1$ is  in $D_2$ (and vice versa).  Indeed, we have  
\[ \<0\oplus\alpha,(a\oplus0)\dia(b\oplus0)\> = \<\conn_\alpha a,b\>_1 \]
The RHS vanishes due to isotropy of $D_1$.  Therefore, due to maximal isotropy of $D_2$, $\oo(\alpha,\beta)$ must be in $D_2$.
\end{proof}

\begin{cor} Let $D_1$ (resp.\ $D_2$) be a Dirac structure in a Courant algebroid $E_1$ (resp.\ $E_2$). If $(E_1,E_2)$ is a matched pair of Courant algebroids and $(D_1,D_2)$ a matched pair of Dirac structures, then $(D_1,D_2)$ is a matched pair of Lie algebroids. 
\end{cor}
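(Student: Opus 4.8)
The plan is to check Mokri's three requirements for $(D_1,D_2)$: that each $D_i$ is a Lie algebroid, that the induced connections are flat, and that the compatibility identities \eqref{crocodile} and \eqref{alligator} hold. The first is immediate: a Dirac structure in a Courant algebroid carries a canonical Lie algebroid structure by Corollary~\ref{c:diracisLie}, so $D_1$ and $D_2$ are Lie algebroids with brackets the restrictions of $\dia_1,\dia_2$ and anchors $\rho_1|_{D_1},\rho_2|_{D_2}$. Moreover, saying that $(D_1,D_2)$ is a matched pair of Dirac structures means exactly $\lconn_\alpha a\in\Gamma(D_1)$ and $\rconn_a\alpha\in\Gamma(D_2)$ for all $a\in\Gamma(D_1)$, $\alpha\in\Gamma(D_2)$; thus the connections $\nablaleft,\nablaright$ of the Courant matched pair restrict to a $D_2$-connection on $D_1$ and a $D_1$-connection on $D_2$, and these are the connections I would feed into Mokri's definition.

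For \eqref{crocodile} and \eqref{alligator} I would restrict the Courant matched-pair identities \eqref{derofBr1} and \eqref{derofBr2} to Dirac sections. Taking $a_1,a_2\in\Gamma(D_1)$ and $\alpha\in\Gamma(D_2)$ in \eqref{derofBr1} and replacing $\dia_1$ by the Lie bracket of $D_1$ (legitimate on $D_1$), the left-hand side becomes precisely the defect between the two sides of \eqref{crocodile}; the whole content is therefore that the right-hand side of \eqref{derofBr1} vanishes on Dirac sections. Isotropy is what should make this happen: $\ip{a_1}{a_2}_1=0$ removes every term carrying $\D_2\ip{a_1}{a_2}_1$, and since the preceding proposition gives $\oo(a_1,a_2)\in\Gamma(D_2)$, isotropy of $D_2$ yields $\ip{\alpha}{\oo(a_1,a_2)}_2=0$ and removes the $\D_1$-term as well, leaving only $\w(\alpha,\oo(a_1,a_2))$ to be dealt with. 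An identical computation with \eqref{derofBr2} produces \eqref{alligator} up to the mirror term $\oo(a,\w(\alpha_1,\alpha_2))$.

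Flatness of the two restricted connections I would extract from the curvature identity \eqref{curvCompat}, $\Rright+\Rleft=0$. On Dirac sections the matched-pair-of-Dirac hypothesis forces $\Rright(a,b)\alpha\in\Gamma(D_2)$ and $\Rleft(\alpha,\beta)a\in\Gamma(D_1)$, so pairing \eqref{curvCompat} against Dirac sections and invoking the Lagrangian (maximal isotropic) property of $D_1$ and $D_2$ is the mechanism I would use to force each curvature to vanish, giving genuinely flat connections. With flatness together with \eqref{crocodile} and \eqref{alligator} established, Mokri's definition of a matched pair of Lie algebroids is satisfied, and $(D_1,D_2)$ is such a matched pair.

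The main obstacle is the clean vanishing, on the Dirac structures, of the surviving skew-defect and curvature terms---concretely $\w(\alpha,\oo(a_1,a_2))$ from \eqref{derofBr1}, its mirror $\oo(a,\w(\alpha_1,\alpha_2))$ from \eqref{derofBr2}, and the restricted $\Rright,\Rleft$. The subtlety is that the matched-pair-of-Dirac condition controls $\nablaleft,\nablaright$ only when one argument already lies in a Dirac structure, whereas detecting a section of a Lagrangian subbundle requires pairing it against a transverse complement. I therefore expect the crux to be a lemma showing that, for $a_i\in\Gamma(D_1)$ and $\alpha\in\Gamma(D_2)$, the $D_1$-valued section $\w(\alpha,\oo(a_1,a_2))$ is orthogonal to all of $E_1$; proving this looks to require combining the metric-invariance relations \eqref{cow}--\eqref{calf}, the defining formulas \eqref{donkey} and \eqref{shark} for $\oo$ and $\w$, the cyclic identities \eqref{new4X}--\eqref{new4Y}, and isotropy, and it is the step I would budget the most effort for.
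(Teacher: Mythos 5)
Your plan has the right architecture, and its first half is correct: each $D_i$ is a Lie algebroid by Corollary~\ref{c:diracisLie}, the matched-pair-of-Dirac hypothesis is precisely the statement that $\nablaleft$ and $\nablaright$ restrict, and your reduction of \eqref{crocodile} and \eqref{alligator} to the two identities $\w(\alpha,\oo(a_1,a_2))=0$ and $\oo(a,\w(\alpha_1,\alpha_2))=0$ on Dirac sections is exactly right. For comparison: the paper offers no proof of this corollary at all, evidently treating it as immediate from the preceding proposition; but that implicit route ($D_1\oplus D_2$ is Dirac in $E_1\oplus E_2$, hence a Lie algebroid, now invoke Mokri) founders on the very point you isolate, since by \eqref{pig} the bracket of two sections of $D_1$ inside $D_1\oplus D_2$ is $(a_1\dia_1 a_2)\oplus\oo(a_1,a_2)$, so $D_1$ is a Lie subalgebroid of that Lie algebroid only when $\oo$ vanishes on $\wedge^2\Gamma(D_1)$.

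There are, however, two genuine gaps. The flatness step fails as proposed: since $\Rright(a,b)\alpha\in\Gamma(D_2)$ and $\Rleft(\alpha,\beta)a\in\Gamma(D_1)$ on Dirac sections, their pairings against sections of $D_2$, resp.\ $D_1$, vanish identically by isotropy, so \eqref{curvCompat} restricted to Dirac sections collapses to $0+0=0$ and forces nothing. To detect a section of the Lagrangian subbundle $D_2$ you must pair it against a complement of $D_2$, and there \eqref{curvCompat} brings in $\lconn_\gamma$ for $\gamma\notin\Gamma(D_2)$, which the matched-pair-of-Dirac hypothesis does not control.

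The deeper problem is that the vanishing you flag as the crux is not a consequence of the hypotheses, so no combination of \eqref{cow}--\eqref{calf}, \eqref{donkey}, \eqref{shark}, \eqref{new4X}--\eqref{new4Y} and isotropy can produce it; it can genuinely fail. Work over $M=\pt$, where Courant algebroids are quadratic Lie algebras, Dirac structures are Lagrangian subalgebras, and the brackets $\dia_i$, both connections, and $\oo,\w$ are just the orthogonal projections of the Lie bracket of the matched sum. Let $\mathfrak{d}$ be the diamond algebra with basis $e,h,f,z$, brackets $[h,e]=e$, $[h,f]=-f$, $[e,f]=z$, $z$ central, and invariant metric $\<e,f\>=\<h,z\>=1$, and let $\tau$ be the isometry exchanging $e\leftrightarrow f$ and $h\leftrightarrow z$ (it is not an automorphism). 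In $\g=\mathfrak{d}\oplus\mathfrak{d}$ with the orthogonal-sum metric, set $E_1=\{(x,\tau x)\}$ and $E_2=\{(x,-\tau x)\}$: these are orthogonal nondegenerate complements, and a direct check shows both projected brackets satisfy the Jacobi identity, so $(E_1,E_2)$ is a matched pair of Courant algebroids with matched sum $\g$. Put $D_1=\{(x,\tau x):x\in\mathrm{span}(e,z)\}$ and $D_2=\{(x,-\tau x):x\in\mathrm{span}(e,z)\}$; both are Lagrangian subalgebras, and $D_1\oplus D_2=\mathrm{span}(e,z)\times\mathrm{span}(f,h)$ is a Lagrangian subalgebra of $\g$, so $(D_1,D_2)$ is a matched pair of Dirac structures. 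Now take $b=(e,f)$, $c=(z,h)$ in $D_1$ and $\alpha=(z,-h)$ in $D_2$. From $[b,c]_\g=(0,f)=\tfrac12(e,f)-\tfrac12(e,-f)$ one reads off $b\dia_1 c=\tfrac12(e,f)$ and $\oo(b,c)=-\tfrac12(e,-f)\in\Gamma(D_2)$, and then $\w(\alpha,\oo(b,c))=-\tfrac12\pr_1[(z,-h),(e,-f)]_\g=-\tfrac12\pr_1(0,-f)=\tfrac14(e,f)\neq0$. Correspondingly \eqref{crocodile} fails outright: its left-hand side is $\tfrac14(e,f)$ while its right-hand side is $\tfrac12(e,f)$. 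So your gap cannot be closed, because the corollary as stated is false. It does become true under the sufficient extra hypothesis that $\oo$ vanish on $\wedge^2\Gamma(D_1)$ and $\w$ on $\wedge^2\Gamma(D_2)$ --- equivalently, that $D_1$ and $D_2$ be Lie subalgebroids of the Dirac structure $D_1\oplus D_2$ --- for then \eqref{parrot} and Mokri's characterisation of Lie algebroid structures on a direct sum whose summands are subalgebroids deliver flatness and \eqref{crocodile}--\eqref{alligator} with no further computation.
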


\appendix

\newpage\phantomsection
\bibliography{bibliogr}

\begin{thebibliography}{LWX97}
\providecommand{\url}[1]{{\def~{{\textasciitilde}}\texttt{#1}}}
\providecommand{\urlprefix}{}
\expandafter\ifx\csname urlstyle\endcsname\relax
  \providecommand{\doi}[1]{doi:\discretionary{}{}{}#1}\else
  \providecommand{\doi}{doi:\discretionary{}{}{}\begingroup
  \urlstyle{rm}\Url}\fi
\providecommand{\eprint}[2][]{\texttt{#2}}

\bibitem[BCG07]{BCG07}
\textsc{H.~Bursztyn}, \textsc{G.~R. Cavalcanti}, and \textsc{M.~Gualtieri}:
  \emph{Reduction of {C}ourant algebroids and generalized complex structures},
  Adv. Math., \textbf{vol. 211/2}, (2007) 726--765. \eprint{math.DG/0509640}.

\bibitem[BI{\v S}08]{BIS08}
\textsc{H.~Bursztyn}, \textsc{D.~Iglesia{s-P}onte}, and \textsc{P.~{\v
  S}evera}: \emph{{C}ourant morphisms and moment maps}.
  \eprint{mathSG/0801.1663}.

\bibitem[CEal]{CEha}
\textsc{H.~Cartan} and \textsc{S.~Eilenberg}: \emph{Homological algebra},
  Princeton Landmarks in Mathematics (Princeton University Press, Princeton,
  NJ, 1999, Reprint of the 1956 original), ISBN 0-691-04991-2.

\bibitem[CGR94]{Gutt93}
\textsc{M.~Cahen}, \textsc{S.~Gutt}, and \textsc{J.~Rawnsley}: \emph{Some
  remarks on the classification of {P}oisson {L}ie groups}, in \emph{Symplectic
  geometry and quantization ({S}anda and {Y}okohama, 1993)}, vol. 179 of
  \emph{Contemp. Math.}, pp. 1--16 (Amer. Math. Soc., Providence, RI, 1994).

\bibitem[Cou90]{Cour90}
\textsc{T.~Courant}: \emph{{D}irac manifolds}, Trans.A.M.S., \textbf{vol. 319},
  (1990) 631--661. \urlprefix\url{http://dx.doi.org/10.2307/2001258}.

\bibitem[CSX09]{CSX09}
\textsc{Z.~Chen}, \textsc{M.~Sti{\'e}non}, and \textsc{P.~Xu}: \emph{On regular
  {C}ourant algebroids}. In preparation.

\bibitem[CW99]{CaWe99}
\textsc{A.~Canna{s da S}ilva} and \textsc{A.~Weinstein}: \emph{Geometric Models
  for non commutative Algebras}, vol.~10 of \emph{Berkeley Mathematics Lecture
  Notes}, 198 pp. (American Mathematical Society, Providence, RI, 1999), ISBN
  0-8218-0952-0. \urlprefix\url{http://math.berkeley.edu/~alanw/}.

\bibitem[Dor87]{Dor87}
\textsc{I.~Y. Dorfman}: \emph{{D}irac structures of integrable evolution
  equations}, Physics Letters A, \textbf{vol. 125}, (1987) 240--246.

\bibitem[Dor93]{Dor93}
--- \emph{{D}irac stuctures and integrablility of nonlinear evolution
  equations} (Wiley, 1993).

\bibitem[Dri86]{Drin86}
\textsc{V.~G. Drinfel'd}: \emph{Quantum groups}, Zap. Nauchn. Sem. Leningrad.
  Otdel. Mat. Inst. Steklov. (LOMI), \textbf{vol. 155(Differentsialnaya
  Geometriya, Gruppy Li i Mekh. VIII)}, (1986) 18--49, 193, ISSN 0373-2703.

\bibitem[GG09]{GG08}
\textsc{G.~Ginot} and \textsc{M.~Gr{\"u}tzmann}: \emph{Cohomology of regular
  {C}ourant algebroids}, Journ.\ of Sympl.\ Geom., \textbf{vol. 7.3}, (2009)
  311--335. \eprint{math.DG/0805.3405}.

\bibitem[GS09]{GSi09}
\textsc{M.~Gr{\"u}tzmann} and \textsc{M.~Sti{\'e}non}: \emph{Matched pairs of
  {C}ourant algebroids}. Work in progress.

\bibitem[Gua04]{Gua04}
\textsc{M.~Gualtieri}: \emph{{G}eneralized complex geometry}, Ph.D. thesis,
  Oxford University (2004). \eprint{math.DG/0401221}.

\bibitem[Hit03]{Hit03}
\textsc{N.~Hitchin}: \emph{Generalized {C}alabi-{Y}au manifolds}, Q. J. Math.,
  \textbf{vol. 54 no. 3}, (2003) 281--308. \eprint{math.DG/0209099}.

\bibitem[HP04]{HoPa04}
\textsc{C.~Hofman} and \textsc{J.-S. Park}: \emph{B{V} quantization of
  topological open membranes}, Comm. Math. Phys., \textbf{vol. 249(2)}, (2004)
  249--271, ISSN 0010-3616. \eprint{hep-th/0209214}.

\bibitem[Huy05]{Huy05}
\textsc{D.~Huybrechts}: \emph{Complex Geometry -- an introduction},
  Universitext, xii+309 pp. (Springer-Verlag, Berlin, 2005), ISBN
  3-540-21290-6.

\bibitem[Ike03]{Ikeda02}
\textsc{N.~Ikeda}: \emph{{C}hern-{S}imons gauge theory coupled with
  {BF}-theory}, Int. J. Mod. Phys., \textbf{vol. A18}, (2003) 2689--2702.
  \eprint{hep-th/0203043}.

\bibitem[Kos96]{YKS96}
\textsc{Y.~Kosman{n-S}chwarzbach}: \emph{From {P}oisson algebras to
  {G}erstenhaber algebras}, Ann.Inst.Fourier, \textbf{vol. 46(5)}, (1996)
  1243--1274.
  \urlprefix\url{http://www.math.polytechnique.fr/cmat/kosmann/fourier96.pdf}.

\bibitem[KS02]{Str02}
\textsc{C.~Klim{\v c}{\'\i}k} and \textsc{T.~Strobl}: \emph{{WZW}-{P}oisson
  manifolds}, J. Geom. Phys., \textbf{vol. 43 no. 4}, (2002) 341--344.
  \eprint{math.SG/0104189}.

\bibitem[LS93]{SHLA}
\textsc{T.~Lada} and \textsc{J.~Stasheff}: \emph{Introduction to sh {L}ie
  algebras for physicists}, Journal of Theoretical Physics, \textbf{vol. 32.7},
  (1993) 1087--1103.

\bibitem[LW85]{WeDress}
\textsc{J.-H. Lu} and \textsc{A.~Weinstein}: \emph{Dressing transformations and
  {P}oisson {L}ie group actions}, Publ. Res. Inst. Math. Sci.,
  \textbf{vol.~21}, (1985) 1237--1260.

\bibitem[LW90]{LW90}
--- \emph{Poisson {L}ie groups, dressing transformations, and {B}ruhat
  decompositions}, J. Differential Geom., \textbf{vol.~31(2)}, (1990) 501--526,
  ISSN 0022-040X.

\bibitem[LWX97]{Xu97}
\textsc{Z.-J. Liu}, \textsc{A.~Weinstein}, and \textsc{P.~Xu}: \emph{{M}anin
  triples for {L}ie bialgebroids}, J. Diff.\ Geom, \textbf{vol. 45/3}, (1997)
  547--574. \eprint{math.DG/9508013}.

\bibitem[Mac87]{MacK87}
\textsc{K.~C.~H. Mackenzie}: \emph{Lie groupoids and Lie algebroids in
  differential geometry}, vol. 124 of \emph{Lect. Notes Series} (London
  Mathematical Society, Cambridge University Press, 1987).

\bibitem[Mac05]{Mack05}
--- \emph{General theory of {L}ie groupoids and {L}ie algebroids}, vol. 213 of
  \emph{London Mathematical Society Lecture Note Series}, xxxviii+501 pp.
  (Cambridge University Press, Cambridge, 2005), ISBN 978-0-521-49928-3;
  0-521-49928-3.

\bibitem[McC85]{SSeq85}
\textsc{J.~McCleary}: \emph{User's Guide to {S}pectral Sequences}, vol.~12 of
  \emph{Mathematics Lecture Series} (Publish or Perish, 1985).

\bibitem[Mok97]{Mor97}
\textsc{T.~Mokri}: \emph{Matched pairs of {L}ie algebroids}, Glasgow
  Mathematical Journal, pp. 167--181.

\bibitem[MW74]{Wei74}
\textsc{J.~Marsden} and \textsc{A.~Weinstein}: \emph{Reduction of symplectic
  manifolds with symmetry}, Rep. Mathematical Phys., \textbf{vol. 5 no.\ 1},
  (1974) 121--130.

\bibitem[MX94]{Mack94}
\textsc{K.~C.~H. Mackenzie} and \textsc{P.~Xu}: \emph{Lie bialgebroids and
  {P}oisson groupoids}, Duke Math. J., \textbf{vol.~73(2)}, (1994) 415--452,
  ISSN 0012-7094.

\bibitem[Par01]{Par00}
\textsc{J.-S. Park}: \emph{Topological open {$p$}-branes}, in \emph{Symplectic
  geometry and mirror symmetry (Seoul, 2000)}, pp. 311--384 (World Sci. Publ.,
  River Edge, NJ, 2001). \eprint{hep-th/0012141}.

\bibitem[Roy99]{Royt99}
\textsc{D.~Roytenberg}: \emph{{C}ourant algebroids, derived brackets, and even
  symplectic supermanifolds}, Ph.D. thesis, University of California, Berkeley
  (1999). \eprint{math.DG/9910078}.

\bibitem[Roy01]{Royt02}
--- \emph{On the structure of graded symplectic supermanifolds and {C}ourant
  algebroids}, in \emph{Workshop on Quantization, Deformations, and New
  Homological and Categorical Methods in Mathematical Physics}, vol. 315 of
  \emph{Contemp. Math.}, pp. 169--185 (2001). \eprint{math.SG/0203110}.

\bibitem[Roy07]{Royt06}
--- \emph{{AKSZ-BV} formalism and {C}ourant algebroid induced topological field
  theories}, Lett. Math. Phys., pp. 143--159. \eprint{hep-th/0608150}.

\bibitem[RW98]{Royt98}
\textsc{D.~Roytenberg} and \textsc{A.~Weinstein}: \emph{{C}ourant algebroids
  and strongly homotopy {L}ie algebras}, Lett. Math. Phys., \textbf{vol. 46/1},
  (1998) 81--93. \eprint{math.QA/9802118}.

\bibitem[{\v S}ev99]{SevLett}
\textsc{P.~{\v S}evera}: \emph{Letters to {A}. {W}einstein}.
  \urlprefix\url{http://sophia.dtp.fmph.uniba.sk/~severa/letters/}.

\bibitem[{\v{S}}ev05]{Sev01b}
\textsc{P.~{\v{S}}evera}: \emph{Some title containing the words ``homotopy''
  and ``symplectic'', e.g. this one}, in \emph{Travaux math\'ematiques. Fasc.
  XVI}, Trav. Math., XVI, pp. 121--137 (Univ. Luxemb., Luxembourg, 2005).
  \eprint{math.SG/0105080}.

\bibitem[Sme94]{Sme94}
\textsc{V.~de~Smedt}: \emph{Existence of a {L}ie bialgebra structure on every
  {L}ie algebra}, Lett. Math. Phys., \textbf{vol.~31}, (1994) 225--231.

\bibitem[{\v S}W01]{Sev01}
\textsc{P.~{\v S}evera} and \textsc{A.~Weinstein}: \emph{{P}oisson geometry
  with 3-form background}, Prog.Theor.Phys.Suppl., \textbf{vol. 144}, (2001)
  145--154. \eprint{math.SG/0107133}.

\bibitem[SX08]{SX07}
\textsc{M.~Sti\'enon} and \textsc{P.~Xu}: \emph{Modular classes of {L}oday
  algebroids}, C. R. Acad. Sci. Paris, \textbf{vol. Ser. I 346}, (2008)
  193--198. \eprint{math.DG/0803-2047}.

\bibitem[Tul77]{Tul77}
\textsc{W.~M. Tulczyjew}: \emph{The {L}egendre transformation}, Annals Insitute
  H. Poincar{\'e}, \textbf{vol.~27}, (1977) 101--114.

\bibitem[Uch02]{Uchi02}
\textsc{K.~Uchino}: \emph{Remarks on the definition of a {C}ourant algebroid},
  Lett. Math. Phys., \textbf{vol. 60/2}, (2002) 171--175.
  \eprint{math.DG/0204010}.

\bibitem[Vor91]{Vor91}
\textsc{T.~Voronov}: \emph{Geometric integration theory on supermanifolds},
  vol.~9 of \emph{Soviet Scientific Reviews, Section C: Mathematical Physics
  Reviews}, iv+138 pp. (Harwood Academic Publishers, Chur, 1991), ISBN
  3-7186-5199-8.

\bibitem[Vor02]{Vor01}
--- \emph{Graded manifolds and drinfeld doubles for {L}ie bialgebroids},
  Quantization, {P}oisson Brackets and Beyond, Contemp. Math, \textbf{vol.
  315}. \eprint{math.DG/0105237}.

\end{thebibliography}
\bibliographystyle{alphasorteprint}

\singlespace


\end{document}